\newcommand{\Q}{\mathbb{Q}}
\newcommand{\R}{\mathbb{R}}
\newcommand{\C}{\mathbb{C}}
\newcommand{\Z}{\mathbb{Z}}
\newcommand{\N}{\mathbb{N}}
\newcommand{\m}{\mathfrak{m}}
\newcommand{\fkm}{\mathfrak{m}}
\newcommand{\fka}{\mathfrak{a}}
\newcommand{\fkp}{\mathfrak{p}}
\newcommand{\F}{\mathbb{F}}
\newcommand{\ba}{\mathfrak{a}}
\newcommand{\bb}{\mathfrak{b}}
\newcommand{\n}{\mathfrak{n}}
\newcommand{\sO}{\mathcal{O}}
\newcommand{\Div}{\mathrm{div}}
\newcommand{\Hom}{\mathrm{Hom}}
\newcommand{\Spec}{\mathop{\mathrm{Spec}}\nolimits}
\newcommand{\Proj}{\mathop{\mathrm{Proj}}\nolimits}
\newtheorem{thm}{Theorem}[section]
\newtheorem{lem}[thm]{Lemma}
\newtheorem{cor}[thm]{Corollary}
\newtheorem{prop}[thm]{Proposition}
\newtheorem{fact}[thm]{Fact}
\theoremstyle{definition}
\newtheorem{defn}[thm]{Definition}
\newtheorem{rem}[thm]{Remark}
\newtheorem{ex}[thm]{Example}
\newtheorem{conj}[thm]{Conjecture}
\newtheorem*{assumption}{Assumption}
\newtheorem*{conjA}{Conjecture $\mathrm{A}_n$}
\newtheorem*{conjB}{Conjecture $\mathrm{B}_n$}
\begin{document}

\title{$F$-singularities: applications of characteristic \\$p$ methods to singularity theory}
\author{Shunsuke Takagi}
\address{Graduate School of Mathematical Sciences, University of Tokyo, Komaba 3-8-1, Meguro-ku, Tokyo 153-8914, Japan}
\thanks{This article originally appeared in Japanese in Sugaku \textbf{66} (1) (2014), 1--30.}
\subjclass[2010]{13A35, 14B05}

\email{stakagi@ms.u-tokyo.ac.jp}
\author{Kei-ichi Watanabe}
\address{Department of Mathematics, College of Humanities and Sciences,
Nihon University, Setagaya-Ku, Tokyo 156-0045, Japan} 
\email{watanabe@math.chs.nihon-u.ac.jp}

\maketitle
\markboth{S.~TAKAGI and K.-i.~WATANABE}{$F$-SINGULARITIES}

\tableofcontents

\section{Introduction}

Let $R$ be a commutative ring containing a field of characteristic $p>0$. 
The Frobenius map $F:R\to R$ is a ring homomorphism sending $x$ to $x^p$ for each $x \in R$. 
The letter ``$F$" in the title of this article stands for the Frobenius map, and we mean by ``$F$-singularities" singularities defined in terms of the Frobenius map. 
The theory of $F$-singularities provides a series of methods to analyze singularities (not only in positive characteristic but also in characteristic zero), using techniques from commutative algebra in positive characteristic. 

The importance of analytic methods for singularity theory has been recognized for many years. 
 When the second-named author was a graduate student,  his supervisor Yukiyoshi Kawada gave  him the following advice: ``(some of) the deepest theorems in algebraic geometry have been proved by using analysis. It is better to study analytic methods." 
Indeed, the Kodaira vanishing theorem and the Brian\c con-Skoda theorem, which we will explain in \S\ref{tight closure}, were first proved by using analysis, and no purely algebraic proofs were known for these theorems until the 1980s.
Also, it was impressive to Watanabe that Hochster, who was a leading researcher in commutative ring theory,  gave a series of ten lectures entitled ``Analytic Methods in Commutative Algebra" at the NSF regional conference at George Mason University in 1979. 
It was especially impressive to him that Hochster lectured even on some theorems of functional analysis, carrying the famous thick book written by Griffiths and Harris \cite{GH}.  
Here is a quote from the introduction of the paper by Lipman and Teissier \cite{LT}: ``The proof given by Brian\c con and Skoda of this completely algebraic statement is based on a quite transcendental deep result of Skoda.  The absence of an algebraic proof has been for algebraists something of a scandal---perhaps even an insult---and certainly a challenge." 
This captures the feelings of commutative algebraists at the time very well. 

The situation changed dramatically in the mid 1980s. 
Deligne-Illusie \cite{DI} and Hochster-Huneke \cite{HH1} gave a comparatively elementary proof of the Kodaira vanishing theorem and the Brian\c con-Skoda theorem, respectively, using characteristic $p$ methods. 
Such a development of characteristic $p$ methods provided a framework to translate various notions about singularities in characteristic zero into the language of $F$-singularities. 
This leads us to the maxim: ``(As far as singularity theory is concerned) what has been proved by analytic methods can be proved by characteristic $p$ methods."  
The goal of this article is to exhibit the effectiveness of characteristic $p$ methods in singularity theory. 

We shall have a look at the organization of this article. 
In Section \ref{tight closure}, we review the theory of tight closure introduced by Hochster and Huneke. We then discuss the Boutot-type theorem for $F$-singularities and the tight closure version of the Brian\c con-Skoda theorem. 
In Section \ref{classical Fsing}, we overview four classes of $F$-singularities: strongly $F$-regular, $F$-rational, $F$-pure and $F$-injective rings. In particular, we observe that rational singularities and $F$-rational rings are ``morally equivalent." 
In Section \ref{Fpairs}, we generalize the definition of $F$-singularities to the pair setting. We then explain a correspondence of $F$-singularities and singularities in the minimal model program. We also mention the theory of $F$-adjunction introduced by Karl Schwede. 
In Section \ref{test ideals}, we explain two applications of asymptotic test ideals, a positive characteristic analogue of asymptotic multiplier ideals. One is to symbolic powers of ideals and the other is to asymptotic base loci in positive characteristic. 
In Section \ref{HK section}, we give an overview of Hilbert-{K}unz theory.  
We explain a characterization of regular local rings in terms of Hilbert-Kunz multiplicity and then discuss a lower bound for the Hilbert-Kunz multiplicity of any non-regular local ring. 
Finally, we close this article by listing some of the topics that are not discussed in this article. 


Throughout this article, all rings are Noetherian commutative rings with unity. 
For a ring $R$, we denote by $R^{\circ}$ the set of elements of $R$ which are not in any minimal prime ideal. 
 

\section{Tight closure}\label{tight closure}

The following facts are fundamental in the theory of singularities of algebraic varieties over a field of characteristic zero. 
\begin{enumerate}
\item (\cite{KKMS}) Rational singularities are Cohen-Macaulay. 
\item (Boutot's theorem \cite{Bo}) Pure subrings of rational singularities are again rational singularities. 
\item (Brian\c con-Skoda theorem \cite{BrS}) If $I$ is an ideal in a regular ring generated by $n$ elements, then the integral closure of $I^n$ is contained in $I$. 
\end{enumerate}
We can formulate analogous results in positive characteristic, using the theory of tight closure. 
Tight closure is a closure operation defined on ideals (and modules) in positive characteristic, and its name comes from the fact that tight closure is ``tighter" (smaller) than integral closure. 
In this section, we review the theory of tight closure and then discuss a tight closure version of the above theorems. 
The results, not specifically mentioned,  come from \cite{HH0}.  
We recommend  \cite{Hu2}, \cite{Hu3} for a nice introduction to the theory of tight closure. 
The reader is referred to \cite{BH}, \cite{GW1}, \cite{Ma} for standard notions and facts from commutative ring theory. 

Suppose that $p$ is a prime number and $R$ is a reduced ring of characteristic $p$, that is, $R$ contains the prime field $\F_p$. 
For a power $q=p^e$ of $p$ and an ideal $I$ of $R$, we put 
\[
I^{[q]}=( a^q | a\in I ) \subseteq R.
\]

\begin{defn}\label{tight closure def}
For an ideal $I$ of $R$, we define the \textit{tight closure} $I^*$ of $I$ as follows: 
an element $x \in R$ belongs to $I^*$ if and only if for there exists some $c\in R^{\circ}$ such that $cx^q \in I^{[q]}$ 
for all sufficiently large powers $q=p^e$ of $p$. 
We can easily show that $I^*$ is an ideal of $R$ containing $I$. 
We say that the ideal $I$ is \textit{tightly closed} if $I^*=I$. 
\end{defn}

In the theory of tight closure and $F$-singularities, Kunz's theorem plays a very important role. 
Before stating the theorem, we recall that the Frobenius map $F: R\to R$ is defined by $F(a)=a^p$ for every $a\in R$. 
Since $(x+y)^p=x^p+y^p$ in characteristic $p$, the map $F$ is a ring homomorphism. 

\begin{thm}[Kunz's Theorem \cite{Ku}]\label{Kunz} 
For a local ring $(R,\fkm)$ of dimension $d$ containing a field of characteristic $p>0$, the following conditions are equivalent.
\begin{enumerate}
\item $R$ is regular. 
\item The Frobenius map $F : R\to R$ is flat. 
\item $\ell_R( R / \fkm^{[p]})=p^d$. 
\item For any power $q= p^e$ of $p$, $\ell_R( R / \fkm^{[q]}) = q^d$, 
\end{enumerate}
where $\ell_R(M)$ denotes the length of an $R$-module $M$.
\end{thm}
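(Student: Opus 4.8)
\emph{Plan.} The strategy is to reduce to the case where $R$ is complete, where the Cohen structure theorem gives an honest presentation $R=S/I$ with $S$ a power series ring; on that model the implications $(1)\Rightarrow(4)\Rightarrow(3)$ and $(1)\Rightarrow(2)$ become short computations, and the whole weight of the theorem is carried by the reverse implications $(2)\Rightarrow(1)$ and $(3)\Rightarrow(1)$.

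First, the reductions. Since $\fkm$ is finitely generated, $\fkm^{N}\subseteq\fkm^{[q]}\subseteq\fkm^{q}$ for $N\gg0$, so $\fkm^{[q]}$ is $\fkm$-primary and the colengths in $(3)$, $(4)$ are finite; moreover none of the four conditions changes under $\fkm$-adic completion ($\widehat R$ is regular iff $R$ is; $R\to\widehat R$ is faithfully flat and intertwines the two Frobenii, so one is flat iff the other is; and $R/\fkm^{[q]}=\widehat R/\fkm^{[q]}\widehat R$ since these modules have finite length). So assume $R$ complete and write $R=S/I$ with $(S,\fkn)=k[[x_{1},\dots,x_{n}]]$, $k$ a coefficient field, $n=\dim_{k}\fkm/\fkm^{2}$, and $I\subseteq\fkn^{2}$. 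Then $R$ is regular precisely when $I=0$, equivalently $n=d$, and $\fkm^{[q]}$ is the image of $(x_{1}^{q},\dots,x_{n}^{q})+I$.

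Next, the ``free'' implications. $(1)\Rightarrow(4)$: if $I=0$ then $R/\fkm^{[q]}=k[x_{1},\dots,x_{d}]/(x_{1}^{q},\dots,x_{d}^{q})$ has the $q^{d}$ monomials $x_{1}^{a_{1}}\cdots x_{d}^{a_{d}}$, $0\le a_{i}<q$, as a $k$-basis, and since $R/\fkm=k$ the $R$-length of a finite-length module equals its $k$-dimension, so $\ell_{R}(R/\fkm^{[q]})=q^{d}$. $(4)\Rightarrow(3)$ is the case $q=p$. $(1)\Rightarrow(2)$: over $S=k[[x_{1},\dots,x_{d}]]$ the Frobenius realizes ``$R$ viewed via $F$'' as $S^{1/p}=k^{1/p}[[x_{1}^{1/p},\dots,x_{d}^{1/p}]]$, which is flat over $S$ because it is obtained from $S$ by the field extension $k\hookrightarrow k^{1/p}$ (flat), the module-finite free extension adjoining the $x_{i}^{1/p}$, and completion; concretely, flatness of $k[[\underline x]]\hookrightarrow k^{1/p}[[\underline x]]$ follows from $\mathrm{Tor}^{S}_{>0}(k^{1/p}[[\underline x]],k)=0$ (compute with the Koszul resolution of $k$, using that $x_{1},\dots,x_{d}$ is a regular sequence in $k^{1/p}[[\underline x]]$) together with the local flatness criterion.

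Now the substance: $(2)\Rightarrow(1)$ and $(3)\Rightarrow(1)$. Both I would deduce from the quantitative heart of Kunz's theorem,
\[
\ell_{R}(R/\fkm^{[q]})\ \ge\ q^{d}\quad\text{for every }q,\qquad\text{with equality for some }q>1\ \Longleftrightarrow\ I=0 .
\]
On the presentation $R=S/I$ this reads $\dim_{k}S/\bigl((x_{1}^{q},\dots,x_{n}^{q})+I\bigr)\ge q^{d}$, and I would attack it by comparing $R$ with a Noether normalization: after a coordinate change $A:=k[[x_{1},\dots,x_{d}]]\hookrightarrow R$ module-finite, one weighs $\ell_{R}(R/\fkm^{[q]})$ against the regular model $\dim_{k}A/(x_{1}^{q},\dots,x_{d}^{q})=q^{d}$, the inequality expressing that $I$ can only enlarge the ideal one cuts out, and the equality clause being a rigidity statement forcing $I=0$. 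Granting this, $(3)\Rightarrow(1)$ is immediate. For $(2)\Rightarrow(1)$ one uses that when $F$ is flat so is every iterate $F^{e}$ (also faithfully flat, being local), so base change along $F^{e}$ carries a minimal free resolution of $k$ to one of $R\otimes_{F^{e},R}k=R/\fkm^{[q]}$; equivalently, with $\underline x=x_{1},\dots,x_{n}$ a minimal generating set of $\fkm$, it carries the Koszul homology $H_{j}(K_{\bullet}(\underline x;R))$ (killed by $\fkm$, say $\cong k^{\beta_{j}}$) to $H_{j}(K_{\bullet}(x_{1}^{q},\dots,x_{n}^{q};R))\cong(R/\fkm^{[q]})^{\beta_{j}}$. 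Since $R$ is regular exactly when $H_{1}(K_{\bullet}(\underline x;R))=0$ (forward: the Koszul resolution of $k$; converse: $H_{1}=0$ forces $\underline x$ to be a regular sequence, whence $\operatorname{depth}R\ge n\ge d$ and $R$ is Cohen--Macaulay with embedding dimension $d$), non-regularity of $R$ would give $1$-st Koszul homology of length $\beta_{1}\cdot\ell_{R}(R/\fkm^{[q]})\ge q^{d}\to\infty$, which one confronts with the rigidity of Koszul Euler characteristics for the $\fkm$-primary ideal $(x_{1}^{q},\dots,x_{n}^{q})$. I expect this final confrontation---equivalently, the equality case of the displayed inequality---to be the main obstacle: it is precisely where Kunz's theorem has teeth, and it cannot be finessed by soft multiplicity estimates, since $e_{\fkm}(R)=1$ alone does not force regularity once $R$ is allowed to be non-unmixed (so in particular $\ell_{R}(R/\fkm^{[q]})\ge e_{\fkm^{[q]}}(R)$ is both false in general and too weak). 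As a useful preliminary one records that $F$ flat already forces $R$ reduced: flatness gives $\operatorname{Ann}_{R}(x)^{[p]}=\operatorname{Ann}_{R}(x^{p})$ for all $x$, so $x^{p}=0$ yields $\operatorname{Ann}_{R}(x)=R$ and hence $x=0$.
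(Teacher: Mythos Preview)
The paper does not prove Kunz's theorem; it records the statement with the citation \cite{Ku} and moves on, using it as a black box. So there is nothing in the paper to compare against, and what follows assesses your sketch on its own terms.

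Your reductions to the complete case and the ``free'' implications $(1)\Rightarrow(4)\Rightarrow(3)$ and $(1)\Rightarrow(2)$ are correct; the $S^{1/p}$ description of Frobenius on a power series ring is the right picture, and your side remark that $F$ flat forces $R$ reduced is both correct and useful.

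The proposal does not, however, prove the theorem: you stop precisely where the content is. For $(3)\Rightarrow(1)$ you state the key inequality $\ell_R(R/\fkm^{[q]})\ge q^{d}$ together with its rigidity clause, wave at a Noether normalization, and then write ``I expect this final confrontation\dots\ to be the main obstacle.'' That is an accurate diagnosis, not a proof. Worse, the heuristic you offer is backwards: you say the inequality ``express[es] that $I$ can only enlarge the ideal one cuts out,'' but enlarging an ideal \emph{shortens} the quotient, and in any case you are comparing lengths in two different rings; nothing here explains why $\ell_R(R/\fkm^{[q]})$ should dominate $q^{d}$, let alone why equality forces $I=0$. For $(2)\Rightarrow(1)$ your Koszul set-up is on the right track---flatness of $F^{e}$ does give $H_j\bigl(K_\bullet(x_1^{q},\dots,x_n^{q};R)\bigr)\cong (R/\fkm^{[q]})^{\beta_j}$---but ``one confronts with the rigidity of Koszul Euler characteristics'' is again a placeholder, and you have not extracted a contradiction from $\beta_1>0$.

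What actually closes these directions (in Kunz's paper, or in standard treatments such as Bruns--Herzog): for $(2)\Rightarrow(1)$ one exploits the colon identity $(I^{[p]}:a^{p})=(I:a)^{[p]}$ that flatness of $F$ provides (this is exactly the identity the paper uses in the proof of Proposition~2.3) to run an induction on $\dim R$, or equivalently to show $\operatorname{pd}_R k<\infty$. For $(3)\Rightarrow(1)$ Kunz proves the inequality $\ell_R(R/\fkm^{[p]})\ge p^{d}$ by an inductive filtration argument (reducing modulo a suitably chosen parameter and controlling how lengths change), and then shows that equality forces $F$ to be flat, feeding back into $(2)\Rightarrow(1)$. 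Either route is a genuine argument that your sketch has identified but not supplied.
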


\begin{prop}  If $(R, \m)$ is a regular local ring, then every ideal in $R$ is tightly closed.  
\end{prop}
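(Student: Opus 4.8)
The plan is to deduce everything from Kunz's Theorem~\ref{Kunz}. Since $R$ is regular, the Frobenius $F\colon R\to R$, and therefore each iterate $F^{e}\colon R\to R$ (the map $a\mapsto a^{q}$, $q=p^{e}$), is flat. The whole argument then rests on one consequence of this flatness: for every ideal $I\subseteq R$ and every $x\in R$,
\[
I^{[q]} : x^{q} = (I:x)^{[q]} .
\]

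To establish this identity I would start from the exact sequence of $R$-modules
\[
0\longrightarrow (I:x)/I \longrightarrow R/I \xrightarrow{\ \cdot x\ } R/I
\]
and apply the base-change functor $-\otimes_{R}R'$, where $R'$ denotes $R$ viewed as an $R$-algebra via $F^{e}$. Flatness of $F^{e}$ keeps the sequence exact, so the task is merely to identify the three terms after base change: $(R/I)\otimes_{R}R'\cong R/I^{[q]}$; multiplication by $x$ turns into multiplication by $x^{q}$; and, again using flatness, the submodule $(I:x)/I$ is carried isomorphically onto $(I:x)^{[q]}/I^{[q]}\subseteq R/I^{[q]}$. Reading off the kernel of $\cdot x^{q}$ on $R/I^{[q]}$, which is $(I^{[q]}:x^{q})/I^{[q]}$, and equating it with the image of the base-changed submodule gives the displayed equality. (The inclusion $\supseteq$ is formal; flatness is exactly what supplies $\subseteq$.)

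Granting the identity, the proposition is immediate. Let $I$ be an ideal and $x\in I^{*}$, so there is $c\in R^{\circ}$ with $cx^{q}\in I^{[q]}$, that is $c\in I^{[q]}:x^{q}=(I:x)^{[q]}$, for all large $q$. If $x\notin I$ then $I:x$ is a proper ideal, hence contained in $\fkm$, so $(I:x)^{[q]}\subseteq \fkm^{[q]}\subseteq \fkm^{q}$ and therefore $c\in\bigcap_{q}\fkm^{q}$. The Krull intersection theorem forces this intersection to be $0$, so $c=0$; but a regular local ring is a domain, whence $R^{\circ}=R\setminus\{0\}$ and $c\neq 0$, a contradiction. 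Hence $x\in I$, and $I$ is tightly closed.

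The only substantive point, and the sole place where regularity is used, is the passage from Kunz's flatness statement to the colon-ideal identity $I^{[q]}:x^{q}=(I:x)^{[q]}$; the bookkeeping in the base change (identifying $R/I^{[q]}$ and tracking the multiplication map through the Frobenius twist) is the part that must be carried out with care. Once that is in place, the Krull intersection theorem finishes the proof in a single line.
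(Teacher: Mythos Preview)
Your proof is correct and follows essentially the same approach as the paper: both invoke Kunz's theorem to get flatness of Frobenius, deduce the identity $(I^{[q]}:x^{q})=(I:x)^{[q]}$, and then obtain a contradiction by trapping $c$ in $\bigcap_{e}\fkm^{[p^e]}=(0)$. The only cosmetic difference is that the paper writes $\bigcap_{e\gg 0}\fkm^{[p^e]}=(0)$ directly, whereas you pass through $\fkm^{[q]}\subseteq\fkm^{q}$ and invoke the Krull intersection theorem; your version also spells out the base-change computation behind the colon identity, which the paper leaves as a one-line assertion.
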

\begin{proof} 
Suppose to the contrary that there exists an element $x \in I^* \setminus I$ for some ideal $I \subset R$. 
Then $(I : x) \subseteq \fkm$. 
Since the Frobenius map $F:R \to R$ is flat by Theorem \ref{Kunz}, 
$(I^{[q]} : x^q)=(I : x)^{[q]} \subseteq \fkm ^{[q]}$ for every $q=p^e$.\footnote{For ideals $I,J$ of a ring $A$, we denote $(I:J)=\{x\in A\;| xJ \subset I\}$.  
Also, we denote the ideal $(I: (y))$ simply by $(I :y)$.  This ``colon" operation will appear frequently in this section.}
On the other hand, by the definition of tight closure, there exists a nonzero element $c \in R$ such that $c x^q \in I^{[q]}$ for all sufficiently large $q=p^e$. 
Hence, we have 
\[
c\in \bigcap_{e \gg 0} (I^{[p^e]} : x^{p^e} ) \subseteq   \bigcap_{e \gg 0} \fkm ^{[p^e]}=(0), 
\]
which is a contradiction. 
Thus, $J^* = J$ for every ideal $J$ in $R$. 
\end{proof}

This result leads us to introduce the notion of $F$-regular rings.  
The notion of $F$-rational rings is also defined in a similar way. 
Although the definitions of $F$-rational rings and rational singularities are completely different at first glance, we can think of $F$-rational rings as a positive characteristic analogue of rational singularities as we will see in \S\ref{classical Fsing}. 

\begin{defn}\label{Frational} 
Let $R$ be a reduced ring of characteristic $p>0$. 
\begin{enumerate}
\item 
We say that $R$ is \textit{weakly $F$-regular} if every ideal in $R$ is tightly closed.
We say that $R$ is \textit{$F$-regular}  if every localization of $R$ is weakly $F$-regular.
\item (\cite{FW}) 
A local ring  $(R, \fkm)$ is said to be \textit{$F$-rational} if every parameter ideal\footnote{We call an ideal $I$ a \textit{parameter ideal} if $I$ is generated by $\mathrm{ht} \; I$ elements.} is tightly closed.
When $R$ is not local, we say that $R$ is $F$-rational if the local ring $R_{\fkp}$ is $F$-rational for every maximal ideal $\fkp$ of $R$. 
\end{enumerate}
\end{defn}

By definition, we have the following hierarchy of properties of local rings:
\[
\xymatrix{
\textup{$F$-regular} \ar@{=>}[r] & \textup{weakly $F$-regular} \ar@{=>}[r] & \textup{$F$-rational.}}
\]
If the ring is Gorenstein, then $F$-rationality implies $F$-regularity (see Proposition \ref{F-pure=>F-injective}).

\begin{rem}\label{localization remark}
 The question of whether tight closure commutes with localization, that is, whether $(I S^{-1}R)^* = I^* (S^{-1}R)$ for every multiplicative subset  $S\subset R$ and for every ideal $I$ of $R$, had been a long-standing open problem since tight closure was introduced by Hochster and Huneke in 1986. 
In 2006, Brenner and Monsky \cite{BM} gave a negative answer to this question by constructing a counterexample in characteristic 2. 
However, the question of whether every weakly $F$-regular ring is $F$-regular still remains open  (see also Remark \ref{F-regular remark} (1)). 
\end{rem}

\begin{ex}\label{Frational example}
Let $k$ be a field of characteristic $p>0$. 

(1) Let $R=k[X,Y,Z]/ (X^n + Y^n + Z^n)$, where $n \ge 2$ is an integer not divisible by $p$. 
Let $x, y, z$ denote the images of $X, Y, Z$ in $R$, respectively, and put $I = (y, z)$. 
Then $x \not\in I^*$ and $x^2 \in I^*$. 
Therefore, if $R$ is $F$-rational, then $n=2$, and the converse also holds if $p \ge 3$. 

(2) Suppose that  $R=k[X,Y,Z]/ (X^a + Y^b + Z^c)$ is reduced, where $a, b, c$ are integers greater than or equal to $2$. 
Let $x, y, z$ denote the images of $X, Y, Z$ in $R$, respectively, and put $I = (y,z)$.
Then the condition that $x^{a-1}\not\in I^*$ implies $1/a +1/b+1/c>1$. 
Therefore, if $R$ is $F$-rational, then $1/a +1/b+1/c>1$, and the converse also holds if $p>5$. 
This should be compared with the fact that when $k$ is a field of characteristic zero, $R$ is a rational singularity if and only if $1/a +1/b+1/c>1$. 
\end{ex}

We give a sketch of the proof of Example \ref{Frational example} in order to help the reader get a feeling of tight closure. 

\begin{proof}[A sketch of the proof of Example $\ref{Frational example}$]  
(1) Fix any $q=p^e$ and write  $q= nu +r$ with $0\le r<n$. 
Since $(x^2)^q =x^{2r}(x^n)^{2u}= (-1)^{2u}x^{2r}(y^n + z^n)^{2u}$, 
we have $(yz)^n(x^2)^q \in (y^q, z^q)=I^{[q]}$. Thus, $x^2 \in I^*$. 

In order to show $x \not\in I^*$, we use the notion of test elements (see Definition \ref{test element}). 
It follows from Lemma \ref{test vs jacobi} that there exists some power $z^m$ of $z$ such that for every $w \in I^*$, one has $z^m w^q \in I^{[q]}$ for all $q=p^e$.  

Suppose to the contrary that $x \in I^*$, and choose $q=p^e>m$. 
Let $S=k[X, Y, Z]$ be a polynomial ring, and put $J=(X^n+Y^n+Z^n, Y^q, Z^q) \subset S$. 
We take the graded reverse lexicographic order on $S$ with $X>Y>Z$.\footnote{The \textit{graded reverse lexicographic order} on the polynomial ring $k[X_1, \dots, X_n]$ is defined by saying that ${X}^{\alpha}>{X}^{\beta}$ if $\deg {X}^{\alpha}>\deg {X}^{\beta}$ or if $\deg {X}^{\alpha}=\deg {X}^{\beta}$ and in the vector difference $\alpha-\beta$, the rightmost non-zero entry is negative.} 
Then the initial ideal $\mathrm{in}(J)=(X^n, Y^q, Z^q)$. 
Write $q=nu+r$ with $0\le r<n$.  Since
\[\mathrm{in}((-1)^u Z^m X^r(Y^n+Z^n)^u)=(-1)^u X^r Y^{nu}Z^m \notin (X^n, Y^q, Z^q)=\mathrm{in}(J),\] 
one has $Z^m X^q \notin J$. This means that $z^m x^q \notin I^{[q]}$, which is a contradiction.

(2) Fix any $q=p^e$ and write  $q= au +r$ with $0\le r<a$. 
Suppose that $1/a +1/b+1/c \le 1$. 
Then
\[
(x^{a-1})^q =(-1)^{(a-1)u} x^{(a-1)r}(y^b + z^c)^{(a-1)u} \subset ((y^b+z^c)^{\lceil a/b+a/c \rceil u}),  
\] 
which implies that $(yz)^a(x^{a-1})^q \in (y^q, z^q)=I^{[q]}$. 
Thus, $x^{a-1} \in I^*$. 
\end{proof}

One of the important properties of $F$-rational rings is that they are Cohen-Macaulay.  
First we recall the definitions of Cohen-Macaulay rings and related notions. 

Let $(A,\fkm)$ be a local ring of dimension $d$ and $E_A(A/\m)$ be the injective hull of the residue field $A/\fkm$. 
Then the \textit{canonical module} $\omega_A$ of $A$ is defined by 
\[
\omega_A \otimes_A \widehat{A} \cong \Hom_A(H^d_{\fkm} (A),E_A(A/\fkm)).
\]
Alternatively, it is defined as $\omega_A=\mathcal{H}^{-d}(\omega^{\bullet}_A)$, where $\omega_A^\bullet$ is the normalized dualizing complex of $A$. 
In other words, 
\[
\omega_A^\bullet: 0 \to I^{-d} \to I^{-d+1} \to \cdots \to I^0 \to 0
\]
is a complex of $A$-modules satisfying the following two conditions: (1) $I^{-i} \cong \bigoplus_{\dim A/\fkp=i} E_A(A/\fkp)$ where $\fkp$ varies over all prime ideals in $A$ such that $\dim A/\fkp=i$ and $E_A(A/\fkp)$ is the injective hull of $A/\fkp$, (2) $\mathcal{H}^{-i}(\omega_A^\bullet)$ is a finitely generated $A$-module for each $i$. 
By this definition, if $A$ has a dualizing complex, then $A$ has a canonical module.\footnote{It is known in \cite{Kaw} that $A$ has a dualizing complex if and only if $A$ is a homomorphic image of a Gorenstein local ring.}

\begin{defn} Let $A$ be a $d$-dimensional local ring with canonical module $\omega_A$. 
\begin{enumerate}
\item We say that $A$ is a \textit{Cohen-Macaulay ring} if a full system of parameters 
$x_1, \dots, x_d$ for $A$ is a regular sequence, that is, if $(x_1,\ldots ,x_i) : x_{i+1} = (x_1,\ldots ,x_i)$ for $i=0, \dots, d-1$. 
Note that if some full system of parameters is a regular sequence, then so is every full system of parameters. 

\item We say that $A$ is a \textit{Gorenstein ring} If $A$ is Cohen-Macaulay and $\omega_A$ is a free $A$-module. 
We say that $A$ is a \textit{quasi-Gorenstein ring}  if $\omega_A$ is free.
A quasi-Gorenstein ring may not be Cohen-Macaulay. 
\end{enumerate}
When $A$ is not a local ring, we say that $A$ is Cohen-Macaulay (respectively, Gorenstein, 
quasi-Gorenstein) if so is the local ring $A_{\fkp}$ for every maximal ideal $\fkp$ of $A$.  
\end{defn}
Next we recall the definition of rational singularities. 

\begin{defn} Let $R$ be a normal ring essentially of finite type over a field $k$ of characteristic zero, that is, $R$ is a localization of a finitely generated algebra over $k$. 
We say that $R$ is a \textit{rational singularity} if there exists a resolution of singularities $\pi: Y \to\Spec R$, a proper birational morphism with $Y$ a regular scheme, such that $R^i\pi_*\sO_{Y}=0$ for every $i>0$. 
If $R$ is a rational singularity, then $R^i\pi_*\sO_{Y}=0$ for every $i>0$ for every resolution of singularities $\pi:Y \to\Spec R$. 

It is an application of Grauert-Riemenschneider vanishing Theorem that $R$ is a rational singularity if and only if $R$ is Cohen-Macaulay and $\pi_*\omega_Y=\omega_R$ for some (every) resolution $\pi:Y \to \Spec R$, where $\omega_Y$ is the canonical sheaf on $Y$.  
\end{defn}

Let us introduce the notion of pure subrings. 
Let $A \subset B$ be a ring extension. 
We say that $A$ is a \textit{pure subring} of $B$ if for every $A$-module $M$,the natural map 
$M=M\otimes_A A\to  M\otimes_A B$ is injective.\footnote{For an ideal $I$ of $A$,  the natural map $A/I \to (A/I) \otimes_A B$ is injective if and only if $IB \cap A=I$. 
Under the mild assumption that $A$ is locally excellent and reduced,  the condition that $IB \cap A=I$ for every ideal $I$ in $A$ is equivalent to the one that $A$ is a pure subring of $B$ (\cite{Ho}).}
For example, if $A$ is a direct summand of $B$ as an $A$-module, then $A$ is a pure subring of $B$. 

The following Boutot's theorem is very important in the study of singularities in characteristic zero. 
For example, it is a direct consequence of this theorem that in characteristic zero, rings of invariants under linearly reductive group actions are rational singularities. 

\begin{thm}[Boutot's  Theorem \cite{Bo}] Let 
$A \subset B$ be a  extension of rings of essentially of finite type over a field of characteristic zero, 
 and assume that $A$ is a pure subring of $B$.  
If $B$ is a rational singularity, then so is $A$. 
\end{thm}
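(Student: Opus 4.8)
The plan is to deduce Boutot's theorem from an analogous statement in characteristic $p>0$ via reduction modulo $p$, rather than reproducing Boutot's original argument (which rests on the Grauert--Riemenschneider vanishing theorem). The bridge is the ``moral equivalence'' between rational singularities and $F$-rational rings discussed in \S\ref{classical Fsing}: a normal ring essentially of finite type over a field of characteristic zero is a rational singularity if and only if it is of \emph{dense $F$-rational type}, i.e.\ a general reduction modulo $p$ is $F$-rational. So, after choosing a finitely generated $\Z$-subalgebra $D$ of the base field over which the inclusion $A\subseteq B$ and all auxiliary data are defined, and spreading out, it suffices to prove: \emph{if $A\subseteq B$ is a pure inclusion of excellent reduced rings of characteristic $p$ and $B$ is $F$-rational, then $A$ is $F$-rational.} Here one uses that purity of $A\subseteq B$, $F$-rationality of $B$, and normality all persist over a dense open subset of $\Spec D$, so that the fibers $A_\mu\subseteq B_\mu$ over a dense set of maximal ideals $\mu\subset D$ satisfy these hypotheses; normality of the $A_\mu$ (needed before one may call $A$ a rational singularity) likewise descends from normality of the $B_\mu$ by purity.

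For the characteristic $p$ statement the two inputs are \emph{persistence of tight closure} along $A\to B$ --- since both rings are essentially of finite type over a field they possess completely stable test elements (see Definition~\ref{test element}), whence $I^*B\subseteq (IB)^*$ for every ideal $I$ of $A$ --- and \emph{purity}, which yields $IB\cap A=I$ for every ideal $I$ of $A$ and, more generally, injections $H^i_{\fkm}(A)\hookrightarrow H^i_{\fkm}(B)$ of local cohomology modules at a maximal ideal $\fkm$ of $A$ that are compatible with the Frobenius action. The weak-$F$-regularity analogue is then immediate: if $x\in I^*$ and $B$ is weakly $F$-regular, then $x\in (IB)^*=IB$, so $x\in IB\cap A=I$; the same one line shows that a pure subring of a strongly $F$-regular ring is strongly $F$-regular, which is the cleanest ``Boutot-type'' theorem for $F$-singularities and is what \S\ref{tight closure} will go on to discuss.

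The main obstacle is that $F$-rationality is phrased in terms of parameter ideals, and the expansion $IB$ of a parameter ideal $I\subseteq A$ need not be a parameter ideal of $B$, so the one-line argument does not apply directly. I would get around this by using the cohomological characterization of $F$-rationality: an excellent equidimensional local ring $(A,\fkm)$ of dimension $d$ is $F$-rational exactly when it is Cohen--Macaulay and the tight closure of the zero submodule of $H^d_{\fkm}(A)$ vanishes. Cohen--Macaulayness of $A$ transfers from that of $B$ through the injections $H^i_{\fkm}(A)\hookrightarrow H^i_{\fkm}(B)$ together with the vanishing of the targets for $i<d$ (this is where one pays the technical price of matching dimensions, reducing to the case $\dim A=\dim B$), and the vanishing $0^*_{H^d_{\fkm}(A)}=0$ follows from the Frobenius-compatible injection into $H^d_{\fkm}(B)$ once one knows $0^*_{H^d_{\fkm}(B)}=0$, which is exactly $F$-rationality of $B$. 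Reading this back through the reduction modulo $p$ shows that $A_\mu$ is $F$-rational for a dense set of $\mu$, i.e.\ that $A$ is a rational singularity.
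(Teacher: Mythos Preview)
The paper does not supply its own proof of Boutot's theorem; it is cited from \cite{Bo}, with the remark that the original argument rests on Grauert--Riemenschneider vanishing. So there is no paper's proof to compare against directly. That said, your proposed route contains a genuine gap which the paper itself flags.

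Your reduction-mod-$p$ strategy hinges on the characteristic-$p$ statement: \emph{a pure subring of an $F$-rational ring is $F$-rational}. This is \emph{false}, and Remark~\ref{counterexample to Boutot} says so explicitly: for $p\le 5$ the ring $R=k[X,Y,Z]/(X^2+Y^3+Z^5)$ is not $F$-rational, yet it embeds as a pure subring of an $F$-rational graded ring. Your cohomological workaround is exactly where the argument breaks. With $(A,\fkm)$ local of dimension $d$, the module you need to control is $H^d_{\fkm}(B)=H^d_{\fkm B}(B)$, but $F$-rationality of $B$ says nothing about this unless $\fkm B$ is a parameter ideal; what it controls is $0^*_{H^{\dim B_{\fkn}}_{\fkn}(B_{\fkn})}$ at maximal ideals $\fkn$ of $B$. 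Likewise, Cohen--Macaulayness of $B$ does not force $H^i_{\fkm B}(B)=0$ for $i<d$ when $\fkm B$ is not $\fkn$-primary. Your parenthetical ``reducing to the case $\dim A=\dim B$'' is the unjustified step: no such reduction is available for an arbitrary pure extension, and the counterexample lives precisely in a situation where $B$ has larger dimension than $A$. (As the same remark notes, the $F$-rational Boutot-type theorem \emph{does} hold for finite extensions, which is consistent with your argument working once $\fkm B$ is forced to be a parameter ideal.)

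So the bridge ``rational singularities $\leftrightarrow$ $F$-rational type'' from Theorem~\ref{F-rational and rational} is correct, but it cannot be used to transport Boutot's theorem to characteristic $p$ in the way you propose, because the target statement there is simply not true. The Boutot-type result that \emph{does} hold in characteristic $p$ is Theorem~\ref{BoF-reg} for (strong/weak) $F$-regularity; via Theorem~\ref{F-regular vs klt} this recovers the characteristic-zero statement only under an extra $\Q$-Gorenstein/klt hypothesis on $B$, not for arbitrary rational singularities.
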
 

Although Boutot's theorem is proved by using Grauert-Riemenschneider vanishing theorem, the Boutot-type theorem for $F$-regular rings immediately follows from the definition. 

\begin{thm}[Boutot-type theorem for $F$-regular rings]\label{BoF-reg}
Let  $R\subset S$ be an extension of reduced rings of characteristic $p>0$ 
satisfying the condition $R^{\circ} \subseteq S^{\circ}$, and assume that $R$ is a pure subring of $S$. If $S$ is weakly $F$-regular $($respectively, $F$-regular$)$, then so is $R$. 
\end{thm}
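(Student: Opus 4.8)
The plan is to chase the definition of tight closure through the extension $R\subseteq S$ for the weakly $F$-regular statement, and then to deduce the $F$-regular statement by checking that each hypothesis is inherited by localizations.

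For the weakly $F$-regular case, let $I\subseteq R$ be an arbitrary ideal and let $x\in I^*$; the goal is to show $x\in I$. By Definition \ref{tight closure def} there is an element $c\in R^{\circ}$ with $cx^q\in I^{[q]}$ in $R$ for all $q=p^e\gg 0$. The hypothesis $R^{\circ}\subseteq S^{\circ}$ gives $c\in S^{\circ}$, and since $I^{[q]}S\subseteq(IS)^{[q]}$ the same relations yield $cx^q\in(IS)^{[q]}$ in $S$ for all $q\gg 0$; hence $x\in(IS)^*$ computed in $S$. As $S$ is weakly $F$-regular, $(IS)^*=IS$, so $x\in IS\cap R$. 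Finally, purity of $R\subseteq S$ applied to the $R$-module $R/I$ makes $R/I\to(R/I)\otimes_R S\cong S/IS$ injective, which is exactly the assertion $IS\cap R=I$; therefore $x\in I$, and $R$ is weakly $F$-regular.

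For the $F$-regular case, fix a multiplicative set $W\subseteq R$ and consider $W^{-1}R\subseteq W^{-1}S$. This extension is again pure: for any $W^{-1}R$-module $N$, regarded as an $R$-module, one has $N\otimes_R S\cong N\otimes_{W^{-1}R}W^{-1}S$, so the injectivity of $N\to N\otimes_R S$ passes to injectivity of $N\to N\otimes_{W^{-1}R}W^{-1}S$. Moreover $W^{-1}S$ is weakly $F$-regular, since every localization of $W^{-1}S$ is a localization of $S$ and $S$ is $F$-regular. Finally $(W^{-1}R)^{\circ}\subseteq(W^{-1}S)^{\circ}$: the hypothesis $R^{\circ}\subseteq S^{\circ}$ is equivalent to the statement that the contraction to $R$ of every minimal prime of $S$ is itself a minimal prime of $R$ (a prime contained in a minimal prime equals it), so if $c\in R$ represents an element of $(W^{-1}R)^{\circ}$ while $c/1$ lies in a minimal prime of $W^{-1}S$---necessarily of the form $\mathfrak{Q}W^{-1}S$ with $\mathfrak{Q}$ a minimal prime of $S$ disjoint from $W$---then $c\in\mathfrak{Q}\cap R$, a minimal prime of $R$ disjoint from $W$, contradicting the choice of $c$. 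Applying the weakly $F$-regular case to $W^{-1}R\subseteq W^{-1}S$ now shows $W^{-1}R$ is weakly $F$-regular, so $R$ is $F$-regular.

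I do not anticipate a genuine obstacle: the entire content is the chain ``tight closure in $R$ $\Rightarrow$ tight closure in $S$ $\Rightarrow$ membership in $IS$ $\Rightarrow$ membership in $I$,'' the last step being purity. The only points requiring a word of justification are the containment $I^{[q]}S\subseteq(IS)^{[q]}$ (immediate from $(rs)^q=r^qs^q$ in characteristic $p$), the equivalence of module purity of $R\subseteq S$ with the equalities $IS\cap R=I$ for all ideals $I$, and---for the $F$-regular statement---the verification that purity, the hypothesis on the sets $(\,-\,)^{\circ}$, and $F$-regularity all behave well under localization; this last is precisely why one must assume $S$ is $F$-regular rather than merely weakly $F$-regular in order to conclude $F$-regularity of $R$.
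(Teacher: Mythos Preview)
Your argument is correct and follows essentially the same route as the paper: the key chain $I^*\subseteq (IS)^*\cap R=IS\cap R=I$ is exactly what the paper writes (in one line), and you have simply unpacked the inclusion $I^*S\subseteq(IS)^*$ and the purity step. The paper does not spell out the $F$-regular case at all, so your localization argument is a welcome addition; one small quibble is that in your closing commentary you speak of the \emph{equivalence} of purity with $IS\cap R=I$, whereas only the forward implication is needed (and is what you actually use).
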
 

Indeed, it is almost obvious to see that $I^*S \subset (IS)^*$ for every ideal $I$ of $R$. 
If $S$ is weakly $F$-regular, then $I^*\subseteq (IS)^*\cap R=IS\cap R=I$,
which implies that  $R$ is weakly $F$-regular, too. 

\begin{rem}\label{counterexample to Boutot}
While we can think of $F$-rational rings as a positive characteristic analogue of rational singularities, ``Boutot-type theorem for $F$-rational rings" fails to hold. 
Let $k$ be a field of characteristic $p>0$ and let $R= \bigoplus_{n\ge 0} R_n$ be a Noetherian graded ring with $R_0=k$.
It is known in \cite{Wa3} (see also \cite{HWY}) that if $R$ is Cohen-Macaulay isolated singularity and 
if $a(R)<0$ where $a(R)$ is the $a$-invariant of $R$,\footnote{The $a$-invariant $a(A)$ of a $d$-dimensional Noetherian graded ring $(A, \m)=\bigoplus_{n \ge 0}A_n$ with $A_0$ a field, introduced by Goto and Watanabe \cite{GW}, is the largest integer $a$ such that $[H^d_{\m}(A)]_a \ne 0$.}
then $R$ is a pure subring of some $F$-rational graded ring. 
For example, suppose that $R = k[X, Y,Z]/(X^2+Y^3+Z^5)$, and $x, y, z$ denote the images of $X, Y, Z$ in $R$, respectively. 
Then $R$ can be viewed as a graded ring by putting $\deg x=15, \deg y=10, \deg z=6$, respectively, and $a(R)=-1$. 
If $p \le 5$, then $R$ is \textit{not} an $F$-rational ring but a pure subring of some $F$-rational graded ring.
This is a counterexample to ``Boutot-type theorem for $F$-rational rings."

On the other hand, ``Boutot-type Theorem for $F$-rational rings" holds if $R\subset S$ is a finite extension. 
Indeed, if $S$ is a finite $R$-module,  then the extension $IS$ of a parameter ideal $I$ of $R$ is a parameter ideal of $S$. 
Thus, if $S$ is $F$-rational and if $R$ is a pure subring of $S$, then $R$ is $F$-rational by the same argument as the proof of Theorem \ref{BoF-reg}. 
\end{rem}

In order to state Brian\c con-Skoda theorem,  we recall the definition of the integral closure of an ideal.   

\begin{defn} Let $I$ be an ideal of a ring $A$. 
\begin{enumerate}
\item
We say that an element $x\in A$ is \textit{integral over} $I$ if there exist some $n \in \N$ and $a_i \in I^i$ for $i=1, \dots, n$ such that $x^n+a_1x^{n-1}+a_2x^{n-2}+\cdots+a_n=0$. 
This condition is equivalent to saying that there exists some $c \in A^{\circ}$ such that $cx^n \in I^n$ for all sufficiently large $n \in \N$ (\cite[Corollary 6.8.12]{HS}).
\item
 The set of elements of $A$ integral over $I$ is called the \textit{integral closure} of $I$ and denoted by $\overline{I}$. We say that $I$ is \textit{integrally closed} if $I=\overline{I}$. 
\end{enumerate}
\end{defn}

By definition, the tight closure $I^*$ of an ideal $I$ of $R$ is contained in the integral closure $\overline{I}$ of $I$. 
The name of tight closure comes from this fact. 
It is natural to ask how small the tight closure $I^*$ is when compared with the integral closure $\overline{I}$.  

Brian\c con-Skoda theorem was originally proved for convergent power series rings over  
the field of complex numbers $\C$. 
We now formulate an analogous statement for an arbitrary ring of positive characteristic, using tight closure.
This tight closure version of Brian\c con-Skoda theorem gives a simple alternative proof of the original Brian\c con-Skoda theorem by using reduction to positive characteristic (see \S \ref{reduction} for the technique of reduction to positive characteristic).

\begin{thm}[Tight closure version of Brian\c con-Skoda theorem]\label{BS thm}
Let $I$ be an ideal of $R$ generated by $n$ elements. Then for every 
$l \in \N$, we have  $\overline{ I^{n+l-1}}\subset  (I^l)^*$. 
\end{thm}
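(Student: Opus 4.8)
The plan is to reduce the statement to an elementary containment between monomials in the generators of $I$ and their Frobenius powers, and then to feed this into the two characterizations already recalled in this section: that $x\in\overline{J}$ iff $cx^m\in J^m$ for some $c\in R^{\circ}$ and all $m\gg 0$, and the definition of tight closure.

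First I would isolate the key lemma: writing $I=(y_1,\dots,y_n)$ and fixing $q=p^e$, one has $I^{(n+l-1)q}\subseteq (I^l)^{[q]}$. The point is that $(I^l)^{[q]}=(I^{[q]})^l$---both are generated by the elements $y_1^{qc_1}\cdots y_n^{qc_n}$ with $c_i\ge 0$ and $\sum_i c_i=l$, using that $(\sum r_iy_i)^q=\sum r_i^q y_i^q$ in characteristic $p$---so it suffices to show that a typical monomial generator $y_1^{a_1}\cdots y_n^{a_n}$ of $I^{(n+l-1)q}$, i.e. one with $a_i\ge 0$ and $\sum_i a_i=(n+l-1)q$, lies in $(I^{[q]})^l$. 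Taking $c_i=\lfloor a_i/q\rfloor$ gives $a_i\ge qc_i$ for every $i$, so $y_1^{a_1}\cdots y_n^{a_n}$ is a multiple of $y_1^{qc_1}\cdots y_n^{qc_n}\in(I^{[q]})^{\sum_i c_i}$; and since $\sum_i c_i>\sum_i(a_i/q-1)=(n+l-1)-n=l-1$, one has $\sum_i c_i\ge l$, whence $(I^{[q]})^{\sum_i c_i}\subseteq(I^{[q]})^l$.

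Given the lemma, the theorem follows quickly. Let $x\in\overline{I^{n+l-1}}$. By the integral-closure criterion applied to $J=I^{n+l-1}$ there is $c\in R^{\circ}$ with $cx^m\in (I^{n+l-1})^m=I^{(n+l-1)m}$ for all $m\gg 0$; specializing to $m=q=p^e$ and invoking the lemma gives $cx^q\in I^{(n+l-1)q}\subseteq (I^l)^{[q]}$ for all $q\gg 0$. Since $c\in R^{\circ}$, this is exactly the defining condition for $x\in(I^l)^*$.

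I do not expect a genuine obstacle here: the only real content is the combinatorial lemma, and within it the inequality $\sum_i\lfloor a_i/q\rfloor\ge l$, which is just the pigeonhole estimate above; everything else is a formal translation between the definitions of integral closure and tight closure. It is worth emphasizing that the argument uses nothing beyond the basic formalism of Frobenius powers---no excellence hypothesis, no existence of test elements---which is precisely why this tight-closure form can serve as the elementary substitute for Skoda's analytic theorem advertised in the introduction.
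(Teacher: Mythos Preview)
Your argument is correct and follows essentially the same route as the paper: use the valuative criterion for integral closure to get $cx^q\in I^{(n+l-1)q}$, then a pigeonhole containment of ordinary powers into Frobenius powers to land in $(I^l)^{[q]}$. The paper only writes out the case $l=1$, using the slightly sharper inclusion $I^{(q-1)n+1}\subseteq I^{[q]}$ before observing $I^{qn}\subseteq I^{(q-1)n+1}$, whereas you prove the general containment $I^{(n+l-1)q}\subseteq (I^{[q]})^l=(I^l)^{[q]}$ directly; the combinatorics is the same in spirit and your version has the virtue of handling all $l$ at once.
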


We show this theorem in the case where $l=1$ for simplicity. 
For $x \in \overline{I^n}$, there exists a $c \in R^{\circ}$ such that $cx^m \in (I^n)^m=I^{mn}$ for all sufficiently large $m \in \N$. 
Since $I$ is generated by $n$ elements, $I^{(q-1)n+1} \subseteq I^{[q]}$ for every power $q$ of $p$.  
If $m=q$ is a power of $p$, then $cx^q \in I^{qn}\subseteq I^{[q]}$. 
Thus, $x \in I^*$.

Before stating a corollary of Theorem \ref{BS thm}, note that a reduced local ring $R$ is normal if and only if the principal ideal $(a)$ is integrally closed for every $a \in R^{\circ}$.   
Considering the case when $n=l=1$ in Theorem \ref{BS thm}, we obtain the following result.  
\begin{cor}\label{Frational=>normal}
If $I $ is a principal ideal of $R$, we have $I^* =\overline{I}$.  In particular, if $R$ is $F$-rational, then $R$ is normal. 
\end{cor}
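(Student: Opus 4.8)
The plan is to split the statement into its two assertions and handle each in turn, relying on Theorem \ref{BS thm} for the first and on the first assertion together with the definition of $F$-rationality for the second.

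\emph{First assertion: if $I$ is principal, then $I^*=\overline{I}$.} We already know in general that $I^*\subseteq\overline{I}$, so only the reverse inclusion needs an argument. Here I would simply invoke Theorem \ref{BS thm} with the parameters $n=l=1$: a principal ideal is generated by $n=1$ element, and the conclusion of the theorem in this case reads $\overline{I^{1+1-1}}=\overline{I}\subseteq (I^1)^*=I^*$. Combining the two inclusions gives $I^*=\overline{I}$. (If one prefers a self-contained argument mirroring the proof sketch of Theorem \ref{BS thm} given above: for $x\in\overline{I}=\overline{(a)}$ pick $c\in R^\circ$ with $cx^m\in(a)^m$ for all $m\gg 0$; taking $m=q=p^e$ gives $cx^q\in(a^q)=(a)^{[q]}=I^{[q]}$, so $x\in I^*$.)

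\emph{Second assertion: if $R$ is $F$-rational, then $R$ is normal.} By the remark recorded just before the corollary, a reduced local ring is normal exactly when every principal ideal $(a)$ with $a\in R^\circ$ is integrally closed; and normality is a local condition, so it suffices to treat the local case (or argue after localizing at each maximal ideal, using that $F$-rationality localizes by definition). So fix $a\in R^\circ$ and set $I=(a)$. A height-one-or-less principal ideal generated by a nonzerodivisor is generated by $\mathrm{ht}\,I$ elements once one knows $\mathrm{ht}\,I = 1$ (or $I=R$, the trivial case); thus $I$ is a parameter ideal in the sense of Definition \ref{Frational}. Since $R$ is $F$-rational, $I$ is tightly closed, i.e. $I=I^*$. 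By the first assertion, $I^*=\overline{I}$, hence $I=\overline{I}$, so $(a)$ is integrally closed. As $a\in R^\circ$ was arbitrary, $R$ is normal.

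\emph{Main obstacle.} The genuinely substantive input is Theorem \ref{BS thm}, which we are entitled to assume. The only point requiring a little care is the bookkeeping in the second part: one must make sure that a principal ideal generated by an element of $R^\circ$ really is a parameter ideal, i.e. that $\mathrm{ht}\,(a)\le 1$ so that it is generated by $\mathrm{ht}$-many elements (the case $\mathrm{ht}\,(a)=0$ forces $a$ to lie in a minimal prime, contrary to $a\in R^\circ$, unless $I=R$). This is a standard consequence of Krull's principal ideal theorem, and handling it cleanly — together with the reduction to the local, reduced case so that the normality criterion quoted before the corollary applies — is the most delicate step in an otherwise short argument.
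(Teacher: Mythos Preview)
Your proposal is correct and follows essentially the same approach as the paper: invoke Theorem~\ref{BS thm} with $n=l=1$ for the equality $I^*=\overline{I}$, then combine this with the normality criterion stated just before the corollary and the fact that $(a)$ is a parameter ideal for $a\in R^\circ$. Your explicit verification via Krull's principal ideal theorem that such an $(a)$ really is a parameter ideal is a detail the paper leaves implicit, but otherwise the arguments coincide.
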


In order to show that $F$-rational sings are Cohen-Macaulay, we need the following ``colon capturing" property of tight closure. 

\begin{thm}\label{colon capturing} 
Let $(R,\fkm)$ be a $d$-dimensional  equidimensional local ring,  and suppose that $R$ is a homomorphic image of a Cohen-Macaulay local ring. 
Let $x_1,\ldots , x_d$ be a full system of parameters for $R$. Then for every $i=0, 1, \dots, d-1$, one has 
\[
(x_1,\ldots , x_i) : x_{i+1} \subset (x_1,\ldots , x_i)^*, 
\]
where the ideal $(x_1, \dots, x_i)$ is regarded as the zero ideal when $i=0$. 
\end{thm}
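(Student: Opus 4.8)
The plan is to reduce immediately to the complete case and then to the case of a Cohen-Macaulay ring, where the colon-capturing statement becomes a genuine regular-sequence statement that can be verified after a faithfully flat base change killing the nilpotents of the fibers. First I would pass to the $\fkm$-adic completion $\widehat{R}$: tight closure is unaffected by completion in the sense relevant here (an element lies in $I^*$ if and only if its image lies in $(I\widehat{R})^*$, using that $\widehat{R}$ is faithfully flat over $R$ and that $R$ is excellent-ish enough for test elements to exist), and $\widehat{R}$ remains equidimensional and is still a homomorphic image of a Cohen-Macaulay local ring. So assume $R=S/\fka$ with $(S,\fkn)$ complete local Cohen-Macaulay, and lift $x_1,\dots,x_d$ to elements $y_1,\dots,y_d\in S$. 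Because $R$ is equidimensional of dimension $d$ and $S$ is Cohen-Macaulay (hence catenary and equidimensional after quotienting by a minimal prime of $\fka$), the sequence $y_1,\dots,y_d$ together with a system of parameters for $\fka$ forms part of a system of parameters for $S$; the point is that $\dim S/(\fka+(y_1,\dots,y_i))=d-i$, which gives $\operatorname{ht}$ bounds forcing $y_1,\dots,y_i$ to be part of a regular sequence on $S/\fkq$ for each minimal prime $\fkq$ of $\fka$.

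The heart of the matter is then the following: if $x\in R$ satisfies $x x_{i+1}\in(x_1,\dots,x_i)$, I want to produce a single $c\in R^{\circ}$ with $c x^q\in(x_1,\dots,x_i)^{[q]}$ for all $q=p^e$. The mechanism is to work in $S$, where $y_1,\dots,y_d$ is part of a system of parameters, hence (by Cohen–Macaulayness of $S$) a regular sequence, so in $S$ the colon is exact: $(y_1,\dots,y_i):y_{i+1}=(y_1,\dots,y_i)$, and likewise after raising to $q$-th powers, $(y_1^q,\dots,y_i^q):y_{i+1}^q=(y_1^q,\dots,y_i^q)$. Lifting the relation $x x_{i+1}\in(x_1,\dots,x_i)$ to $S$ gives $\tilde x\, y_{i+1}\in(y_1,\dots,y_i)+\fka$, i.e. $\tilde x\, y_{i+1}=\sum a_j y_j + f$ with $f\in\fka$. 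Multiplying by a suitable power and using the regular-sequence property of the $y_j$ on $S$, one controls $\tilde x$ modulo $(y_1,\dots,y_i)$ up to an element that lies in $\fka$ after multiplication by something avoiding the minimal primes; the element $c$ will be built out of the coefficients expressing a suitable multiplier of $\fka$ in terms of the regular sequence, plus the standard trick that for a regular sequence $z_1,\dots,z_i$ one has $(z_1,\dots,z_i)^{[q]}:(z_1\cdots z_i)^{q-1}$-type containments. The uniformity in $q$ comes precisely from the exactness of Koszul colons for the regular sequence $y_1^q,\dots,y_i^q$ in the Cohen–Macaulay ring $S$, which holds for every $q$ with the same shape.

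The main obstacle I expect is the bookkeeping in passing between $R$ and $S$: one must show that the error term coming from $\fka$ can be absorbed by a fixed $c\in R^{\circ}$ independent of $q$, and that $c$ indeed avoids every minimal prime of $R$. For this I would choose $c$ to be (the image of) an element of $S$ that multiplies $\fka$ into $(y_1,\dots,y_d)$ in a controlled way while not lying in any minimal prime of $\fka$ — concretely, use that $\fka$ is contained in only finitely many minimal primes and that $S$ is Cohen–Macaulay to find such an element via prime avoidance applied to $\operatorname{Ann}_S\!\big(\fka/(\fka\cap(y_1,\dots,y_d))\big)$ or a Noether-normalization-style argument. Once $c$ is pinned down, the verification $c x^q\in(x_1,\dots,x_i)^{[q]}$ for all $q$ is a direct computation with the regular sequence $y_1,\dots,y_i$ in $S$ and reduction modulo $\fka$. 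The equidimensionality hypothesis is used exactly to guarantee that $y_1,\dots,y_i$ remains a partial system of parameters modulo every minimal prime of $\fka$, so that no minimal prime is "too big" and the height count does not fail; without it the colon-capturing statement is genuinely false, so this is where the hypothesis must be spent.
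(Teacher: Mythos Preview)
The paper does not give a proof of this theorem; it is stated as a known result (the original reference is \cite{HH0}). So there is no ``paper's proof'' to compare against. That said, your overall architecture---complete, write $R=S/\fka$ with $S$ Cohen--Macaulay, lift the $x_j$ to $y_j$, extend to a full system of parameters $y_1,\dots,y_d,y_{d+1},\dots,y_n$ of $S$ with $y_{d+1},\dots,y_n\in\fka$, and exploit that these form a regular sequence in $S$---is exactly the standard Hochster--Huneke route.

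There is, however, a genuine gap at the step you flag as ``the main obstacle.'' You propose choosing $c$ so that $c\fka\subseteq(y_1,\dots,y_d)$. Such a $c$ avoiding the minimal primes of $\fka$ does exist, but it is useless: after multiplying the lifted Frobenius relation $\tilde x^q y_{i+1}^q=\sum_{j\le i}a_j^q y_j^q+f^q$ by $c$, you land in $(y_1,\dots,y_d)$, an ideal that already contains $y_{i+1}$, so the colon with $y_{i+1}^q$ is all of $S$ and you learn nothing about $\tilde x^q$. The correct target is $J:=(y_{d+1},\dots,y_n)\subseteq\fka$, not $(y_1,\dots,y_d)$. One shows there exist $c\in S$ avoiding every minimal prime of $\fka$ and an integer $N$ with $c\,\fka^N\subseteq J$; this follows because $S/J$ is Cohen--Macaulay (hence unmixed) and the module $(\fka^N+J)/J\hookrightarrow S/J$ vanishes at every minimal prime of $\fka$ once $N$ is large enough, so its annihilator is not contained in any such prime. (Your suggested annihilator $\mathrm{Ann}_S\big(\fka/(\fka\cap(y_1,\dots,y_d))\big)$ does not have this property; take $S=k[[a,b]]$, $\fka=(a^2,ab)$ to see the difference.) Then for $q\ge N$ one has $f^q\in\fka^q\subseteq\fka^N$, hence $cf^q\in J$, so
\[
c\,\tilde x^q y_{i+1}^q\in(y_1^q,\dots,y_i^q)+J=(y_1^q,\dots,y_i^q,y_{d+1},\dots,y_n),
\]
and now the colon with $y_{i+1}^q$ \emph{is} exact because $y_1^q,\dots,y_i^q,y_{i+1}^q,y_{d+1},\dots,y_n$ is still part of a regular sequence in $S$. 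This gives $c\,\tilde x^q\in(y_1^q,\dots,y_i^q)+J\subseteq(y_1^q,\dots,y_i^q)+\fka$, and reducing modulo $\fka$ finishes the proof. The ``$(z_1\cdots z_i)^{q-1}$-type'' colon trick you mention is not needed.
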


Since $F$-rational rings are normal by Corollary \ref{Frational=>normal}, $F$-rational local rings are integral domains. 
Also, note that excellent local rings are homomorphic images of Cohen-Macaulay local rings. 
Therefore, we have the following corollary. 

\begin{cor}
If $(R,\fkm)$ is an excellent $F$-rational local ring, then $R$ is Cohen-Macaulay. 
In particular, every $F$-rational ring essentially of finite type over a field is Cohen-Macaulay.  
\end{cor}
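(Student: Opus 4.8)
The plan is to obtain this as an immediate consequence of the colon-capturing property (Theorem \ref{colon capturing}) together with Corollary \ref{Frational=>normal}, which tells us that an $F$-rational ring is normal. First I would note that, being local and normal, $R$ is an integral domain, and, being excellent, it is universally catenary. A catenary local domain of dimension $d$ is equidimensional and satisfies $\mathrm{ht}\,\fkp + \dim R/\fkp = d$ for every prime ideal $\fkp$. Hence if $x_1,\ldots,x_d$ is a full system of parameters for $R$, then for each $i$ the equality $\dim R/(x_1,\ldots,x_i) = d-i$ forces $\mathrm{ht}(x_1,\ldots,x_i)=i$, so $(x_1,\ldots,x_i)$ is a parameter ideal in the sense of Definition \ref{Frational}.

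The main step is then to invoke colon capturing. Since $R$ is excellent it is a homomorphic image of a Cohen-Macaulay local ring, and $R$ is equidimensional, so Theorem \ref{colon capturing} gives, for every $i=0,1,\ldots,d-1$,
\[
(x_1,\ldots,x_i):x_{i+1}\ \subseteq\ (x_1,\ldots,x_i)^{*}.
\]
But $(x_1,\ldots,x_i)$ is a parameter ideal, so by $F$-rationality it is tightly closed, i.e. $(x_1,\ldots,x_i)^{*}=(x_1,\ldots,x_i)$. Together with the obvious reverse containment this yields $(x_1,\ldots,x_i):x_{i+1}=(x_1,\ldots,x_i)$ for all $i$, which says precisely that $x_1,\ldots,x_d$ is a regular sequence; hence $R$ is Cohen-Macaulay.

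For the final assertion, any ring essentially of finite type over a field is excellent, and so is each of its localizations; since $F$-rationality of $R$ means that $R_{\fkp}$ is $F$-rational for every maximal ideal $\fkp$, the local case just proved shows each $R_{\fkp}$ is Cohen-Macaulay, and therefore so is $R$.

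I do not expect a genuine obstacle: the content is entirely packaged in Theorem \ref{colon capturing} and Corollary \ref{Frational=>normal}. The only point requiring a little care is the bookkeeping in the first paragraph --- verifying that a partial system of parameters generates an ideal of height exactly equal to the number of generators, so that $F$-rationality may be applied to it --- which is where the hypotheses of normality (to get a domain, hence equidimensionality) and excellence (to get catenarity, and the homomorphic-image-of-Cohen-Macaulay condition) enter, rather than in the colon-capturing input itself.
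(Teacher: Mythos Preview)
Your proposal is correct and follows essentially the same route as the paper: normality from Corollary~\ref{Frational=>normal} to get a domain, excellence to be a homomorphic image of a Cohen--Macaulay ring, then colon capturing (Theorem~\ref{colon capturing}) combined with $F$-rationality of the partial parameter ideals $(x_1,\ldots,x_i)$. The paper states this only in the sentence preceding the corollary and leaves the verification that $\mathrm{ht}(x_1,\ldots,x_i)=i$ implicit; your use of catenarity to check this is the right extra bookkeeping.
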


We have discussed the behavior of tight closure of ideals. 
Now we turn our attention to the tight closure of modules. 
Given an $R$-module $M$ and its submodule $N$,  we can define the tight closure $N^*_M$ of $N$ in $M$. 
In this article, we only consider the case where $N=0$ for simplicity.\footnote{Since there exists an isomorphism $N^*_M/N \cong 0^*_{M/N}$, we can reduce to the case where $N=0$.}

In order to state the definition of the tight closure of modules, we need to introduce some notation. 
Given an $R$-module $M$ and an $e \in \N$, the $R$-module $F^e_*M$ is defined by the following two conditions: (1) $F^e_*M=M$ as an abelian group, (2) the $R$-module structure of $F^e_*M$ is given by $r \cdot x:=r^{p^e}x$ with $r \in R$ and $x \in F^e_*M$. 
We write elements of $F^e_*M$ in the form $F^e_*x$ with $x \in M$. 
If $X=\Spec R$ and $F:X \to X$ is the (absolute) Frobenius morphism on $X$, 
then $F^e_*R$ is the $R$-module corresponding to $F^e_*\sO_X$. 
By definition, the $e$-times iterated Frobenius map $F^e: R \to F^e_*R$ sending $x$ to $F^e_*(x^{p^e})=x \cdot F^e_*1$ is  an $R$-module homomorphism. 

\begin{defn}\label{tc module}
Let $M$ be a (not necessarily finitely generated) $R$-module. For every $e \in \N$, 
let $F^e:R \to F^e_*R$ be the $e$-times iterated Frobenius map. 
$F^e$ induces the $e$-th Frobenius map on $M$  
\[
F^e_M: M \to M \otimes _R F^e_*R \quad x \mapsto x \otimes F^e_*1.
\]
Then the tight closure $0_M^*$ of the zero submodule in $M$ is defined as follows: 
an element $x \in M$ belongs to $0_M^*$ if and only if there exists some $c\in R^{\circ}$ such that $cF^e_M(x)=0$ for all sufficiently large $e \in \N$.\footnote{For an ideal $I$ of $R$, if we put $M=R/I$, then the tight closure of $0$ in $M$ agrees with $I^*/I$.}
\end{defn}

Weak $F$-regularity and $F$-rationality can be characterized in terms of $0^*_M$. 
If $0^*_M = 0$ for every finitely generated $R$-module $M$, then $R$ is weakly $F$-regular, because $0^*_{R/I}=I^*/I \subseteq R/I$. 
The converse also holds under the mild assumption that $R$ is locally excellent. 
A $d$-dimensional excellent local ring $(R, \m)$ is $F$-rational if and only if $R$ is Cohen-Macaulay and $0^*_{H^d_{\m}(R)}=0$ (see Proposition \ref{Frational2}). 
Also, an $F$-finite local ring $(R, \m)$ is strongly $F$-regular if and only if $0^*_{E_R(R/\m)}=0$ where $E_R(R/\m)$ is the injective hull of the residue field $R/\m$ (see the definitions of $F$-finiteness and of strongly $F$-regular rings for \S \ref{strongly F-regular section}). 

The Frobenius map $F: R \to R$ sending $r$ to $r^p$ induces a $p$-linear map $H^d_{\m}(R) \to H^d_{\m}(R)$.\footnote{For an $R$-module $M$, we say that a map $\varphi:M \to M$ is \textit{$p$-linear} if $\varphi$ is additive and $\varphi(rz)=r^p\varphi(z)$ with $r \in R$ and $z \in M$.}
 We denote this $p$-linear map by the same letter $F$ if it does not cause any confusion. 
 Karen Smith gave a characterization of $F$-rational rings using this Frobenius action $F$ on  $H^d_{\m}(R)$. 

\begin{thm}[\cite{Sm1}]\label{Frobenius action}
Let $(R, \m)$ be a $d$-dimensional excellent local ring of characteristic $p>0$. Then 
$R$ is $F$-rational if and only if $R$ is Cohen-Macaulay and $H^d_{\fkm} (R)$ has no proper nontrivial submodules stable under the Frobenius action $F$.
\end{thm}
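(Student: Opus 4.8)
The plan is to route everything through the characterization of $F$-rationality by the vanishing of $0^*_{H^d_{\fkm}(R)}$, and then to study the Frobenius-stable submodules of $H^d_{\fkm}(R)$ directly. By Proposition~\ref{Frational2}, the theorem is equivalent to the statement that, \emph{for a $d$-dimensional excellent Cohen-Macaulay local ring $R$}, one has $0^*_{H^d_{\fkm}(R)}=0$ if and only if $H^d_{\fkm}(R)$ has no proper nonzero $F$-stable submodule. Write $M=H^d_{\fkm}(R)$ and $N_0=0^*_M$. First I would record two soft facts. (i) $N_0$ is an $F$-stable submodule of $M$: if $c\in R^{\circ}$ annihilates $F^e(\eta)$ for all $e\gg 0$, then it annihilates $F^{e+1}(\eta)=F^e(F\eta)$ for all $e\gg 0$ as well. (ii) $N_0\neq M$: fix a system of parameters $x_1,\dots,x_d$; since $R$ is Cohen-Macaulay this is a regular sequence, so the natural map $R/(x_1,\dots,x_d)\to M$ is injective, and the image $\eta_0$ of $1$ cannot lie in $N_0$, for otherwise some $c\in R^{\circ}$ would lie in $(x_1,\dots,x_d)^{[p^e]}\subseteq\fkm^{p^e}$ for all large $e$, hence in $\bigcap_e\fkm^{p^e}=0$. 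Granting (i) and (ii), the ``if'' direction is immediate: if $M$ has no proper nonzero $F$-stable submodule, then $N_0$ is $0$ or $M$, and (ii) forces $N_0=0$, so $R$ is $F$-rational by Proposition~\ref{Frational2}.

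The substance is the converse. Assume $0^*_M=0$; I would show every $F$-stable $N\subsetneq M$ is zero. By Proposition~\ref{Frational2} again $R$ is $F$-rational, hence (Corollary~\ref{Frational=>normal}) a domain, since it is local. Next I would reduce to the complete case: since $F$-rationality is preserved in both directions under completion of an excellent local ring, and since $M$ with its Frobenius action is unaffected, I may replace $R$ by $\widehat R$ and assume that $R$ is a complete Cohen-Macaulay normal local domain with $0^*_M=0$. Then $R$ admits a canonical module $\omega=\omega_R$, which is torsion-free of rank one. Now I would apply Matlis duality $(-)^{\vee}=\Hom_R(-,E_R(R/\fkm))$ together with local duality, which identifies the Matlis dual of $M$ with $\omega$ and satisfies $M^{\vee\vee}\cong M$ because $M$ is Artinian. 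Dualizing $N\subseteq M$ produces a submodule $K:=(M/N)^{\vee}$ of $\omega$ with $N\cong(\omega/K)^{\vee}$ and $\Ann_R N=\Ann_R(\omega/K)$, and $N\subsetneq M$ forces $K\neq 0$. Since a nonzero submodule of the rank-one torsion-free $R$-module $\omega$ has torsion quotient, $\Ann_R N=\Ann_R(\omega/K)\neq 0$; I would pick $0\neq c\in\Ann_R N$, which lies in $R^{\circ}$ as $R$ is a domain. As $N$ is $F$-stable, $F^e(\eta)\in N$ for every $\eta\in N$ and every $e\geq 0$, whence $cF^e(\eta)=0$ for all $e$; thus $\eta\in 0^*_M=0$, and $N=0$.

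The step I expect to be the main obstacle is the duality bookkeeping in the second paragraph: fixing the precise form of local duality that identifies the Matlis dual of $H^d_{\fkm}(R)$ with the canonical module over a complete Cohen-Macaulay local ring, and justifying the reduction to the complete case, which rests on the standard but not wholly trivial fact that the completion of an excellent $F$-rational local ring is again $F$-rational (itself a consequence of the existence of completely stable test elements for excellent reduced rings). Once that infrastructure is in place, both implications are a short unwinding of the definition of tight closure.
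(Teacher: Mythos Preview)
The paper does not supply a proof of this theorem; it is quoted from Smith~\cite{Sm1}. Your proposal is correct and is, in outline, Smith's original argument. The crux of the ``only if'' direction---that any proper $F$-stable submodule $N\subsetneq H^d_{\fkm}(R)$ has nonzero annihilator, obtained by Matlis-dualizing to a nonzero submodule of the rank-one torsion-free $R$-module $\omega_R$---is exactly her idea, and once a nonzero $c\in\Ann_R N\cap R^{\circ}$ is in hand the inclusion $N\subseteq 0^*_M$ is immediate. Your ``if'' direction via the $F$-stability of $0^*_M$ and the socle computation showing $0^*_M\neq M$ is also standard and correct.

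Two small remarks. First, the reference you cite as Proposition~\ref{Frational2} is, in this paper, stated only for $F$-finite local rings; the excellent-local version you actually need is the assertion recorded just after Definition~\ref{tc module}. Second, your reduction to the complete case is sound but deserves one extra word: since $H^d_{\fkm}(R)$ is $\fkm$-power torsion, every $R$-submodule is automatically an $\widehat{R}$-submodule, so $F$-stable $R$-submodules and $F$-stable $\widehat{R}$-submodules coincide, which is what justifies replacing $R$ by $\widehat{R}$ without losing or gaining $F$-stable submodules. With those bookkeeping points noted, the argument is complete.
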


Lipman and Teissier \cite{LT} introduced the notion of pseudo-rational rings as a resolution-free  and characteristic-free analogue of rational singularities. 
Let $A$ be a homomorphic image of an excellent Gorenstein local ring. 
We say that $A$ is \textit{pseudo-rational} if $A$ is normal Cohen-Macaulay and if $\pi_*\omega_Y=\omega_X$ for every proper birational morphism $\pi:Y \to X=\Spec A$ with $Y$ normal. 
A non-local ring is pseudo-rational if all of its localizations at maximal ideals are pseudo-rational. 
It is well-known that pseudo-rationality is equivalent to rational singularities for rings essentially of finite type over a field of characteristic zero.

As a corollary of Theorem \ref{Frobenius action}, Smith proved that $F$-rationality implies pseudo-rationality. 

\begin{cor}[\cite{Sm1}]\label{Smith's thm}
\label{pseudo-rational}
Excellent $F$-rational rings are pseudo-rational.  
\end{cor}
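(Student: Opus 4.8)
The plan is to combine Theorem~\ref{Frobenius action} with Grothendieck duality for the morphism $\pi$. Since both $F$-rationality and pseudo-rationality are conditions on the localizations at maximal ideals, I may assume $(R,\m)$ is an excellent $F$-rational local ring of dimension $d$; replacing $R$ by its completion — which is again $F$-rational, because completion leaves $H^d_{\m}(R)$ together with its Frobenius action unchanged and preserves Cohen--Macaulayness, and which does not affect the desired conclusion by faithfully flat descent — I may assume in addition that $R$ is complete, hence a homomorphic image of a regular local ring by the Cohen structure theorem, so that $R$ admits a normalized dualizing complex $\omega_R^{\bullet}$ and a canonical module $\omega_R$. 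By Corollary~\ref{Frational=>normal} and the fact (recalled above) that excellent $F$-rational local rings are Cohen--Macaulay, $R$ is a normal Cohen--Macaulay domain; in particular $\omega_R^{\bullet}\simeq\omega_R[d]$ and, by local duality, $\Hom_R\!\bigl(H^d_{\m}(R),E_R(R/\m)\bigr)\cong\omega_R$.

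Now fix a proper birational morphism $\pi\colon Z\to X=\Spec R$ with $Z$ normal, and let $E=\pi^{-1}(\m)$ denote the closed fibre. Because $\pi$ is proper and birational and $X$ is normal, $\pi_*\sO_Z=\sO_X$ (the $\sO_X$-module $\pi_*\sO_Z$ is coherent by properness of $\pi$, lies inside the function field of $X$, hence is a finite birational extension of $\sO_X$ and therefore equals $\sO_X$). Applying $\mathbf{R}\Gamma_{\m}$ to the unit map $\sO_X\to\mathbf{R}\pi_*\sO_Z$ and using the identity $\mathbf{R}\Gamma_{\m}\circ\mathbf{R}\pi_*=\mathbf{R}\Gamma_E$ of derived functors, I obtain in cohomological degree $d$ a natural $R$-linear map
\[
\phi\colon H^d_{\m}(R)\longrightarrow H^d_E(Z,\sO_Z).
\]
Since the absolute Frobenius is functorial, i.e.\ $F_X\circ\pi=\pi\circ F_Z$, the map $\phi$ intertwines the Frobenius action on $H^d_{\m}(R)$ with the one on $H^d_E(Z,\sO_Z)$; hence $\ker\phi$ is a Frobenius-stable $R$-submodule of $H^d_{\m}(R)$, and Theorem~\ref{Frobenius action} forces $\ker\phi=0$ or $\ker\phi=H^d_{\m}(R)$.

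To exclude the second possibility I would pass to Matlis duals, writing $(-)^{\vee}:=\Hom_R(-,E_R(R/\m))$. The modules $H^d_{\m}(R)$ and $H^d_E(Z,\sO_Z)$ are $\m$-power torsion (the latter is in fact Artinian, being the Matlis dual of the finitely generated module $\mathcal{H}^{-d}\bigl(\mathbf{R}\Gamma(Z,\omega_Z^{\bullet})\bigr)$), so $\phi=0$ if and only if $\phi^{\vee}=0$. By Grothendieck duality for the proper morphism $\pi$ — equivalently, by local duality on $Z$ applied to $\omega_Z^{\bullet}=\pi^!\omega_R^{\bullet}$ — the dual map $\phi^{\vee}$ is identified with the Grothendieck trace
\[
\phi^{\vee}\colon\Gamma(Z,\omega_Z)=\Gamma\!\bigl(X,\pi_*\omega_Z\bigr)\longrightarrow\omega_R,
\]
using that $\omega_Z^{\bullet}$ is concentrated in degrees $\ge-d$ with $\mathcal{H}^{-d}(\omega_Z^{\bullet})=\omega_Z$, so that $\mathcal{H}^{-d}\bigl(\mathbf{R}\Gamma(Z,\omega_Z^{\bullet})\bigr)=\Gamma(Z,\omega_Z)$. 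As $Z$ is integral, $\omega_Z$ is a torsion-free sheaf of rank one, so $\pi_*\omega_Z\neq0$ and the trace map $\pi_*\omega_Z\to\omega_X$ is an isomorphism over the dense open locus on which $\pi$ is an isomorphism; in particular $\phi^{\vee}\neq0$. Therefore $\ker\phi=0$, i.e.\ $\phi^{\vee}$ is surjective, and being also injective (a nonzero map of rank-one torsion-free sheaves on the integral scheme $X$ that is generically an isomorphism) it is an isomorphism: $\pi_*\omega_Z=\omega_X$. Since $\pi$ was arbitrary, $R$ is pseudo-rational.

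I expect the main obstacle to be the duality bookkeeping in the last step, namely checking that the Matlis dual of the local-cohomology comparison map $\phi$ is precisely the Grothendieck trace $\pi_*\omega_Z\to\omega_X$, compatibly with restriction to the isomorphism locus of $\pi$ (which is exactly what makes it nonzero), together with the finiteness/torsion properties of $H^d_E(Z,\sO_Z)$ needed for Matlis duality to reflect vanishing of maps. Once these identifications and Theorem~\ref{Frobenius action} are in hand, the argument is essentially formal.
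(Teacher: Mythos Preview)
Your argument is correct and is precisely Smith's original proof, which is the approach the paper indicates (the corollary is stated as a consequence of Theorem~\ref{Frobenius action}, with the details deferred to \cite{Sm1}). The key step---that $\ker\phi$ is an $F$-stable submodule of $H^d_{\m}(R)$ and hence trivial by Theorem~\ref{Frobenius action}, while Grothendieck/Matlis duality identifies $\phi^{\vee}$ with the trace $\pi_*\omega_Z\to\omega_X$, which is nonzero because it is an isomorphism over the locus where $\pi$ is---is exactly Smith's argument, and your duality bookkeeping is in order.
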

In \S \ref{trace section}, we will explain that there is a more geometric way to show this result in the case where $R$ is $F$-finite. 

\section{Classical $F$-singularities}\label{classical Fsing}

$F$-regular and $F$-rational rings have origin in the theory of tight closure as we have seen in the previous section, but they are nowadays recognized as one of the classes of $F$-singularities.  
``$F$-singularities" are a generic term used to refer to singularities defined in terms of Frobenius maps. 
In addition to $F$-regular and $F$-rational rings, there are two other basic classes of $F$-singularities, $F$-pure and $F$-injective rings. 
In this section, we will overview these 4 classes of singularities from a more algebro-geometric point of view than in \S\ref{tight closure}. 
Roughly speaking, they are divided into two groups, singularities defined via splittings of Frobenius maps and singularities defined via surjectivity of trace maps. 

Let $R$ be a ring of prime characteristic $p$. 
We say that $R$ is \textit{$F$-finite} if $F_*R$ is a finitely generated $R$-module (see the paragraph preceding Definition \ref{tc module} for the definition of $F_*R$). 
A field $K$ is $F$-finite if and only if the extension degree $[K:K^p]$ is finite. 
Important examples of $F$-finite rings are rings essentially of finite type over an $F$-finite field and  complete local rings with $F$-finite residue field.
$F$-finite rings satisfy the following nice property. 

\begin{fact}[\cite{Ku}, \cite{Ga}]
$F$-finite rings are excellent and have a dualizing complex. 
\end{fact}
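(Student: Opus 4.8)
The statement to prove is that $F$-finite rings are excellent and have a dualizing complex. The plan is to reduce this to the case of a complete local ring and then to a power series ring over an $F$-finite field, using Kunz's structural description of $F$-finite rings.

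\textbf{Step 1: Reduction to the local case.} Excellence is a local property in the sense that a Noetherian ring $R$ is excellent if and only if it is universally catenary and the formal fibers of $R_{\fkm}$ are geometrically regular for every maximal ideal $\fkm$; and the existence of a dualizing complex can likewise be checked after passing to an affine cover or, via Kawasaki's theorem quoted in the footnote, detected by the property of being a quotient of a Gorenstein ring. So I would first observe that if $R$ is $F$-finite, then every localization and every finitely generated $R$-algebra is again $F$-finite (the module-finiteness of $F_*R$ is preserved under localization and under base change along a finite-type map, using that $F_*(R[x]) = \bigoplus_{0 \le i < p} x^i F_*R[x^p]$ and $R[x^p] \cong R[x]$). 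Hence it suffices to treat the case of an $F$-finite local ring $(R,\fkm)$, and in fact, since the completion $\widehat{R}$ is $F$-finite with $F$-finite residue field and excellence/dualizing complexes descend from $\widehat{R}$ to $R$ (flatness of $R \to \widehat{R}$ with regular fibers is the content of "$R$ is excellent"; but that is what we want to prove, so instead one argues via the structure theorem directly).

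\textbf{Step 2: Structure of complete $F$-finite local rings.} Let $(R,\fkm,k)$ be a complete Noetherian local ring with $k$ $F$-finite. By Cohen's structure theorem, $R$ is a quotient of a power series ring $A = k[[x_1,\dots,x_n]]$ (in the equicharacteristic case; in mixed characteristic one uses a Cohen ring, but the title of the article restricts us to characteristic $p$, so I will stay equicharacteristic). The key computation is that $A = k[[x_1,\dots,x_n]]$ is $F$-finite precisely because $[k:k^p] < \infty$: one has $F_*A$ is free over $A$ with basis the monomials $F_*(a_j x^{\alpha})$ where $\{a_j\}$ is a $k^p$-basis of $k$ and $\alpha$ ranges over exponent vectors in $\{0,\dots,p-1\}^n$. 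Now $A$ is a regular local ring, hence Gorenstein, hence excellent (complete local rings are excellent) and possesses a dualizing complex (namely $A$ itself placed in the appropriate degree). Since $R$ is a homomorphic image of the Gorenstein local ring $A$, the footnoted theorem of Kawasaki gives that $R$ has a dualizing complex, and $R$ is excellent because it is a quotient of the excellent ring $A$.

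\textbf{Step 3: Globalization.} Returning to an arbitrary $F$-finite ring $R$: for each maximal ideal $\fkm$, the completion $\widehat{R_{\fkm}}$ has $F$-finite residue field, so by Step 2 it is excellent with a dualizing complex, in particular it is a quotient of a Gorenstein local ring, so its formal fibers over $R_{\fkm}$ — wait, more cleanly — by Step 2 applied to $\widehat{R_{\fkm}}$ one knows $\widehat{R_{\fkm}}$ is excellent; excellence of $R_{\fkm}$ then requires checking that the map $R_{\fkm}\to \widehat{R_{\fkm}}$ has geometrically regular fibers, which for $F$-finite rings follows from the characterization (Kunz) that $F$-finite rings have the property that $F$ factors appropriately; alternatively one invokes that an $F$-finite ring is a quotient of a regular $F$-finite ring locally and regular rings are excellent, so quotients are excellent. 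The existence of a (global) dualizing complex follows since having a dualizing complex is a local condition that glues, and we have produced one on each $\Spec R_{\fkm}$, or more directly because $R$ is locally a quotient of a regular ring.

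\textbf{Main obstacle.} The genuinely delicate point is verifying the regularity of the formal fibers — i.e., the "$G$-ring" condition in the definition of excellence — rather than universal catenaricity (which is automatic once $R$ is locally a quotient of a Cohen–Macaulay, indeed regular, ring) or the existence of a dualizing complex (which drops out of Kawasaki's criterion once we know $R$ is locally a quotient of a Gorenstein ring). The cleanest route around this obstacle is to avoid checking formal fibers by hand and instead use the structure result that every $F$-finite ring is, Zariski-locally, a finite extension — equivalently a quotient when one is careful — of a regular $F$-finite ring; since regular rings are $G$-rings and the $G$-ring property as well as universal catenaricity pass to finite type algebras and quotients, excellence follows. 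I would cite Kunz's original paper for the structure theorem and Kawasaki (the footnote reference \cite{Kaw}) for the passage from "quotient of Gorenstein" to "has a dualizing complex."
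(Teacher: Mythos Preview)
The paper does not give a proof; it states this as a Fact with references to Kunz for excellence and Gabber for the dualizing complex. That said, your sketch has two genuine gaps worth naming.

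For the dualizing complex, you assert that ``having a dualizing complex is a local condition that glues'' and that ``$R$ is locally a quotient of a regular ring.'' Neither is justified. Dualizing complexes are unique only up to shift and tensoring by an invertible module, so local choices on an affine cover need not patch; and knowing that each \emph{completion} $\widehat{R_\fkm}$ is a quotient of a regular ring (Cohen's structure theorem) does not tell you that $R_\fkm$ itself is --- the regular ring you produced lives over the completion, not over $R_\fkm$. Gabber's actual contribution is precisely the missing structural statement: an $F$-finite Noetherian ring of characteristic $p$ admits a \emph{global} surjection from a regular $F$-finite ring of finite Krull dimension, and the dualizing complex is then inherited. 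Your Step~3 essentially restates Gabber's theorem as a conclusion without proving it.

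For excellence, you propose to bypass the $G$-ring condition via ``every $F$-finite ring is, Zariski-locally, a quotient of a regular $F$-finite ring,'' and you attribute this ``structure theorem'' to Kunz. But that is not what Kunz proves --- it is again (a consequence of) Gabber's structural result. Kunz's 1976 argument proceeds more directly: $F$-finiteness passes to localizations, completions, and residue-type constructions, and he uses the finiteness of Frobenius together with his regularity criterion (Theorem~\ref{Kunz} in this article) to verify geometric regularity of the formal fibers, with universal catenaricity handled separately. Your overall strategy is not unreasonable, but as written it assumes the hardest step rather than proving it, and misattributes that step to the wrong reference.
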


\subsection{Singularities defined in terms of Frobenius splitting}\label{strongly F-regular section}

Suppose that $R$ is a ring of prime characteristic $p$. 
For each $e \in \N$, the $e$-times iterated Frobenius map $F^e:R \to F^e_*R$ is the $R$-module homomorphism sending $x$ to $F^e_*x^{p^e}=x \cdot F^e_*1$. 
We make the following easy remark, which may help the reader understand various definitions in this section: when $R$ is reduced with minimal prime ideals $\fkp_1, \dots, \fkp_r$, 
the map $F^e$ can be identified with the natural inclusion 
\[R \hookrightarrow \prod_{i=1}^r R/\fkp_i \hookrightarrow R^{1/p^e}=\left\{x \in \prod_{i=1}^r \overline{Q(R/\fkp_i)} \; \Bigg| \; x^{p^e} \in R \right\},\] 
where $\overline{Q(R/\fkp_i)}$ is the algebraic closure of the quotient field $Q(R/\fkp_i)$ of $R/\fkp_i$. 

\begin{defn}[\cite{HR}, \cite{HH1}]\label{strongly F-regular rings}
Let $R$ be an $F$-finite ring of prime characteristic $p$. 
We say that $R$ is \textit{$F$-pure} if the Frobenius map $F: R \to F_*R$ splits as an $R$-module homomorphism. 
We say that $R$ is \textit{strongly $F$-regular} if for every $c \in R^{\circ}$, there exists some $e \in \N$ such that the map 
\[
cF^e: R \xrightarrow{F^e} F^e_*R \xrightarrow{\times F^e_*c} F^e_*R \quad x \mapsto F^e_*(x^{p^e}) \mapsto  F^e_*(cx^{p^e})
\]
splits as an $R$-module homomorphism. 
\end{defn}

Since the splitting of $F^e:R \to F^e_*R$ for some $e \in \N$ implies that of $F:R \to F_*R$, we see that strongly $F$-regular rings are $F$-pure by considering the case when $c=1$.
Note also that if the map $F:R \to F_*R$ splits, then it has to be injective. Thus, $F$-pure rings are reduced.  

\begin{ex}
Let $R=k[x_1, \dots, x_d]$ be a polynomial ring over a perfect field $k$ of characteristic $p>0$. 
Then $R$ is strongly $F$-regular. 
Indeed, let $c$ be any nonzero polynomial of $R$. 
Take a sufficiently large $e \in \N$ such that the degree of $c$ in $x_j$ is less than $p^e$ for all $j=1, \dots, d$. 
In particular, $c$ has a term $a x_1^{m_1} \cdots x_d^{m_d}$ with $m_j <p^e$ and $a \in k^*$. 
Since $F^e_*R=\bigoplus_{0 \le i_1, \dots, i_d < p^e} R \cdot F^e_*(x_1^{i_1} \cdots x_d^{i_d})$, 
there exists an $R$-module homomorphism $\varphi:F^e_*R \to R$ such that 
\[
\varphi(F^e_*(x_1^{i_1} \cdots x_d^{i_d}))=\left\{
\begin{array}{ll}
1 & \textup{if $(i_1, \dots, i_d)=(m_1, \dots, m_d)$} \\
0 & \textup{if $(i_1, \dots, i_d) \ne (m_1, \dots, m_d)$ and $0 \le i_1, \dots, i_d < p^e$}.
\end{array}
\right.
\]
Then $\varphi(F^e_*c)=a^{1/p^e}$ and $a^{-1/p^e}\varphi$ gives a splitting of $cF^e:R \to F^e_*R$. 
\end{ex}

More generally, we can show the following proposition, which is an easy consequence of Theorem \ref{Kunz}. 

\begin{prop}[\cite{HH1}]\label{regular => F-regular}
$F$-finite regular rings are strongly $F$-regular. 
\end{prop}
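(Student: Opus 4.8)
The plan is to use Kunz's theorem (Theorem~\ref{Kunz}) to turn the splitting of $cF^e$ into a statement about free modules, settle that with Krull's intersection theorem, and then globalize by a Noetherianity argument. As structural input, note first that $F$-finiteness of $R$ gives, by a routine iteration, that each $F^e_*R$ is a finitely generated $R$-module. Since $R$ is regular, Theorem~\ref{Kunz} says $F\colon R\to F_*R$ is flat; as $F_*R$ is again the (regular) ring $R$, iterating shows $F^e\colon R\to F^e_*R$ is flat for every $e$. Hence $F^e_*R$ is a finitely generated flat, and therefore finitely generated projective (equivalently, locally free), $R$-module. In particular $\Hom_R(F^e_*R,-)$ commutes with localization, and $F^e_*R\otimes_R R_{\fkp}=F^e_*(R_{\fkp})$ is free over the regular local ring $R_{\fkp}$ for every prime $\fkp$.

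Next I would reformulate the splitting condition. Unwinding the definition, $cF^e\colon R\to F^e_*R$ is the $R$-linear map $x\mapsto x\cdot F^e_*c$, so it splits if and only if some $\psi\in\Hom_R(F^e_*R,R)$ satisfies $\psi(F^e_*c)=1$, i.e. if and only if the ideal $\bb_e:=\{\psi(F^e_*c)\mid \psi\in\Hom_R(F^e_*R,R)\}$ equals $R$. Localizing at a prime $\fkp$, using that $F^e_*(R_{\fkp})$ is free together with the elementary identity $\fkp R_{\fkp}\cdot F^e_*(R_{\fkp})=F^e_*(\fkp^{[p^e]}R_{\fkp})$, one gets $\bb_e\subseteq\fkp$ if and only if $c\in\fkp^{[p^e]}R_{\fkp}$. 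Therefore $V(\bb_e)=\{\fkp\in\Spec R\mid c\in\fkp^{[p^e]}R_{\fkp}\}$, and $cF^e$ splits precisely when $V(\bb_e)=\emptyset$.

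Finally I would produce a uniform exponent. Since $\fkp^{[p^{e+1}]}R_{\fkp}\subseteq\fkp^{[p^e]}R_{\fkp}$, the sets $V(\bb_0)\supseteq V(\bb_1)\supseteq\cdots$ form a descending chain of closed subsets of the Noetherian space $\Spec R$, so they stabilize at some $e_0$, with common value $\bigcap_e V(\bb_e)$. If $\fkp$ lay in this intersection, then $c\in\bigcap_e\fkp^{[p^e]}R_{\fkp}\subseteq\bigcap_e(\fkp R_{\fkp})^e=0$ by Krull's intersection theorem in the Noetherian local ring $R_{\fkp}$, forcing $c=0$ in $R_{\fkp}$; this contradicts $c\in R^{\circ}$, since $R$ is reduced (being regular) and $c$ avoids every minimal prime of $R$, hence every minimal prime of $R_{\fkp}$. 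Thus $V(\bb_{e_0})=\emptyset$, so $\bb_{e_0}=R$ and $cF^{e_0}$ splits. As $c\in R^{\circ}$ was arbitrary, $R$ is strongly $F$-regular.

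The conceptual core is the local case: Kunz's theorem gives freeness of $F^e_*R$, and then Krull's theorem forces $F^e_*c$ to escape $\fkm F^e_*R$ for $e\gg 0$, i.e. to become part of a free basis, whence the coordinate functional is the desired splitting. The one genuinely technical point, and the step I expect to be the main obstacle, is upgrading to the non-local statement: a priori the exponent $e$ depends on the prime, and one needs a single $e$ that works at all primes simultaneously. This is exactly what the stabilizing descending chain $V(\bb_e)$ supplies, using that $\Spec R$ is Noetherian and that $F^e_*R$ is finitely generated projective so that $\bb_e$ localizes correctly.
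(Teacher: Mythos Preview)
Your proof is correct and follows exactly the route the paper indicates: it states the proposition as ``an easy consequence of Theorem~\ref{Kunz}'' without further details, and your argument is a clean fleshing-out of that---Kunz gives local freeness of $F^e_*R$, Krull's intersection forces $F^e_*c$ out of $\fkm F^e_*R$ for $e\gg 0$, and the descending-chain trick on the closed sets $V(\bb_e)$ supplies the uniform exponent. One could alternatively shortcut the globalization by invoking Remark~\ref{F-regular remark}(2) (strong $F$-regularity is local) and treating only the local case, but that remark appears after the proposition and itself needs an argument of the sort you give, so your direct approach is entirely appropriate.
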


\begin{rem}\label{F-regular remark}
(1) 
Strong $F$-regularity implies $F$-regularity defined in Definition \ref{Frational}. 
Indeed, since strong $F$-regularity commutes with localization, it suffices to show that a strongly $F$-regular ring $R$ is weakly $F$-regular. 
Let $I$ be an arbitrary ideal in $R$, and fix an $x \in I^*$. We will show that $x \in I$. 
By definition, there is a $c \in R^{\circ}$ such that $cx^{p^e} \in I^{[p^e]}$ for all sufficiently large $e \in \N$. 
By the assumption on $R$, there exist an $e \in \N$, which can be made sufficiently large, and an $R$-module homomorphism $\psi:F^e_*R \to R$ sending $F^e_*c$ to $1$. 
Therefore, 
\[x=x\psi(F^e_*c)=\psi(F^e_*(cx^{p^e})) \in \psi(F^e_*(I^{[p^e]}))=I \psi(F^e_*R) \subset I.\] 
 
Conversely, it is conjectured that an $F$-finite weakly $F$-regular ring $R$ is strongly $F$-regular.  This conjecture is known to be true when $R$ is $\Q$-Gorenstein (see \cite{AM})\footnote{When $R$ is local, the conjecture holds if the non-$\Q$-Gorenstein locus of $\Spec R$ is isolated (cf.~\cite{LS2}).} or $R$ is an $\N$-graded ring (see \cite{LS1}).  

(2) It is not hard to see that an $F$-finite ring $R$ is strongly $F$-regular (respectively, $F$-pure) if and only if so is the local ring $R_{\fkp}$ for every maximal ideal $\fkp$ in $R$. 
In particular, if $\m$ is a maximal ideal of $R$ and $R_{\fkp}$ is regular for all other maximal ideals $\fkp \ne \m$, then $R$ is strongly $F$-regular if and only if $R_{\m}$ is strongly $F$-regular. 
\end{rem}

It is usually difficult to determine directly from the definition whether a given ring $R$ is strongly $F$-regular or not, because we have to check the condition for all $c \in R^{\circ}$. 
Pick an element $c \in R^{\circ}$ such that the localization $R_c$ is regular. 
It then follows from Proposition \ref{regular => F-regular} and Lemma \ref{single c} that $R$ is strongly $F$-regular if (and only if) there is an $e \in \N$ such that $cF^e:R \to F^e_*R$ splits as an $R$-module homomorphism. 
Namely, it suffices to check the condition only for this $c$. 

\begin{lem}[\cite{HH1}]\label{single c}
Let $R$ be an $F$-finite reduced ring of characteristic $p>0$ and $c \in R^{\circ}$ an element such that the localization $R_c$ is strongly $F$-regular. 
If there exists an $e \in \N$ such that $cF^e:R \to F^e_*R$ splits as an $R$-module homomorphism, then $R$ is strongly $F$-regular. 
\end{lem}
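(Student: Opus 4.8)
The plan is to verify the defining property of strong $F$-regularity (Definition~\ref{strongly F-regular rings}) directly: fix an arbitrary $d\in R^{\circ}$, and produce $e'\in\N$ for which $dF^{e'}\colon R\to F^{e'}_*R$ splits. There are two inputs. From the hypothesis that $cF^{e}$ splits I fix an $R$-linear splitting $\beta\colon F^{e}_*R\to R$ with $\beta(F^{e}_*c)=1$; one may assume $e\ge 1$, since if $c$ is a unit then $R=R_c$ and there is nothing to prove. From the hypothesis that $R_c$ is strongly $F$-regular, and since $d/1\in(R_c)^{\circ}$ (localization being flat), there are $e_1\in\N$ and an $R_c$-linear splitting of $(d/1)F^{e_1}$, i.e.\ an $R_c$-linear $\psi\colon F^{e_1}_*R_c\to R_c$ with $\psi(F^{e_1}_*(d/1))=1$. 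Because $R$ is $F$-finite, $F^{e_1}_*R$ is a finite, hence finitely presented, $R$-module, so $\Hom_R(F^{e_1}_*R,R)$ commutes with localization at $c$ and $(F^{e_1}_*R)_c\cong F^{e_1}_*(R_c)$; writing $\psi=\varphi_0/c^{b}$ with $\varphi_0\in\Hom_R(F^{e_1}_*R,R)$ and clearing denominators in the resulting equation $\varphi_0(F^{e_1}_*d)=c^{b}$ in $R_c$ produces $\varphi\in\Hom_R(F^{e_1}_*R,R)$ (a $c$-multiple of $\varphi_0$) and an integer $a\ge 0$ with $\varphi(F^{e_1}_*d)=c^{a}$.

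Next I would turn $\beta$ into a splitting of $c^{a}F^{ne}$ for suitable $n$. Iterating $\beta$ --- forming the composites $R\xrightarrow{cF^{e}}F^{e}_*R\xrightarrow{F^{e}_*(cF^{e})}F^{2e}_*R\to\cdots$ together with the composites $\beta\circ F^{e}_*\beta\circ\cdots$ of their splittings --- a short induction, using the identity $s_n+p^{ne}=s_{n+1}$ for $s_n:=1+p^{e}+\cdots+p^{(n-1)e}$, shows that $c^{s_n}F^{ne}\colon R\to F^{ne}_*R$ splits for every $n\ge 1$. I also record the elementary fact that if $tF^{m}$ splits and $t=su$ in $R$, then $sF^{m}$ splits: precompose the splitting of $tF^{m}$ with the $R$-linear endomorphism $F^{m}_*w\mapsto F^{m}_*(uw)$ of $F^{m}_*R$. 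Applying this with $t=c^{s_n}$, $s=c^{a}$, $u=c^{s_n-a}$, and choosing $n$ large enough that $s_n\ge a$, I obtain an $R$-linear $\delta\colon F^{ne}_*R\to R$ with $\delta\bigl(F^{ne}_*(c^{a}y^{p^{ne}})\bigr)=y$ for all $y\in R$.

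Finally, set $e'=ne+e_1$ and let $\theta$ be the composite
\[
\theta\colon F^{e'}_*R=F^{ne}_*\bigl(F^{e_1}_*R\bigr)\xrightarrow{F^{ne}_*(\varphi)}F^{ne}_*R\xrightarrow{\delta}R,
\]
which is $R$-linear. Tracing $F^{e'}_*d=F^{ne}_*(F^{e_1}_*d)$ through $\theta$ gives $F^{ne}_*\bigl(\varphi(F^{e_1}_*d)\bigr)=F^{ne}_*(c^{a})$ and then $\delta(F^{ne}_*(c^{a}))=1$, so $\theta(F^{e'}_*d)=1$; since $\theta$ is $R$-linear and $F^{e'}_*(dx^{p^{e'}})=x\cdot F^{e'}_*d$, this yields $\theta\bigl(F^{e'}_*(dx^{p^{e'}})\bigr)=x$ for all $x\in R$, i.e.\ $\theta$ splits $dF^{e'}$. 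As $d\in R^{\circ}$ was arbitrary, $R$ is strongly $F$-regular.

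The only real content beyond unwinding the definitions is the bookkeeping of the exponents of $c$: composing Frobenius twists inevitably introduces extra powers of $c$, while $R_c$-regularity hands us only the fixed power $c^{a}$, and these must be reconciled. This is what the two-step move in the second paragraph achieves --- first inflating the exponent to the geometric-sum value $s_n$, which grows without bound, then descending to $c^{a}$ by divisibility --- so that $\varphi$ and $\delta$ fit together into the single splitting $\theta$. The place where $F$-finiteness is genuinely used is the identification $\Hom_R(F^{e_1}_*R,R)_c\cong\Hom_{R_c}(F^{e_1}_*R_c,R_c)$, which relies on $F^{e_1}_*R$ being finitely presented over the Noetherian ring $R$.
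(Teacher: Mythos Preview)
Your proof is correct and is essentially the standard argument from \cite{HH1}. The paper does not include its own proof of this lemma (it is merely cited from Hochster--Huneke), so there is nothing to compare against; your approach---clearing denominators in a splitting over $R_c$ to obtain $\varphi(F^{e_1}_*d)=c^{a}$, iterating the given splitting $\beta$ of $cF^{e}$ to split $c^{s_n}F^{ne}$ with $s_n=1+p^e+\cdots+p^{(n-1)e}$, descending to a splitting of $c^{a}F^{ne}$ once $s_n\ge a$, and composing---is exactly the intended proof.
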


The following proposition, so-called Fedder's criterion, is a very useful criterion for strong $F$-regularity and $F$-purity. 

\begin{prop}[\cite{Fe}, \cite{Gl}]\label{Fedder}
Let $(S, \n)$ be an $F$-finite regular local ring and $I$ a radical ideal of $S$. 
Set $R=S/I$ and let $c \in S \setminus I$ be an element such that the localization $R_c$ is strongly $F$-regular. 
\begin{enumerate}
\item 
$R$ is $F$-pure if and only if $(I^{[p]}:I) \not\subset \n^{[p]}$. 
\item 
$R$ is strongly $R$-regular if and only if there exists some $e \in \N$ such that $c(I^{[p^e]}:I) \not\subset \n^{[p^e]}$. 
\end{enumerate}
\end{prop}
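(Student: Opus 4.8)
The plan is to reduce Fedder's criterion to a single structural fact about the $S$-module $\Hom_S(F^e_*S,S)$ together with a ``colon description'' of $\Hom_R(F^e_*R,R)$, and then to rewrite the splitting conditions defining $F$-purity and strong $F$-regularity as membership conditions modulo $I$.

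First I would isolate the structural input. Since $(S,\mathfrak{n})$ is $F$-finite and regular, $F^e_*S$ is a finitely generated flat, hence free, $S$-module by Kunz's theorem (Theorem~\ref{Kunz}); and since $F^e\colon S\to F^e_*S$ is a module-finite flat extension of regular local rings, $\Hom_S(F^e_*S,S)$ is free of rank one over $F^e_*S$ (it is the canonical module of $F^e_*S$). I would fix a generator $\Phi^e$ of it; when $S=k[[x_1,\dots,x_n]]$ with $k$ perfect this is the projection onto the basis element $F^e_*\big((x_1\cdots x_n)^{p^e-1}\big)$ of $F^e_*S$ over $S$, and the general $F$-finite case is handled in the same way after incorporating a $p$-basis of $k$. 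Here $F^e_*S$ acts on $\Hom_S(F^e_*S,S)$ by $\big((F^e_*s)\cdot\psi\big)(\eta)=\psi\big((F^e_*s)\,\eta\big)$. Using the elementary identity $I\cdot F^e_*S=F^e_*(I^{[p^e]})$ for an ideal $I\subseteq S$, the freeness yields the \emph{key lemma}: for $b\in S$ and any ideal $I$ of $S$,
\[
b\in I^{[p^e]}\ \Longleftrightarrow\ \Phi^e\big(F^e_*(bS)\big)\subseteq I,
\]
because the right-hand side says exactly that $(F^e_*b)\cdot\Phi^e\in I\cdot\Hom_S(F^e_*S,S)=F^e_*(I^{[p^e]})\cdot\Phi^e$, i.e.\ that $F^e_*b\in F^e_*(I^{[p^e]})$. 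Taking $I=\mathfrak{n}$ gives the dichotomy I will use repeatedly: for an ideal $J\subseteq S$ one has $\Phi^e(F^e_*J)=S$ when $J\not\subseteq\mathfrak{n}^{[p^e]}$ and $\Phi^e(F^e_*J)\subseteq\mathfrak{n}$ otherwise (here $\Phi^e(F^e_*J)$ is an ideal and $\Phi^e(F^e_*\mathfrak{n}^{[p^e]})=\mathfrak{n}$).

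Next I would prove \emph{Fedder's Lemma}: there is an isomorphism of $F^e_*R$-modules
\[
\Hom_R(F^e_*R,R)\ \cong\ F^e_*\big((I^{[p^e]}:I)\big)\big/\,F^e_*(I^{[p^e]}),
\]
sending the class of $F^e_*u$, with $u\in(I^{[p^e]}:I)$, to the map $\overline{F^e_*t}\mapsto\overline{\Phi^e(F^e_*(ut))}$. For this I would identify $\Hom_R(F^e_*R,R)$ with the $S$-linear maps $F^e_*S\to S/I$ that annihilate $F^e_*I$; since $F^e_*S$ is projective these are precisely the reductions modulo $I$ of the elements $(F^e_*u)\cdot\Phi^e$ of $\Hom_S(F^e_*S,S)$, the annihilation condition unwinds (by applying the key lemma to the principal ideals $(ua)S$ for $a\in I$) to $uI\subseteq I^{[p^e]}$, i.e.\ $u\in(I^{[p^e]}:I)$, and the kernel of $F^e_*u\mapsto(\text{this map})$ is $F^e_*(I^{[p^e]})$ by the key lemma once more.

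Finally I would translate the two conditions. For (1): $R$ is $F$-pure iff $F\colon R\to F_*R$ admits an $R$-linear splitting, equivalently iff some $\phi\in\Hom_R(F_*R,R)$ satisfies $\phi(F_*\overline{1})=\overline{1}$ (since $\phi(F_*\overline{r}^{\,p})=\overline{r}\,\phi(F_*\overline{1})$); via Fedder's Lemma this says $1\in\Phi\big(F_*(I^{[p]}:I)\big)+I$, and because $I\subseteq\mathfrak{n}$ (as $R\neq0$) the dichotomy makes this hold exactly when $(I^{[p]}:I)\not\subseteq\mathfrak{n}^{[p]}$. For (2): after observing that the image $\overline{c}$ of $c$ in $R$ lies in $R^{\circ}$, Lemma~\ref{single c} together with the definition of strong $F$-regularity shows that $R$ is strongly $F$-regular iff $cF^e\colon R\to F^e_*R$ splits for some $e$, equivalently iff some $\phi\in\Hom_R(F^e_*R,R)$ satisfies $\phi(F^e_*\overline{c})=\overline{1}$; via Fedder's Lemma and the set-theoretic equality $c\,(I^{[p^e]}:I)=\{cu:u\in(I^{[p^e]}:I)\}$ this becomes $1\in\Phi^e\big(F^e_*(c\,(I^{[p^e]}:I))\big)+I$, which by the same dichotomy holds for some $e$ if and only if $c\,(I^{[p^e]}:I)\not\subseteq\mathfrak{n}^{[p^e]}$ for some $e$. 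The hard part will be the structural input of the second paragraph — verifying that $\Hom_S(F^e_*S,S)$ is free of rank one over $F^e_*S$ and pinning down the explicit generator $\Phi^e$ — for an arbitrary $F$-finite regular local ring whose residue field may be imperfect; once that is secured, everything else is routine bookkeeping with the key lemma.
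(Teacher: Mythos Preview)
The paper does not supply a proof of Proposition~\ref{Fedder}; it is stated with references to \cite{Fe} and \cite{Gl} and then used in examples. Your argument is the standard one from those references: the cyclicity of $\Hom_S(F^e_*S,S)$ over $F^e_*S$ (Grothendieck duality for the finite flat map $F^e$, using Kunz's theorem and the Gorenstein property of $S$), Fedder's lemma identifying $\Hom_R(F^e_*R,R)$ with $F^e_*\big((I^{[p^e]}\!:\!I)\big)/F^e_*(I^{[p^e]})$, and the translation of the splitting conditions via your ``key lemma''. This is correct and is exactly Fedder's approach.

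One small point deserves a line of justification. You write ``after observing that the image $\overline{c}$ of $c$ in $R$ lies in $R^{\circ}$'', but the hypothesis only gives $c\notin I$, i.e.\ $\overline{c}\neq 0$, and $R$ need not be a domain. For the ``only if'' direction this is harmless: if $R$ is strongly $F$-regular then $R$ is normal local, hence a domain, so $\overline{c}\in R^{\circ}$ automatically. For the ``if'' direction, once you have shown that $\overline{c}\,F^e$ splits you can deduce $\overline{c}\in R^{\circ}$ directly: if $r\overline{c}=0$ in the reduced ring $R$ then $r^{p^e}\overline{c}=0$ as well (both lie in every minimal prime), and applying a splitting $\phi$ with $\phi(F^e_*\overline{c})=1$ gives $r=\phi(F^e_*(r^{p^e}\overline{c}))=0$. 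With that in hand, Lemma~\ref{single c} applies and your argument goes through.
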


It is easier to verify the conditions in Proposition \ref{Fedder} when $R$ is a hypersurface, 
because if $I=(f)$ is a principal ideal, then $(I^{[p^e]}:I)=(f^{p^e-1})$. 

\begin{ex}\label{F-pure example}
Let $k$ be a perfect field of characteristic $p>0$. 

(1) Let $R=k[X, Y, Z]/(X^3-YZ(Y+Z))$ and $x, y, z$ be the images of $X, Y, Z$ in $R$, respectively. 
Then we will show that $R$ is $F$-pure if and only if $p \equiv 1$ $\mathrm{mod}$ $3$. 

Since $R$ is singular only at the origin, by Remark \ref{F-regular remark} (2), it suffices to check the $F$-purity of the local ring $R_{\m}$ where $\m=(x, y, z)$. 
Setting $f=X^3-YZ(Y+Z)$, we see from Proposition \ref{Fedder} (1) that $R_{\m}$ is $F$-pure if and only if $f^{p-1} \notin (X^p, Y^p, Z^p)$. 
Note that $f^{p-1} \notin (X^p, Y^p, Z^p)$ if and only if the monomial $X^{p-1}Y^{p-1}Z^{p-1}$ appears in the expansion of $f^{p-1}$. 
If $p \equiv 1 \; \mathrm{mod}\; 3$, then $X^{p-1}Y^{p-1}Z^{p-1}$ appears in the expansion of $f^{p-1}$. 
If $p=3$ or $p \equiv 2 \; \mathrm{mod}\; 3$, then it does not. 
Thus, we obtain the assertion. 

It also follows from Proposition \ref{Fedder} (2) that $R$ is not strongly $F$-regular, because $Xf^{p^e-1} \in (X^{p^e}, Y^{p^e}, Z^{p^e})$ for all $e \in \N$. 

(2) Let $R=k[X, Y, Z]/(X^2+Y^3+Z^5)$, and suppose that $p \ge 7$. 
Then we will show that $R$ is strongly $F$-regular.  

By Remark \ref{F-regular remark} (2), it suffices to check the strong $F$-regularity of $R$ at the origin.  
Set $f=X^2+Y^3+Z^5$. 
By Proposition \ref{Fedder} (2), it is enough to show that either $Yf^{p-1}$ or $Zf^{p-1}$ is not contained in $(X^p, Y^p, Z^p)$. 
If $p \equiv 1\; \mathrm{mod}\; 3$, then the monomial $X^{p-1}Y^{p-1}Z^{(5p+1)/6}$ appears in the expansion of  $Zf^{p-1}$. 
Since $(5p+1)/6 \le p-1$, this monomial is not contained in $(X^p, Y^p, Z^p)$. 
If $p \equiv 2\; \mathrm{mod}\; 3$, then $p \ge 11$ and the monomial  $X^{p-1}Y^{p-1}Z^{(5p+5)/6}$ appears in the expansion of $Yf^{p-1}$.  
Since $(5p+5)/6 \le p-1$, this monomial is not contained in $(X^p, Y^p, Z^p)$. 
Thus, we obtain the assertion. 

\end{ex}

\subsection{Singularities defined in terms of surjectivity of trace maps}\label{trace section}
Let $(R,\m)$ be a $d$-dimensional local ring of prime characteristic $p$. The Frobenius map $F: R \to F_*R$ induces an $R$-linear map between local cohomology modules: 
\[
H^i_{\m}(R)=R \otimes_R H^i_{\m}(R) \to F_*R \otimes_R  H^i_{\m}(R) \cong H^i_{\m}(F_*R).
\]
Under the identification of $F_*R$ with $R$, we view this map as a $p$-linear map $H^i_{\m}(R) \to H^i_{\m}(R)$, and we use the same letter $F$ to denote this $p$-linear map. 
Let $x_1, \dots, x_d$ be a system of parameters for $R$. 
Since $H^{d}_{\m}(R) \cong \varinjlim R/(x_1^n, \dots, x_d^n)$,  the map $F$ on $H^d_{\m}(R)$ can be described as follows: 
\[
F:H^d_{\m}(R) \to H^d_{\m}(R) \quad \xi=[z \; \mathrm{mod} \; (x_1^n, \dots, x_d^n)] \mapsto \xi^p=[z^p \; \mathrm{mod} \; (x_1^{np}, \dots, x_d^{np})].
\]

The following proposition, which one may take as the definition of $F$-rational rings, immediately follows from this description. 
\begin{prop}\label{Frational2}
Let $(R, \m)$ be a $d$-dimensional $F$-finite local ring. 
Then $R$ is $F$-rational $($see Definition $\ref{Frational}$ for the definition of $F$-rational rings$)$ if and only if $R$ is Cohen-Macaulay and if for any $c \in R^{\circ}$, there exists an $e \in \N$ such that 
\[
cF^e:H^d_{\m}(R) \to H^d_{\m}(R) \quad \xi \mapsto c\xi^{p^e}
\]
is injective. 
\end{prop}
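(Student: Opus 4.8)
The plan is to relate the tight closure of parameter ideals (which governs $F$-rationality by definition) to the tight closure of the zero submodule in the top local cohomology module $H^d_{\m}(R)$, using the explicit direct-limit description of $H^d_{\m}(R)$ given just above the statement. First I would dispose of the Cohen-Macaulay hypothesis: if $R$ is $F$-rational then, since $R$ is excellent and $F$-finite, Corollary \ref{Frational=>normal} together with Theorem \ref{colon capturing} (colon capturing applies because $R$ is a homomorphic image of a Cohen-Macaulay ring, being $F$-finite hence excellent with a dualizing complex) forces $R$ to be Cohen-Macaulay; conversely Cohen-Macaulayness is assumed in the ``if'' direction. So throughout one may assume $R$ is Cohen-Macaulay, and in particular a fixed system of parameters $x_1, \dots, x_d$ is a regular sequence, so that $H^d_{\m}(R) = \varinjlim R/(x_1^n, \dots, x_d^n)$ with injective transition maps given by multiplication by $x_1 \cdots x_d$, and every element of $H^d_{\m}(R)$ is represented as $[z \bmod (x_1^n, \dots, x_d^n)]$ for some $n$.

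The core of the argument is the identity $0^*_{H^d_{\m}(R)} = 0$ if and only if every parameter ideal is tightly closed, which I would prove in two steps. For one direction, suppose some parameter ideal, which after a Cohen-Macaulay reduction one may take to be $J = (x_1^n, \dots, x_d^n)$ for a system of parameters and some $n$, is not tightly closed: pick $z \in J^* \setminus J$. Then the class $\xi = [z \bmod J] \in H^d_{\m}(R)$ is nonzero (by injectivity of the maps into the limit), and a $c \in R^\circ$ witnessing $z \in J^*$, i.e.\ $c z^{p^e} \in J^{[p^e]} = (x_1^{np^e}, \dots, x_d^{np^e})$ for $e \gg 0$, witnesses exactly that $c F^e_{M}(\xi) = c \xi^{p^e} = [c z^{p^e} \bmod J^{[p^e]}] = 0$ in $H^d_{\m}(R)$; hence $\xi \in 0^*_{H^d_{\m}(R)}$ is a nonzero element. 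Conversely, given a nonzero $\xi \in 0^*_{H^d_{\m}(R)}$, represent it as $[z \bmod J]$ with $J = (x_1^n, \dots, x_d^n)$; nonzeroness of $\xi$ means $z \notin J$, while a $c \in R^\circ$ with $c \xi^{p^e} = 0$ for $e \gg 0$ translates (again via the explicit description of $F^e$ on $H^d_{\m}(R)$, using that $c z^{p^e}$ maps to zero in the limit and hence, by Cohen-Macaulayness and injectivity of the transition maps, already lies in $J^{[p^e]}$) into $c z^{p^e} \in J^{[p^e]}$ for $e \gg 0$, so $z \in J^* \setminus J$ and $J$ is not tightly closed. Finally I would observe that the condition ``for all $c \in R^\circ$ there is $e$ with $cF^e$ injective on $H^d_{\m}(R)$'' is literally the negation of ``there exists a nonzero $\xi \in H^d_{\m}(R)$ and $c \in R^\circ$ with $c\xi^{p^e} = 0$ for all $e \gg 0$'': indeed if such $\xi, c$ exist then $cF^e$ kills $\xi^{p^e} \ne 0$ — one must be slightly careful that $\xi^{p^e} \ne 0$, which follows because $cF^e$ applied to $\xi$ being eventually zero while $\xi \ne 0$ already contradicts injectivity of a single $cF^{e_0}$; conversely if $0^*_{H^d_{\m}(R)} = 0$ then no such pair exists, so for every $c$ some (in fact all sufficiently large) $cF^e$ is injective.

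The step I expect to be the main obstacle is the passage, in both directions of the core equivalence, between ``$c z^{p^e}$ becomes zero in the direct limit $H^d_{\m}(R)$'' and ``$c z^{p^e} \in (x_1^{np^e}, \dots, x_d^{np^e})$ on the nose.'' An element of $R/(x_1^m,\dots,x_d^m)$ maps to zero in the limit iff after multiplying by a high power of $x_1\cdots x_d$ it lands in a deeper parameter ideal, and one needs the Cohen-Macaulay (regular-sequence) hypothesis to conclude that this forces membership in $(x_1^m,\dots,x_d^m)$ itself — this is exactly where Cohen-Macaulayness of $R$ is used essentially, and also where one matches up the powers $np^e$ coming from raising to the $p^e$-th power with the filtration by parameter ideals. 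A secondary subtlety is the reduction from an arbitrary parameter ideal $(y_1, \dots, y_d)$ to one of the special form $(x_1^n, \dots, x_d^n)$; here one uses that in a Cohen-Macaulay local ring every system of parameters generates a parameter ideal that is tightly closed iff the corresponding class in $H^d_{\m}$ behaves well, and that the family of all $(x_1^n, \dots, x_d^n)$ is cofinal among parameter ideals for the purpose of detecting $0^*_{H^d_{\m}(R)}$. Once these bookkeeping points about the direct limit are pinned down, the rest is the routine translation of the definition of tight closure of an ideal into that of the tight closure of $0$ in a module, as recorded in the footnote to Definition \ref{tc module}.
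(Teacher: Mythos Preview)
Your reduction of the problem to the equivalence ``$R$ is $F$-rational $\Leftrightarrow$ $R$ is Cohen--Macaulay and $0^*_{H^d_\m(R)}=0$'' via the direct-limit description of $H^d_\m(R)$ is correct and is exactly what the paper has in mind (the paper gives no argument beyond ``immediately follows from this description''). The gap is in your final paragraph: the proposition's condition ``for all $c\in R^\circ$ there exists $e$ with $cF^e$ injective'' is \emph{not} literally the negation of $0^*_{H^d_\m(R)}\ne 0$. Written out, the former is $\forall c\,\exists e\,\forall\xi\ne 0:\, cF^e(\xi)\ne 0$, whereas $0^*=0$ says only $\forall\xi\ne 0\,\forall c:\ \neg(\forall e\gg 0:\,cF^e(\xi)=0)$; the swap of $\exists e$ and $\forall\xi$ is not automatic. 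Even in the easier direction (the proposition's condition $\Rightarrow 0^*=0$) your sketch is incomplete: from $0\ne\xi$ with $cF^e(\xi)=0$ for $e\ge N$ you only rule out injectivity of $cF^e$ for $e\ge N$, which does not yet contradict the existence of a single injective $cF^{e_0}$ with $e_0<N$. You must replace $c$ by $c'=c\prod_{e<N}c_e$ with $c_e\in\Ann_R(F^e\xi)\cap R^\circ$ (such $c_e$ exist since $H^d_\m(R)$ is $\m$-power torsion) so that $c'F^e(\xi)=0$ for \emph{every} $e\ge 0$.

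The harder direction ($0^*=0\Rightarrow$ for every $c$ some $cF^e$ is injective) genuinely needs a finiteness input, not just quantifier unwinding. One route: first note $0^*=0$ forces $F$ injective on $H^d_\m(R)$ (if $F\xi=0$ then taking $c=1$ gives $\xi\in 0^*$); Matlis-dualize to the trace maps $c\mathrm{Tr}^e_F:F^e_*\omega_R\to\omega_R$, check using surjectivity of $\mathrm{Tr}_F$ that the images $\mathrm{Im}(c\mathrm{Tr}^e_F)$ form an ascending chain in the Noetherian module $\omega_R$, and let them stabilize. If the stable image were a proper submodule, its Matlis dual would give a fixed nonzero $\ker(cF^e)$ for all large $e$, and any nonzero element there lies in $0^*$, a contradiction; hence some $c\mathrm{Tr}^e_F$ is surjective, i.e.\ $cF^e$ is injective. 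Alternatively one can invoke the existence of a parameter test element (available since $R$ is $F$-finite and reduced) to uniformize the $e$'s. Either way, this step is more than bookkeeping.
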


\begin{defn}[\cite{Fe}]
A $d$-dimensional $F$-finite local ring $(R, \m)$ is said to be \textit{F-injective} if $F: H^i_{\m}(R) \to H^i_{\m}(R)$ is injective for all $i$. 
When an $F$-finite ring $R$ is not local, we say that $R$ is $F$-injective if the local ring $R_{\mathfrak{p}}$ is $F$-injective for every maximal ideal $\mathfrak{p}$ of $R$. 
\end{defn}

By an argument analogous to the proof that strongly $F$-regular rings are $F$-pure, we see that $F$-rational rings are $F$-injective. 
Moreover, there is the following relationship between strongly $F$-regular (respectively, $F$-pure) rings and $F$-rational (respectively, $F$-injective) rings. 

\begin{prop}\label{F-pure=>F-injective}
$F$-pure $($respectively, strongly $F$-regular$)$ rings are $F$-injective $($respectively, $F$-rational$)$. 
If the ring is $F$-finite and quasi-Gorenstein, then the converse is also true. 
\end{prop}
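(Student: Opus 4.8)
The plan is to dispatch the two easy implications first, by the mechanism used repeatedly in this section: a splitting of a Frobenius-type map on $R$ survives any additive functor. Let $(R,\m)$ be $F$-finite local. If $R$ is $F$-pure, fix an $R$-linear retraction $\psi\colon F_*R\to R$ of $F\colon R\to F_*R$. Applying $H^i_{\m}(-)$ and using the natural identification $H^i_{\m}(F_*R)\cong F_*H^i_{\m}(R)$, the map $H^i_{\m}(F)$ is exactly the Frobenius action $F$ on $H^i_{\m}(R)$, and $H^i_{\m}(\psi)$ is a retraction of it; in particular it is injective for every $i$, so $R$ is $F$-injective. For ``strongly $F$-regular $\Rightarrow$ $F$-rational'' I would simply invoke the hierarchy recalled after Definition \ref{Frational}: strong $F$-regularity implies $F$-regularity by Remark \ref{F-regular remark} (1), hence weak $F$-regularity, hence $F$-rationality. (Alternatively: applying $H^d_{\m}(-)$ to a splitting of $cF^e\colon R\to F^e_*R$ shows $\xi\mapsto c\xi^{p^e}$ is injective on $H^d_{\m}(R)$, and such an $R$ is $F$-rational hence Cohen-Macaulay, so Proposition \ref{Frational2} applies.)

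For the converse, reduce to the local case, since $F$-purity and strong $F$-regularity are detected at maximal ideals, and fix an $F$-finite quasi-Gorenstein local ring $(R,\m)$ of dimension $d$; recall that $F$-finite rings are excellent and possess a dualizing complex, so Grothendieck local duality is available over $R$ itself. The crux is the equivalence: $R$ is $F$-pure if and only if the Frobenius action on $H^d_{\m}(R)$ is injective. Indeed, $F\colon R\to F_*R$ splits iff some $\varphi\in\Hom_R(F_*R,R)$ satisfies $\varphi(F_*1)=1$, i.e.\ iff the ideal $\{\varphi(F_*1)\mid\varphi\in\Hom_R(F_*R,R)\}$ equals $R$. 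Since $\omega_R$ is free of rank one, $\Hom_R(F_*R,R)\cong\Hom_R(F_*R,\omega_R)\cong\omega_{F_*R}\cong F_*\omega_R$, and under this identification the evaluation functional $\varphi\mapsto\varphi(F_*1)$ is carried to the trace map $\Phi\colon F_*\omega_R\to\omega_R$; by the top-degree case of local duality ($H^d_{\m}(R)^{\vee}\cong\omega_R$), $\Phi$ is the Matlis dual of the Frobenius action on $H^d_{\m}(R)$. Hence $\Phi$ is surjective iff that action is injective, and when $R$ is $F$-injective this injectivity is one of the defining conditions; so $R$ is $F$-pure.

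It remains to treat ``$F$-rational $\Rightarrow$ strongly $F$-regular'' for such $R$. An $F$-rational ring is Cohen-Macaulay (being excellent), hence, being also quasi-Gorenstein, Gorenstein; therefore $H^d_{\m}(R)\cong E_R(R/\m)$. Now $F$-rationality gives $0^{*}_{H^d_{\m}(R)}=0$ by Proposition \ref{Frational2}, whence $0^{*}_{E_R(R/\m)}=0$, which is precisely the criterion for strong $F$-regularity recalled in Section \ref{tight closure}.

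The step I expect to be the main obstacle is the duality bookkeeping in the converse: establishing $\Hom_R(F_*R,R)\cong F_*\omega_R$ and, more delicately, checking that evaluation at $F_*1$ really corresponds to the trace map Matlis-dual to Frobenius on $H^d_{\m}(R)$. The subtlety is that the quasi-Gorenstein hypothesis does not make $R$ Cohen-Macaulay, so the dualizing complex $\omega^\bullet_R$ need not be concentrated in a single degree; one must verify that only $\mathcal H^{-d}(\omega^\bullet_R)\cong\omega_R\cong R$ and the top-degree instance of local duality enter the argument, both of which are insensitive to the failure of the Cohen-Macaulay property.
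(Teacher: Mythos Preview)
Your argument is correct and follows essentially the same route as the paper: the forward implications are handled exactly as here, and for the converse you unpack explicitly (via the trace map and top-degree local duality) what the paper packages into Lemma~\ref{F-pure lemma}, namely that $F$-purity is equivalent to injectivity of Frobenius on $E=H^d_{\m}(\omega_R)$, which becomes $H^d_{\m}(R)$ under the quasi-Gorenstein hypothesis. Your caution about the non-Cohen--Macaulay case is well-placed but, as you suspect, only the identification $E=H^d_{\m}(\omega_R)$ and top-degree duality are needed, and these hold without any Cohen--Macaulay assumption.
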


Proposition \ref{F-pure=>F-injective} follows from Lemma \ref{F-pure lemma}: If a $d$-dimensional $F$-finite local ring $(R, \m)$ is $F$-pure, then by Lemma \ref{F-pure lemma} $(2) \Rightarrow (3)$, $F \otimes \mathrm{id}_{H^i_{\m}(R)}: H^i_{\m}(R) \to F_*R \otimes_R H^i_{\m}(R)$ is injective for each $i$, in other words, $R$ is $F$-injective. 
If $R$ is quasi-Gorenstein, then $E \cong H^d_{\m}(R)$, so that $F$-injectivity implies $F$-purity by Lemma \ref{F-pure lemma} $(1) \Rightarrow (4)$. 
The strongly $F$-regular case is similar: 
As explained in Remark \ref{F-regular remark}, strong $F$-regularity implies weak $F$-regularity and in particular $F$-rationality. 
If $R$ is Gorenstein, then $E \cong H^d_{\m}(R)$, so that $F$-rationality implies the strong $F$-regularity by an analogous statement to Lemma \ref{F-pure lemma} for strongly $F$-regular rings. 

The following lemma is a direct consequence of the definition of $F$-purity and Matlis duality.  
\begin{lem}\label{F-pure lemma}
Let $(R, \m)$ be a $d$-dimensional $F$-finite local ring and $E=H^d_{\m}(\omega_R)$ the injective hull of the residue field $R/\m$. Then the following conditions are equivalent to each other. 
\begin{enumerate}
\item 
$R$ is $F$-pure.
\item
The $R$-dual of the Frobenius map $F^{\vee}: \mathrm{Hom}_R(F_*R, R) \to \mathrm{Hom}_R(R, R)=R$ is surjective. 
\item 
For any $R$-module $M$, $F \otimes \mathrm{id}_M: M=R \otimes_R M \to F_*R \otimes_R M$ is injective. 
\item 
$F \otimes \mathrm{id}_E: E=R \otimes_R E \to F_*R \otimes_R E$ is injective. 
\end{enumerate}
A similar statement holds for strongly $F$-regular rings. 
\end{lem}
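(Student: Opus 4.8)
The plan is to establish the cycle of implications $(1)\Rightarrow(3)\Rightarrow(4)\Rightarrow(2)\Rightarrow(1)$, in which all the substance is concentrated in the single step $(4)\Rightarrow(2)$; the rest is formal. Note first that $(1)\Leftrightarrow(2)$ is essentially a restatement of the definition: $F^\vee=\Hom_R(F,R)$ carries $\psi\in\Hom_R(F_*R,R)$ to $\psi\circ F\in\Hom_R(R,R)=R$, so $F^\vee$ is surjective if and only if some $\psi$ has $\psi\circ F=\mathrm{id}_R$, i.e.\ $F$ splits. For $(1)\Rightarrow(3)$: a splitting $\psi$ of $F$ gives, for every $R$-module $M$, a left inverse $\psi\otimes\mathrm{id}_M$ of $F\otimes\mathrm{id}_M$, so the latter is split injective; and $(3)\Rightarrow(4)$ is the case $M=E$.

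It remains to prove $(4)\Rightarrow(2)$, and here the plan is to invoke Matlis duality $(-)^\vee=\Hom_R(-,E)$. First I would record that both $E$ and $F_*R\otimes_R E$ are Artinian $R$-modules: $E$ is Artinian, and since $F$-finiteness makes $F_*R$ a finitely generated $R$-module, $F_*R\otimes_R E$ is a quotient of a finite direct sum of copies of $E$. On the category of Artinian $R$-modules, $(-)^\vee$ is exact and reflexive, so a map between such modules is injective exactly when its Matlis dual is surjective. Next I would compute $(F\otimes\mathrm{id}_E)^\vee$: the tensor--Hom adjunction identifies $\Hom_R(F_*R\otimes_R E,\,E)$ with $\Hom_R(F_*R,\,\Hom_R(E,E))$, and $\Hom_R(E,E)\cong\widehat{R}$, so, by naturality, $(F\otimes\mathrm{id}_E)^\vee$ gets identified with $\Hom_R(F,\widehat{R})=F^\vee\otimes_R\widehat{R}$ (using that $F_*R$ is finitely presented). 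Since $\widehat{R}$ is faithfully flat over $R$, $F^\vee\otimes_R\widehat{R}$ is surjective if and only if $F^\vee$ is. Combining these equivalences gives $(4)\Leftrightarrow(2)$.

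The step I expect to be the main obstacle is the bookkeeping in this last identification: one must chase the image of $F_*1$ to check that the composite of the adjunction isomorphism with $\Hom_R(E,E)\cong\widehat{R}$ really carries $(F\otimes\mathrm{id}_E)^\vee$ to $\Hom_R(F,\widehat{R})$, and one must keep straight which modules are reflexive when passing from ``injective'' to ``dual is surjective''. A convenient simplification is to reduce to the complete case at the outset --- $F$ splits over $R$ if and only if it splits over $\widehat{R}$, because $F_*R$ is finitely presented --- after which $\Hom_R(E,E)=R$ and the identifications are cleaner. Finally, the strongly $F$-regular analogue is obtained by running the identical argument with $cF^e:R\to F^e_*R$ in place of $F:R\to F_*R$ for the relevant $c\in R^\circ$ and $e\in\N$; the only facts used are that $F^e_*R$ is a finitely generated $R$-module and that the existence of a splitting is detected by surjectivity of the $R$-dual.
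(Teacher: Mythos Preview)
Your proof is correct and follows precisely the approach the paper indicates: it states that the lemma ``is a direct consequence of the definition of $F$-purity and Matlis duality,'' without giving further details. Your argument fills in exactly those details---$(1)\Leftrightarrow(2)$ is the definition, $(1)\Rightarrow(3)\Rightarrow(4)$ is formal, and $(4)\Rightarrow(2)$ is the Matlis-duality step via tensor--Hom adjunction and $\Hom_R(E,E)\cong\widehat{R}$---so there is nothing to add.
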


Note that $F$-finite $F$-rational rings are normal by Corollary \ref{Frational=>normal}.  
On the other hand, $F$-finite $F$-injective rings are not necessarily normal. 
Before stating the next proposition, we recall the definition of weakly normal rings. 
Since we are working in positive characteristic, we take the following characterization of weak normality given in \cite{RRS} as its definition. 
Let $R$ be an excellent reduced ring of characteristic $p>0$ and $R^{\rm N}$ be the normalization of $R$. We say that $R$ is \textit{weakly normal} if for any $x \in R^{\rm N}$, if $x^p \in R$, then $x \in R$. 

\begin{prop}[\cite{Sch1}]\label{weakly normal}
$F$-finite $F$-injective rings are weakly normal. 
\end{prop}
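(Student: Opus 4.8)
The plan is to recast weak normality as a statement about local cohomology and then exploit that $F$-injectivity means precisely that the Frobenius acts injectively on $H^i_\m(R)$ for every $i$.

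First I would reduce to the local case. Since $F$-injectivity is checked at maximal ideals and both the normalization and the weak normalization commute with localization for the excellent ring $R$ ($F$-finite rings being excellent, and such $R$ being automatically reduced), it suffices to prove: if $(R,\m)$ is a $d$-dimensional $F$-finite $F$-injective local ring and $x\in R^{\rm N}$ satisfies $x^p\in R$, then $x\in R$. Put $S=R[x]\subseteq R^{\rm N}$, with inclusion $\iota\colon R\hookrightarrow S$; then $S$ is reduced, module-finite over $R$ (because $R^{\rm N}$ is), and $\dim S=d$. Set $Q=S/R$. Since $\mathrm{Supp}_R(Q)$ is contained in the non-normal locus of $R$---a proper closed set meeting no minimal prime of $R$---and $R$ is catenary, we get $\dim Q\le d-1$. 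Note also that $d\ge 1$, as a reduced Artinian local ring is normal. It now suffices to show $Q=0$.

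The crucial observation is that $S^p\subseteq R$: if $s=\sum_i r_i x^i\in S$ then $s^p=\sum_i r_i^p(x^p)^i\in R$. Consequently the Frobenius $F_S\colon S\to F_*S$ factors as $S\xrightarrow{\;\phi\;}F_*R\hookrightarrow F_*S$, where $\phi(s)=F_*(s^p)$ and $\phi|_R=F_R$. Applying $H^i_\m(-)$, and using $H^i_\m(F_*R)\cong F_*H^i_\m(R)$, the composite $H^i_\m(R)\xrightarrow{H^i_\m(\iota)}H^i_\m(S)\xrightarrow{H^i_\m(\phi)}F_*H^i_\m(R)$ is exactly the Frobenius map on $H^i_\m(R)$, which is injective because $R$ is $F$-injective. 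Hence $H^i_\m(\iota)$ is injective for every $i$, so all connecting homomorphisms in the long exact sequence of $0\to R\to S\to Q\to 0$ vanish, giving short exact sequences
\[
0\longrightarrow H^i_\m(R)\longrightarrow H^i_\m(S)\longrightarrow H^i_\m(Q)\longrightarrow 0\qquad(i\ge 0).
\]

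It remains to derive a contradiction from $Q\ne 0$. If $\dim Q=0$---in particular whenever $d=1$---then $H^0_\m(Q)=Q\ne 0$, whereas $H^0_\m(R)=H^0_\m(S)=0$ since $R$ and $S$ are reduced of positive dimension; the sequence at $i=0$ then forces $Q=0$, a contradiction. The case $\dim Q\ge 1$ is the heart of the matter and, I expect, the main obstacle. A naive dimension induction is blocked, because $F$-injectivity is not known to pass to localizations at non-maximal primes or to general hyperplane sections, so one cannot simply replace $R$ by $R/yR$ or $R_{\fkp}$. A more promising route is Matlis duality (available since $R$ has a dualizing complex): dualizing the displayed sequences yields $0\to H^i_\m(Q)^{\vee}\to H^i_\m(S)^{\vee}\to H^i_\m(R)^{\vee}\to 0$ together with compatible ``trace'' maps $F_*(H^i_\m(R)^{\vee})\to H^i_\m(R)^{\vee}$, which are surjective by $F$-injectivity, while on the subobject $H^i_\m(Q)^{\vee}$ the trace vanishes (the induced Frobenius on $Q=S/R$ is zero, as $S^p\subseteq R$); one then argues that this is incompatible with $H^i_\m(Q)^{\vee}\ne 0$. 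Alternatively, one may complete and localize $R^{\rm N}$ to reduce to the one-dimensional case treated above. Pushing the Frobenius bookkeeping through this final dévissage is the step I would budget the most care for.
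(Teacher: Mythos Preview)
Your setup is exactly right: reducing to the local case, taking $S=R[x]\subseteq R^{\mathrm N}$, observing $S^p\subseteq R$, and factoring $F_R$ as $R\xrightarrow{\iota}S\xrightarrow{\phi}F_*R$ to conclude that $H^i_\m(\iota)$ is injective for every $i$. The $\dim Q=0$ case (in particular $d=1$) is also correctly dispatched. The paper itself gives no argument here---it simply cites \cite{Sch1}---so there is no ``paper proof'' to compare line by line; but your write-up, as it stands, has a real gap precisely where you flag it.

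The difficulty is that injectivity of $H^i_\m(\iota)$ for all $i$ does \emph{not} by itself force $Q=0$ for a finite birational extension, and neither of your two proposed fixes closes the gap as written. The Matlis--duality sketch is incomplete: you obtain $0\to H^i_\m(Q)^\vee\to H^i_\m(S)^\vee\to H^i_\m(R)^\vee\to 0$ with the induced trace on $H^i_\m(Q)^\vee$ equal to zero and the trace on $H^i_\m(R)^\vee$ surjective, but from this diagram alone one cannot conclude $H^i_\m(Q)^\vee=0$; a diagram chase only tells you that the trace on $H^i_\m(S)^\vee$ has cokernel $H^i_\m(Q)^\vee$, which is no contradiction. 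What is missing is an \emph{independent} reason why $H^i_\m(\iota)$ is also surjective (equivalently, why the maps $h^{-i}(\omega_S^\bullet)\to h^{-i}(\omega_R^\bullet)$ are injective for the finite birational inclusion $R\subseteq S$); once you have both injectivity and surjectivity you are done, but you have not supplied that second half. The alternative suggestion to ``complete and localize $R^{\mathrm N}$ to reduce to the one-dimensional case'' is not a valid reduction: weak normality is not a codimension-one condition, and you yourself note that $F$-injectivity is not known to pass to localizations at non-maximal primes, so there is no mechanism to transport the hypothesis to the height-one localizations. In short, the heart of the matter---which you correctly identify---remains unresolved in your proposal, and Schwede's argument in \cite{Sch1} supplies exactly the missing duality input that your sketch gestures at but does not establish.
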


In the process of proving Proposition \ref{weakly normal}, we obtain the following example. 

\begin{ex}[\cite{Sch1}, \cite{GW2}]
A one-dimensional $F$-finite reduced ring is $F$-injective if and only if it is weakly normal. 
Furthermore, suppose that $R$ is a one-dimensional $F$-finite reduce local ring with perfect residue field. Then $R$ is $F$-pure if and only if it is weakly normal. 
If $R$ is a one-dimensional complete local ring with algebraically closed residue field $k$, then $R$ is isomorphic to $k[[X_1, \dots, X_r]]/(X_iX_j \; | \; i<j)$, where $r$ is the number of associated prime ideals of $R$. 
\end{ex}

Summing up the above, we have seen that the following implications hold for $F$-finite rings: 
\vspace*{-1.2pt}
\[
\xymatrix{
\textup{regular}\ar@{=>}[r] & 
\textup{strongly $F$-regular} \ar@{=>}[r] \ar@{=>}[d] & \textup{$F$-rational} \ar@{=>}[r] \ar@/_1pc/[l]_{+\textup{quasi-Gorenstein}} \ar@{=>}[d] & \textup{CM \& normal} \ar@{=>}[d] & \\
& \textup{$F$-pure}  \ar@{=>}[r]   & \textup{$F$-injective} \ar@{=>}[r] \ar@/^1pc/[l]^{+\textup{quasi-Gorenstein}} & \textup{weakly normal}, 
}
\]
where ``CM" is an abbreviation for Cohen-Macaulay.

There is no criteria for $F$-rationality such as Proposition \ref{Fedder}, but as we see in the following proposition, we can determine using the $a$-invariant\footnote{See the footnote to Remark \ref{counterexample to Boutot} for the definition of the $a$-invariant.} whether a given graded ring is $F$-rational or not.
This should be compared with a criterion for rational singularities given in \cite{Fl}, \cite{Wa1}. 
\begin{prop}[\cite{HH4}]\label{F-rational criterion}
Let $R=\bigoplus_{n \ge 0}R_n$ be a $d$-dimensional graded ring with $R_0$ a field of characteristic $p>0$ and $\m=\bigoplus_{n \ge 1}R_n$. 
Then $R$ is $F$-rational if and only if the following 4 conditions are satisfied: 
$(1)$ $R$ is Cohen-Macaulay, $(2)$ $R$ is $F$-injective, $(3)$ $R_{\mathfrak{p}}$ is $F$-rational for every prime ideal $\mathfrak{p} \ne \m$, $(4)$ $a(R)<0$ $($the condition $(4)$ can be replaced by the condition that $[H^d_{\m}(R)]_0=0$ in this setting$)$. 
\end{prop}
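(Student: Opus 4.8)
The plan is to prove the two implications separately; the equivalence of the two forms of condition~$(4)$ will drop out at the end. I assume $d=\dim R\ge 1$ (for $d=0$ the ring is a field and the statement is degenerate). I use freely that a finitely generated graded algebra over a field is excellent, has a dualizing complex, and possesses a homogeneous test element; that for the irrelevant maximal ideal one has $H^i_{\m}(R)\cong H^i_{\m R_\m}(R_\m)$, that $H^d_{\m}(R)$ is graded and Artinian, and that $R$ is Cohen--Macaulay exactly when $R_\m$ is; that the $p$-linear Frobenius action $F$ on $H^d_{\m}(R)$ carries the degree-$n$ component into the degree-$pn$ component; and, by graded local duality, that $[H^d_{\m}(R)]_n\cong\Hom_{R_0}([\omega_R]_{-n},R_0)$. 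In particular, when $R$ is a $d$-dimensional graded domain, $[H^d_{\m}(R)]_n\ne 0$ for all $n\ll 0$, since $\omega_R$ is a nonzero finitely generated module of dimension $d$.

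For $\Rightarrow$, suppose $R$ is $F$-rational. Then $(1)$ is the statement that excellent $F$-rational rings are Cohen--Macaulay; $(2)$ holds because each $R_{\fkp}$ with $\fkp$ maximal is $F$-rational, hence $F$-injective; and $(3)$ holds because $F$-rationality is preserved under localization (Hochster--Huneke). For $(4)$: $R$ is a graded domain by Corollary~\ref{Frational=>normal}, so $[H^d_{\m}(R)]_n\ne 0$ for some $n<0$. If $a(R)\ge 0$, then $H_{\ge 0}:=\bigoplus_{n\ge 0}[H^d_{\m}(R)]_n$ is an $R$-submodule of $H^d_{\m}(R)$ (the degrees of $R$ are $\ge 0$), it is stable under $F$ (which multiplies degrees by $p$), it is nonzero by the hypothesis $a(R)\ge0$, and it is proper by the preceding sentence; this contradicts Theorem~\ref{Frobenius action}, since the Cohen--Macaulay $F$-rational ring $R_\m$ has no proper nonzero $F$-stable submodule of $H^d_{\m}(R)$. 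Hence $a(R)<0$.

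For $\Leftarrow$, assume $(1)$--$(4)$. By $(3)$, $R_{\fkp}$ is $F$-rational for every maximal ideal $\fkp\ne\m$, so by Definition~\ref{Frational}$(2)$ it remains to show $R_\m$ is $F$-rational; as $R_\m$ is Cohen--Macaulay by $(1)$, by Theorem~\ref{Frobenius action} it suffices to show $H^d_{\m}(R)$ has no proper nonzero $F$-stable submodule, equivalently that $N:=0^*_{H^d_{\m}(R)}=0$. First, $N$ is a graded $F$-stable submodule of $H^d_{\m}(R)$: gradedness holds because the tight closure of a homogeneous submodule of a graded module is homogeneous, which one sees by decomposing an element of $N$ into its homogeneous components and applying the definition of tight closure with a homogeneous test element. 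Second, $N$ has finite length: by $(3)$, and since the parameter test ideal $\Ann_R(0^*_{H^d_{\m}(R)})$ commutes with localization (Hochster--Huneke), its vanishing locus lies in $\{\m\}$, so it is $\m$-primary (or equals $R$, in which case $N=0$ already); being a submodule of the Artinian module $H^d_{\m}(R)$ that is annihilated by an $\m$-primary ideal, $N$ has finite length. Third, by $(2)$ the Frobenius action $F$ is injective on $H^d_{\m}(R)$, hence on $N$, and an injective endomorphism of a finite-length module is bijective; thus $F^e|_N$ is bijective for every $e$. Now suppose $N\ne0$ and choose a nonzero homogeneous element of $N$ in some degree $j$; then $j\ne0$, since $[N]_j\subseteq[H^d_{\m}(R)]_j$ while $[H^d_{\m}(R)]_0=0$ by $(4)$. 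Pick $e$ with $p^e\nmid j$. As $F^e$ sends a homogeneous element of degree $k$ to one of degree $p^ek$ and no integer $k$ satisfies $p^ek=j$, the degree-$j$ component of $F^e(N)$ vanishes, contradicting the surjectivity of $F^e$ on $N$. Hence $N=0$, so $R_\m$ is $F$-rational and therefore $R$ is $F$-rational. The equivalence of $(4)$ with ``$[H^d_{\m}(R)]_0=0$'' now follows: the latter is implied by $a(R)<0$, and conversely, assuming $(1)$--$(3)$ together with $[H^d_{\m}(R)]_0=0$, the argument just given---which used only $[H^d_{\m}(R)]_0=0$---shows $R$ is $F$-rational, whence $a(R)<0$ by $\Rightarrow$.

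The step I expect to be the main obstacle is the finite-length claim in $\Leftarrow$. It is precisely here that condition~$(3)$ is indispensable: Cohen--Macaulayness, $F$-injectivity, and negativity of the $a$-invariant do not by themselves make the vertex $F$-rational, because $0^*_{H^d_{\m}(R)}$ can be supported only on $\{\m\}$ and still have infinite length; the reduction to the finite-length case rests on the localization behavior of the parameter test ideal, that is, on the theorem that $F$-rationality localizes. A secondary technical input, used in both directions, is the graded local duality statement that $H^d_{\m}(R)$ is nonzero in all sufficiently negative degrees, together with the identification $H^i_{\m}(R)\cong H^i_{\m R_\m}(R_\m)$ and the gradedness of $0^*_{H^d_{\m}(R)}$; all of these are standard for finitely generated graded algebras over a field.
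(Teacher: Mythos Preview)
The paper does not include a proof of this proposition; it is stated with a citation to \cite{HH4} and followed immediately by an example. So there is nothing in the paper to compare against, and I assess your argument on its own merits.

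Your overall strategy is the standard one and is essentially correct: the forward direction is routine (using Smith's Theorem~\ref{Frobenius action} for $(4)$), and for the converse you reduce to showing $N:=0^*_{H^d_{\m}(R)}=0$ by combining finite length of $N$ (from condition $(3)$ via the parameter test ideal) with the graded structure and $F$-injectivity. The finite-length step, which you correctly flag as the main obstacle, is fine as stated.

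There is, however, a real gap in the final step. You assert that ``an injective endomorphism of a finite-length module is bijective'' and apply this to the $p$-linear Frobenius $F$ on $N$. But $F$ is not $R$-linear, and as a map of abelian groups $N$ need not be finite when $R_0$ is imperfect. Concretely, if $R_0=\F_p(t)$ and $V$ is a one-dimensional $R_0$-space, then $a\mapsto a^p$ is injective with image $R_0^p\subsetneq R_0$. Equivalently, viewing $F$ as an $R$-linear map $N\to F_*N$, one has $\ell_R(F_*N)=[R_0:R_0^p]\cdot\ell_R(N)$, so injectivity does not force surjectivity unless $R_0$ is perfect. Since the proposition does not assume $R_0$ perfect, your surjectivity argument is not justified.

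The repair is immediate and in fact simpler than what you wrote, using only injectivity. Since $N$ is graded of finite length with $[N]_0\subseteq[H^d_{\m}(R)]_0=0$, suppose $N\ne 0$. If $[N]_j\ne 0$ for some $j>0$, take $j_1$ maximal with this property; then $F([N]_{j_1})\subseteq[N]_{pj_1}=0$ because $pj_1>j_1$, contradicting injectivity of $F$. Otherwise every nonzero graded piece sits in negative degree; take $j_0$ minimal with $[N]_{j_0}\ne 0$, and again $F([N]_{j_0})\subseteq[N]_{pj_0}=0$ since $pj_0<j_0$. Hence $N=0$. This uses only injectivity, finite length, and $[N]_0=0$, so it works over an arbitrary $R_0$ and simultaneously handles the parenthetical replacement of $(4)$ by $[H^d_{\m}(R)]_0=0$.
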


\begin{ex}
Let $k$ be an algebraically closed field of characteristic $p>0$ with $p \equiv 1 \; \mathrm{mod}\; 3$ and $S=k[X,Y,Z]/(X^3-YZ(Y+Z))$. 
We have seen in Example \ref{F-pure example} that $S$ is $F$-pure. 
Let $\omega \in k$ be a primitive cube root of unity, and suppose that the cyclic group $G=\Z/3\Z$ of order 3 acts on $S$ as $x \mapsto x, y \mapsto \omega y, z \mapsto \omega z$, where $x, y, z$ are the images of $X, Y, Z$ in $S$, respectively. 
Then the invariant subring 
\[
R:=S^G=k[X, Y^3, Y^2Z, Z^3]/(X^3-YZ(Y+Z))
\]
is $F$-rational. 
Indeed, since $S$ is a two-dimensional normal ring, so is $R$. In particular, $R$ is a Cohen-Macaulay isolated singularity. Hence, the conditions (1) and (3) in Proposition \ref{F-rational criterion} are satisfied. 
Since $S$ is $F$-pure, $R$ is also $F$-pure by Lemma \ref{pure subring} (1), which implies the condition (2). 
In order to show that $R$ satisfies the condition (4), note that the third Veronese subring $R^{(3)}$ of $R$ is isomorphic to the third Veronese subring $A^{(3)}=k[Y^3, Y^2Z, YZ^2, Z^3]$ of the polynomial ring $A=k[Y, Z]$. 
Denoting by $\m$ the unique homogeneous maximal ideal of $R$, one has  
\[[H^2_{\m}(R)]_0=[H^2_{\m}(R^{(3)})]_0 \cong [H^2_{(Y,Z)}(A^{(3)})]_0=[H^2_{(Y,Z)}(A)]_0=0.\]
Thus, it follows from Proposition \ref{F-rational criterion} that $R$ is $F$-rational. 
On the other hand, $R$ is not strongly $F$-regular by Lemma \ref{pure subring} (2), because $S$ is not strongly $F$-regular as we have seen in Proposition \ref{F-rational criterion}. 
\end{ex}

\begin{lem}\label{pure subring}
Let $A \subset B$ be an extension of $F$-finite rings such that $A^{\circ} \subseteq B^{\circ}$, and suppose that $A$ is a pure subring of $B$. 
\begin{enumerate}
\item $($\cite{Has}$)$
If $B$ is $F$-pure $($respectively, strongly $F$-regular$)$, then so is $A$. 
\item $($\cite{Wa2}$)$
Suppose that $A$ and $B$ are both normal and that the extension $A \subset B$ is \'{e}tale in codimension one.\footnote{An analogous statement holds even when $A \subset B$ is not \'{e}tale in codimension one. The reader is referred to \cite{ScTu} for the details.}
Then $A$ is $F$-pure $($respectively, strongly $F$-regular$)$ if and only if so is $B$. 
\end{enumerate}
\end{lem}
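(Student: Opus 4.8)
The forward implication in part~(2)---that if $B$ is $F$-pure, resp.\ strongly $F$-regular, then so is $A$---is exactly part~(1) (whose hypotheses all hold here), so the plan is to prove (1) and then the reverse implication in (2). Throughout I would use that $A$ and $B$ are $F$-finite, so that $F^e_*A$ and $F^e_*B$ are finitely generated, hence finitely presented, modules over these Noetherian rings.

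For part~(1) the first step is a reduction: if a map of $A$-modules $cF^e_A\colon A\to F^e_*A$ (for some $c\in A$, $e\in\N$) has the property that $cF^e_A\otimes_A\mathrm{id}_M$ is injective for every $A$-module $M$, then it is a pure monomorphism with finitely presented cokernel, and hence splits. So it is enough to verify this injectivity, where in the $F$-pure case we take $c=1$ and $e=1$, and in the strongly $F$-regular case we fix $c\in A^{\circ}$, use $A^{\circ}\subseteq B^{\circ}$ to regard $c\in B^{\circ}$, and use the strong $F$-regularity of $B$ to choose $e\in\N$ with $cF^e_B\colon B\to F^e_*B$ split. For an $A$-module $M$, I would then identify, via $F^e_*B\otimes_A M\cong F^e_*B\otimes_B(B\otimes_A M)$, the composite $M\to F^e_*A\otimes_A M\to F^e_*B\otimes_A M$ (first map $cF^e_A\otimes_A\mathrm{id}_M$, second induced by $A\hookrightarrow B$) with the composition of the purity map $M\to B\otimes_A M$ and $cF^e_B\otimes_B\mathrm{id}_{B\otimes_A M}$. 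The purity map is injective since $A$ is pure in $B$, and $cF^e_B\otimes_B\mathrm{id}$ is injective since $cF^e_B$ is a split injection; hence $cF^e_A\otimes_A\mathrm{id}_M$ is injective, which proves~(1).

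For the reverse implication in part~(2) I would assume $A$ and $B$ are normal domains and (as in the intended applications) that $A\subset B$ is module-finite. Suppose $A$ is strongly $F$-regular. Since $A$ is $F$-finite it is excellent, so, being normal, its regular locus is dense open with complement of codimension $\ge 2$; and since $A\subset B$ is \'etale in codimension one, the (closed) image in $\Spec A$ of the non-\'etale locus of $\Spec B\to\Spec A$ meets no height-one prime, hence is also of codimension $\ge 2$. So I can pick $c\in A^{\circ}$ with $A_c$ regular and $A_c\to B_c$ \'etale; then $B_c$ is regular, hence strongly $F$-regular by Proposition~\ref{regular => F-regular}. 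Because $c\in A^{\circ}$ and $A$ is strongly $F$-regular, there are $e\in\N$ and an $A$-linear map $\phi\colon F^e_*A\to A$ with $\phi(F^e_*c)=1$. The crucial step is to push $\phi$ up to $B$: because $A\subset B$ is \'etale in codimension one and both rings are normal, the natural $B$-linear map $F^e_*A\otimes_A B\to F^e_*B$ is an isomorphism over an open $V\subseteq\Spec B$ with $\Spec B\setminus V$ of codimension $\ge 2$ (here one uses that Frobenius commutes with \'etale base change, applied at the height-one primes of $B$), while $F^e_*B$, and therefore $\Hom_B(F^e_*B,B)$, are reflexive $B$-modules since $B$ is normal. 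Hence $\phi\otimes_A\mathrm{id}_B$ restricted to $V$ is a homomorphism $F^e_*\sO_V\to\sO_V$, which by reflexivity of $\Hom_B(F^e_*B,B)$ and $\Gamma(V,\sO_B)=B$ extends uniquely to a $B$-linear $\psi\colon F^e_*B\to B$; since $\psi(F^e_*c)$ restricts to $1$ on $V$, it equals $1$, i.e.\ $cF^e_B$ splits, and Lemma~\ref{single c} gives that $B$ is strongly $F$-regular. The $F$-pure case is the same with $c=1$, $e=1$: a splitting $F_*A\to A$ of $F_A$ extends as above to a splitting $F_*B\to B$ of $F_B$, and Lemma~\ref{single c} is not needed.

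The step I expect to be the main obstacle is the codimension-one bookkeeping in the reverse direction of (2): verifying that $F^e_*B$ and $\Hom_B(F^e_*B,B)$ are reflexive, so that objects specified in codimension one extend uniquely to all of $\Spec B$; that the comparison map $F^e_*A\otimes_A B\to F^e_*B$ really is an isomorphism in codimension one---which is precisely where the hypothesis ``\'etale in codimension one'' enters---and, most delicately, that the extended $\psi$ genuinely sends $F^e_*c$ to $1$ in $B$ rather than merely to something agreeing with $1$ on a dense open subset. Part~(1), by contrast, is essentially formal once one recalls that a pure monomorphism with finitely presented cokernel splits.
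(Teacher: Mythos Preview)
The paper itself does not give a proof of this lemma; it is stated with citations to \cite{Has} for part~(1) and \cite{Wa2} for part~(2), and the text proceeds immediately to the next example. So there is no ``paper's own proof'' to compare your proposal against.

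Your argument is correct and follows the standard route. For part~(1), you are essentially reproving the implication $(1)\Leftrightarrow(3)$ of Lemma~\ref{F-pure lemma} (purity of $cF^e_A$ plus finite presentation of the cokernel forces a splitting) and then exploiting purity of $A\subset B$ to factor $cF^e_A\otimes\mathrm{id}_M$ through the injective map $M\to B\otimes_A M$ followed by a split injection; this is exactly the approach one finds in \cite{Has}. For part~(2), your use of the relative Frobenius isomorphism $F^e_*A\otimes_A B\cong F^e_*B$ over the \'etale locus, combined with reflexivity of $F^e_*B$ and $\Hom_B(F^e_*B,B)$ over the normal ring $B$ to extend across codimension~$\ge 2$, is the standard mechanism (and is what underlies both \cite{Wa2} and the more general \cite{ScTu} cited in the footnote). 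Your invocation of Lemma~\ref{single c} at the end is correct and is precisely what that lemma is designed for.

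Two minor comments. First, you explicitly add the hypothesis that $A\subset B$ is module-finite in part~(2); this is not literally in the statement, but it is implicit in the phrase ``\'etale in codimension one'' as used here and in \cite{Wa2}, and all applications in the paper (Veronese subrings, invariant rings under finite groups) are finite extensions, so this is harmless. Second, your worry about whether $\psi(F^e_*c)=1$ is unfounded: since $B$ is a normal domain and $\psi(F^e_*c)-1$ vanishes on a dense open subset, it vanishes everywhere.
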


\begin{ex}\label{F-injective example}
Let $S=k[X, Y, Z]/(X^4+Y^4+Z^4)$ where $k$ is a perfect field of characteristic $p>0$ with $p \equiv 1 \; \mathrm{mod} \; 4$. 
 It is easy to see from Proposition \ref{Fedder} that $S$ is not $F$-pure. 
Therefore, the second Veronese subring 
\[R:=S^{(2)}=k[X^2, XY, XZ, Y^2, YZ, Z^2]/(X^4+Y^4+Z^4)\] 
of $S$ is also not $F$-pure by Lemma \ref{pure subring} (2). 
We will show that $R$ is $F$-injective. 

Let $A=k[X, Y ,Z]$ and $f=X^4+Y^4+Z^4 \in A$. 
Let $\m_A, \m_R$ and $\m_S$ denote the unique homogeneous maximal ideals of $A$, $R$ and $S$, respectively. 
Consider a commutative diagram with exact rows 
\[
\xymatrix{
0 \ar[r] & A \ar[r]^{\times f}  \ar[d]_{f^{p-1}F} & A  \ar[r] \ar[d]_F & S  \ar[r] \ar[d]_F & 0 \\
0 \ar[r] & A \ar[r]^{\times f} & A  \ar[r] & S \ar[r] & 0. 
}
\]
This diagram induces the following commutative diagram with exact rows: 
\[
\xymatrix{
0 \ar[r] & H^2_{\m_S}(S) \ar[r] \ar[d]_F & H^3_{\m_A}(A) \ar[r]^{\times f}  \ar[d]_{f^{p-1}F} & H^3_{\m_A}(A) \ar[d]_F\\
0 \ar[r] & H^2_{\m_S}(S) \ar[r] & H^3_{\m_A}(A)  \ar[r]^{\times f} & H^3_{\m_A}(A). 
}
\]
By the commutativity of this diagram, the Frobenius map $F:H^2_{\m_S}(S) \to H^2_{\m_S}(S)$ on $H^2_{\m_S}(S)$ can be identified with $f^{p-1}F: (0:f)_{H^3_{\m_A}(A)} \to (0:f)_{H^3_{\m_A}(A)}$. 

Suppose that $\xi=[g/(XYZ)^m] \in H^3_{\m_A}(A)$ is a homogeneous element such that $f^{p-1}F(\xi)=0$, that is, $f^{p-1}g^p \in (X^{mp}, Y^{mp}, Z^{mp})$. 
Since $p \equiv 1 \; \mathrm{mod} \; 4$, the monomial $X^{2(p-1)}Y^{p-1}Z^{p-1}=X^p(X^{p-2}Y^{p-1}Z^{p-1})$ appears with a nonzero coefficient $c \in k$ in the expansion of $f^{p-1}$. 
Let $\varphi:F_*A \to A$ be the $A$-linear map sending $F_*(X^{p-2}Y^{p-1}Z^{p-1})$ to $c^{-1/p}$ and the other members of the free basis to zero. 
Then
\[
Xg=\varphi(F_*f^{p-1})g=\varphi(F_*(f^{p-1}g^p)) \in \varphi(F_*(X^{mp}, Y^{mp}, Z^{mp})) \subset (X^m, Y^m, Z^m).
\] 
One can show similarly that $Yg$ and $Zg$ lie in $(X^m, Y^m, Z^m)$. 
Namely, $\m_Ag$ is contained in $(X^m, Y^m, Z^m)$, which is equivalent to saying that $\m_A \xi=0$. 
Since $A$ is a three-dimensional polynomial ring, this means that $\deg \xi \ge -3$. 

Taking into consideration the fact that $[H^2_{\m_S}(S)]_n \cong [(0:f)_{H^3_{\m_A}(A)}]_{n-4}$ for each $n \in \Z$, one has that $F:H^2_{\m_S}(S) \to H^2_{\m_S}(S)$ is injective in nonpositive degrees, and then so is $F:H^2_{\m_R}(R) \to H^2_{\m_R}(R)$.  
Since $[H^2_{\m_R}(R)]_n \cong [H^2_{\m_S}(S)]_{2n}=0$ for all $n \ge 1$, we can conclude that the Frobenius map $F:H^2_{\m_R}(R) \to H^2_{\m_R}(R)$ is injective in all degrees. 
Thus, $R$ is $F$-injective. 
\end{ex}

Let $R$ be an $F$-finite local ring. 
The \textit{trace map} $\mathrm{Tr}_F:F_*\omega_R \to \omega_R$ of the Frobenius map on $R$ is the $\omega_R$-dual of the Frobenius map $F: R \to F_*R$: 
\[
\mathrm{Tr}_F:F_*\omega_R \cong \Hom_R(F_*R, \omega_R) \to \Hom_R(R, \omega_R)=\omega_R, 
\]
where the first isomorphism follows from Grothendieck-Serre duality. 
For each $e \in \N$, the map $\mathrm{Tr}_F^e:F^e_*\omega_R \to \omega_R$ is defined by $\mathrm{Tr}_F^e=\mathrm{Tr}_F \circ F_*\mathrm{Tr}_F \circ \cdots \circ F^{e-1}_*\mathrm{Tr}_F$, which is nothing but the $\omega_R$-dual of the $e$-times iterated Frobenius $F^e:R \to F^e_*R$. 

When $R$ is Cohen-Macaulay, $F$-injectivity and $F$-rationality are characterized in terms of the subjectivity of the trace of Frobenius.  
This is an easy consequence of local duality. 

\begin{prop}
Let $R$ be an $F$-finite reduced local ring. 
\begin{enumerate}
\item 
If $R$ is $F$-injective, then $\mathrm{Tr}_F: F_* \omega_R \to \omega_R$ is surjective. 
When $R$ is Cohen-Macaulay, the converse is also true. 
\item 
$R$ is $F$-rational if and only if $R$ is Cohen-Macaulay and if for any $c \in R^{\circ}$, there exists an $e \in \N$ such that 
\[
\hspace*{1em} c\mathrm{Tr}^e_F:F^e_*\omega_R \xrightarrow{\times F^e_*c} F^e_*\omega_R \xrightarrow{\mathrm{Tr}_F^e} \omega_R \quad F^e_*x \mapsto F^e_*(cx) \mapsto \mathrm{Tr}_F^e(F^e_*(cx))
\]
is surjective. 
\end{enumerate}
\end{prop}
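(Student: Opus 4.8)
The plan is to deduce everything from local duality, exactly as the sentence preceding the statement promises: the two assertions are the Matlis duals of the characterizations of $F$-injective and $F$-rational rings via the Frobenius action on $H^d_{\m}(R)$. Throughout, $d=\dim R$. Since $R$ is $F$-finite it has a dualizing complex, hence a canonical module $\omega_R$, and by definition $\mathrm{Tr}^e_F\colon F^e_*\omega_R\to\omega_R$ is the $\omega_R$-dual of $F^e\colon R\to F^e_*R$ under the identification $F^e_*\omega_R\cong\Hom_R(F^e_*R,\omega_R)$ coming from Grothendieck--Serre duality for the finite morphism $F^e$. One may harmlessly pass to the completion: surjectivity of a map of finitely generated modules, $F$-finiteness, the formation of $\omega_R$, of $H^d_{\m}(R)$ and of $\mathrm{Tr}^e_F$ are all compatible with completion, and $F$-injectivity is detected on local cohomology, which is insensitive to completion. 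So assume $R$ complete and write $(-)^{\vee}=\Hom_R(-,E)$ with $E=H^d_{\m}(\omega_R)$ as in Lemma \ref{F-pure lemma}; this is an exact faithful functor, with $\omega_R^{\vee}\cong H^d_{\m}(R)$ and $H^d_{\m}(R)^{\vee}\cong\omega_R$.

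The technical core is a single compatibility: using the natural isomorphism $E\cong\Hom_R(F^e_*R,E)$ (itself a form of Grothendieck duality for $F^e$) together with Hom--tensor adjunction over the ring $F^e_*R$, one obtains a natural isomorphism $(F^e_*\omega_R)^{\vee}\cong F^e_*H^d_{\m}(R)$, and I would check that under all these identifications the Matlis dual of $c\,\mathrm{Tr}^e_F$ is precisely the $p^e$-linear map $cF^e\colon H^d_{\m}(R)\to H^d_{\m}(R)$, $\xi\mapsto c\xi^{p^e}$ (equivalently, its $R$-linear incarnation $H^d_{\m}(R)\to F^e_*H^d_{\m}(R)$). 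Once this is in place, both claims are immediate. For (1), taking $c=1$ and $e=1$ shows $\mathrm{Tr}_F$ is surjective if and only if $F\colon H^d_{\m}(R)\to H^d_{\m}(R)$ is injective; if $R$ is $F$-injective the latter holds by definition, so $\mathrm{Tr}_F$ is surjective, and conversely if moreover $R$ is Cohen--Macaulay then $H^i_{\m}(R)=0$ for $i<d$, so injectivity of $F$ on $H^d_{\m}(R)$ is exactly $F$-injectivity. For (2), for each $c\in R^{\circ}$ and $e\in\N$ the compatibility gives that $c\,\mathrm{Tr}^e_F$ is surjective if and only if $\xi\mapsto c\xi^{p^e}$ is injective on $H^d_{\m}(R)$; now Proposition \ref{Frational2} says that $R$ is $F$-rational precisely when $R$ is Cohen--Macaulay and for every $c\in R^{\circ}$ there is an $e$ with this last map injective, so substituting the trace condition for the local cohomology condition yields the statement.

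The step that needs real care — the main obstacle — is the compatibility asserted in the second paragraph: that the Grothendieck--Serre duality isomorphism defining $\mathrm{Tr}^e_F$ dualizes, under Matlis duality, to the Frobenius action on $H^d_{\m}(R)$, with the iterated $F^e_*$-bookkeeping and the twist by the multiplier $c$ tracked correctly. Everything surrounding it is formal manipulation with exact functors and the definitions already recorded above.
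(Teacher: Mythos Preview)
Your proposal is correct and is exactly the approach the paper intends: the paper does not give a proof but simply remarks, immediately before the proposition, that it ``is an easy consequence of local duality,'' and your argument spells this out by Matlis-dualizing the trace map to recover the Frobenius action on $H^d_{\m}(R)$ and then invoking Proposition~\ref{Frational2}. Your identification of the duality compatibility $(c\,\mathrm{Tr}^e_F)^{\vee}=cF^e$ as the only non-formal step is apt, and the rest is as routine as you say.
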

 
A morphism $\pi:Y \to X$ of integral schemes is called an \textit{alteration} if $\pi$ is a generically finite proper surjective morphism. 
It is called a \textit{regular alteration} if $Y$ is a regular scheme. 
It follows from Stein factorization that an alteration $\pi: Y \to X$ factors as $Y \xrightarrow{f} Z \xrightarrow{g} X$, where $f$ is a proper birational morphism and $g$ is a finite morphism. 
Then the trace map $\mathrm{Tr}_{\pi}:\pi_*\omega_Y \to \omega_X$ of $\pi$ is the composite of $\mathrm{Tr}_f$ and $\mathrm{Tr}_g$, where 
the trace map $\mathrm{Tr}_f: f_*\omega_Y \to \omega_Z$ of $f$ is a natural inclusion and the trace map $\mathrm{Tr}_g:g_*\omega_Z \to \omega_X$ of $g$ is the $\omega_X$-dual of $\sO_X \to g_*\sO_Z$: 
\[
g_*\omega_Z \cong \mathcal{H}\mathrm{om}_{\sO_X}(g_*\sO_Z, \omega_X) \to \mathcal{H}\mathrm{om}_{\sO_X}(\sO_X, \omega_X)=\omega_X. 
\]
 
Rational singularities in characteristic zero and $F$-rational rings in characteristic $p>0$ have a common characterization in terms of the surjectivity of the trace map $\mathrm{Tr}_{\pi}: \pi_*\omega_Y \to \omega_X$ for every regular alterations $\pi: Y \to X$.

\begin{thm}[\cite{BST}]\label{BST}
Let $R$ be an $F$-finite Cohen-Macaulay domain. Then the following conditions are equivalent to each other. 
\begin{enumerate}
\item $R$ is $F$-rational. 
\item For any finite extension $R \subset S$, its trace map $\omega_S \to \omega_R$ is surjective. 
\item For any alteration $\pi: Y \to X=\Spec R$, the trace map $\mathrm{Tr}_{\pi}:\pi_*\omega_Y \to \omega_X$ of $\pi$ is surjective. 
\end{enumerate}
If $R$ is of finite type over a perfect field, then we may take $\pi$ to be a regular separable alteration in $(3)$.  
\end{thm}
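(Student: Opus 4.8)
The plan is to prove the cycle $(1) \Rightarrow (3) \Rightarrow (2) \Rightarrow (1)$ and then deduce the concluding refinement. The implication $(3) \Rightarrow (2)$ is immediate: a module-finite extension $R \subseteq S$ of domains gives an alteration $\Spec S \to \Spec R$ (finite, hence proper, surjective and generically finite), and the trace map of this alteration — computed from its Stein factorization, whose birational factor is the identity — is exactly the trace $\omega_S \to \omega_R$ of the extension, so $(3)$ applies.

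For $(1) \Rightarrow (3)$ I would use the characterization of $F$-rationality through the trace of Frobenius proved just above: since $R$ is Cohen-Macaulay, for every $c \in R^{\circ}$ there is an $e$ with $c\,\mathrm{Tr}_F^e\colon F^e_*\omega_R \to \omega_R$ surjective, and unwinding the definition one has $\mathrm{image}(c\,\mathrm{Tr}_F^e) = \mathrm{Tr}_F^e\bigl(F^e_*(c\,\omega_R)\bigr)$. Fix an alteration $\pi\colon Y \to X = \Spec R$ and set $M = \mathrm{image}(\mathrm{Tr}_\pi\colon \pi_*\omega_Y \to \omega_X)$, regarded as an $R$-submodule of $\omega_R$ (harmless since $X$ is affine, and surjectivity of $\mathrm{Tr}_\pi$ may be tested on global sections). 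The birational factor of $\pi$ is an isomorphism over the generic point of $X$, and a finite field extension has surjective Grothendieck trace $\mathrm{Hom}_K(L,K) \to K$; hence $M$ agrees with $\omega_R$ at the generic point, and since $\omega_R$ is a torsion-free $R$-module of rank one, $\omega_R/M$ is torsion, so there is $c \in R^{\circ}$ with $c\,\omega_R \subseteq M$. The crucial point is that $M$ is stable under $\mathrm{Tr}_F^e$: by the naturality of the absolute Frobenius one has $F^e_X \circ \pi = \pi \circ F^e_Y$, and $F^e_X$ is finite because $R$ is $F$-finite, so applying the functoriality of the Grothendieck trace under composition to the two factorizations of this morphism gives $\mathrm{Tr}_F^e \circ (F^e_X)_*\mathrm{Tr}_\pi = \mathrm{Tr}_\pi \circ \pi_*\bigl((F^e_Y)_*\omega_Y \to \omega_Y\bigr)$, the last arrow being the Frobenius trace of $Y$; passing to images over $X$ yields $\mathrm{Tr}_F^e(F^e_*M) \subseteq M$. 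Combining everything, for this $c$ and a suitable $e$,
\[
\omega_R = \mathrm{image}(c\,\mathrm{Tr}_F^e) = \mathrm{Tr}_F^e\bigl(F^e_*(c\,\omega_R)\bigr) \subseteq \mathrm{Tr}_F^e(F^e_*M) \subseteq M \subseteq \omega_R,
\]
so $M = \omega_R$, i.e. $\mathrm{Tr}_\pi$ is surjective. (Notably this argument uses neither resolution of singularities nor pseudo-rationality, and taking $\pi$ finite it also yields $(1) \Rightarrow (2)$ directly.)

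For $(2) \Rightarrow (1)$, the hard direction, I would argue by contradiction; since $F$-rationality is local, assume $(R,\m)$ is an excellent Cohen-Macaulay local domain which is not $F$-rational, so that $0^*_{H^d_{\m}(R)} \neq 0$ by Proposition \ref{Frational2}. The aim is to exhibit a module-finite extension $R \subseteq S$ for which $\mathrm{Tr}_{S/R}\colon \omega_S \to \omega_R$ fails to be surjective, contradicting $(2)$. By local duality (completion being harmless since $R$ is excellent), $\mathrm{Tr}_{S/R}$ is surjective precisely when the induced map $H^d_{\m}(R) \to H^d_{\m}(S)$ is injective, so it is enough to find an $S$ that kills some nonzero element of $0^*_{H^d_{\m}(R)}$ in $H^d_{\m}(S)$. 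This is exactly where the main obstacle lies: it requires knowing that the tight closure of a parameter ideal is captured inside a finite extension — equivalently $0^*_{H^d_{\m}(R)} \subseteq \bigcup_{R \subseteq S\ \text{finite}} \ker\!\bigl(H^d_{\m}(R) \to H^d_{\m}(S)\bigr)$ — which is K.~Smith's theorem identifying tight closure with plus closure for parameter ideals, and whose proof in the generality needed rests on de Jong's theorem on the existence of alterations. Granting this, the contradiction is immediate.

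Finally, for the concluding statement, suppose $R$ is of finite type over a perfect field. That $(1)$ implies surjectivity of the trace of every regular separable alteration is the relevant special case of $(1) \Rightarrow (3)$ above. For the converse one runs the same contradiction argument: a witnessing alteration with non-surjective trace may be dominated by a regular, generically étale alteration — which is precisely what de Jong's theorem yields over a perfect field — and such domination only shrinks the trace image, by functoriality of the trace; so the argument goes through with no new difficulty beyond the one already identified.
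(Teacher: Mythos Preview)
The paper does not prove this theorem: it is quoted from \cite{BST} and used as a black box (the only comment is that Corollary~\ref{Smith's thm} follows from $(1)\Rightarrow(3)$). So there is no in-paper proof to compare against; your outline is in fact close to the argument in the cited source.

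Your argument for $(1)\Rightarrow(3)$ is correct and is the standard one: the image of $\mathrm{Tr}_\pi$ is a rank-one submodule of $\omega_R$ stable under Frobenius trace, hence by the trace characterization of $F$-rationality it must be all of $\omega_R$. The implication $(3)\Rightarrow(2)$ is indeed trivial. For $(2)\Rightarrow(1)$ you correctly identify the key input as Smith's theorem that tight closure equals plus closure for parameter ideals (equivalently, $0^*_{H^d_\m(R)}$ dies in $H^d_\m(R^+)$). One small historical correction: Smith's proof does \emph{not} rest on de Jong's alterations; it predates them and goes through the Hochster--Huneke theorem that $R^+$ is a big Cohen--Macaulay $R$-algebra for an excellent local domain $R$. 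De Jong enters only in the last sentence of the theorem, exactly where you invoke it, to produce a regular generically \'etale alteration dominating a given one over a perfect base field.
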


Under the assumption of $F$-finiteness, Corollary \ref{Smith's thm} immediately follows from Theorem \ref{BST} $(1) \Rightarrow (3)$. 

\subsection{Reduction from characteristic zero to positive characteristic}\label{reduction}
We can define the notion of $F$-singularities in characteristic zero, using reduction from characteristic zero to positive characteristic.  
First we briefly review how to reduce things from characteristic zero to characteristic $p > 0$. Our main references are \cite[Chapter 2]{HH3} and \cite[Section 3.2]{MS}.

Let $R$ be a ring of finite type over a field $k$ of characteristic zero. 
There exist finitely many polynomials $f_i=\sum_{\underline{j}}a_{i\underline{j}}\underline{X}^{\underline{j}} \in k[X_1, \dots, X_n]$ such that $R \cong k[X_1, \dots, X_n]/(f_1, \dots, f_r)$. 
Let $A=\Z[a_{i\underline{j}}] \subseteq k$ be the $\Z$-subalgebra generated by the coefficients of the $f_i$ and $R_A=A[X_1, \dots, X_n]/(f_1, \dots, f_r)$. 
Then $R_A \otimes_A k \cong R$. 
Applying the generic freeness (\cite[Theorem 24.1]{Ma}), after possibly replacing $A$ by the localization $A_a$ with respect to some element $a \in A$, we may assume that $R_A$ is flat over $A$. 
Such a ring $R_A$ is referred to as a \textit{model} of $R$ over $A$. 
For a closed point $\mu \in \Spec A$, denote by $R_{\mu}$ the closed fiber $R_A \otimes_A A/\mu$ of the natural map $A \to R_A$ over $\mu$. 
Note that $R_{\mu}$ is $F$-finite, because $R_A$ is of finite type over the finite field $A/\mu$. 
If $R$ is regular (respectively, $\Q$-Gorenstein, Cohen-Macaulay, normal), after possibly replacing $A$ by a localization $A_a$, we may assume that $R_A$ is regular (respectively, $\Q$-Gorenstein, Cohen-Macaulay, normal), and then so is $R_{\mu}$ for all closed points $\mu \in \Spec A$. 

When $X$ is a scheme of finite type over $k$, we can similarly define a scheme $X_A$ of finite type over a finitely generated $\Z$-subalgebra $A$ of $k$ and a scheme $X_{\mu}$ of finite type over $A/\mu$ for each closed point $\mu \in \Spec A$. 
Given a morphism $f:X \to Y$ of schemes of finite type over $k$ and models $X_A$ and $Y_A$ of $X$ and $Y$ over $A$, respectively, after enlarging $A$ if necessarily, we may assume that $f$ induces a morphism $f_A:X_A \to Y_A$ of schemes of finite type over $A$. 
Then we can define a morphism $f_{\mu}:X_{\mu} \to Y_{\mu}$ of schemes of finite type over $A/\mu$ for each closed point $\mu \in \Spec A$. 
If $f$ is a projective morphism (respectively, a finite morphism), after possibly enlarging $A$ again, we may assume that $f_{\mu}$ is projective (respectively, finite) for all closed points $\mu \in \Spec A$. 

Suppose that $X$ is a normal variety over $k$ and $D=\sum_{i}d_i D_i$ is a $\Q$-divisor on $X$. 
Taking a model $D_{i,A} \subset X_A$ of $D_i$ over $A$ for each $i$, we say that $D_{A}:=\sum_{i}d_i D_{i, A}$ is a model of $D$ over $A$. 
After enlarging $A$ if necessarily, we may assume that $D_{i, \mu}$ is a prime divisor on the normal variety $X_{\mu}$ for every closed point $\mu \in \Spec A$. 
Then $D_{\mu}:=\sum_{i}d_i D_{i, \mu}$ is a $\Q$-divisor on $X_{\mu}$. 
If $D$ is Cartier (respectively, $\Q$-Cartier), then after possibly enlarging $A$ again, we may assume that $D_{\mu}$ is Cartier (respectively, $\Q$-Cartier) for every closed point $\mu \in \Spec A$. 

\begin{defn}
Let $R$ be a ring essentially of finite type over a field $k$ of characteristic zero. 
Suppose we are given a model $R_A$ of $R$ over a finitely generated $\Z$-subalgebra $A$ of $k$. 
We say that $R$ is of \textit{$F$-rational type} (respectively, \textit{strongly $F$-regular type}) if there exists a dense open subset $S \subset \Spec A$ such that $R_{\mu}$ is $F$-rational (respectively, strongly $F$-regular) for all closed points $\mu \in S$. 
We say that $R$ is of \textit{dense $F$-injective type} (respectively, \textit{dense $F$-pure type}) if there exists a dense subset of closed points $S \subset \Spec A$ such that $R_{\mu}$ is $F$-injective (respectively, $F$-pure) for all $\mu \in S$. 
These definitions are independent of the choice of $R_A$. 
\end{defn} 

\begin{ex}
(1) Let $R=\C[X,Y,Z]/(X^3-YZ(Y+Z))$. 
Then the ring $R_{\Z}=\Z[X, Y,Z]/(X^3-YZ(Y+Z))$ is a model of $R$ over $\Z$. 
We take the dense subset $S$ of $\Spec \Z$ to be $\{p \in \Spec \Z \; | \; p \equiv 1 \; \mathrm{mod} \; 3\}$. 
Since we have seen in Example \ref{F-pure example} (1) that $R_p=\F_p[X, Y,Z]/(X^3-YZ(Y+Z))$ is $F$-pure for all $p \in S$, the ring $R$ is of dense $F$-pure type. 

(2) Let $R=\C[X, Y, Z]/(X^2+Y^3+Z^5)$. 
The ring $R_{\Z}=\Z[X, Y,Z]/(X^2+Y^3+Z^5)$ is a model of $R$ over $\Z$. 
Note that $S=\{p \in \Spec \Z \; | \; p \ge 7\} \cup \{0\}$ is a dense open subset of $\Spec \Z$. 
Since $R_p=\F_p[X, Y,Z]/(X^2+Y^3+Z^5)$ is strongly $F$-regular for all closed points $p \in S$ by Example \ref{F-pure example} (2), the ring $R$ is of  strongly $F$-regular type. 
\end{ex}

$F$-singularities (partly conjecturally) correspond to singularities arising in birational geometry in characteristic zero. 
Before explaining this correspondence, we first explain what kind of singularities in characteristic zero are considered. 
The following implications holds for singularities in characteristic zero. 
\[
\xymatrix{
\textup{nonsingular} \ar@{=>}[r] & \textup{log terminal} \ar@{=>}[r] \ar@{=>}[d] & \textup{rational} \ar@{=>}[r] \ar@/_1pc/[l]_{+\textup{quasi-Gorenstein}} \ar@{=>}[d] & \textup{CM \& normal} \ar@{=>}[d] \\
& \textup{log canonical}  \ar@{=>}[r]   & \textup{Du Bois} \ar@{=>}[r] \ar@/^1pc/[l]^{+\textup{quasi-Gorenstein}} & \textup{semi-normal}
}
\]
The reader is referred to $\S\ref{MMP sing}$ for the definition of log terminal and log canonical singularities and to \cite{Kov1}, \cite{Kov2}, \cite{KK} for Du Bois singularities. 
The definition of Du Bois singularities are a bit involved, but a simple characterization of them was given in \cite{Sch5} when they are Cohen-Macaulay and normal: 
Let $R$ be a Cohen-Macaulay normal domain essentially of finite type over a field of characteristic zero. 
Let $\pi:\widetilde{X} \to X$ be a log resolution of $X=\Spec R$, that is, $\pi$ is a proper birational morphism with $\widetilde{X}$ nonsingular such that the exceptional locus $E$ of $\pi$ is a simple normal crossing divisor. 
Then $X$ has only \textit{Du Bois singularities} if and only if $\pi_*\omega_{\widetilde{X}}(E)=\omega_X$. 

We also briefly explain the definition of semi-normal rings. 
Let $R$ be an excellent reduced ring and $R^{\rm N}$ be the normalization of $R$. 
We say that $R$ is \textit{semi-normal} if for any $x \in R^{\rm N}$, if $x^2, x^3 \in R$, then $x \in R$. 
In equal characteristic zero, weak normality is equivalent to semi-normality, whereas weak normality is a strictly stronger condition than semi-normality in positive characteristic. 

As the name suggests, there is a correspondence between $F$-rational rings  and rational singularities. 

\begin{thm}[\cite{Ha}, \cite{MeS}]\label{F-rational and rational}
Let $R$ be a ring essentially of finite type over a field of characteristic zero. 
Then $\Spec R$ has only rational singularities if and only if $R$ is of $F$-rational type. 
\end{thm}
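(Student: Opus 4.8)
The plan is to prove the two implications separately, in both cases passing through the reduction-to-positive-characteristic machinery of \S\ref{reduction} together with a standard spreading-out and cohomology-and-base-change argument, and to exploit the characterizations of $F$-rationality established above. The direction ``$F$-rational type $\Rightarrow$ rational'' is the formal one, and rests on Smith's pseudo-rationality theorem; the direction ``rational $\Rightarrow$ $F$-rational type'' is the substantial one, where Grauert--Riemenschneider vanishing alone does not suffice in characteristic $p$.

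\textbf{$F$-rational type $\Rightarrow$ rational singularities.} Suppose $R$ has a model $R_A$ over a finitely generated $\Z$-subalgebra $A\subseteq k$ with $R_\mu$ $F$-rational for all closed $\mu$ in a dense open $S\subseteq\Spec A$. An $F$-rational ring essentially of finite type over a field is normal and Cohen--Macaulay (Corollary~\ref{Frational=>normal} and the corollary to Theorem~\ref{colon capturing}); since normality and Cohen--Macaulayness are open conditions compatible with restriction to fibers after shrinking $A$, $R$ itself is normal and Cohen--Macaulay. Fix a resolution $\pi:Y\to X=\Spec R$; after shrinking $S$ it spreads out to $\pi_A:Y_A\to X_A$ with $Y_A$ regular over $A$ and with $\pi_{A*}\omega_{Y_A}$ and the trace map $\mathrm{Tr}_{\pi_A}:\pi_{A*}\omega_{Y_A}\to\omega_{X_A}$ compatible with base change to every closed fiber. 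For $\mu\in S$, $R_\mu$ is pseudo-rational by Corollary~\ref{Smith's thm}, so $\mathrm{Tr}_{\pi_\mu}:\pi_{\mu*}\omega_{Y_\mu}\to\omega_{X_\mu}$ is an isomorphism. Hence $\mathrm{Tr}_{\pi_A}$ is an isomorphism over a dense open subset of $\Spec A$, and therefore an isomorphism at the generic fiber, i.e.\ $\pi_*\omega_Y=\omega_R$; together with Cohen--Macaulayness this says $\Spec R$ has rational singularities.

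\textbf{Rational singularities $\Rightarrow$ $F$-rational type.} Now assume $\Spec R$ has rational singularities; then $R$ is normal and Cohen--Macaulay, and we may assume it is a domain. Fix a log resolution $\pi:Y\to X=\Spec R$ with reduced simple-normal-crossing exceptional divisor $E$. In characteristic zero we have $R^i\pi_*\sO_Y=0$ for $i>0$, $\pi_*\omega_Y=\omega_R$ (rationality), and $R^i\pi_*\omega_Y=0$ for $i>0$ (Grauert--Riemenschneider). Spreading out over $A$ and shrinking, there is a dense open $S$ such that for closed $\mu\in S$ the morphism $\pi_\mu:Y_\mu\to X_\mu$ is a log resolution, $R_\mu$ is Cohen--Macaulay, and all of the above statements continue to hold for $\pi_\mu$ by cohomology and base change. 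It remains to show, after possibly shrinking $S$, that $R_\mu$ is $F$-rational. Since $R_\mu$ is $F$-finite and Cohen--Macaulay, by Smith's criterion (Theorem~\ref{Frobenius action}) this is equivalent to $H^d_{\fkm_\mu}(R_\mu)$ having no proper nonzero Frobenius-stable submodule; dually, using $\omega_{R_\mu}=\pi_{\mu*}\omega_{Y_\mu}$, it is equivalent to the assertion that for every $c\in R_\mu^{\circ}$ there is an $e$ with the twisted trace $c\,\mathrm{Tr}_F^e:F^e_*\omega_{R_\mu}\to\omega_{R_\mu}$ surjective (the trace characterization of $F$-rationality recalled before Theorem~\ref{BST}).

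\textbf{Where the work is.} I would establish this surjectivity by working upstairs on the \emph{smooth} variety $Y_\mu$, where $\omega_{Y_\mu}$ is a line bundle and the Frobenius trace $\mathrm{Tr}_F:F_*\omega_{Y_\mu}\to\omega_{Y_\mu}$ is surjective (Cartier operator). Because $\pi_\mu\circ F_{Y_\mu}=F_{X_\mu}\circ\pi_\mu$, Grothendieck duality identifies $c\,\mathrm{Tr}_F^e$ downstairs with $\pi_{\mu*}$ of a trace map on $Y_\mu$ twisted by a divisor encoding $c$ and the power of Frobenius; its surjectivity then follows from the vanishing of the first higher direct image of the kernel of that twisted trace --- that is, from a \emph{relative Kawamata--Viehweg--type vanishing} $R^i\pi_{\mu*}(\omega_{Y_\mu}\otimes L)=0$ for $i>0$ and a suitable $\pi_\mu$-nef and $\pi_\mu$-big $L$. \emph{This is the main obstacle:} such vanishing is false in positive characteristic in general. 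The resolution --- the technical heart of the theorem, due independently to Hara and to Mehta--Srinivas --- is that $Y_\mu$, being the reduction of a scheme flat over $A$, lifts to $\Z/p^2$ (i.e.\ to truncated Witt vectors), and $\dim Y_\mu<p$ for $\mu$ in a dense open subset of $\Spec A$; by the Deligne--Illusie criterion, the relevant logarithmic Kodaira-type vanishing (hence the needed Kawamata--Viehweg-type statement) holds on $Y_\mu$, and feeding this back in yields surjectivity of the twisted Frobenius trace on $\omega_{R_\mu}$, hence $F$-rationality of $R_\mu$ on a dense open set, i.e.\ $R$ is of $F$-rational type. Alternatively, Mehta--Srinivas use a direct Frobenius-splitting argument on the resolution, compatible with $E$, in place of the Deligne--Illusie input. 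The restriction to $p$ large enters only at this step.
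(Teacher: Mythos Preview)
Your proposal is correct and follows essentially the same route as the paper. The ``if'' direction is exactly what the paper does (it simply cites Corollary~\ref{pseudo-rational}), and for the ``only if'' direction the paper packages the key technical input as Lemma~\ref{surjectivity}---surjectivity of the trace map $F^e_*\pi_{\mu *}\omega_{\widetilde{X}_{\mu}}(\lceil p^e E_{\mu} \rceil) \to \pi_{\mu *}\omega_{\widetilde{X}_{\mu}}(\lceil E_{\mu} \rceil)$ for a $\pi$-ample $\Q$-divisor $E$---obtained from Serre vanishing and Deligne--Illusie, which is precisely the twisted-trace surjectivity via liftability to $W_2$ that you identify as the heart of the argument.
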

Let us say a few words about the proof of Theorem \ref{F-rational and rational}. 
The ``if" part follows from Corollary \ref{pseudo-rational}.  
The following lemma, a consequence of Serre's vanishing theorem and Deligne-Illusie's result \cite{DI} on Akizuki-Kodaira-Nakano's vanishing theorem in characteristic $p>0$, is essential in the proof of  the ``only if" part. 

\begin{lem}[\cite{Ha}]\label{surjectivity}
Let $R$ be a ring essentially of finite type over a field $k$ of characteristic zero. 
Let $\pi:\widetilde{X} \to X$ be a log resolution of $X=\Spec R$ and $E$ be a $\pi$-ample $\Q$-divisor on $X$ whose fractional part $E-\lfloor E \rfloor$ has simple normal crossing support. 
Suppose that we are given models $\pi_A:\widetilde{X}_A \to X_A$ and $E_A$ over a finitely generated $\Z$-subalgebra $A$ of $k$. 
Then there exists a dense open subset $S \subset \Spec A$ such that for every closed point $\mu \in S$ and every $e \in \N$, the map 
\[
F^e_*\pi_{\mu *}\omega_{\widetilde{X}_{\mu}}(\lceil p^e E_{\mu} \rceil) \to \pi_{\mu *}\omega_{\widetilde{X}_{\mu}}(\lceil E_{\mu} \rceil), 
\]
induced by the trace map $\mathrm{Tr}_F^e: F^e_*\omega_{\widetilde{X}_{\mu}} \to \omega_{\widetilde{X}_{\mu}}$, is surjective. 

\end{lem}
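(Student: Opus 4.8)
The plan is to split the statement into a local surjectivity on the regular scheme $\widetilde{X}_{\mu}$ and a relative vanishing statement, with Deligne--Illusie entering only in the latter. Since $X=\Spec R$ is affine, the map in question is obtained by applying $\pi_{\mu*}=R^0\pi_{\mu*}$ to the twisted Frobenius trace map
\[
\mathrm{Tr}^e\colon F^e_*\,\omega_{\widetilde{X}_{\mu}}(\lceil p^e E_{\mu}\rceil)\longrightarrow \omega_{\widetilde{X}_{\mu}}(\lceil E_{\mu}\rceil)
\]
of sheaves on $\widetilde{X}_{\mu}$. First I would prove that $\mathrm{Tr}^e$ is surjective as a morphism of sheaves; this gives a short exact sequence $0\to\mathcal K_e\to F^e_*\omega_{\widetilde{X}_{\mu}}(\lceil p^e E_{\mu}\rceil)\to\omega_{\widetilde{X}_{\mu}}(\lceil E_{\mu}\rceil)\to 0$, and the assertion of the lemma becomes equivalent to $R^1\pi_{\mu*}\mathcal K_e=0$.

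The sheaf-level surjectivity of $\mathrm{Tr}^e$ is a purely local computation on $\widetilde{X}_{\mu}$ that uses only the simple normal crossing hypothesis on the fractional part $E_{\mu}-\lfloor E_{\mu}\rfloor$. By the projection formula one reduces to the case $0\le E_{\mu}<1$ along every component; then, in suitable \'etale-local coordinates $x_1,\dots,x_d$ with $E_{\mu}=\sum_i a_i\{x_i=0\}$, a direct calculation with the iterated Cartier operator shows that the free-basis element $F^e_*\bigl(x_1^{\lceil p^e a_1\rceil-1}\cdots x_d^{\lceil p^e a_d\rceil-1}\cdot(\text{generator})\bigr)$ maps onto a generator of $\omega_{\widetilde{X}_{\mu}}(\lceil E_{\mu}\rceil)$. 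This works for every closed point $\mu$ and every $e$, so it costs only the single shrinking of $\Spec A$ needed to make $\widetilde{X}_A\to\Spec A$ smooth with $E_A$ of relative simple normal crossing fractional support (and, harmlessly, the inversion of the denominators of the coefficients of $E$).

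The relative vanishing $R^1\pi_{\mu*}\mathcal K_e=0$ is the heart of the matter. Via the iterated Cartier isomorphism, $\mathcal K_e$ acquires a finite filtration whose subquotients are of the form $\Omega^j_{\widetilde{X}_{\mu}}(\lceil D\rceil)$ with $j<d$ (together with one copy of $\omega_{\widetilde{X}_{\mu}}(\lceil D\rceil)$) for $\pi_{\mu}$-nef and $\pi_{\mu}$-big $\Q$-divisors $D$ with simple normal crossing fractional support, so it suffices to establish the relative Akizuki--Kodaira--Nakano vanishing $R^i\pi_{\mu*}\Omega^j_{\widetilde{X}_{\mu}}(\lceil D\rceil)=0$ for $i+j>d$, for all $\mu$ in a dense open subset of $\Spec A$. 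I would prove this by spreading out: pass to a projective compactification over $A$ in which $\pi_A$ and the relevant divisors are realized projectively; observe that the model $\widetilde{X}_A$ provides a flat lifting of $\widetilde{X}_{\mu}$ over the square-zero thickening $\Spec(A/\mu^2)$, which suffices to run Deligne--Illusie's method; apply Deligne--Illusie's logarithmic degeneration/vanishing theorem on the projective fibers for the closed points $\mu$ whose residue characteristic exceeds $\dim\widetilde{X}_{\mu}$ (which excludes only a proper closed subset of $\Spec A$); and finally descend from the projective situation to the relative one by the usual reduction of relative vanishing to the absolute case, using Serre's vanishing theorem on the base. The same Serre vanishing also handles the uniformity in $e$: the surjectivity is first obtained for $e\gg0$ and then propagated to all $e$ by the compatibility $\mathrm{Tr}^{e+1}=\mathrm{Tr}^1\circ F_*\mathrm{Tr}^e$, the coprimality of $p$ with the denominators of $E$ guaranteeing that only finitely many fractional parts $\{p^jE\}$ arise.

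Taking $S$ to be the intersection of the finitely many dense open subsets produced above then yields one dense open $S\subset\Spec A$ valid for all closed points $\mu\in S$ and all $e\in\N$. The main obstacle is the relative Akizuki--Kodaira--Nakano vanishing in characteristic $p$: Deligne--Illusie's theorem is available only for smooth projective varieties, so the real work lies in compactifying the relative setup while keeping control of the $\pi_{\mu}$-ample twist and of the simple normal crossing boundary, and in carrying the argument out uniformly in both the prime $\mu$ and the Frobenius power $e$.
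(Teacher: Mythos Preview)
Your proposal is correct and follows essentially the same strategy the paper indicates (and that Hara's original argument uses): sheaf-level surjectivity of the twisted trace via a local Cartier computation, followed by the relative Akizuki--Kodaira--Nakano vanishing obtained from Deligne--Illusie's decomposition together with Serre vanishing on the base. One organizational remark: rather than establishing surjectivity for $e\gg0$ and propagating downward, it is cleaner (and is how Hara proceeds) to prove the $e=1$ case $\pi_{\mu*}F_*\omega_{\widetilde{X}_\mu}(\lceil pD\rceil)\twoheadrightarrow\pi_{\mu*}\omega_{\widetilde{X}_\mu}(\lceil D\rceil)$ for every $\pi_\mu$-ample $D$ with SNC fractional support, and then factor $\mathrm{Tr}^e$ as a composition of $e$ such single-step traces applied to $D=p^jE$; your observation that only finitely many fractional parts $\{p^jE\}$ occur is exactly what makes this uniform in $e$.
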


\begin{thm}[\cite{Sch1}]\label{Du Bois}
If a ring $R$ essentially of finite type over a field of characteristic zero is of dense $F$-injective type, then $\Spec R$ has only Du Bois singularities. 
\end{thm}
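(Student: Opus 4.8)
The plan is to deduce this by reduction to positive characteristic, turning ``$X$ is Du Bois'' into a statement about a fixed log resolution and then exploiting that $F$-injectivity, together with the fact that the resolution is regular (hence Frobenius-split), forces the relevant ``Du Bois defect'' to vanish. First I would fix a log resolution $\pi\colon\widetilde X\to X=\Spec R$ which is an isomorphism over $X\setminus Z$, where $Z$ carries its reduced structure and $E:=\pi^{-1}(Z)_{\mathrm{red}}$ is a reduced simple normal crossing divisor. Recall (Du Bois; see also the Cohen--Macaulay normal case discussed above and \cite{Kov1}) that $X$ has Du Bois singularities if and only if the natural morphism $\mathcal I_Z\to R\pi_*\mathcal I_{E/\widetilde X}$ is a quasi-isomorphism; write $\mathcal C^\bullet$ for its cone, a bounded complex of coherent $\sO_X$-modules supported on $Z$, so the goal is to show $\mathcal C^\bullet\simeq 0$. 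We may assume $R$ is reduced, indeed seminormal, by applying Proposition \ref{weakly normal} to the reductions, so that this characterization applies.

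Next I would spread everything out as in \S\ref{reduction} and in the set-up of Lemma \ref{surjectivity}: choose a model $R_A$ over a finitely generated $\Z$-subalgebra $A\subset k$, together with models $\pi_A\colon\widetilde X_A\to X_A$, $Z_A$, $E_A$ of the resolution data, and shrink $\Spec A$ so that (i) for every closed point $\mu$ the fibre $\pi_\mu$ is a log resolution of $X_\mu$ with $E_\mu$ reduced SNC, (ii) forming $\mathcal I_Z$, $\mathcal I_E$ and $R\pi_*$ commutes with the base change $A\to\kappa(\mu)$, and (iii), by generic freeness, all cohomology modules of the cone $\mathcal C^\bullet_A$ are \emph{free} over $A$. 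With (iii) in hand, a single vanishing $\mathcal C^\bullet_\mu\simeq 0$ at one closed point $\mu$ in the (dense) $F$-injective locus forces each $\mathcal H^i(\mathcal C^\bullet_A)$ to vanish, hence $\mathcal C^\bullet\otimes_A k\simeq 0$, i.e.\ $X$ is Du Bois. So it remains to prove: if $R_\mu$ is $F$-injective, then $\mathcal C^\bullet_\mu\simeq 0$.

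For that last, positive-characteristic step I would dualize. Applying Grothendieck duality, the map $\mathcal I_{Z_\mu}\to R\pi_{\mu*}\mathcal I_{E_\mu}$ becomes a natural map $R\pi_{\mu*}\omega_{\widetilde X_\mu}(E_\mu)[\dim X]\to R\Hom(\mathcal I_{Z_\mu},\omega_{X_\mu}^\bullet)$, and the claim is that it is a quasi-isomorphism. Now $\widetilde X_\mu$ is regular, hence $F$-split, so for every $e$ the iterated Frobenius trace is surjective, and twisting by $\sO(E_\mu)$ the maps $F^e_*\omega_{\widetilde X_\mu}(\lceil p^e E_\mu\rceil)\to\omega_{\widetilde X_\mu}(\lceil E_\mu\rceil)=\omega_{\widetilde X_\mu}(E_\mu)$ are surjective as well (here $E_\mu$ reduced gives $\lceil p^eE_\mu\rceil=p^eE_\mu$), and likewise on the strata of the SNC divisor $E_\mu$. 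Since the absolute Frobenius commutes with $\pi_\mu$, one has $R\pi_{\mu*}F^e_*=F^e_*R\pi_{\mu*}$, so these surjectivities push down to $X_\mu$. On the other side, $F$-injectivity of $R_\mu$ means precisely, by local duality, that the Frobenius trace $F_*\omega_{R_\mu}^\bullet\to\omega_{R_\mu}^\bullet$ is surjective on every cohomology module. Feeding the former into the latter along a commutative diagram of Frobenius traces on $X_\mu$, $\widetilde X_\mu$, $E_\mu$ and $Z_\mu$, the surjectivity upstairs and the $F$-injectivity downstairs together force every cohomology module of the cone of the displayed map to vanish (iterating over all $e$ and using that only a bounded complex of finitely generated modules is involved), giving $\mathcal C^\bullet_\mu\simeq 0$.

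The main obstacle is the diagram-chase in the last paragraph, and in particular the handling of the auxiliary pieces $Z_\mu$ and $E_\mu$: one must know that the trace map for $\pi|_{E_\mu}\colon E_\mu\to Z_\mu$ is suitably surjective on cohomology, a statement of ``rational singularities'' type that, exactly as in Lemma \ref{surjectivity}, genuinely rests on the Deligne--Illusie approach \cite{DI} to Kodaira-type vanishing in characteristic $p$ and on a careful choice of the model; and one must run the bootstrapping over $e$ uniformly. Granting these, the remainder is formal Grothendieck duality together with the spreading-out bookkeeping of \S\ref{reduction}. This is, of course, the content of \cite{Sch1}.
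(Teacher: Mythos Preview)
The paper itself does not prove this theorem: it only states the result and cites \cite{Sch1}, so there is no ``paper's own proof'' to compare against. Your sketch is, as you note at the end, essentially an outline of Schwede's argument in \cite{Sch1}, and the overall architecture---fix a log resolution, spread out, use generic freeness so that vanishing of the cone at a single $F$-injective fiber propagates back to characteristic zero---is correct and is exactly how the reduction works.

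Two points deserve more care. First, the characterization ``$X$ is Du Bois if and only if $\mathcal I_Z\to R\pi_*\mathcal I_{E}$ is a quasi-isomorphism'' is not Schwede's criterion from \cite{Sch5} as stated; the criterion there involves the full structure sheaves of $X$, $\widetilde X$, $Z$ and $E$ in a distinguished triangle, and passing to the ideal-sheaf formulation you wrote requires knowing in advance that $Z$ is Du Bois. In \cite{Sch1} this is handled by induction on dimension (one may take $Z$ to have strictly smaller dimension and apply the theorem to $Z$), and you should flag that induction explicitly rather than absorbing it into ``Recall \ldots''. Second, your positive-characteristic step is, as you yourself say, where the real content lies, and the paragraph you wrote is too impressionistic to count as a proof: the assertion that ``surjectivity upstairs and $F$-injectivity downstairs together force every cohomology module of the cone to vanish'' hides a genuine diagram chase that in \cite{Sch1} occupies several pages and uses carefully that $E_\mu$ is SNC (so that its strata are regular and one can run Frobenius-splitting arguments on them) together with a compatibility of trace maps across the square formed by $X_\mu$, $\widetilde X_\mu$, $Z_\mu$, $E_\mu$. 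In particular, the invocation of Deligne--Illusie/Lemma~\ref{surjectivity} here is a red herring: Schwede's argument in \cite{Sch1} does not need Kodaira-type vanishing at this point, only the elementary surjectivity of trace on a regular scheme and the $F$-injectivity hypothesis. So your outline is sound, but the last paragraph would need to be substantially expanded---and slightly redirected---to become a proof.
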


Since $F$-injective rings and Du Bois singularities share many similar properties,  
the converse of Theorem \ref{Du Bois} is also expected to be true. 
However, to the best of our knowledge, Conjecture \ref{DB conj} is open even when $R$ is a two-dimensional normal local ring. 

\begin{conj}\label{DB conj}
Let $R$ be a ring essentially of finite type over a field of characteristic zero. 
Then $\Spec R$ has only Du Bois singularities if and only if $R$ is of dense $F$-injective type.  
\end{conj}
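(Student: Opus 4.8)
The ``if'' half of the statement is Theorem~\ref{Du Bois}, so the real content is the converse: a Du Bois singularity $\Spec R$ over a field of characteristic zero is of dense $F$-injective type. The plan is to imitate the proof of Theorem~\ref{F-rational and rational}. Fix a log resolution $\pi\colon\widetilde X\to X=\Spec R$ with reduced exceptional divisor $E$ and spread everything out over a finitely generated $\Z$-subalgebra $A\subset k$ as in \S\ref{reduction}, getting $\pi_A\colon\widetilde X_A\to X_A$ and $E_A$. Assume first that $R$ is Cohen--Macaulay and normal; then, by the characterization recalled above together with Grothendieck duality, Du Bois-ness is the derived statement $R\pi_*\omega_{\widetilde X}(E)\simeq\omega_X$ (in particular $\pi_*\omega_{\widetilde X}(E)=\omega_X$ and the higher direct images vanish). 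Generic freeness and cohomology-and-base-change then produce a dense open $S\subseteq\Spec A$ such that for all closed $\mu\in S$ the ring $R_\mu$ is Cohen--Macaulay and normal, $\pi_\mu$ is a log resolution with reduced SNC exceptional divisor $E_\mu$, and $R\pi_{\mu*}\omega_{\widetilde X_\mu}(E_\mu)\simeq\omega_{R_\mu}$. By the trace-map description of $F$-injectivity (for Cohen--Macaulay rings, $R_\mu$ is $F$-injective iff $\mathrm{Tr}_F\colon F_*\omega_{R_\mu}\to\omega_{R_\mu}$ is surjective, equivalently iff $\mathrm{Tr}_F^e$ is surjective for some $e$), it now suffices to find a dense set of $\mu$ for which this holds.

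Pushing the iterated trace of the log Frobenius of the pair $(\widetilde X_\mu,E_\mu)$ down along $\pi_\mu$ and using the derived isomorphism above, this reduces to surjectivity, for a dense set of $\mu$ and some $e$, of
\[
F^e_*\pi_{\mu*}\omega_{\widetilde X_\mu}(p^eE_\mu)\longrightarrow\pi_{\mu*}\omega_{\widetilde X_\mu}(E_\mu),
\]
which is formally the analogue of Lemma~\ref{surjectivity} with the $\pi$-ample $\Q$-divisor there replaced by the reduced exceptional divisor $E$. Locally on $\widetilde X_\mu$ there is no problem: $(\widetilde X_\mu,E_\mu)$ is log smooth, hence \'etale-locally a toric variety with $E_\mu$ its toric boundary, for which the Frobenius splits compatibly with the boundary, so already $F^e_*\omega_{\widetilde X_\mu}(p^eE_\mu)\to\omega_{\widetilde X_\mu}(E_\mu)$ is surjective as a map of sheaves on $\widetilde X_\mu$. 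The difficulty is global: one must control the higher direct images $R^i\pi_{\mu*}$ of the sheaves $F^e_*\omega_{\widetilde X_\mu}(p^eE_\mu)$. In Lemma~\ref{surjectivity} it was exactly the $\pi$-ampleness of $E$ that let one invoke Serre vanishing; here $-E$ is $\pi$-nef and $E$ is far from $\pi$-ample, so that route is unavailable and the needed vanishing is genuine new input.

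This vanishing is the main obstacle, and is why the conjecture is open. By a Leray spectral sequence argument together with Serre duality on the smooth proper pieces appearing over the exceptional locus, what one really needs is surjectivity of the Frobenius trace on $H^\bullet(\omega)$ of those varieties, which is implied by the \emph{weak ordinarity conjecture}: for a smooth projective variety over a number field, Frobenius acts injectively (conjecturally bijectively) on $H^i(\sO)$ of its reduction modulo $p$ for a dense set of primes $p$. Thus I would complete the proof conditionally on weak ordinarity, via a Musta\c{t}\u{a}--Srinivas-type transfer: ordinary reduction makes the relevant maps $H^j(\mathrm{Tr}_F)$ surjective, and spreading out plus the Leray spectral sequence propagates this to surjectivity of $\pi_{\mu*}(\mathrm{Tr}_F)$, hence to $F$-injectivity of $R_\mu$, for a dense set of $\mu$. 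Unconditionally the conjecture stays open, even for two-dimensional normal $R$, precisely because weak ordinarity is unknown there. Finally, for a general Du Bois singularity (not Cohen--Macaulay or normal), I would run the same scheme with the Deligne--Du Bois complex $\underline{\Omega}^0_X$ in place of $\omega_X$ and the dualizing complex $\omega^{\bullet}_{R_\mu}$ in place of $\omega_{R_\mu}$: the condition $\sO_X\xrightarrow{\sim}\underline{\Omega}^0_X$ reduces modulo $p$, $F$-injectivity in every local-cohomological degree is again dual to a surjectivity of the Frobenius trace on $\omega^{\bullet}_{R_\mu}$, and on a smooth proper hyperresolution the problem localizes once more to ordinarity of the smooth proper members --- the same obstacle.
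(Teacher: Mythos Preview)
The statement is a \emph{conjecture}, and the paper does not prove it; it explicitly says that Conjecture~\ref{DB conj} is open even for two-dimensional normal local rings. So there is no ``paper's own proof'' to compare against, and you have correctly recognized this: your proposal is not a proof but a sketch of the conditional argument together with an identification of the obstruction.

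Your analysis is accurate and matches the paper's later discussion. The reduction you outline---spread out a log resolution, use the trace characterization of $F$-injectivity, and reduce the needed surjectivity to a Frobenius-bijectivity statement on the cohomology of smooth projective varieties appearing in the resolution---is exactly the mechanism behind Proposition~\ref{MS implies full}, which records that the weak ordinarity conjecture (Conjecture~\ref{MS conj}) implies Conjecture~\ref{DB conj}, and conversely. Your invocation of a ``Musta\c{t}\u{a}--Srinivas-type transfer'' is on the nose: that is the reference \cite{MS} (and \cite{BST2}) underlying Proposition~\ref{MS implies full}. The one thing the paper adds that you did not state is the reverse implication: Conjecture~\ref{DB conj} in fact \emph{implies} weak ordinarity, so the two are equivalent, and hence your obstacle is not merely the best currently known route but is provably the essential one.
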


We will explain a correspondence between strongly $F$-regular rings (respectively, $F$-pure rings) and log terminal singularities (respectively, log canonical singularities) in the next section. 


\section{$F$-singularities of pairs}\label{Fpairs}

In the minimal model program, singularities are studied in the pair setting: instead of looking only at singularities of a single variety, one should consider singularities of pairs $(X, \Delta)$ where $X$ is a variety and $\Delta$ is a $\Q$-divisor (that is, a $\Q$-linear combination of divisors) on $X$. 
Since there is a close relationship between $F$-singularities and singularities in birational geometry as we have partly seen in \S\ref{classical Fsing},  we introduce ``$F$-singularities of pairs," that is, a generalization of $F$-singularities to the pair setting. 

\subsection{Correspondence to singularities in the minimal model program}\label{MMP sing}
First we recall the definition of singularities of pairs. 
Let $X$ be a normal variety over a field of characteristic zero and $\Delta=\sum_i d_i \Delta_i$ be an effective $\Q$-divisor on $X$ such that $K_X+\Delta$ is $\Q$-Cartier. 
The round down of $\Delta$ is $\lfloor \Delta \rfloor=\sum_i \lfloor d_i \rfloor \Delta_i$ where $\lfloor d_i \rfloor$ denotes the largest integer less than or equal to $d_i$. 
The round up of $\Delta$ is $\lceil \Delta \rceil=\sum_i \lceil d_i \rceil \Delta_i$ where $\lceil d_i \rceil$ denotes the smallest integer greater than or equal to $d_i$. 

We take a log resolution $\pi:\widetilde{X} \to X$ of $(X, \Delta)$. 
Namely, $\pi$ is a proper birational morphism such that $\widetilde{X}$ is a nonsingular variety and that $\mathrm{Exc}(\pi)$ and $\mathrm{Exc}(\pi) \cup \pi^{-1}_*\Delta$ are simple normal crossing divisors, where  $\mathrm{Exc}(\pi)$ denotes the exceptional locus of $\pi$ and $\pi^{-1}_*\Delta$ does the strict transform of $\Delta$.\footnote{Such a $\pi$ always exists by a famous theorem of Hironaka \cite{Hi}.}
Then we can write
\[
K_{\widetilde{X}}=\pi^*(K_X+\Delta)+\sum_i a_i E_i,
\]
where the $E_i$ are prime divisors on $\widetilde{X}$ and $a_i$ are rational numbers. 
We say that the pair $(X, \Delta)$ is \textit{Kawamata log terminal}  (\textit{klt}, for short) if $a_i>-1$ for all $i$ and that $(X, \Delta)$ is \textit{log canonical} (\textit{lc}, for short) if $a_i \ge -1$ for all $i$. 
Furthermore, suppose that $\pi^{-1}_*\lfloor \Delta \rfloor$ is nonsingular (but possibly disconnected). 
Then we say that $(X, \Delta)$ is \textit{purely log terminal} (\textit{plt}, for short) if $a_i>-1$ for all $i$ with $E_i$ $\pi$-exceptional. 
These definition are independent of the choice of the log resolution $\pi$. 
If $\lfloor \Delta \rfloor=0$, then by definition, being klt is equivalent to being plt. 
When $(X, 0)$ is klt (respectively, lc), we say that $X$ has only log terminal (respectively, log canonical) singularities. 
In general, the following implications hold for singularities of pairs:
\[
\xymatrix{
\textup{klt} \ar@{=>}[r] & \textup{plt} \ar@{=>}[r] & \textup{lc.}}
\]
The reader is referred to \cite{KM} for their basic properties. 

Next, we generalize the definition of $F$-singularities to the pair setting. 
Let $X=\Spec R$ be an $F$-finite normal integral affine scheme and $D$ be an effective integral divisor on $X$.  
We denote by $F:X \to X$ the (absolute) Frobenius morphism on $X$. 
Since $D$ is effective, we have a natural inclusion $i: \sO_X \to \sO_X(D)$. 
For each $e \in \N$, by the composition $F^e_*i$ and the $e$-times iterated Frobenius $F^e$, we have the following map: 
\[
\sO_X \xrightarrow{F^e} F^e_*\sO_X \xrightarrow{F^e_*i} F^e_*\sO_X(D) \quad x \mapsto F^e_*x^{p^e} \mapsto F^e_*x^{p^e}.
\]
We define $F$-singularities of pairs using this map. 
\begin{defn}[\cite{HW}, \cite{Sch2}]
Let $\Delta$ be an effective $\Q$-divisor on $X$. 
\begin{enumerate}
\item We say that $(X, \Delta)$ is \textit{$F$-pure} if the map $\sO_X \to F^e_*\sO_X(\lfloor (p^e-1) \Delta \rfloor)$ splits as an $\sO_X$-module homomorphism for every $e \in \N$. 
\item We say that $(X, \Delta)$ is \textit{sharply $F$-pure} if there exists an $e \in \N$ such that the map $\sO_X \to F^e_*\sO_X(\lceil (p^e-1) \Delta \rceil)$ splits as an $\sO_X$-module homomorphism. 
\item We say that $(X, \Delta)$ is \textit{strongly $F$-regular} if for every nonzero $c \in \sO_X$, there exists an $e \in \N$ such that the composite map 
\[
\hspace*{3em}
\sO_X \to F^e_*\sO_X(\lceil (p^e-1) \Delta \rceil) \xrightarrow{\times F^e_*c} F^e_*\sO_X(\lceil (p^e-1) \Delta \rceil) 
\]
sending $x$ to $F^e_*(cx^{p^e})$ splits as an $\sO_X$-module homomorphism. 
\item We say that $(X, \Delta)$ is \textit{purely $F$-regular}\footnote{Purely $F$-regular pairs are called divisorially $F$-regular pairs in \cite{HW}. Since they correspond to plt pairs, not to dlt pairs, we use the term ``purely $F$-regular" to avoid giving a misleading impression.}
if for every $c \in \sO_X$ which is not in any minimal prime of $\sO_X(-\lfloor \Delta \rfloor)$, 
 there is an $e \in \N$ such that the map 
\[
\hspace*{3em}
\sO_X \to F^e_*\sO_X(\lceil (p^e-1) \Delta \rceil) \xrightarrow{\times F^e_*c} F^e_*\sO_X(\lceil (p^e-1) \Delta \rceil) 
\]
sending $x$ to $F^e_*(cx^{p^e})$ splits as an $\sO_X$-module homomorphism. 
\end{enumerate}
These definitions can be extended to non-affine schemes by considering the same conditions on each affine chart. 
\end{defn}
If $\lfloor \Delta \rfloor=0$ (respectively, $(p^e-1)\Delta$ is an integral divisor for some $e \in \N$), then the pure $F$-regularity (respectively, the sharp $F$-purity) of $(X, \Delta)$ is equivalent to its strong $F$-regularity (respectively, its $F$-purity). 
If $\Delta=0$, then the strong $F$-regularity (respectively, the $F$-purity) of the pair $(\Spec R, 0)$ is nothing but the strong $F$-regularity (respectively, the $F$-purity) of $R$ defined in Definition \ref{strongly F-regular rings}.
In general, the following implications hold for $F$-singularities of pairs:
\[
\xymatrix{
\textup{strongly $F$-regular} \ar@{=>}[r] & \textup{purely $F$-regular} \ar@{=>}[r] & \textup{sharply $F$-pure} \ar@{=>}[r]  & \textup{$F$-pure.}
}
\]

\begin{rem}\label{two F-purity}
Why do we consider two kinds of ``$F$-purity," sharp $F$-purity and $F$-purity? It is because both have advantages and disadvantages. 

Let $R$ be a (normal) $F$-pure ring and $\Delta$ be an effective Cartier divisor on $X=\Spec R$. 
Then 
\[
\sup\{t \ge 0 \mid (X, t \Delta)\textup{ is $F$-pure}\}=\sup\{t \ge 0 \mid (X, t \Delta)\textup{ is sharply $F$-pure}\}.
\]
This critical value is called the \textit{$F$-pure threshold} of $\Delta$ and denoted by $\mathrm{fpt}(\Delta)$. 
Since ``$F$-pure" pairs can be viewed as a positive characteristic analogue of lc pairs  (see Conjecture \ref{full correspondence}), it is expected that $(X, t_0 \Delta)$ is ``$F$-pure" where $t_0=\mathrm{fpt}(\Delta)$. 
By \cite{He2}, $(X, t_0 \Delta)$ is $F$-pure but not necessarily sharply $F$-pure. 
For example, in Example \ref{pair example} (1), $\mathrm{fpt}(\Delta)=1/3$ but $(X, (1/3)\Delta)$ is not sharply $F$-pure. 

On the other hand, sharp $F$-purity fits better into the theory of $F$-pure centers which we will discuss in \S\ref{F-adjunction section}. 
If we introduced the theory of $F$-pure centers using $F$-purity not sharp $F$-purity, then a  pathological phenomenon could happen. 
Setting $o=(x, y) \in \Spec R=X$ in Example \ref{pair example} (1), we easily see that the $F$-pure pair $(X, (1/3)\Delta)$ is strongly $F$-regular except at $o$. 
Therefore, $o$ should be the unique $F$-pure center of $(X, (1/3)\Delta)$. 
However, if we changed the definition of $F$-pure centers (Definition \ref{Fpure center def}) by replacing sharp $F$-purity with $F$-purity, 
then every irreducible component of $\Delta$ would be an $F$-pure center of $(X, (1/3)\Delta)$. 

Sharp $F$-purity has one more advantage over $F$-purity. 
If $(X, \Delta)$ is lc, then the multiplier ideal $\mathcal{J}(X, \Delta)$ associated to $(X, \Delta)$ becomes a radical ideal. 
Since test ideals are a positive characteristic analogue of multiplier ideals (see \S\ref{test ideals} for the details), it is expected that the test ideal $\tau(X, \Delta)$ associated to $(X, \Delta)$ is a radical ideal if $(X, \Delta)$ is ``$F$-pure." 
Indeed, if $(X, \Delta)$ is sharply $F$-pure, then $\tau(X, \Delta)$ becomes a radical ideal (see \cite{Sch2}). 
If $(X, \Delta)$ is only $F$-pure, then $\tau(X, \Delta)$ is not necessarily even an integrally closed ideal (see Example \ref{test ideal example} (4)). 
\end{rem}

\begin{prop}[\cite{HW}]
If $(X, \Delta)$ is strongly $F$-regular $($respectively, $F$-pure$)$, then $\lfloor \Delta \rfloor=0$ $($respectively, $\lceil \Delta \rceil$ is a reduced divisor$)$. 
\end{prop}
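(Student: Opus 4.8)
The plan is to reduce the statement to a one‑dimensional computation at each prime divisor occurring in $\Delta$, exploiting that a normal ring localizes to a discrete valuation ring at a height‑one prime. Write $\Delta=\sum_i d_i\Delta_i$ with the $\Delta_i$ distinct prime divisors and each $d_i>0$; then $\lfloor\Delta\rfloor=0$ says every $d_i<1$ and $\lceil\Delta\rceil$ is reduced says every $d_i\le1$. So I fix an index $i$ and aim to prove: strong $F$-regularity of $(X,\Delta)$ forces $d_i<1$, and $F$-purity forces $d_i\le1$. Let $\fkp\subset R$ be the height-one prime cutting out $\Delta_i$. Since $R$ is a normal Noetherian domain, $V:=R_{\fkp}$ is a DVR; I fix a uniformizer $t$ and write $v$ for the associated valuation.

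First I would check that localizing at $\fkp$ turns the splitting in the definition into a splitting over $V$. For an integral divisor $D$ one has $\sO_X(D)_{\fkp}=t^{-a}V$ with $a=\mathrm{ord}_{\Delta_i}(D)$ (using reflexivity of $\sO_X(D)$); and since the absolute Frobenius is the identity on underlying spaces and $s\notin\fkp\Rightarrow s^{p^e}\notin\fkp$, the functor $(-)_{\fkp}$ commutes with $F^e_*$ and carries an $\sO_X$-linear splitting to a $V$-linear one. Thus I obtain a $V$-linear map $\phi\colon F^e_*(t^{-m}V)\to V$ with $\phi(F^e_*c)=1$ in the strongly $F$-regular case (where $m=\lceil(p^e-1)d_i\rceil$) or $\phi(F^e_*1)=1$ in the $F$-pure case (where $m=\lfloor(p^e-1)d_i\rfloor$).

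The heart of the argument is an elementary observation in $V$: the $V$-module structure on $F^e_*$ gives $t\cdot F^e_*w=F^e_*(t^{p^e}w)$, so for any $z\in V$ with $v(z)\ge p^e-m$ one has $t^{-p^e}z\in t^{-m}V$ and $F^e_*z=t\cdot F^e_*(t^{-p^e}z)$ inside $F^e_*(t^{-m}V)$, whence $\phi(F^e_*z)=t\,\phi(F^e_*(t^{-p^e}z))\in tV$ is a non-unit. For the $F$-pure case I apply this with $z=1$: if $d_i>1$ then $p^e(d_i-1)-d_i\to\infty$, so $m=\lfloor(p^e-1)d_i\rfloor\ge p^e$ for $e\gg0$, i.e.\ $0=v(1)\ge p^e-m$, and then $\phi(F^e_*1)\in tV$ contradicts $\phi(F^e_*1)=1$; hence $d_i\le1$, so $\lceil\Delta\rceil$ is reduced. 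For the strongly $F$-regular case I suppose $d_i\ge1$ and, crucially, choose the test element $c$ to be a nonzero element of $\fkp\cap R$ (possible since $\fkp$ is a nonzero prime of the domain $R$), so $v(c)\ge1$. Strong $F$-regularity for this $c$ produces $\phi$ as above with $m=\lceil(p^e-1)d_i\rceil\ge p^e-1$ and $\phi(F^e_*c)=1$; since $v(c)\ge1\ge p^e-m$, the observation gives $\phi(F^e_*c)\in tV$, a contradiction. Therefore $d_i<1$, i.e.\ $\lfloor\Delta\rfloor=0$.

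I do not expect a genuine obstacle: the computation is routine once the setup is in place. The two points needing attention are the compatibility of localization at $\fkp$ with $F^e_*$ and with $F$-splittings, and — the only slightly non-obvious move — the choice of $c$ vanishing along $\Delta_i$ in the strongly $F$-regular case. That choice is exactly what upgrades the inequality: with $c=1$ one would only get $d_i\le1$ (as in the $F$-pure case), whereas letting $c$ vanish on $\Delta_i$ is what rules out the boundary value $d_i=1$ and yields $\lfloor\Delta\rfloor=0$.
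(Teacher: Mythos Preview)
The paper does not supply a proof of this proposition; it merely states the result with a citation to \cite{HW}. Your argument is correct and is essentially the standard one from that reference: localize at the generic point of a component $\Delta_i$ to reduce to a DVR $V$ with uniformizer $t$, and then use the identity $t\cdot F^e_*w=F^e_*(t^{p^e}w)$ to see that any $V$-linear map $F^e_*(t^{-m}V)\to V$ sends $F^e_*z$ into $tV$ whenever $v(z)\ge p^e-m$. Your handling of the two cases is accurate, and the point you single out as ``the only slightly non-obvious move''---choosing the test element $c$ to lie in $\fkp$ in the strongly $F$-regular case, so that $v(c)\ge 1\ge p^e-\lceil(p^e-1)d_i\rceil$ once $d_i\ge 1$---is precisely what distinguishes the strict bound $d_i<1$ from the non-strict bound $d_i\le 1$ obtained in the $F$-pure case.
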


A criterion analogous to Proposition \ref{Fedder} holds in the pair setting. 
\begin{prop}[cf.~\textup{\cite{HW}}]\label{Fedder2}
Let $(R, \m)$ be an $F$-finite regular local ring, $f \in R$ be a nonzero element and $t>0$ be a rational number. 
Put $X=\Spec R$ and $\Delta=t \; \Div(f)$. 
\begin{enumerate}
\item $(X, \Delta)$ is $F$-pure if and only if $f^{\lfloor t (p^e-1) \rfloor} \notin \m^{[p^e]}$ for all $e \in \N$. 
\item $(X, \Delta)$ is sharply $F$-pure if and only if there exists some $e \in \N$ such that $f^{\lceil t (p^e-1) \rceil} \notin \m^{[p^e]}$. 
\item $(X, \Delta)$ is strongly $F$-regular if and only if there exists some $e \in \N$ such that $f^{\lceil t p^e \rceil} \notin \m^{[p^e]}$. 
\item 
Suppose that $t=1$ and $R/(f)$ is reduced.  
Choose an element $c \in R \setminus (f)$ such that the localization $(R/f)_c$ is strongly $F$-regular. 
Then $(X, \Delta)$ is purely $F$-regular if and only if there exists $e \in \N$ such that $cf^{p^e-1} \notin \m^{[p^e]}$. 
\end{enumerate}
\end{prop}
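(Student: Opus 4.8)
The plan is to deduce all four parts from a single ``Fedder-type'' translation, in the spirit of Proposition \ref{Fedder}. First I would record the key input: since $R$ is $F$-finite and regular, $F^e_*R$ is a free $R$-module by Kunz's Theorem \ref{Kunz}, and hence $\Hom_R(F^e_*R,R)$ is free of rank one over $F^e_*R$, generated by a map $\Phi^e$ with the property that $\Phi^e(F^e_*g)\in\m$ if and only if $g\in\m^{[p^e]}$. Next, because $\Delta=t\,\Div(f)$ is a $\Q$-multiple of a principal divisor, $\sO_X(\lceil(p^e-1)\Delta\rceil)=f^{-N_e}R$ with $N_e=\lceil t(p^e-1)\rceil$ (and similarly for the round-down, or for $(p^e-1)\Div(f)$ when $t=1$), and multiplication by $f^{N_e}$ identifies $F^e_*\bigl(f^{-N_e}R\bigr)$ with $F^e_*R$. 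Transporting the composite $\sO_X\to F^e_*\sO_X(\lceil(p^e-1)\Delta\rceil)$, $x\mapsto F^e_*(cx^{p^e})$, through this identification turns it into the $R$-linear map $R\to F^e_*R$, $1\mapsto F^e_*(cf^{N_e})$; such a map splits iff $\Phi^e(F^e_*(cf^{N_e}R))\not\subseteq\m$, i.e.\ (as $R$ is local) iff $cf^{N_e}\notin\m^{[p^e]}$. I would prove exactly this lemma and its round-down variant up front.

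With that in hand, parts (1) and (2) are immediate: take $c=1$ and read off ``$F$-pure $=$ the map splits for all $e$ with the round-down twist'' and ``sharply $F$-pure $=$ the map splits for some $e$ with the round-up twist.''

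For part (3) I would first note that, by the same translation, $(X,\Delta)$ is strongly $F$-regular iff for every nonzero $c$ there is $e$ with $cf^{\lceil t(p^e-1)\rceil}\notin\m^{[p^e]}$. For the forward implication I would plug in the single element $c=f^{\lceil t\rceil}$ and use that $\lceil t\rceil+\lceil t(p^e-1)\rceil$ is an integer $\ge tp^e$, hence $\ge\lceil tp^e\rceil$, so that $f^{\lceil tp^e\rceil}$ divides a power of $f$ not in $\m^{[p^e]}$. For the converse, starting from $f^{\lceil tp^{e_0}\rceil}\notin\m^{[p^{e_0}]}$ I would first propagate this to all $e\ge e_0$ using flatness of Frobenius (so $g\notin\m^{[q]}\Rightarrow g^p\notin\m^{[pq]}$) together with $p\lceil tp^e\rceil\ge\lceil tp^{e+1}\rceil$; then, to absorb an arbitrary nonzero $c$, I would pick $e_1$ with $c\notin\m^{[p^{e_1}]}$ (Krull intersection), take splittings $\psi$ of $F^e_*f^{\lceil tp^e\rceil}$ and $\phi$ of $F^{e_1}_*c$, and form the composite $\psi\circ F^e_*\phi\colon F^{e+e_1}_*R\to R$, which sends $F^{e+e_1}_*\bigl(c\,f^{p^{e_1}\lceil tp^e\rceil}\bigr)$ to $1$; since $p^{e_1}\lceil tp^e\rceil$ is an integer $>t(p^{e+e_1}-1)$, it dominates $\lceil t(p^{e+e_1}-1)\rceil$, and we conclude $c\,f^{\lceil t(p^{e+e_1}-1)\rceil}\notin\m^{[p^{e+e_1}]}$. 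I expect this last step --- \emph{absorbing the test element by composing Frobenius splittings} --- to be the main obstacle; everything else is bookkeeping with floors and ceilings.

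For part (4) ($t=1$) the twist $\lceil(p^e-1)\Delta\rceil=(p^e-1)\Div(f)$ is integral, and the translation above (with this twist) shows that $(X,\Delta)$ is purely $F$-regular iff for every $c$ lying in no minimal prime of $\sO_X(-\lfloor\Delta\rfloor)=(f)$ there is $e$ with $cf^{p^e-1}\notin\m^{[p^e]}$. Because $(f)$ is radical one has $\bigl((f)^{[p^e]}:(f)\bigr)=(f^{p^e-1})$, so running the analogous unwinding for $R/(f)$ identifies this condition with ``$R/(f)$ is strongly $F$-regular.'' I would then invoke Proposition \ref{Fedder}(2) for the regular local ring $R$, the radical ideal $(f)$, and the prescribed $c$ --- which indeed lies in no minimal prime of $(f)$, since $R/(f)$ is reduced, hence regular in codimension zero, so that its strongly $F$-regular locus $\Spec(R/f)\setminus V(c)$ meets every component --- to get that $R/(f)$ is strongly $F$-regular iff $c\bigl((f)^{[p^e]}:(f)\bigr)\not\subseteq\m^{[p^e]}$ for some $e$, i.e.\ iff $cf^{p^e-1}\notin\m^{[p^e]}$ for some $e$. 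The only subtlety here is matching the quantifier over admissible $c$ in the definition of pure $F$-regularity with the ``single-element'' form of Fedder's criterion, which is precisely what the identification with strong $F$-regularity of $R/(f)$ supplies.
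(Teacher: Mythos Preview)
The paper does not give a proof of this proposition; it simply states it with the attribution ``cf.~\cite{HW}'' and moves on to examples. Your Fedder-type translation (freeness of $F^e_*R$ by Theorem~\ref{Kunz}, a generator $\Phi^e$ of $\Hom_R(F^e_*R,R)$ as an $F^e_*R$-module detecting membership in $\m^{[p^e]}$, and the identification of the twisted map with $1\mapsto F^e_*(cf^{N_e})$) is the standard route, and your arguments for (1), (2) and (3) are correct, including the composition-of-splittings step to absorb an arbitrary $c$ in (3).

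In (4) your main line --- $(X,\Delta)$ purely $F$-regular $\Leftrightarrow$ $R/(f)$ strongly $F$-regular $\Leftrightarrow$ the single-$c$ Fedder criterion of Proposition~\ref{Fedder}(2) --- is correct, but your parenthetical claim that the prescribed $c$ ``lies in no minimal prime of $(f)$'' is both unnecessary and false. It is unnecessary because Proposition~\ref{Fedder}(2) only requires $c\in S\setminus I$, which is exactly the hypothesis $c\in R\setminus(f)$. It is false: take $R=k[[x,y]]$, $f=xy$, $c=x$; then $c\notin(f)$ and $(R/f)_c\cong k((x))$ is a field, hence strongly $F$-regular, yet $c$ lies in the minimal prime $(x)$ of $(f)$. (Your justification conflates ``the strongly $F$-regular locus of $R/(f)$ meets every component'' with ``$D(\bar c)$ meets every component''; only the former follows from reducedness.) Simply drop the parenthetical and the proof goes through. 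A smaller quibble: the identity $((f)^{[p^e]}:(f))=(f^{p^e-1})$ holds because $R$ is a regular local ring (hence a UFD) and $(f)$ is principal, not because $(f)$ is radical.
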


The following example easily follows from the above proposition. 
Note that strong $F$-regularity (respectively, $F$-purity, weak $F$-purity, pure $F$-regularity) of pairs can be checked locally (cf.~Remark \ref{F-regular remark} (2)).
\begin{ex}\label{pair example}
(1) Let $k$ be a perfect field of characteristic $3$ and $R=k[x, y]$ be the two-dimensional polynomial ring over $k$. 
Put  $f=xy(x+y)(x-y) \in R$ and $\Delta=\Div(f)$. 
Then the pair $(\Spec R, (1/3) \Delta)$ is $F$-pure but not sharply $F$-pure. 
Also, $(\Spec R, t \Delta)$ is strongly $F$-regular for every $1/3>t>0$.

(2) Let $k$ be a perfect field of characteristic $p>0$ and $R=k[x, y, z]$ be the three-dimensional polynomial ring over $k$. 
Put $f=x^3-yz(y+z) \in R$ and $\Delta=\Div(f)$. 
Then $(\Spec R,\Delta)$ is $F$-pure (equivalently, sharply $F$-pure) if and only if $p \equiv 1 \; \mathrm{mod} \; 3$. Compare this with Example \ref{F-pure example} (1). 

(3) Let $k$ be a perfect field of characteristic $p>5$ and $R=k[x, y, z]$ be the three-dimensional polynomial ring over $k$. 
Put $f=x^2+y^3+z^5 \in R$ and $\Delta=\Div(f)$. 
Then $(\Spec R,\Delta)$ is purely $F$-regular. Compare this with Example \ref{F-pure example} (2). 
\end{ex}

Let $X$ be a normal variety over a field of characteristic zero and $\Delta$ be an effective $\Q$-divisor on $X$ such that $K_X+\Delta$ is $\Q$-Cartier. 
Let $(X_A, \Delta_A)$ be a model of $(X, \Delta)$ over a finitely generated $\Z$-subalgebra $A$ of $k$. 
We say that $(X, \Delta)$ is of \textit{strongly $F$-regular type} (respectively, \textit{purely $F$-regular type}) if there exists a dense open subset $T \subset \Spec A$ such that $(X_{\mu}, \Delta_{\mu})$ is strongly $F$-regular (respectively, purely $F$-regular) for all closed points $\mu \in T$. 
We also say that $(X, \Delta)$ is of \textit{dense $F$-pure type} if there exists a dense subset of closed points $S \subset \Spec A$ such that $(X_{\mu}, \Delta_{\mu})$ is $F$-pure for all $\mu \in S$. 
If $(X, \Delta)$ is of dense $F$-pure type, then after possibly shrinking $S$, we may assume that for every $\mu \in S$, there exists some $e(\mu) \in \N$ such that $(p(\mu)^{e(\mu)}-1)\Delta_{\mu}$ is an integral divisor. 
Thus, the definition of being of dense $F$-pure type is equivalent to saying that there exists a dense subset of closed points $S \subset \Spec A$ such that $(X_{\mu}, \Delta_{\mu})$ is sharply $F$-pure for all $\mu \in S$.

As we have promised at the end of \S\ref{tight closure}, we state a correspondence between strongly $F$-regular pairs and klt pairs. 
Lemma \ref{surjectivity} plays a key role in the proof. 
\begin{thm}[\cite{Ta1}, \cite{Ta2}]\label{F-regular vs klt}
The pair $(X, \Delta)$ is klt $($respectively, plt$)$ if and only if it is of strongly $F$-regular type $($respectively, purely $F$-regular type$)$. 
\end{thm}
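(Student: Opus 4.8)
The plan is to prove both equivalences by reduction modulo $p$, matching the discrepancy condition defining klt (resp.\ plt) against the splitting characterization of strong $F$-regularity (resp.\ pure $F$-regularity), with Lemma~\ref{surjectivity} as the crucial vanishing input on the characteristic-$p$ side. I will treat the klt versus strongly $F$-regular type statement; the plt case is parallel. First I would reduce to the affine case $X=\Spec R$, with $R$ a normal domain essentially of finite type over a field $k$ of characteristic $0$ and $K_X+\Delta$ $\Q$-Cartier, fix a log resolution $\pi\colon\widetilde X\to X$ of $(X,\Delta)$ with $K_{\widetilde X}=\pi^{*}(K_X+\Delta)+\sum_i a_iE_i$, and spread everything out over a finitely generated $\Z$-subalgebra $A\subset k$ to obtain $\pi_\mu\colon\widetilde X_\mu\to X_\mu$ for closed points $\mu\in\Spec A$.

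For ``klt $\Rightarrow$ strongly $F$-regular type'': being klt means exactly that the multiplier ideal $\pi_{*}\sO_{\widetilde X}(\lceil\sum_i a_iE_i\rceil)$ equals $\sO_X$. By the pair analogue of Lemma~\ref{single c} it suffices to fix one $c\in R\setminus\{0\}$ with $R_c$ regular and, for all $\mu$ in a dense open subset, to produce an $e$ for which $cF^{e}\colon\sO_{X_\mu}\to F^{e}_{*}\sO_{X_\mu}(\lceil(p^{e}-1)\Delta_\mu\rceil)$ splits. Dualizing via Grothendieck--Serre duality, this splitting is equivalent to the surjectivity of a trace-type map $F^{e}_{*}\pi_{\mu*}\omega_{\widetilde X_\mu}(\lceil p^{e}G_\mu\rceil)\to\pi_{\mu*}\omega_{\widetilde X_\mu}(\lceil G_\mu\rceil)$, where $G$ is a $\Q$-divisor on $\widetilde X$ chosen to be $\pi$-ample with round-up large enough to absorb both the fractional part of $\pi^{*}(K_X+\Delta)$ and an appropriate multiple of the pullback of $\Div(c)$. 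Lemma~\ref{surjectivity} furnishes precisely this surjectivity over a dense open $S\subset\Spec A$, while the klt hypothesis forces $\pi_{\mu*}\omega_{\widetilde X_\mu}(\lceil G_\mu\rceil)\supseteq\omega_{X_\mu}$; composing with the natural map from $\omega_{X_\mu}$ then gives the required splitting, so $(X_\mu,\Delta_\mu)$ is strongly $F$-regular for every $\mu\in S$.

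For the converse I argue by contraposition. If $(X,\Delta)$ is not klt, then $\mathcal J:=\pi_{*}\sO_{\widetilde X}(\lceil\sum_i a_iE_i\rceil)$ is a proper ideal of $R$, so by generic freeness its reduction satisfies $\mathcal J_\mu\subsetneq\sO_{X_\mu}$ for all $\mu$ in a dense open set $U$. Pushing forward at the level of $\widetilde X_\mu$ the evident inequality of divisors, one obtains that for such $\mu$ the image of every trace-type map on $X_\mu$ appearing (in dual form) in the definition of strong $F$-regularity of $(X_\mu,\Delta_\mu)$ is forced into the proper submodule $\mathcal J_\mu\,\omega_{X_\mu}\subsetneq\omega_{X_\mu}$; this containment is elementary and, unlike the other direction, uses no vanishing theorem. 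Hence for no $c$ and no $e$ does $cF^{e}$ split, $(X_\mu,\Delta_\mu)$ is not strongly $F$-regular for $\mu\in U$, and therefore $(X,\Delta)$ is not of strongly $F$-regular type.

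The plt versus purely $F$-regular type equivalence follows the same two-step template, with three adjustments: ``every $c\ne 0$'' is replaced by ``every $c$ avoiding the minimal primes of $\sO_X(-\lfloor\Delta\rfloor)$'', the multiplier ideal is replaced by the adjoint ideal along $\lfloor\Delta\rfloor$, and plt only requires $a_i>-1$ for $\pi$-exceptional $E_i$ --- which dovetails with the relaxation on the test element. I expect the ``only if'' direction to be the main obstacle: one must choose the auxiliary $\pi$-ample divisor $G$ with enough slack, verify that the surjectivity provided by Lemma~\ref{surjectivity}, combined with duality, really descends to a splitting of $cF^{e}$ on $X_\mu$ rather than merely on $\widetilde X_\mu$, and arrange that a single $c$ with $R_c$ regular together with a single dense open $S$ works uniformly. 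The plt refinement adds only combinatorial bookkeeping along $\lfloor\Delta\rfloor$ and no new idea.
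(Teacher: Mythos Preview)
Your proposal is correct and follows essentially the approach of the cited papers \cite{Ta1}, \cite{Ta2}: the paper itself does not supply a proof but only remarks that Lemma~\ref{surjectivity} plays the key role, and your outline correctly identifies this lemma as the engine for the ``klt $\Rightarrow$ strongly $F$-regular type'' direction (via duality and a $\pi$-ample auxiliary divisor), while handling the converse by the elementary containment of the test ideal in the reduction of the multiplier ideal. Your treatment of the plt case via the adjoint ideal and the restricted class of test elements is likewise the approach taken in \cite{Ta2}.
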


\begin{thm}[\cite{HW}]\label{lc pairs}
If the pair $(X, \Delta)$ is of dense $F$-pure type, then it is lc. 
\end{thm}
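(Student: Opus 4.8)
The plan is to prove the contrapositive: if $(X,\Delta)$ is not log canonical, I will produce a dense open subset $U\subseteq\Spec A$ such that $(X_\mu,\Delta_\mu)$ is \emph{not} $F$-pure for any closed point $\mu\in U$. Since any dense set of closed points of $\Spec A$ meets $U$, this contradicts the assumption that $(X,\Delta)$ is of dense $F$-pure type.

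First I would reduce to a statement about a single exceptional divisor. Log canonicity is local on $X$, so I may assume $X=\Spec R$ with $R$ local. By the Proposition following Remark~\ref{two F-purity}, $F$-purity of a pair forces $\lceil\Delta\rceil$ to be a reduced divisor, and $\Delta_\mu$ has the same coefficients as $\Delta$; hence dense $F$-pure type already forces every coefficient of $\Delta$ to be at most $1$. Consequently, if $(X,\Delta)$ is not lc, then on any log resolution $\pi\colon\widetilde X\to X$, written $K_{\widetilde X}=\pi^*(K_X+\Delta)+\sum_i a_iE_i$, there is a $\pi$-exceptional prime divisor $E_{i_0}$ with $a_{i_0}<-1$; write $a_{i_0}=-1-\delta$ with $\delta>0$. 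Fixing such a $\pi$ and spreading $(X,\Delta,\pi,\{E_i\})$ out over a finitely generated $\Z$-subalgebra $A$ of $k$ as in \S\ref{reduction}, I may shrink to a dense open $U\subseteq\Spec A$ over which, for every closed point $\mu$, $\pi_\mu$ is a log resolution of $(X_\mu,\Delta_\mu)$, $E_{i_0,\mu}$ is a nonempty $\pi_\mu$-exceptional smooth prime divisor, and $K_{\widetilde X_\mu}=\pi_\mu^*(K_{X_\mu}+\Delta_\mu)+\sum_i a_iE_{i,\mu}$.

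Next I would rewrite $F$-purity as surjectivity of a Frobenius trace and transport it to the resolution. Fix $\mu\in U$ with $(X_\mu,\Delta_\mu)$ $F$-pure, and set $p=p(\mu)$. Dualizing the split inclusion $\sO_{X_\mu}\hookrightarrow F^e_*\sO_{X_\mu}(\lfloor(p^e-1)\Delta_\mu\rfloor)$ into $\sO_{X_\mu}$ by Grothendieck duality for the finite morphism $F^e$ (applied to reflexive sheaves, in the spirit of Lemma~\ref{F-pure lemma}), $F$-purity of $(X_\mu,\Delta_\mu)$ is equivalent to surjectivity, for every $e$, of the evaluation-at-$1$ map
\[
\mathrm{ev}^e_\mu\colon F^e_*\sO_{X_\mu}\bigl((1-p^e)K_{X_\mu}-\lfloor(p^e-1)\Delta_\mu\rfloor\bigr)\longrightarrow\sO_{X_\mu},
\]
the generalized $e$-th Frobenius trace of the pair. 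Because $\pi_\mu$ is an isomorphism in codimension one, all sheaves in sight are reflexive, and Frobenius is absolute, $\mathrm{ev}^e_\mu$ equals $\pi_{\mu*}$ of the corresponding twisted Cartier operator $\widetilde{\mathrm{ev}}^e_\mu$ on the \emph{regular} scheme $\widetilde X_\mu$ (both arise from $F^e_{X_\mu}\circ\pi_\mu=\pi_\mu\circ F^e_{\widetilde X_\mu}$). A divisor-rounding computation using $\pi_\mu^*(K_{X_\mu}+\Delta_\mu)=K_{\widetilde X_\mu}-\sum_i a_iE_{i,\mu}$ then shows that the source sheaf of $\widetilde{\mathrm{ev}}^e_\mu$ consists, near $E_{i_0,\mu}$, of forms vanishing along $E_{i_0,\mu}$ to order at least $(p^e-1)\lvert a_{i_0}\rvert-C$, where $C$ is bounded independently of $e$ and of $\mu$.

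Then comes the key point. Since $\lvert a_{i_0}\rvert=1+\delta>1$, once $p^e$ exceeds a bound depending only on $\delta$ and $C$ this vanishing order exceeds $p^e$; and the Frobenius trace on the regular scheme $\widetilde X_\mu$ contracts vanishing orders along the smooth divisor $E_{i_0,\mu}$ by the factor $p^e$. Hence the image of $\widetilde{\mathrm{ev}}^e_\mu$ lies in $\sO_{\widetilde X_\mu}(-E_{i_0,\mu})$, so the image of $\mathrm{ev}^e_\mu$ lies in the ideal $\pi_{\mu*}\sO_{\widetilde X_\mu}(-E_{i_0,\mu})$, which is a proper ideal of $\sO_{X_\mu}$ because $E_{i_0,\mu}$ is $\pi_\mu$-exceptional and nonempty. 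Thus $\mathrm{ev}^e_\mu$ is not surjective, so $(X_\mu,\Delta_\mu)$ is not $F$-pure; and since the bound on $e$ is uniform in $\mu$, this holds for every $\mu\in U$, contradicting dense $F$-pure type. The main obstacle I expect is precisely this last stretch: establishing the non-finite functoriality that identifies $\mathrm{ev}^e_\mu$ with $\pi_{\mu*}$ of a Cartier operator on $\widetilde X_\mu$, and the rounding bookkeeping showing that the vanishing order forced along $E_{i_0,\mu}$ grows like $(p^e-1)\lvert a_{i_0}\rvert$ and so, because $\lvert a_{i_0}\rvert>1$, outpaces $p^e$ — it is exactly the slack $\delta=\lvert a_{i_0}\rvert-1$ that survives division by $p^e$, and the argument degenerates in the boundary case $\lvert a_{i_0}\rvert=1$, consistently with the existence of log canonical pairs that are of dense $F$-pure type.
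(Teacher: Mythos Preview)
The paper does not actually prove Theorem~\ref{lc pairs}; it is stated with a citation to \cite{HW} and no argument is given. So there is nothing in the paper to compare your proposal against directly.

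That said, your proposal is precisely the argument of \cite{HW}. One assumes $(X,\Delta)$ is not lc, spreads out a fixed log resolution so that the offending exceptional divisor $E_{i_0}$ with discrepancy $a_{i_0}<-1$ persists over a dense open of $\Spec A$, and then shows that for every closed point $\mu$ in that open the pair $(X_\mu,\Delta_\mu)$ fails to be $F$-pure. The mechanism is exactly what you describe: dualize the would-be splitting to a trace-type map, identify it (using that $\pi_\mu$ is an isomorphism in codimension one and that the relevant sheaves are reflexive) with $\pi_{\mu*}$ of the twisted Cartier operator on the smooth $\widetilde X_\mu$, and then observe that along $E_{i_0,\mu}$ the source is $F^e_*\sO_{\widetilde X_\mu}(\lceil -(p^e-1)\sum_i a_i E_{i,\mu}\rceil)$, so the coefficient of $E_{i_0,\mu}$ is $\lceil (p^e-1)(1+\delta)\rceil$. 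Since the Cartier operator divides valuations by $p^e$, the image lands in $\sO_{\widetilde X_\mu}(-E_{i_0,\mu})$ once $p^e$ is large enough, and pushing forward gives a proper ideal.

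Two small remarks on your write-up. First, the reduction ``$F$-purity forces $\lceil\Delta\rceil$ reduced, hence any bad discrepancy is exceptional'' is a nice shortcut, but it is not strictly necessary: even a non-exceptional $E_i$ with $a_i<-1$ produces a proper ideal downstairs, so you could skip that step. Second, the ``non-finite functoriality'' you flag as the main obstacle is handled in \cite{HW} exactly as you suggest: one checks the identification over the locus where $\pi_\mu$ is an isomorphism and then extends by reflexivity (both the domain of $\mathrm{ev}^e_\mu$ and $\sO_{X_\mu}$ are $S_2$, and $\pi_{\mu*}$ of an $S_2$ sheaf on $\widetilde X_\mu$ is $S_2$ on $X_\mu$). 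So your outline is correct and complete in spirit; the only work left is the bookkeeping you already anticipate.
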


The converse of Theorem \ref{lc pairs} is also expected to be true, but it is wide open except in a few special cases. 

\begin{conj}\label{full correspondence}
The pair $(X, \Delta)$ is lc if and only it is of dense $F$-pure type. 
\end{conj}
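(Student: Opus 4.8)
\emph{The nontrivial direction.} The implication ``$(X,\Delta)$ of dense $F$-pure type $\Rightarrow (X,\Delta)$ lc'' is exactly Theorem \ref{lc pairs}, so the real content of Conjecture \ref{full correspondence} is the converse: if $(X,\Delta)$ is lc, then it is of dense $F$-pure type. (Once a model $(X_A,\Delta_A)$ is fixed, after shrinking $\Spec A$ one may assume $(p(\mu)^{e(\mu)}-1)\Delta_\mu$ is integral for each closed point $\mu$ and suitable $e(\mu)$, so ``dense $F$-pure type'' and ``dense sharply $F$-pure type'' coincide; one is therefore free to argue with sharp $F$-purity.) The plan is to fix a log resolution $\pi:\widetilde X\to X$ of $(X,\Delta)$, spread it out to $\pi_A:\widetilde X_A\to X_A$, and to exhibit a dense set of closed points $\mu\in\Spec A$ for which $(X_\mu,\Delta_\mu)$ is sharply $F$-pure, by realizing $F$-purity as the surjectivity of a trace map on the resolution, in the style of Lemma \ref{surjectivity} and of the proof of Theorem \ref{F-regular vs klt}.

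\emph{The mechanism, and why the klt argument stalls.} Write $K_{\widetilde X}=\pi^*(K_X+\Delta)+\sum_i a_iE_i$; the pair is lc precisely when $a_i\ge -1$ for all $i$. Let $E=\sum_{a_i=-1}E_i$ be the reduced divisor of lc places, and push the remaining (fractional, $a_i>-1$) data into a correction divisor $G$ on $\widetilde X$ with $\lfloor G\rfloor$ supported away from $E$. Then, roughly, sharp $F$-purity of $(X_\mu,\Delta_\mu)$ amounts to the existence of some $e$ for which the map
\[
F^e_*\,\pi_{\mu *}\omega_{\widetilde X_\mu}\bigl(\lceil p^eE_\mu\rceil+G_\mu\bigr)\longrightarrow \pi_{\mu *}\omega_{\widetilde X_\mu}(E_\mu)
\]
induced by $\mathrm{Tr}_F^e$ is surjective. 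In the klt case $E=0$, the correction term is $\pi$-ample, and Lemma \ref{surjectivity} (Serre vanishing together with the Deligne--Illusie--type vanishing in characteristic $p$) produces such an $e$ for \emph{all} large $\mu$; this is how Theorem \ref{F-regular vs klt} is proved. When $(X,\Delta)$ is only lc, one has $E\ne 0$ in general and there is no room left for an ample perturbation on the reduced divisor $E$: ampleness is lost, Lemma \ref{surjectivity} no longer applies, and the argument genuinely stalls.

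\emph{Reduction to Frobenius on cohomology.} The way past the stall is to dualize. By Grothendieck--Serre duality the required surjectivity is equivalent to injectivity of the Frobenius action on a local cohomology module built from $\pi_{\mu *}\omega_{\widetilde X_\mu}(E_\mu)$, and ultimately to bijectivity of Frobenius on the slope-zero part of the cohomology of $\widetilde X_\mu$ and of the strata cut out by $E_\mu$. So the plan is: (i) reduce Conjecture \ref{full correspondence} to the statement that, for a dense set of closed points $\mu$, Frobenius acts injectively on these cohomology groups; (ii) recognize (i) as a consequence of the \emph{weak ordinarity conjecture}---the assertion that a smooth projective variety over a number field has a dense set of primes of ordinary reduction (Frobenius bijective on $H^\bullet(-,\sO)$)---applied to $\widetilde X_\mu$ and its boundary strata. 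This is the route by which ``weak ordinarity $\Rightarrow$ lc $\Rightarrow$ dense $F$-pure type'' is established, and it throws the whole problem onto an input from arithmetic geometry that is independent of $F$-singularity theory; the same circle of ideas would settle Conjecture \ref{DB conj}.

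\emph{The main obstacle, and what survives unconditionally.} The crux---and the reason the conjecture is open---is step (ii): the weak ordinarity conjecture is itself wide open, known only in special cases (abelian varieties via Mehta--Srinivas, curves, certain toric and low-dimensional situations). Unconditionally one therefore obtains Conjecture \ref{full correspondence} only when the log resolution can be arranged to have fibers of such special type (e.g.\ toric pairs, cone singularities over abelian varieties) or by a separate low-dimensional analysis (log canonical surface singularities). A secondary, more routine difficulty is that $F$-purity of $(X_\mu,\Delta_\mu)$ is local on $X$ whereas the vanishing/ordinarity input lives globally on $\widetilde X_\mu$, so one must relate the behavior of $\pi$ over a neighborhood of a point of $X$ to global statements on the resolution. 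In short, everything except the density of ordinary reduction is a reorganization of the proof of Theorem \ref{F-regular vs klt}, and I expect the density of ordinary primes to be where all the real difficulty sits.
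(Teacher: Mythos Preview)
The statement is a \emph{conjecture}, and the paper does not prove it; rather, the paper records one direction as Theorem~\ref{lc pairs}, lists the known special cases, and explains (Proposition~\ref{MS implies full}) that the open direction would follow from the weak ordinarity conjecture (Conjecture~\ref{MS conj}). Your write-up is not a proof either, and you are right that it cannot be one: you correctly isolate the open direction, sketch the same reduction to a trace-map surjectivity on a log resolution, and identify the missing arithmetic input---density of ordinary primes---as the genuine obstruction. This is exactly the picture the paper paints.

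A couple of small calibrations. First, the paper's weak ordinarity conjecture asks only for bijectivity of Frobenius on the \emph{top} cohomology $H^n(V_\mu,\sO_{V_\mu})$, not on all of $H^\bullet(-,\sO)$; your formulation is stronger than what is needed (and than what is stated as Conjecture~\ref{MS conj}). Second, the paper organizes the partial progress somewhat differently from your summary: rather than invoking ordinarity for abelian varieties directly, it introduces the weaker pair of conjectures $\mathrm{A}_n$/$\mathrm{B}_n$, proves their equivalence (Theorem~\ref{A_3}), and then cites the arithmetic results of Ogus, Joshi--Rajan, and Bogomolov--Zarhin to settle $\mathrm{B}_n$ for $n\le 2$, yielding the isolated three-dimensional case unconditionally. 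Your ``cone over an abelian variety'' and ``toric'' remarks are in the right spirit but not quite the cases the paper singles out.
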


Conjecture \ref{full correspondence} is known to be true in the following cases: 
\begin{enumerate}
\item (\cite{MeS1}, \cite{Ha1}, \cite{HW}) $\dim X=2$ and $\Delta$ is an integral divisor.

\item (\cite{He}) $X=\C^n, \Delta=t \;\Div(f)$ with $t \in \Q_{\ge 0}$ and $f=\sum_{\underline{i}} c_{\underline{i}}\underline{x}^{\underline{i}} \in \C[x_1, \dots, x_n]$ where the $c_{\underline{i}} \in \C \setminus \{0\}$ are algebraically independent over $\Q$. 

\item (Theorem\ref{A_3}) $\dim X=3$, $X$ has only isolated non-log-terminal points and $\Delta=0$. 
\end{enumerate}
We will explain how to show (3) briefly. 
Musta\c{t}\u{a}-Srinivas introduced in \cite{MS} the following conjecture, the so-called \textit{weak ordinarity conjecture}. 
\begin{conj}[Weak ordinarity conjecture \cite{MS}]\label{MS conj}
Let $V$ be an $n$-dimensional smooth projective variety over an algebraically closed field of characteristic zero. 
Given a model $V_A$ of $V$ over a finitely generated $\Z$-subalgebra $A$ of $k$, 
there exists a dense subset of closed points $S \subset \Spec A$ such that the natural Frobenius action on $H^n(V_{\mu}, \sO_{V_{\mu}})$ is bijective for all $\mu \in S$. 
\end{conj}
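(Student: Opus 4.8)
The statement is a central open problem, so the most one can honestly offer is a strategy that succeeds exactly where the conjecture is presently known and a precise identification of where it breaks down; in the paper this conjecture is the input needed for case $(3)$ after Conjecture~\ref{full correspondence}, i.e.\ for Theorem~\ref{A_3}. The plan is to first reformulate ``bijectivity of Frobenius on $H^n(V_\mu,\sO_{V_\mu})$'' cohomologically. Since $V_\mu$ is smooth projective of dimension $n$ over the finite (hence perfect) field $A/\mu$, Grothendieck--Serre duality identifies the $p$-linear Frobenius $F$ on $H^n(V_\mu,\sO_{V_\mu})$ with the transpose of the Cartier operator on the space of global top forms, and over a perfect field a $p$-linear endomorphism of a finite-dimensional vector space is bijective as soon as it is injective; so the conjecture is equivalent to invertibility of the top Hasse--Witt matrix of $V_\mu$ for a dense set of closed points $\mu$. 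Passing to crystalline cohomology, this in turn is equivalent to the Newton polygon of $H^n_{\mathrm{cris}}(V_\mu)$ beginning with a slope-$0$ segment of length exactly $h^{n,0}=\dim H^0(V,\omega_V)$, that is, to ``$H^n$-ordinarity'' of the reduction. By Mazur's theorem the Newton polygon always lies on or above the Hodge polygon, so what must be produced is a dense set of primes at which equality holds along the initial segment.

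With this reduction in hand, the second step is to exhibit enough members of the family whose motive has complex multiplication. When $V$ is a curve (equivalently, via its Jacobian, an abelian variety) or a K3 surface, the relevant period or moduli space is a Shimura variety or period domain in which points of CM type are Zariski dense, and a CM abelian variety or K3 reduces ordinarily at the primes splitting suitably in its reflex field, a set of positive density. First I would specialize $V$ so that $H^n$ acquires complex multiplication, then let $\mu$ range over the ordinary primes of that CM specialization; this recovers the classical density of ordinary reduction for abelian varieties and the Bogomolov--Zarhin result for K3 surfaces. More generally this works whenever $H^n(V)$ is of ``abelian type'', i.e.\ is cut out by algebraic correspondences from the cohomology of abelian varieties, since ordinarity can then be transported from an auxiliary abelian variety.

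The hard part will be the general case, and it is exactly why the conjecture is open: for an arbitrary smooth projective $V$ there is no moduli-theoretic handle. The Hodge structure on $H^n(V)$ need not be of abelian type, CM points need not be dense in any relevant deformation space, and there is no known mechanism forcing the first Newton slope down to $0$ for infinitely many primes. A proof along the lines above would require as input a statement of the strength of the Tate (or Hodge) conjecture -- enough to realize $H^n(V)$ motivically and control its reduction -- or else a direct ``generic ordinarity'' theorem for the universal deformation of $V$; neither is available. So while the plan cleanly reduces the conjecture to a density-of-ordinary-primes question and settles it in the cases listed in the excerpt, the genuine obstacle is the absence of an arithmetic or motivic input strong enough to treat cohomology that is not of abelian type.
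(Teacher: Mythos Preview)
The statement is labeled a \emph{Conjecture} in the paper and is not proved there; the paper only records it, relates it to Conjectures~\ref{DB conj}, \ref{full correspondence} and \ref{test ideal conj} via Proposition~\ref{MS implies full}, and cites \cite{Og}, \cite{JR}, \cite{BZ} for the restricted form packaged as Conjecture~$\mathrm{B}_n$ with $n\le 2$. You recognize this correctly, and your reformulation in terms of Hasse--Witt matrices and the initial slope of the Newton polygon is accurate and standard.

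There is, however, a genuine gap in the strategy you sketch for the ``known'' cases. The conjecture concerns a \emph{fixed} smooth projective variety $V$; one is not free to ``specialize $V$ so that $H^n$ acquires complex multiplication.'' Moving to a CM point of a moduli space replaces $V$ by a different variety and says nothing about the reductions $V_\mu$ of the original $V$. The actual arguments of Serre (elliptic curves), Ogus (abelian surfaces, via the Hodge--Tate decomposition and a Lie-algebra argument on the Galois representation), and Bogomolov--Zarhin / Joshi--Rajan (K3 surfaces) work directly with the fixed $V$: they analyze the $\ell$-adic or crystalline Galois representation attached to $V$ itself and show that non-ordinary primes are sparse. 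In particular, even for curves the result is not known in the generality you claim---density of ordinary primes for a fixed curve of genus $g\ge 3$ (equivalently, for its Jacobian) is already open, precisely because no specialization-to-CM trick is available for a fixed $V$. So the mechanism you propose is not how the known cases are established and would not establish them; your diagnosis of the obstruction in the general case is reasonable, but that same obstruction already bites for a fixed abelian threefold.
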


Using techniques from \cite{MS}, we can show the following. 
\begin{prop}[{\cite{BST}, \cite{Ta3}}]\label{MS implies full}
If Conjecture $\ref{MS conj}$ holds, then Conjectures $\ref{DB conj}$ and $\ref{full correspondence}$ hold as well.\footnote{More generally, it follows from a combination of Proposition \ref{MS implies full} and \cite{MiSc} that if Conjecture \ref{MS conj} holds, then being of dense $F$-pure type is equivalent to being slc pairs.} 
Conversely, if Conjecture $\ref{DB conj}$ holds, then so does Conjecture $\ref{MS conj}$. 
\end{prop}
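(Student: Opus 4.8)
Since Theorem~\ref{Du Bois} already gives that dense $F$-injective type implies Du Bois singularities, and Theorem~\ref{lc pairs} that dense $F$-pure type implies lc, the content to establish is: (i) assuming Conjecture~\ref{MS conj}, a ring essentially of finite type over a field of characteristic zero with Du Bois singularities is of dense $F$-injective type, and an lc pair is of dense $F$-pure type; and (ii) Conjecture~\ref{DB conj} implies Conjecture~\ref{MS conj}. I treat these in turn.

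For (i), the first step, following Musta\c{t}\u{a}--Srinivas, is to upgrade Conjecture~\ref{MS conj} --- a statement only about the Frobenius on $H^n(V_\mu,\sO_{V_\mu})$ --- to the assertion that for a dense set of closed points $\mu$ the reduction $V_\mu$ is \emph{ordinary} in the strong sense that Frobenius acts bijectively on every $H^i(V_\mu,\Omega^j_{V_\mu})$; this is done by realizing an arbitrary $H^i(V,\Omega^j_V)$ as the top Hodge cohomology $H^{\dim}(\sO)$ of an auxiliary smooth projective variety built functorially from $V$ (via projective bundles of sheaves of differentials and products) and invoking Conjecture~\ref{MS conj} there. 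The second step is an ordinary analogue of Lemma~\ref{surjectivity}. Work locally with $X=\Spec R$ Du Bois and fix a log resolution $\pi:\widetilde{X}\to X$ with reduced simple normal crossing exceptional divisor $E$; the Du Bois condition is $R\pi_*\omega_{\widetilde{X}}(E)=\omega_X$ (in the Cohen--Macaulay normal case; in general one argues with the whole dualizing complex and all the local cohomology modules $H^i_{\m}$ at once), and it spreads out over a dense open subset after reduction. By local duality and the Cohen--Macaulay case of the trace characterization of $F$-injectivity it suffices to show $\mathrm{Tr}_F : F_*\omega_{R_\mu}\to\omega_{R_\mu}$ is surjective; pulling back along $\pi_\mu$ one is reduced, after applying $R\pi_{\mu*}$, to surjectivity of the $E_\mu$-twisted Frobenius trace $F_*\omega_{\widetilde{X}_\mu}(E_\mu)\to\omega_{\widetilde{X}_\mu}(E_\mu)$. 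That map is surjective, even split, locally, since $(\widetilde{X}_\mu,E_\mu)$ has simple normal crossing support, so the only obstruction after $R\pi_{\mu*}$ lies in the higher direct images $R^{>0}\pi_{\mu*}$ of the intermediate sheaves $\omega_{\widetilde{X}_\mu}(p^eE_\mu)$; Deligne--Illusie's degeneration, reinforced by the strong ordinarity of $\widetilde{X}_\mu$ and of the relevant smooth projective varieties (the strata of $E_\mu$ and the fibres of $\pi$, or resolutions thereof), forces the Frobenius on these to be surjective. The lc case runs in parallel on a log resolution of $(X,\Delta)$: the discrepancy bound $a_i\ge-1$ is exactly what makes the relevant trace (twisted by $E_\mu$ together with the strict transform of $\lceil\Delta_\mu\rceil$) well defined and locally split, and strong ordinarity again removes the higher-cohomology obstruction, yielding a splitting of $\sO_{X_\mu}\to F^e_*\sO_{X_\mu}(\lceil(p^e-1)\Delta_\mu\rceil)$ for a dense set of $\mu$; the refinement to slc pairs mentioned in the footnote follows by combining this with \cite{MiSc}.

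For (ii), given a smooth projective variety $V$ of dimension $n$ with model $V_A$, fix an ample line bundle $L_0$ on $V$ and an integer $d\gg0$ with $H^i(V,L_0^{dm})=0$ for all $i\ge1$ and all $m\ge1$ (Serre vanishing), and set $R=\bigoplus_{m\ge0}H^0(V,L_0^{dm})$. A direct computation of the Du Bois complex of the cone $X=\Spec R$ via the blow-up of its vertex gives $\mathcal{H}^i(\underline{\Omega}^0_X)\cong\bigoplus_{m\ge1}H^i(V,L_0^{dm})$ for $i\ge1$, so the choice of $d$ makes $X$ Du Bois; meanwhile the local cohomology at the vertex satisfies $[H^{n+1}_{\m}(R)]_0\cong H^n(V,\sO_V)$, compatibly with the natural Frobenius actions after reduction modulo $p$. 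By Conjecture~\ref{DB conj}, $X$ is of dense $F$-injective type, so for a dense set of $\mu$ the Frobenius is injective on $H^{n+1}_{\m}(R_\mu)$ and hence on $H^n(V_\mu,\sO_{V_\mu})$; an injective Frobenius-semilinear endomorphism of a finite-dimensional vector space over the finite field $A/\mu$ is bijective, which is precisely Conjecture~\ref{MS conj} for $V$.

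The hard part is the second step of (i). Once the exceptional (or boundary) divisor fails to be relatively ample, the Serre-vanishing shortcut underlying Lemma~\ref{surjectivity} is unavailable, and one must genuinely control the Frobenius on $R^{>0}\pi_{\mu*}\omega_{\widetilde{X}_\mu}(p^eE_\mu)$: identifying its cohomology with Hodge cohomology of actual smooth projective varieties where strong ordinarity applies, and verifying that the resulting Deligne--Illusie splittings are compatible with the $E_\mu$-twisted trace and with the Du Bois / lc bookkeeping, is where the real work lies.
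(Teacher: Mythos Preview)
The paper does not supply its own proof of this proposition; it is stated with citations to \cite{BST} and \cite{Ta3} (together with the remark that the argument uses techniques from \cite{MS}), so there is no in-paper proof to compare against.

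Your sketch is nonetheless broadly aligned with the cited literature. Part (ii) is essentially the standard cone argument (as in \cite{BST2}): a high enough Veronese of an ample line bundle makes the section ring Du Bois, the degree-zero piece of $H^{n+1}_{\m}(R_\mu)$ is $H^n(V_\mu,\sO_{V_\mu})$ compatibly with Frobenius, and injectivity of a $p$-linear endomorphism of a finite-dimensional space over a finite field is bijectivity. One quibble: your displayed formula for $\mathcal{H}^i(\underline{\Omega}^0_X)$ is not quite the right invariant to track, but the conclusion you actually need --- that the cone is Du Bois once $H^i(V,L^m)=0$ for all $i>0$ and $m>0$ --- is correct and is exactly what your Serre-vanishing choice of $d$ guarantees.

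Part (i) has the right architecture: upgrade weak ordinarity to strong ordinarity via the Musta\c{t}\u{a}--Srinivas product/projective-bundle trick, then push through a log resolution. But the second step, which you yourself flag as the hard part, is a plausibility gesture rather than an argument. Two concrete slips: first, ``$R\pi_*\omega_{\widetilde X}(E)=\omega_X$'' should read $\pi_*$ in the Cohen--Macaulay normal case (the higher $R^i\pi_*\omega_{\widetilde X}(E)$ need not vanish even in characteristic zero); second, the ``$E_\mu$-twisted Frobenius trace'' is a map $F_*\omega_{\widetilde X_\mu}(pE_\mu)\to\omega_{\widetilde X_\mu}(E_\mu)$, not $F_*\omega_{\widetilde X_\mu}(E_\mu)\to\omega_{\widetilde X_\mu}(E_\mu)$ as you write --- your later mention of $\omega(p^eE_\mu)$ suggests you know this, but the mismatch obscures the logic. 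More substantively, the assertion that strong ordinarity of the strata of $E_\mu$ ``forces the Frobenius on these to be surjective'' is precisely the statement requiring proof, and the published arguments in \cite{Ta3} and \cite{BST2} organize this step rather differently (via injectivity of Frobenius on cohomology of snc pairs and careful compatibility with adjoint-type or Du Bois data) than your sketch indicates. If you want a complete argument for (i), follow those references rather than trying to fill in the present outline.
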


However, since Conjecture \ref{MS conj} is open even when $\dim V=1$, it is virtually impossible to make progress on Conjecture \ref{full correspondence} by making use of Conjecture \ref{MS conj}. 
Therefore, we propose Conjectures $\mathrm{A}_n$ and $\mathrm{B}_n$, a weaker form of Conjectures \ref{full correspondence} and \ref{MS conj}, respectively.  

\begin{conjA}\label{isolated correspondence}
Let $x \in X$ be an $n$-dimensional normal $\Q$-Gorenstein singularity defined over an algebraically closed field of characteristic zero, such that $x$ is an isolated non-log-terminal point of $X$. 
Then $x \in X$ is log canonical if and only if it is of dense $F$-pure type. 
\end{conjA}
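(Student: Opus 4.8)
The ``if'' direction is immediate from Theorem \ref{lc pairs} applied with $\Delta=0$: any singularity of dense $F$-pure type is log canonical. So the real content is the converse, and the plan is to reduce it to the weak-ordinarity statement Conjecture $\mathrm{B}_n$.

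Assume $x\in X$ is log canonical and $x$ is an isolated non-log-terminal point, so that $X$ is log terminal on $X\setminus\{x\}$. Localizing, take $X=\Spec R$ with $(R,\m)$ an $n$-dimensional normal $\Q$-Gorenstein local ring, log canonical, and klt on the punctured spectrum. Fix a log resolution $\pi\colon\widetilde X\to X$ and write $K_{\widetilde X}=\pi^*K_X+\sum_i a_iE_i$; log canonicity is exactly $a_i\ge-1$ for all $i$, so $\lceil-\sum_i a_iE_i\rceil$ is supported on the reduced exceptional divisor $E=\sum_iE_i$, and the sheaf $\pi_*\omega_{\widetilde X}(\lceil-\sum_i a_iE_i\rceil)$ has the same shape $\pi_*\omega_{\widetilde X}(E)$ occurring in the Du Bois criterion $\pi_*\omega_{\widetilde X}(E)=\omega_X$. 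Since the singularity is isolated, $H^i_{\m}(R)$ is of finite length for $i<n$, and local duality turns the question of $F$-purity of $R$ into the surjectivity of a single Frobenius action on a finite-dimensional space built from $\bigoplus_i R^i\pi_*\mathcal{O}_{\widetilde X}$ --- equivalently, from the cohomology of the exceptional divisor $E$, which has dimension $\le n-1$. The role of the log canonical hypothesis is precisely to guarantee that this is the correct sheaf to look at.

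I would then spread out: pick a model $R_A,\widetilde X_A,\{E_{i,A}\}$ over a finitely generated $\Z$-subalgebra $A\subset k$, and use Lemma \ref{surjectivity}, generic flatness and cohomology-and-base-change to arrange, on a dense open $U\subset\Spec A$, that $\widetilde X_\mu\to X_\mu$ is a log resolution with $E_\mu$ a reduced simple normal crossing divisor and that the relevant direct images, together with their Frobenius actions, commute with reduction mod $\mu$. By Theorem \ref{F-regular vs klt}, after shrinking $U$ we may also assume $R_\mu$ is strongly $F$-regular on the punctured spectrum; hence $R_\mu$ is $F$-pure exactly when the explicit Frobenius action above is surjective. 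If Conjecture $\mathrm{B}_n$ holds --- i.e., if for the projective scheme $E_\mu$ (of dimension $\le n-1$) the pertinent Frobenius action on cohomology is bijective for a dense set of closed points $\mu$ --- then $R_\mu$, and so $X_\mu$, is $F$-pure for a dense set of $\mu$, which is precisely dense $F$-pure type.

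The hard part is Conjecture $\mathrm{B}_n$ itself: since $E$ is in general reducible and singular, one cannot apply Deligne--Illusie directly to $E$, so one must instead work with the de Rham complex of the SNC pair (or stratify $E$ and run a weight spectral sequence), choosing the lifting mod $p^2$ that drives the degeneration compatibly with the stratification. Everything else --- the local-duality reduction, the spreading-out, and the base-change compatibilities --- is formal once Lemma \ref{surjectivity} and the finiteness from the isolated-singularity hypothesis are in hand; in particular, for $n=3$, where $E_\mu$ is a configuration of surfaces, it is this ordinarity input alone that remains, and it is accessible there (the surfaces arising from isolated log canonical but non-klt $3$-fold singularities are of a restricted type admitting a dense set of ordinary reductions), which is how one obtains Theorem \ref{A_3}.
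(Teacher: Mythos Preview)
This is Conjecture $\mathrm{A}_n$, which the paper does \emph{not} prove in general; Theorem \ref{A_3} (citing \cite{FT}) establishes the equivalence $\mathrm{A}_{n+1}\Leftrightarrow\mathrm{B}_n$, and then $\mathrm{A}_3$ follows from the arithmetic input $\mathrm{B}_2$ (Lemma \ref{lemma on B_n}). So what is to be compared is your reduction versus the paper's reduction.

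There are two genuine issues. First, an index slip: for an $n$-dimensional singularity you should be invoking $\mathrm{B}_{n-1}$, not $\mathrm{B}_n$ (the exceptional locus has dimension $n-1$, and the paper's equivalence is $\mathrm{A}_{n+1}\Leftrightarrow\mathrm{B}_n$). Second, and more substantively, you attempt to apply the ordinarity hypothesis directly to the reduced exceptional divisor $E=\sum_i E_i$ of a log resolution. But $E$ is a reducible SNC scheme and does not satisfy the hypotheses of Conjecture $\mathrm{B}$, which demands an irreducible projective variety with rational singularities and linearly trivial canonical divisor. The paper's route is essentially different here: it uses the minimal model program (\cite{BCHM}) to replace the log resolution by a model extracting a single prime divisor $S$ over $x$ --- an $(n{-}1)$-dimensional projective variety with klt (hence rational) singularities and $K_S\sim_{\Q}0$ --- and it is to this $S$ that Conjecture $\mathrm{B}_{n-1}$ is applied. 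The link between ordinarity of $S_\mu$ and $F$-purity of $R_\mu$ is made through this specific MMP model, not through the full SNC configuration and Lemma \ref{surjectivity}. Your final paragraph acknowledges the difficulty with reducible $E$ and proposes de Rham/weight-spectral-sequence methods instead; that is an interesting alternative direction, but it is not the paper's argument, it does not obviously reduce to $\mathrm{B}_{n-1}$ as formulated, and the known cases of $\mathrm{B}_n$ (elliptic curves, abelian surfaces, K3 surfaces) in Lemma \ref{lemma on B_n} come from arithmetic results \cite{Og,JR,BZ}, not from a Deligne--Illusie type degeneration.
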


\begin{conjB}\label{FT conj}
Let $V$ be an $n$-dimensional projective variety over an algebraically closed field $k$ of characteristic zero with only rational singularities, such that $K_V$ is linearly trivial. 
Given a model $V_A$ of $V$ over a finitely generated $\Z$-subalgebra $A$ of $k$, there exists a dense subset of closed points $S \subset \Spec A$ such that the natural Frobenius action on $H^n(V_{\mu}, \sO_{V_{\mu}})$ is bijective for all $\mu \in S$. 
\end{conjB}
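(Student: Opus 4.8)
The plan is to reduce Conjecture $\mathrm{B}_n$, via resolution of singularities and the minimal model program, to the cases of ordinary reduction that are already known for abelian varieties, K3 surfaces, and the other ``building blocks'' of Calabi--Yau type varieties.

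First I would dispose of the singularities. Since $V$ has only rational singularities, for a resolution $\pi:\widetilde{V}\to V$ one has $R^i\pi_*\sO_{\widetilde{V}}=0$ for all $i>0$, hence $H^n(V,\sO_V)\cong H^n(\widetilde{V},\sO_{\widetilde{V}})$; after choosing a common model $\pi_A:\widetilde{V}_A\to V_A$ and shrinking $\Spec A$ so that this vanishing persists on every closed fibre, the isomorphism becomes compatible with the Frobenius actions modulo $\mu$. Moreover $V$ is Cohen--Macaulay (rational singularities are Cohen--Macaulay) and $\omega_V\cong\sO_V$ because $K_V$ is linearly trivial, so Serre duality gives $H^n(V,\sO_V)^{\vee}\cong H^0(V,\omega_V)\cong H^0(V,\sO_V)\cong k$, and the same holds on the fibres after further shrinking $\Spec A$. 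Thus $H^n$ is one-dimensional, and Conjecture $\mathrm{B}_n$ is equivalent to the assertion that, for a dense set of closed points $\mu$, the Hasse--Witt (Frobenius) operator on the one-dimensional space $H^n(\widetilde{V}_\mu,\sO_{\widetilde{V}_\mu})$ is nonzero.

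Next I would exploit that $K_V$ is linearly trivial. Then $h^0(\widetilde{V},mK_{\widetilde{V}})=h^0(V,mK_V)=1$ for every $m\ge 0$, so $\widetilde{V}$ has Kodaira dimension zero; running the MMP and invoking abundance for Kodaira dimension zero to make the canonical class of a minimal model numerically trivial, together with the fact that $H^n(-,\sO)$ is a birational invariant of smooth projective varieties (again made Frobenius-compatible by spreading out), one reduces to a smooth projective $W$ with $K_W$ numerically trivial and $H^n(W,\sO_W)\cong H^n(V,\sO_V)$; alternatively one may apply the decomposition theorem directly to the canonical Calabi--Yau variety $V$. The Beauville--Bogomolov--Kawamata decomposition then produces a quasi-\'etale cover $W'\to W$ that is a product of abelian varieties, strict Calabi--Yau manifolds, and irreducible holomorphic-symplectic (hyperk\"ahler) manifolds. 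Since non-vanishing of the Hasse--Witt operator in top degree is insensitive to quasi-\'etale covers, and, by the K\"unneth formula, the top cohomology of a product is the tensor product of the top cohomologies of the factors with Frobenius acting as the tensor of the factorwise Frobenii, the problem reduces to each factor separately: abelian varieties (a dense set of ordinary primes, classical for elliptic curves and abelian surfaces), K3 surfaces (Bogomolov--Zarhin), and Calabi--Yau/hyperk\"ahler manifolds.

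The main obstacle is twofold. On the geometric side, the resolution, the MMP with abundance for $\kappa=0$, and the decomposition theorem all have to be carried out uniformly in a family over the finitely generated $\Z$-algebra $A$, which is the usual but technically delicate spreading-out bookkeeping. Far more serious is the arithmetic input: ordinary --- or even merely ``non-supersingular in top degree'' --- reduction at a dense set of primes is open for abelian varieties of dimension $\ge 3$ and for higher-dimensional strict Calabi--Yau and hyperk\"ahler manifolds, which is exactly the content that keeps the weak ordinarity conjecture (Conjecture $\ref{MS conj}$) inaccessible. Consequently this strategy establishes Conjecture $\mathrm{B}_n$ unconditionally only for $n\le 2$, and more generally for those $V$ whose Calabi--Yau building blocks lie in the range where ordinary reduction is known; the general case would require a genuinely new idea about ordinary reduction of Calabi--Yau type varieties.
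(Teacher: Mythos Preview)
The statement is a \emph{conjecture}; the paper does not prove it in general. What the paper actually does is (i) observe that the weak ordinarity conjecture (Conjecture~\ref{MS conj}) implies Conjecture~$\mathrm{B}_n$, via exactly the resolution-of-singularities reduction you carry out in your first paragraph, and (ii) record as Lemma~\ref{lemma on B_n} that $\mathrm{B}_n$ holds for $n\le 2$, citing \cite{Og}, \cite{JR}, \cite{BZ} without further elaboration.

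Your proposal arrives at precisely the same endpoint --- unconditional only for $n\le 2$ --- and correctly identifies the obstruction in higher dimension as the open arithmetic of ordinary reduction for abelian varieties of dimension $\ge 3$ and for higher Calabi--Yau and hyperk\"ahler manifolds. Your route through the MMP and the Beauville--Bogomolov decomposition is a sound structural framework for attacking the general problem, but for $n\le 2$ it is heavier machinery than required: in dimension one $V$ is an elliptic curve, and in dimension two a smooth minimal surface of Kodaira dimension zero is already, up to a finite \'etale cover, an abelian surface or a K3, which the cited references handle directly. Spreading out the MMP and the decomposition theorem uniformly over $\Spec A$ is genuine extra bookkeeping you do not actually need at this level. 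So your analysis is correct and honest about its limits; it is simply more elaborate than the paper's bare citation, and there is no gap beyond the one you yourself flag.
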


We can easily check that Conjecture \ref{MS conj} implies Conjecture $\mathrm{B}_n$ as follows: 
Take a resolution of singularities $\pi:\widetilde{V} \to V$. 
Since $V$ has only rational singularities, $\pi$ induces an isomorphism $H^n(V, \sO_V) \cong H^n(\widetilde{V}, \sO_{\widetilde{V}})$. 
Suppose we are given a model $\pi_A: \widetilde{V}_A \to V_A$ of $\pi$ over a finitely generated $\Z$-subalgebra $A$ of $k$. 
Applying Conjecture \ref{MS conj} to $\widetilde{V}$, we see that there exists a dense subset $S$ of closed points of $\Spec A$ such that the Frobenius action on $H^n(\widetilde{V}_{\mu}, \sO_{\widetilde{V}_{\mu}})$ is bijective for all $\mu \in S$. 
The isomorphism $H^n(\widetilde{V}_{\mu}, \sO_{\widetilde{V}_{\mu}}) \cong H^n(V_{\mu}, \sO_{V_{\mu}})$ commutes with the Frobenius action, and so we obtain the assertion. 

The following lemma is a consequence of deep arithmetic results \cite{Og}, \cite{JR}, \cite{BZ}. 
\begin{lem}\label{lemma on B_n}
$($\cite{Og}, \cite{JR}, \cite{BZ}$)$ Conjecture $\mathrm{B}_n$ holds true if $n \le 2$.  
\end{lem}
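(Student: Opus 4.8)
The plan is to reduce the lemma to the case of a smooth variety with trivial canonical bundle and then invoke known density theorems on ordinary reduction. Since $V$ has only rational singularities and $K_V$ is linearly trivial --- hence Cartier, so $V$ is Gorenstein --- $V$ has only canonical Gorenstein singularities with $K_V\sim 0$; for $n\le 2$ these are isolated, and when $n=2$ they are exactly the rational double points (du Val singularities). Fix a resolution $\pi:\widetilde{V}\to V$, chosen crepant when $n=2$ (the minimal resolution of a du Val singularity is crepant), together with a model $\pi_A:\widetilde{V}_A\to V_A$. After shrinking $\Spec A$ we may assume $R^i\pi_{A*}\sO_{\widetilde{V}_A}=0$ for $i>0$, and then cohomology and base change gives $R^i\pi_{\mu*}\sO_{\widetilde{V}_{\mu}}=0$ for $i>0$ for every closed point $\mu$; the Leray spectral sequence therefore furnishes a Frobenius-equivariant isomorphism $H^n(V_{\mu},\sO_{V_{\mu}})\cong H^n(\widetilde{V}_{\mu},\sO_{\widetilde{V}_{\mu}})$. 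Thus it suffices to make the Frobenius act bijectively on $H^n(\widetilde{V}_{\mu},\sO_{\widetilde{V}_{\mu}})$ for a dense set of closed points $\mu$. Now $K_{\widetilde{V}}=\pi^*K_V\sim 0$, so --- treating connected components separately, on which the Frobenius acts factorwise --- $\widetilde{V}$ is an elliptic curve when $n=1$, and a minimal surface with trivial canonical bundle when $n=2$, hence an abelian surface or a $K3$ surface by the Enriques--Kodaira classification. (The hypothesis $K_V\sim 0$ rules out the Enriques and bielliptic cases, where $H^2(-,\sO)$ would vanish and bijectivity would be automatic in any event.)

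The arithmetic input is: an elliptic curve, an abelian surface, or a $K3$ surface over a number field has ordinary good reduction at a set of primes of positive density. For elliptic curves this is classical; for abelian surfaces it follows from \cite{Og}, \cite{JR}; and for $K3$ surfaces it is the theorem of Bogomolov--Zarhin \cite{BZ}. In each of these cases the relevant Hodge number $h^{0,n}$ equals $1$, and ``ordinary reduction'' is exactly the assertion that the Frobenius acts bijectively on $H^n(-,\sO)$ of the reduction (invertibility of the Hasse--Witt operator on $H^1(\sO)$ for elliptic curves and abelian surfaces, height one for $K3$ surfaces).

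It remains to pass from density of ordinary primes over a number field to a Zariski-dense set of closed points of $\Spec A$, noting that $A$ may have positive transcendence degree over $\Z$; here I work fiberwise over $\Spec\Z$. We may assume $\widetilde{V}$ is geometrically connected, so, after shrinking $\Spec A$, the fibers of $\Spec A\to\Spec\Z$ are geometrically irreducible. Choose a ring homomorphism $A\to\overline{\Q}$ --- one exists since $A$ is a domain of characteristic zero --- whose image is an order $\mathcal{O}_0$ in a number field $K_0$ with finitely many primes inverted; after inverting finitely many more primes, $\widetilde{V}_0:=\widetilde{V}_A\otimes_A\mathcal{O}_0$ is smooth over $\Spec\mathcal{O}_0$ of the same type as $\widetilde{V}$. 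By the results quoted above there is an infinite set $L$ of rational primes $\ell$ such that $\widetilde{V}_0$ has ordinary reduction at some prime of $\mathcal{O}_0$ above $\ell$. For all but finitely many $\ell\in L$ that reduction is a closed-point fiber, with ordinary reduction, of the family $\widetilde{V}_A$ restricted to $\Spec(A/\ell A)$, a scheme of finite type over $\F_\ell$ which is irreducible by the arrangement above; on such an equicharacteristic irreducible base the ordinary locus $U_\ell\subset\Spec(A/\ell A)$ is open and, being nonempty, dense. Let $S$ be the union of the $U_\ell$ over the good $\ell\in L$. Its closure contains $\Spec(A/\ell A)$ for infinitely many $\ell$; since any nonzero $f\in A$ satisfies $f\notin\ell A$ for all but finitely many $\ell$ (generic freeness over $\Z$), no proper closed subset of $\Spec A$ can contain infinitely many full fibers $\Spec(A/\ell A)$, so $\overline{S}=\Spec A$. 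Together with the first paragraph this $S$ is the required dense set.

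The main obstacle is the $K3$ case. The equicharacteristic spreading-out of the third paragraph is only useful once one ordinary reduction is known to exist for infinitely many $\ell$, and for $K3$ surfaces that existence --- indeed density one --- is precisely the nontrivial theorem of \cite{BZ}, which proceeds via the Kuga--Satake abelian variety and the corresponding density statement for abelian varieties (itself the content of \cite{Og}, \cite{JR} in the relevant range, and classical in dimension one). Everything else --- the rational-singularities reduction, the classification of $\widetilde{V}$ in dimensions one and two, and the bookkeeping needed to globalize over $\Spec A$ --- is routine; the genuine arithmetic content is entirely carried by \cite{Og}, \cite{JR}, \cite{BZ}.
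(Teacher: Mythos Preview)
The paper does not supply a proof of this lemma; it merely records the statement as a consequence of the cited arithmetic results \cite{Og}, \cite{JR}, \cite{BZ}. Your proposal is precisely the argument one would write to justify that citation: pass to a crepant resolution $\widetilde{V}$ (using that $V$ is Gorenstein with at worst du~Val singularities in dimension~$2$), transfer the problem to $\widetilde{V}$ via the Frobenius-equivariant isomorphism on top cohomology coming from rational singularities, classify $\widetilde{V}$ as an elliptic curve, an abelian surface, or a K3 surface, and then invoke the density-of-ordinary-primes theorems for those three classes. Your identification of the K3 case as the crux, carried by Bogomolov--Zarhin via the Kuga--Satake construction, is exactly right.

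One small technical point: the sentence ``we may assume $\widetilde{V}$ is geometrically connected, so, after shrinking $\Spec A$, the fibers of $\Spec A\to\Spec\Z$ are geometrically irreducible'' is a non sequitur as written---connectedness of $\widetilde{V}$ over $k$ says nothing about the fibers of $\Spec A$ over $\Spec\Z$ (think of $A=\Z[i]$). The standard fix, used for instance in \cite{MS}, is to first enlarge $A$ so that its fraction field is a regular extension of a number field $K_0\subset A$, and then work over $\Spec\mathcal{O}_{K_0}$ rather than $\Spec\Z$; with that adjustment your spreading-out argument (specialize to a number-field point, find infinitely many ordinary primes there, then use openness of the ordinary locus in each equicharacteristic fiber) goes through verbatim. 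This is bookkeeping, not a genuine gap, and your outline is correct at the level of detail appropriate here.
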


Making use of recent progress on the minimal model program (see for example \cite{BCHM}), we can prove the following statement. 
\begin{thm}[\cite{FT}]\label{A_3}
Conjecture ${\rm A}_{n+1}$ is equivalent to Conjecture ${\rm B}_{n}$.
In particular, Conjecture ${\rm A}_3$ holds true by Lemma $\ref{lemma on B_n}$. 
\end{thm}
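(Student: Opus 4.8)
The plan is to establish the equivalence $\mathrm{A}_{n+1} \Leftrightarrow \mathrm{B}_n$ directly; granting it, the last sentence is immediate, since $\mathrm{B}_2$ holds by Lemma \ref{lemma on B_n} and hence so does $\mathrm{A}_3$. Observe first that in Conjecture $\mathrm{A}_{n+1}$ the implication ``dense $F$-pure type $\Rightarrow$ log canonical'' is already unconditional by Theorem \ref{lc pairs} (taken with $\Delta = 0$), so the real content is the reverse direction: an isolated non-log-terminal, log canonical, $\Q$-Gorenstein point $x \in X$ of dimension $n+1$ is of dense $F$-pure type. The whole argument rests on the classical dictionary relating the local geometry of such a point to the global geometry of an $n$-dimensional projective variety with trivial canonical class and rational singularities, together with the compatibility of this dictionary with Frobenius and with reduction modulo $p$.

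\emph{From $\mathrm{B}_n$ to $\mathrm{A}_{n+1}$.} Let $x \in X$ be as above and log canonical. Using the existence of plt blow-ups furnished by the minimal model program (\cite{BCHM}), I would produce a projective birational morphism $\pi\colon Y \to X$ from a normal variety with a single exceptional prime divisor $E$ satisfying $K_Y + E = \pi^*K_X$, with $(Y, E)$ plt and $-E$ $\pi$-ample. Adjunction then gives that $(E, \mathrm{Diff}_E(0))$ is klt and, since $\pi$ contracts $E$ to the point $x$, that $K_E + \mathrm{Diff}_E(0) \sim_{\Q} 0$. Passing to a finite cover trivializing $K_E + \mathrm{Diff}_E(0)$ and absorbing the different, one arrives at an $n$-dimensional projective variety $E'$ with $K_{E'}$ linearly trivial and klt --- hence canonical, hence rational --- singularities. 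Conjecture $\mathrm{B}_n$ now supplies a dense set of closed points $\mu$ at which Frobenius acts bijectively on $H^n(E'_\mu, \sO_{E'_\mu})$. It remains to transfer this back through the cover and the plt blow-up: the local cohomology sequence along $E$ relates $H^\bullet_x(\sO_X)$ to $H^n(E, \sO_E)$ compatibly with Frobenius, and Matlis/local duality together with the quasi-Gorenstein reduction (cf.~Proposition \ref{F-pure=>F-injective}) turns bijectivity upstairs into $F$-injectivity, hence $F$-purity, of the reduction $R_\mu$ for the same $\mu$. Thus $x \in X$ is of dense $F$-pure type.

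\emph{From $\mathrm{A}_{n+1}$ to $\mathrm{B}_n$.} Given an $n$-dimensional projective variety $V$ with rational singularities and $K_V$ linearly trivial, fix an ample line bundle $L$ on $V$ and form the affine cone $X = \Spec \bigoplus_{m\ge 0}H^0(V, mL)$ with vertex $x$. Since $K_V \sim 0$, the ring $X$ is $\Q$-Gorenstein, and a discrepancy computation on the blow-up of the vertex --- whose exceptional divisor is $V$ --- shows that $x \in X$ is log canonical but not log terminal, with $\{x\}$ the only non-klt point; hence $\mathrm{A}_{n+1}$ applies and $X$ is of dense $F$-pure type. The graded Fedder-type criterion, i.e.~the section-ring analogue of Proposition \ref{Fedder} in the borderline case $a(R) = 0$ of Proposition \ref{F-rational criterion}, then identifies $F$-purity of $R_\mu$ with injectivity --- equivalently, everything being finite-dimensional over a finite field, bijectivity --- of the Frobenius action on $H^n(V_\mu, \sO_{V_\mu})$. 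Running over the dense set of $\mu$ for which $R_\mu$ is $F$-pure yields Conjecture $\mathrm{B}_n$.

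The hard part is the compatibility of all of this with reduction modulo $p$: one must check that ``dense $F$-pure type'', a statement about a dense set of primes in characteristic zero, is faithfully translated through the plt blow-up, the finite covers, adjunction, and the cone construction into the density statement about Frobenius on $H^n$, and conversely. This forces careful control of the $\Q$-Gorenstein index and a verification that plt blow-ups, covers, and all the dualizing data spread out over a suitable finitely generated $\Z$-subalgebra and specialize well, so that the local-cohomology and duality comparisons hold fiberwise for almost all $\mu$. The use of \cite{BCHM} is where characteristic zero is genuinely needed, and the subtle bookkeeping there is to confirm that the exceptional data of a plt blow-up is precisely a klt log Calabi--Yau pair, so that after finite covers one lands exactly in the class ``rational singularities with trivial canonical class'' --- neither larger nor smaller --- which is what makes $\mathrm{A}_{n+1}$ and $\mathrm{B}_n$ an honest equivalence rather than a one-sided implication.
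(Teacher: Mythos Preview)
The paper does not actually prove this theorem: it only states it, cites \cite{FT}, and remarks that the argument ``mak[es] use of recent progress on the minimal model program (see for example \cite{BCHM}).'' So there is no proof in the paper to compare your proposal against in detail.

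That said, your sketch is a faithful outline of the Fujino--Takagi argument in \cite{FT} and is consistent with the paper's single hint. For $\mathrm{B}_n\Rightarrow\mathrm{A}_{n+1}$ you correctly invoke a plt blow-up extracting the unique discrepancy-$(-1)$ divisor (a Koll\'ar component, whose existence uses \cite{BCHM}), adjunction to get a klt log Calabi--Yau pair on $E$, and an index-one/cyclic cover to land on a projective $n$-fold with $K$ linearly trivial and rational singularities; the transfer of Frobenius bijectivity on $H^n$ back to $F$-purity at $x$ via local cohomology and duality is exactly the mechanism used in \cite{FT}. For $\mathrm{A}_{n+1}\Rightarrow\mathrm{B}_n$ the cone construction over $(V,L)$ with $K_V\sim 0$, followed by the graded comparison of $F$-purity of the section ring with Frobenius bijectivity on $H^n(V_\mu,\sO_{V_\mu})$, is again the argument of \cite{FT}. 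Your closing paragraph also correctly identifies where the real work lies: spreading out the plt blow-up, the covers, and the duality isomorphisms over a finitely generated $\Z$-algebra so that the comparisons hold fiberwise for a dense set of primes. One small wording issue: ``klt --- hence canonical'' is not a general implication, but it is valid here because after your cover $K_{E'}$ is Cartier, so klt forces integer discrepancies $\ge 0$; you might make that explicit.
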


\subsection{$F$-pure centers and $F$-adjunction}\label{F-adjunction section}
Next, we will explain the theory of $F$-adjunction, a positive characteristic analogue of Kawamata's subadjunction \cite{Ka}, introduced by Karl Schwede \cite{Sch3}, \cite{Sch4}. 

In this subsection, let $X$ be a normal integral scheme essentially of finite type over a perfect field of characteristic $p>0$ and $\Delta$ be an effective $\Q$-divisor on $X$. 
We assume in addition that $K_X+\Delta$ is $\Q$-Cartier with index\footnote{The index of $K_X+\Delta$ is the smallest positive integer $r$ such that $r(K_X+\Delta)$ is a Cartier divisor.} not divisible by $p$. 
Then there exist infinitely many $e \in \N$ such that $(p^e-1)(K_X+\Delta)$ is a Cartier divisor. 
We fix such an $e_0 \in \N$ and put $\mathcal{L}_{\Delta}=\sO_X((1-p^{e_0})(K_X+\Delta))$. 
The composite $\sO_X \to F^{e_0}_*\sO_X((p^{e_0}-1)\Delta)$ of the $e_0$-times iterated Frobenius $\sO_X \to F^{e_0}_*\sO_X$ and a natural inclusion $F^{e_0}_*\sO_X \to F^{e_0}_*\sO_X((p^{e_0}-1)\Delta)$ induces an $\sO_X$-linear map 
\[
\phi_{\Delta}:F^{e_0}_*\mathcal{L}_{\Delta} \cong \mathcal{H}\mathrm{om}_{\sO_X}(F^{e_0}_*\sO_X((p^{e_0}-1)\Delta), \sO_X) \to \mathcal{H}\mathrm{om}_{\sO_X}(\sO_X, \sO_X)=\sO_X,
\]
where the first isomorphism follows from Grothendieck-Serre duality. 
In this article, we refer $\phi_{\Delta}:F^{e_0}_*\mathcal{L}_{\Delta} \to \sO_X$ as the map corresponding to $\Delta$. 

First, we introduce the notion of $F$-pure centers, a positive characteristic analogue of log canonical  centers.  
\begin{defn}[\cite{Sch3}]\label{Fpure center def}
Let $W$ be an irreducible closed subscheme of $X$ and $\mathcal{I}_W \subset \sO_X$ denote the defining ideal sheaf of $W$. 
When $X$ is affine, we say that $W$ is a \textit{non-$F$-regular center} of $(X, \Delta)$ if for all $c \in \mathcal{I}_W$ and all $\varepsilon>0$, the pair $(X, \Delta+\varepsilon \;\mathrm{div}(c))$ is not sharply $F$-pure at the generic point $\xi_W$ of $W$. 
This definition can be easily extended to non-affine schemes by considering the same condition on each affine chart. 
We say that $W$ is an \textit{$F$-pure center} of $(X, \Delta)$ if $W$ is a non-$F$-regular center of $(X, \Delta)$ that is sharply $F$-pure at $\xi_W$. 
A minimal element (with respect to inclusion) of the set of $F$-pure centers is called a \textit{minimal $F$-pure center}. 
\end{defn}

The $F$-pure centers of $(X, \Delta)$ can be characterized in terms of $\phi_{\Delta}$. 
\begin{lem}[\cite{Sch3}]\label{center characterization}
Let $W$ be an irreducible closed subscheme of $X$ and $\mathcal{I}_W \subset \sO_X$ denote the defining ideal sheaf of $W$. 
Then $W$ is a non-$F$-regular center of $(X, \Delta)$ if and only if $\phi_{\Delta}(F^{e_0}_*(\mathcal{I}_W \mathcal{L}_{\Delta})) \subseteq \mathcal{I}_W$. 
This definition is independent of the choice of ${e_0}$. 
If $X=\Spec R$ where $R$ is an $F$-finite normal local ring, then the above condition is equivalent to saying that  
$(\phi \circ F^{e_0}_*i)(F^{e_0}_*\mathcal{I}_W) \subseteq \mathcal{I}_W$ for all $\phi \in \mathrm{Hom}_{\sO_X}(F^{e_0}_*\sO_X((p^{e_0}-1)\Delta), \sO_X)$,  
where $i: \sO_X \to \sO_X((p^{e_0}-1)\Delta)$ is a natural inclusion. 
\end{lem}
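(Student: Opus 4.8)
The plan is to reduce the statement to a question about the local ring at the generic point $\xi_W$ of $W$, and then to unwind the definition of non-$F$-regular center by means of a Fedder-type splitting criterion for the pairs $(X,\Delta+\varepsilon\,\Div(c))$. First, since $W$ is integral, $\mathcal{I}_W$ is a prime ideal sheaf, so writing $\mathcal{J}=\phi_{\Delta}(F^{e_0}_*(\mathcal{I}_W\mathcal{L}_{\Delta}))$ one has $\mathcal{J}\subseteq\mathcal{I}_W$ if and only if $\xi_W\in V(\mathcal{J})$, i.e.\ if and only if $\mathcal{J}_{\xi_W}\subseteq(\mathcal{I}_W)_{\xi_W}$; thus I may localize at $\xi_W$ and assume $X=\Spec R$ with $(R,\fkm)$ an $F$-finite normal local domain (we may take $W\subsetneq X$), $\mathcal{I}_W=\fkm$, and $W$ the closed point. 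Under this reduction, $W$ being a non-$F$-regular center becomes: for every nonzero $c\in\fkm$ and every $\varepsilon>0$, the pair $(\Spec R,\Delta+\varepsilon\,\Div(c))$ is not sharply $F$-pure. I would also recall the Grothendieck--Serre duality identification $\Hom_R(F^{e_0}_*R((p^{e_0}-1)\Delta),R)\cong F^{e_0}_*\mathcal{L}_{\Delta}$, a free $F^{e_0}_*R$-module generated by $\phi_{\Delta}$: every $\phi$ on the left has the form $\phi(F^{e_0}_*m)=\phi_{\Delta}(F^{e_0}_*(sm))$ for a section $s$ of $\mathcal{L}_\Delta$. Since $\bigcup_s\phi_\Delta(F^{e_0}_*(s\cdot\fkm))$ generates $\phi_\Delta(F^{e_0}_*(\fkm\mathcal{L}_\Delta))$, the sheaf inclusion $\phi_{\Delta}(F^{e_0}_*(\mathcal{I}_W\mathcal{L}_{\Delta}))\subseteq\mathcal{I}_W$ and the condition ``$(\phi\circ F^{e_0}_*i)(F^{e_0}_*\mathcal{I}_W)\subseteq\mathcal{I}_W$ for all $\phi$'' are literally equivalent, so it suffices to establish the first.

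Next I would invoke the criterion I will actually use (the normal $\Q$-Gorenstein version of Proposition \ref{Fedder2}; see \cite{Sch2}, \cite{Sch3}): for $c\in\fkm$ and $\varepsilon\in\Q_{>0}$ with denominator coprime to $p$, so that $K_X+\Delta+\varepsilon\,\Div(c)$ is $\Q$-Cartier of $p$-free index, the pair $(\Spec R,\Delta+\varepsilon\,\Div(c))$ is sharply $F$-pure if and only if there exist $e\in\N$ with $e_0\mid e$ and $\psi\in\Hom_R(F^e_*R((p^e-1)\Delta),R)$ with $\psi(F^e_*(c^{\lceil\varepsilon(p^e-1)\rceil}))\notin\fkm$; equivalently, $\phi_{\Delta}^{(e)}\big(F^e_*(c^{\lceil\varepsilon(p^e-1)\rceil}\,\mathcal{L}_{\Delta}^{(e)})\big)=R$ for some $e_0\mid e$, where $\phi_\Delta^{(e)}$ is the $(e/e_0)$-fold iterate of $\phi_\Delta$. (Here I also use the standard reduction that for sharp $F$-purity it is enough to test exponents $e$ divisible by $e_0$.)

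Now the two directions. For ($\Leftarrow$), assume $\phi_{\Delta}(F^{e_0}_*(\fkm\mathcal{L}_{\Delta}))\subseteq\fkm$. A routine iteration argument — write $\phi_\Delta^{(ne_0)}=\phi_\Delta^{((n-1)e_0)}\circ F^{(n-1)e_0}_*\phi_\Delta$ and peel off one factor at a time — propagates this to $\phi_\Delta^{(e)}(F^e_*(\fkm\mathcal{L}_\Delta^{(e)}))\subseteq\fkm$ for every $e_0\mid e$. Given any nonzero $c\in\fkm$ and $\varepsilon>0$, we have $\lceil\varepsilon(p^e-1)\rceil\ge1$, hence $c^{\lceil\varepsilon(p^e-1)\rceil}\mathcal{L}_\Delta^{(e)}\subseteq\fkm\mathcal{L}_\Delta^{(e)}$ and so $\phi_\Delta^{(e)}(F^e_*(c^{\lceil\varepsilon(p^e-1)\rceil}\mathcal{L}_\Delta^{(e)}))\subseteq\fkm\subsetneq R$; by the criterion $(\Spec R,\Delta+\varepsilon\,\Div(c))$ is never sharply $F$-pure, so $W$ is a non-$F$-regular center. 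For ($\Rightarrow$), I argue the contrapositive: if $\phi_{\Delta}(F^{e_0}_*(\fkm\mathcal{L}_{\Delta}))\not\subseteq\fkm$ then, being an ideal of the local ring $R$, it equals $R$, so there are $x\in\fkm$ and a section $\ell$ of $\mathcal{L}_\Delta$ with $\phi_{\Delta}(F^{e_0}_*(x\ell))$ a unit. Taking $c=x$ (nonzero, as $R$ is a positive-dimensional domain) and $\varepsilon=1/(p^{e_0}-1)$, whose denominator is coprime to $p$ and for which $\lceil\varepsilon(p^{e_0}-1)\rceil=1$, the criterion with $e=e_0$ gives $\phi_\Delta(F^{e_0}_*(c\,\mathcal{L}_\Delta))\ni\phi_\Delta(F^{e_0}_*(x\ell))$, a unit, hence $=R$; so $(\Spec R,\Delta+\varepsilon\,\Div(c))$ is sharply $F$-pure and $W$ is not a non-$F$-regular center. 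Independence of $e_0$ is then automatic, since the left-hand condition does not refer to $e_0$.

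The hard part will be the bookkeeping behind the criterion used in the second paragraph: correctly setting up the duality isomorphism $\Hom_R(F^{e_0}_*R((p^{e_0}-1)\Delta),R)\cong F^{e_0}_*\mathcal{L}_\Delta$ when $(p^{e_0}-1)\Delta$ is not integral (so round-downs intervene), tracking the line-bundle twist when passing from $\Delta$ to $\Delta+\varepsilon\,\Div(c)$ and from $e_0$ to its multiples, and verifying that one may restrict to exponents $e$ divisible by $e_0$. The iteration lemma in the ($\Leftarrow$) step is formally routine but must be carried out with the twists in place; everything else reduces to the localization observation and the generator property of $\phi_\Delta$.
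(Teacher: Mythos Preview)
The paper does not give its own proof of this lemma; it simply cites \cite{Sch3}. Your argument is correct and is essentially the standard one: localize at the generic point of $W$ (using that $\mathcal{I}_W$ is prime), identify $\Hom_R(F^{e_0}_*\sO_X((p^{e_0}-1)\Delta),\sO_X)$ with $F^{e_0}_*\mathcal{L}_\Delta$ via Grothendieck duality so that $\phi_\Delta$ is a free generator, and then translate sharp $F$-purity of $(X,\Delta+\varepsilon\,\Div(c))$ into the surjectivity condition $\phi_\Delta^{(e)}(F^e_*(c^{\lceil \varepsilon(p^e-1)\rceil}\mathcal{L}_\Delta^{(e)}))=R$. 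Both directions then go through exactly as you indicate, and the equivalence with the ``for all $\phi$'' formulation in the local case is immediate from the generator property.

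One small simplification: your worry about round-downs in the duality isomorphism is unnecessary here. Under the standing hypothesis of \S\ref{F-adjunction section}, $(p^{e_0}-1)(K_X+\Delta)$ is Cartier, hence integral; since $(p^{e_0}-1)K_X$ is already an integral Weil divisor, so is $(p^{e_0}-1)\Delta$. Thus $\sO_X((p^{e_0}-1)\Delta)$ is a genuine reflexive sheaf with no rounding needed, and the identification $\Hom_R(F^{e_0}_*\sO_X((p^{e_0}-1)\Delta),R)\cong F^{e_0}_*\mathcal{L}_\Delta$ is clean. The remaining bookkeeping you flag---restricting to $e$ divisible by $e_0$ (via the usual composition-of-splittings trick) and the iteration $\phi_\Delta^{(ne_0)}(F^{ne_0}_*(\fkm\mathcal{L}_\Delta^{(ne_0)}))\subseteq\fkm$---is routine once the twists are written out, exactly as you say.
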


The following theorem should be compared with fact that there are only finitely many log canonical  centers. 
\begin{thm}[\cite{Sch4}]
If $(X, \Delta)$ is sharply $F$-pure, then there are at most finitely many $F$-pure centers of $(X, \Delta)$. 
\end{thm}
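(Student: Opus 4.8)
\emph{The plan} is to convert the statement into a finiteness assertion about $\phi_{\Delta}$-compatible prime ideals, via Lemma \ref{center characterization}, and then to reduce that assertion to a Kumar--Mehta-type theorem on compatibly split subschemes.

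\emph{Reduction to a single surjective map.} Since the index $r$ of $K_X+\Delta$ is prime to $p$, the integers $e$ for which $(p^e-1)(K_X+\Delta)$ is Cartier are exactly the multiples of $\mathrm{ord}_r(p)$. Sharp $F$-purity of $(X,\Delta)$ furnishes some $e_1$ for which $\sO_X\to F^{e_1}_*\sO_X(\lceil(p^{e_1}-1)\Delta\rceil)$ splits, and this splitting persists after composition with its own Frobenius twists, hence holds for every multiple of $e_1$. Choosing $e_0$ to be a common multiple of $e_1$ and $\mathrm{ord}_r(p)$, we may use it to build the map $\phi_{\Delta}:F^{e_0}_*\mathcal{L}_{\Delta}\to\sO_X$ corresponding to $\Delta$; the splitting says precisely that $\phi_{\Delta}$ is surjective as a map of $\sO_X$-modules, and therefore so is each of its localizations. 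In particular $(X,\Delta)$ is sharply $F$-pure at every generic point $\xi_W$, so by Definition \ref{Fpure center def} the $F$-pure centers of $(X,\Delta)$ are exactly its non-$F$-regular centers, which by Lemma \ref{center characterization} are exactly the irreducible closed subschemes $W$ with $\phi_{\Delta}(F^{e_0}_*(\mathcal{I}_W\mathcal{L}_{\Delta}))\subseteq\mathcal{I}_W$. Since $X$ is Noetherian, cover it by finitely many affine opens $\Spec R_i$ on which $\mathcal{L}_{\Delta}$ is trivial; as the displayed condition is local, it suffices to bound, for each $i$, the number of prime ideals $\mathfrak p$ of the $F$-finite normal domain $R_i$ that are compatible with the induced surjective $R_i$-linear map $\phi_i:F^{e_0}_*R_i\to R_i$, i.e. satisfy $\phi_i(F^{e_0}_*\mathfrak p)\subseteq\mathfrak p$.

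\emph{Finiteness of compatible primes.} So the content is: if $R$ is an $F$-finite normal domain and $\phi:F^{e}_*R\to R$ is surjective and $R$-linear, then $R$ has only finitely many $\phi$-compatible primes. The formal ingredients are that $\phi$-compatible ideals are stable under arbitrary intersections; that $\phi$ descends to a surjective map on $R/I$ whenever $I$ is $\phi$-compatible; and that each minimal prime of a $\phi$-compatible radical ideal is again $\phi$-compatible --- the last by the standard trick of multiplying by the $p^{e}$-th power of an element $g$ lying in the other components but not in the given prime $\mathfrak p$, using $\phi(F^{e}_*(g^{p^{e}}a))=g\,\phi(F^{e}_*a)$ and primeness of $\mathfrak p$. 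One then inducts on $\dim R$, the case $\dim R=0$ being trivial. Let $\mathcal{K}$ be the intersection of all nonzero $\phi$-compatible primes, a $\phi$-compatible radical ideal. If $\mathcal{K}\neq 0$, then, $R$ being a domain, $V(\mathcal{K})$ is a finite union of irreducible components $W_1,\dots,W_m$, each of dimension $<\dim R$; each $\mathcal{I}_{W_j}$ is $\phi$-compatible, every nonzero $\phi$-compatible prime contains some $\mathcal{I}_{W_j}$, and such primes correspond to $\phi$-compatible primes of $R/\mathcal{I}_{W_j}$ for the induced surjective map, of which there are finitely many by the inductive hypothesis; adding the zero ideal then gives the bound. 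Summing the resulting bounds over the finitely many charts $\Spec R_i$ bounds the number of $F$-pure centers of $(X,\Delta)$.

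\emph{The main obstacle.} Everything above is routine except the residual case $\mathcal{K}=0$, which for integral $X$ says exactly that the proper $\phi$-compatible subvarieties cover a dense subset of $X$, equivalently that there are infinitely many $\phi$-compatible primes; this is the one place where the dimension induction does not help and where surjectivity of $\phi$ must be used beyond formalities. It is precisely a twisted form of the Kumar--Mehta theorem on the finiteness of compatibly split subschemes of a Frobenius-split scheme. I would dispatch it in one of two ways: either carry out the Kumar--Mehta Noetherian induction on the lattice of compatible subvarieties, using that at the generic point of a codimension-one $\phi$-compatible subvariety the local ring is a DVR on which surjectivity of $\phi$ confines compatibility to the zero locus of a fixed effective divisor attached to $\phi$; or pass to the completion at a maximal ideal, Matlis-dualize $\phi$ to an \emph{injective} $p^{e_0}$-linear endomorphism of the injective hull $E$ of the residue field, and invoke the Enescu--Hochster--Sharp anti-nilpotence result that such an endomorphism has only finitely many stable submodules, then globalize. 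Either route rules out $\mathcal{K}=0$ and completes the proof; this is where essentially all the work lies, the remainder being bookkeeping.
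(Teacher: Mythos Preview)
The survey does not prove this theorem; it is stated with attribution to \cite{Sch4} and no argument is given. So there is no in-paper proof to compare against, and I assess your proposal on its own merits.

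Your reduction via Lemma~\ref{center characterization} to the finiteness of $\phi$-compatible primes for a surjective $\phi:F^{e_0}_*R\to R$ is correct, as is the induction scheme you set up (one small point: the induction hypothesis must be stated for arbitrary $F$-finite reduced rings with a surjective $\phi$, not just normal domains, since the quotients $R/\mathcal{I}_{W_j}$ need not be normal). You correctly isolate $\mathcal{K}=0$ as the only real obstruction, but you do not actually dispose of it: both of your suggested routes outsource the work to substantial black boxes (Kumar--Mehta, or Enescu--Hochster/Sharp anti-nilpotence) and neither is carried out.

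There is, however, a one-line fix already sitting in the paper's toolbox, and it is essentially Schwede's own argument in \cite{Sch4}. The test ideal $\tau(X,\Delta)$ (introduced for pairs in \S\ref{test ideals}, see the paragraph defining $\tau((X,\Delta),\ba^t)$ and set $\ba=\sO_X$) is by definition the \emph{smallest} nonzero ideal $J$ satisfying $\phi_{\Delta}(F^{e_0}_*J)\subseteq J$; its existence and nontriviality come from the existence of test elements (Lemma~\ref{test vs jacobi}, Lemma~\ref{lemma HT}). Since every nonzero $\phi_{\Delta}$-compatible prime is in particular a nonzero $\phi_{\Delta}$-compatible ideal, each one contains $\tau(X,\Delta)$, whence $\mathcal{K}\supseteq\tau(X,\Delta)\ne 0$. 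Moreover, as noted in Remark~\ref{two F-purity}, sharp $F$-purity makes $\tau(X,\Delta)$ radical, so the minimal primes of $\tau(X,\Delta)$ are already $F$-pure centers and your Noetherian induction becomes completely transparent. Invoking Kumar--Mehta or the Matlis-dual Frobenius-action argument is therefore unnecessary here; the test ideal does the job directly.
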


In order to state our $F$-adjunction formula,  we need to introduce some notation.   
Let $W$ be an $F$-pure center of $(X, \Delta)$ and we assume that $W$ is normal for now. 
Since $W$ is an $F$-pure center of $(X, \Delta)$, the map $\phi_\Delta: F^e_*\mathcal{L}_{\Delta} \to \sO_X$ induces an $\sO_W$-linear map $\phi_{\Delta_W}: F^{e_0}_*\mathcal{L}_{\Delta_W} \to \sO_W$ where $\mathcal{L}_{\Delta_W}=\mathcal{L}_{\Delta}/\mathcal{I}_{W}\mathcal{L}_{\Delta}$:  
\[
\xymatrix{
F^{e_0}_*\mathcal{L}_{\Delta} \ar@{->>}[r] \ar[d]^{\phi_{\Delta}} & F^{e_0}_*\left(\mathcal{L}_{\Delta}/\mathcal{I}_{W}\mathcal{L}_{\Delta} \right) \ar[d] \ar@{=}[r] & F^{e_0}_*\mathcal{L}_{\Delta_W} \ar[d]^{\phi_{\Delta_W}} \\
\sO_X \ar@{->>}[r]  & \sO_X/\mathcal{I}_W \ar@{=}[r] & \sO_W.
}
\]
Note that $\mathcal{L}_{\Delta_W}$ is an invertible sheaf on $W$ and $\mathcal{L}_{\Delta_W}=\sO_W((1-p^{e_0})(K_X+\Delta)|_W)$. 
Since $\phi_{\Delta_W}$ is a global section of 
\[
\mathcal{H}\mathrm{om}_{\sO_W}(F^{e_0}_*\mathcal{L}_{\Delta_W}, \sO_W) \cong F^{e_0}_*\sO_W((p^{e_0}-1)(K_X+\Delta)|_W-(p^{e_0}-1)K_W), 
\]
where the isomorphism follows from Grothendieck-Serre duality, we have a corresponding effective divisor $\Gamma_W$ on $W$ such that  
\[
\Gamma_W \sim (p^{e_0}-1)(K_X+\Delta)|_W-(p^{e_0}-1)K_W.
\]
Then $\Delta_W:=(1/(p^{e_0}-1))\Gamma_W$ is an effective $\Q$-divisor on $W$ satisfying that 
\[
K_W+\Delta_W \sim_{\Q} (K_X+\Delta)|_W.
\]
\begin{thm}[\cite{Sch4}]
\label{F-subadjunction}
In the above notation, the following holds: 
\begin{enumerate}
\item 
$(W, \Delta_W)$ if sharply $F$-pure if and only if so is $(X, \Delta)$. 
\item 
$(W, \Delta_W)$ is strongly $F$-regular if and only if $W$ is a minimal $F$-pure center of $(X, \Delta)$. 
\item 
There is a natural bijection between the $F$-pure centers of $(W, \Delta_W)$ and the $F$-pure centers of $(X, \Delta)$ properly contained in $W$ $($as topological spaces$)$. 
\end{enumerate}
\end{thm}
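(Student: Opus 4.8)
The plan is to reduce all three assertions to manipulations with the map $\phi_\Delta$ corresponding to $\Delta$ and its iterates, using the standard translations of $F$-singularities into surjectivity properties of such maps. First I would localize $X$ at an arbitrary point of $W$, so that $X=\Spec R$ with $(R,\fkm)$ an $F$-finite normal local ring and $W=\Spec R/\fkp$ for $\fkp=\mathcal{I}_W$; the stated equivalences are then read at this point, and the global assertions follow on letting the point vary over $W$. For $n\in\N$ put $\mathcal{L}_\Delta^{(n)}:=\sO_X((1-p^{ne_0})(K_X+\Delta))$ and let $\phi_\Delta^{(n)}\colon F^{ne_0}_*\mathcal{L}_\Delta^{(n)}\to\sO_X$ be the $n$-fold iterate of $\phi_\Delta$, equivalently the map corresponding to $\Delta$ computed with $ne_0$ in place of $e_0$; this makes sense because the index of $K_X+\Delta$ divides $p^{e_0}-1$ and hence $p^{ne_0}-1$. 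The translations I intend to use are: $(X,\Delta)$ is sharply $F$-pure iff $\phi_\Delta^{(n)}$ is surjective for some $n$; $(X,\Delta)$ is strongly $F$-regular iff for every nonzero $c\in R$ the composite of multiplication by $F^{ne_0}_*c$ with $\phi_\Delta^{(n)}$ is surjective for some $n$; and, by Lemma \ref{center characterization}, an irreducible closed $V\subseteq X$ is a non-$F$-regular center of $(X,\Delta)$ iff $\phi_\Delta^{(n)}$ carries $F^{ne_0}_*(\mathcal{I}_V\,\mathcal{L}_\Delta^{(n)})$ into $\mathcal{I}_V$ for every $n$, and an $F$-pure center if in addition $(X,\Delta)$ is sharply $F$-pure at $\xi_V$.

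The crux will be that the passage $\phi_\Delta\rightsquigarrow\phi_{\Delta_W}$ is compatible with iteration. Because $W$ is an $F$-pure center, Lemma \ref{center characterization} gives $\phi_\Delta(F^{e_0}_*(\fkp\,\mathcal{L}_\Delta))\subseteq\fkp$, so $\phi_\Delta$ descends modulo $\fkp$; I would then show by induction on $n$ that $(\phi_{\Delta_W})^{(n)}$ is exactly the reduction of $\phi_\Delta^{(n)}$ modulo $\fkp$, using that $\mathcal{L}_{\Delta_W}=\mathcal{L}_\Delta/\fkp\mathcal{L}_\Delta$ is an invertible sheaf on $W$ so that the iterates of $\phi_{\Delta_W}$ live on $\sO_W((1-p^{ne_0})(K_W+\Delta_W))$ compatibly with the adjunction $K_W+\Delta_W\sim_\Q(K_X+\Delta)|_W$. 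Granting this, $\phi_\Delta^{(n)}$ surjective forces $(\phi_{\Delta_W})^{(n)}$ surjective, while conversely, if $(\phi_{\Delta_W})^{(n)}$ is surjective then $\phi_\Delta^{(n)}$ hits a unit of $R/\fkp$, hence --- since $R$ is local and $\fkp\subseteq\fkm$ --- a unit of $R$, so $\phi_\Delta^{(n)}$ is surjective. This would give (1), and the identical argument with an extra twist by $c\in R\setminus\fkp$ (reducing to $\bar c\in R/\fkp$) will be reused below.

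For (3) the plan is to feed the previous step into Lemma \ref{center characterization}. An $F$-pure center $V$ of $(X,\Delta)$ contained in $W$ corresponds to a radical ideal $J\supseteq\fkp$ of $R$ with $\phi_\Delta^{(n)}(F^{ne_0}_*(J\,\mathcal{L}_\Delta^{(n)}))\subseteq J$ for every $n$; reducing modulo $\fkp$ and using the identification of $(\phi_{\Delta_W})^{(n)}$ from the second paragraph, such $J$ correspond bijectively to radical ideals $\bar J\subseteq\sO_W$ satisfying the analogous condition for $(W,\Delta_W)$, i.e.\ to non-$F$-regular centers of $(W,\Delta_W)$. Since $(X,\Delta)$ is sharply $F$-pure along $W$, part (1) and the fact that sharp $F$-purity is a local condition show that $(W,\Delta_W)$ is sharply $F$-pure along $W$ as well; hence each such $\bar J$ is actually an $F$-pure center of $(W,\Delta_W)$, and applying (1) at $\xi_V$ shows that the ``$F$-pure'' decorations on the two sides agree. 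This would yield the asserted bijection, which is of topological spaces because both notions of center are insensitive to nilpotents. Finally, (2) would follow by combining (1) and (3) with the standard fact that a sharply $F$-pure pair is strongly $F$-regular if and only if it has no $F$-pure center properly contained in its ambient scheme: applied to $(W,\Delta_W)$, which is sharply $F$-pure by (1), and translated through (3), this says that $(W,\Delta_W)$ is strongly $F$-regular iff $(X,\Delta)$ has no $F$-pure center properly contained in $W$, i.e.\ iff $W$ is a minimal $F$-pure center of $(X,\Delta)$.

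The step I expect to be the main obstacle is the compatibility claimed in the second paragraph --- that reducing $\phi_\Delta$ modulo $\mathcal{I}_W$ genuinely produces ``the map corresponding to $\Delta_W$'' in the precise sense of the construction preceding the theorem, so that all of its iterates and twists, and hence the whole translation apparatus together with Lemma \ref{center characterization}, are available on $W$. This rests on the normality of $W$, on the invertibility of $\mathcal{L}_{\Delta_W}$, and on the hypothesis that the index of $K_X+\Delta$ is prime to $p$, which is what makes the relevant reflexive sheaves into line bundles and lets Grothendieck--Serre duality be applied symmetrically on $X$ and on $W$. The non-normal case of $W$, excluded from the present statement, is exactly where this compatibility fails and one must first pass to the normalization of $W$.
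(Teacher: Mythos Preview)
The paper does not give its own proof of this theorem; it is stated with attribution to Schwede \cite{Sch4} and followed immediately by an example. There is therefore nothing in the present paper to compare your argument against directly.

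That said, your sketch is essentially the strategy of Schwede's original proof in \cite{Sch4}: localize, identify the iterates $\phi_\Delta^{(n)}$ with the maps corresponding to $\Delta$ for $ne_0$, check that reduction modulo $\mathcal{I}_W$ commutes with iteration so that $(\phi_{\Delta_W})^{(n)}$ is the induced map, and then read off sharp $F$-purity, strong $F$-regularity, and $F$-pure centers from surjectivity and ideal-stability of these maps via Lemma~\ref{center characterization}. Your reduction of (2) to (1)+(3) together with the fact that a sharply $F$-pure pair is strongly $F$-regular exactly when it has no proper $F$-pure center is the right closing move. The point you flag as the main obstacle---that the reduction of $\phi_\Delta$ really is ``the'' map corresponding to $\Delta_W$ in the sense of the preceding construction, so that the translation dictionary applies on $W$---is precisely where the hypotheses (normality of $W$, index of $K_X+\Delta$ prime to $p$, hence invertibility of $\mathcal{L}_{\Delta_W}$) are used, and your identification of this is accurate. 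One small addendum: the equivalence ``$(X,\Delta)$ sharply $F$-pure iff some $\phi_\Delta^{(n)}$ is surjective'' that you invoke relies on the fact that, under the index hypothesis, $\phi_\Delta$ generates $\Hom_{\sO_X}(F^{e_0}_*\sO_X((p^{e_0}-1)\Delta),\sO_X)$ as an $F^{e_0}_*\sO_X$-module; this is implicit in the construction of $\phi_\Delta$ via Grothendieck--Serre duality and is proved in \cite{Sch3,Sch4}, but it is worth making explicit in a full write-up.
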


\begin{ex}
Let $X=\mathbb{A}^3_k=\Spec k[x, y, z]$ and $\Delta=\Div(x^2z-y^2)$, where $k$ is a perfect field of characteristic $p \ge 3$.  
Since $(x^2z-y^2)^{p-1} \notin (x^p, y^p, z^p)$, by  Proposition \ref{Fedder2} (2), the pair $(X, \Delta)$ is sharply $F$-pure. 
We will show that $W=V((x, y))$ is a minimal $F$-pure center of $(X, \Delta)$. 

We may assume that $e_0=1$.
Then $\mathcal{L}_{\Delta} \cong \sO_X$ and the map $\phi_{\Delta}: F_*\sO_X \to \sO_X$ corresponding to $\Delta$ is nothing but the composite map 
\[
F_*\sO_X \xrightarrow{\times F_*(x^2z-y^2)^{p-1}} F_*\sO_X \xrightarrow{\mathrm{Tr}_F} \sO_X,  
\]
where $\mathrm{Tr}_F: F_*\sO_X \to \sO_X$ is the $\sO_X$-linear map sending $F_*(xyz)^{p-1}$ to $1$ and all other lower-degree monomials to zero. 
Since 
\[\phi_{\Delta}(F_*(x, y))=\mathrm{Tr}_F(F_*((x, y)(x^2z-y^z)^{p-1})) \subset \mathrm{Tr}_F(F_*(x^p, y^p)) \subset (x, y),\]
$W=V((x, y))$ is an $F$-pure center of $(X, \Delta)$ by Lemma \ref{center characterization}. 
The map $\phi_{\Delta}$ induces an $\sO_W$-linear map $\phi_{\Delta_W}:F_*\sO_W \to \sO_W$ sending $F_*z^{(p-1)/2}$ to 1, from which it follows that $\Delta_W=(1/2)\Div(z)$. 
The pair $(W, \Delta_W)=(\Spec k[z], (1/2) \Div(z))$ is strongly $F$-regular by Lemma \ref{Fedder2} (3),  and so we conclude from Theorem \ref{F-subadjunction} that $W$ is a minimal $F$-pure center of $(X, \Delta)$. 
\end{ex}

When $W$ is not normal, we take the normalization $W^{\rm N}$ of $W$. 
Since $\phi_{\Delta_W}$ extends to a unique $\sO_{W^{\rm N}}$-linear map $\phi_{\Delta_{W^{\rm N}}}: F^e_*\sO_{W^{\rm N}} \to \sO_{W^{\rm N}}$, 
we can define an effective $\Q$-divisor $\Delta_{W^{\rm N}}$ on $W^{\rm N}$ satisfying that $K_{W^{\rm N}}+\Delta_{W^{\rm N}} \sim_{\Q} (K_X+\Delta)|_{W^{\rm N}}$ similarly. 
The pair $(X, \Delta)$ is, however, not necessarily sharply $F$-pure if $(W^{\rm N}, \Delta_{W^{\rm N}})$ is sharply $F$-pure. 

\begin{ex}[\cite{Sch4}]\label{normalization}
Let $k$ be a perfect field of characteristic two. 
Put $X=\mathbb{A}^3_k=\Spec k[x, y, z]$ and $\Delta=\Div(x^2z+y^2)$. 
Then $W=\Delta$ is an $F$-pure center of $(X, \Delta)$. 
Since $\sO_W=k[x, y, z]/(x^2z+y^2) \cong k[u, uv, v^2] \hookrightarrow k[u, v]$, 
the normalization $W^{\rm N}$ of $W$ can be identified with $\Spec k[u, v]$. 
We may assume that $e_0=1$, and then the map $\phi_{\Delta}: F_*\sO_X \to \sO_X$ corresponding to $\Delta$ is the composite map 
\[
F_*\sO_X \xrightarrow{\times F_*(x^2z+y^2)} F_*\sO_X \xrightarrow{\mathrm{Tr}_F} \sO_X \quad F_*(xy) \mapsto F_*(x^3yz+xy^3) \mapsto x,
\]
where $\mathrm{Tr}_F: F_*\sO_X \to \sO_X$ is the map sending $F_*(xyz)$ to $1$ and all other lower-degree monomials to zero. 
It induces an $\sO_{W^{\rm N}}$-linear map $\phi_{\Delta_{W^{\rm N}}}: F^e_*\sO_{W^{\rm N}} \to \sO_{W^{\rm N}}$ sending $F_*v$ to $1$, which implies that  $\Delta_{W^{\rm N}}=\Div(u)$. 
It is easily checked from Proposition \ref{Fedder2} (2) that $(W^{\rm N}, \Delta^{\rm N})=(\Spec k[u, v], \Div(u))$ is sharply $F$-pure but $(X, \Delta)=(\Spec k[x, y, z], \Div(x^2z+y^2))$ is not.
\end{ex}

Such a pathology can be avoided by assuming that $W$ has hereditary surjective trace.  
Let $R$ be an $F$-finite reduced local ring, $R^{\rm N}$ be the normalization of $R$ and $\mathfrak{c}$ be its conductor ideal.  
Note that $\mathfrak{c}$ is an ideal of both rings $R$ and $R^{\rm N}$. 
We say that $R$ has \textit{hereditary surjective trace} if there exist minimal associated prime ideals of $\mathfrak{c}$, $\mathfrak{p} \subset R$ and $\mathfrak{q} \subset R^{\rm N}$, satisfying the following three conditions: (i) $R \cap \mathfrak{q}=\mathfrak{p}$, (ii) the induced trace map $\mathrm{Tr}: (R^{\rm N}/\mathfrak{q})^{\rm N} \to (R/\mathfrak{p})^{\rm N}$ is surjective, and (iii) $R/\mathfrak{p}$ also has hereditary surjective trace.  
Since $\dim R/\mathfrak{p} <\dim R$, this definition is well-defined.  
We say that an $F$-finite reduced scheme $X$ has hereditary surjective trace if the local ring $\sO_{X, x}$ has hereditary surjective trace for every $x \in X$. 
Although the definition looks complicated at first glance, it is known by \cite{MiSc} that any reduced scheme of finite type over an algebraically closed field of characteristic zero has hereditary surjective trace after reduction to characteristic $p \gg 0$. 
\begin{thm}[\textup{\cite{MiSc}}]
Let the notation be as above and $W$ be a $($not necessarily normal$)$ $F$-pure center of $(X, \Delta)$. 
If $W$ has hereditary surjective trace, then $(W^{\rm N}, \Delta_{W^{\rm N}})$ is sharply $F$-pure if and only if so is $(X, \Delta)$. 
\end{thm}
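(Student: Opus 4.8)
The plan is to reduce the statement to a comparison of surjectivities of the Frobenius-linear maps attached to the three pairs in play, and then to run an induction on $\dim W$ in which the hereditary surjective trace hypothesis is invoked exactly once per step, through a reduction modulo a minimal prime of the conductor. First I would localize at a point of $W$, so that $X=\Spec R$ with $R$ an $F$-finite normal local domain, $W=\Spec(R/\mathcal{I}_W)$, and $\sO_{W^{\rm N}}$ is the (semilocal) normalization of $R/\mathcal{I}_W$. Since the index of $K_X+\Delta$ is prime to $p$ and $(p^{e_0}-1)\Delta_{W^{\rm N}}=\Gamma_{W^{\rm N}}$ is integral whenever $(p^{e_0}-1)(K_X+\Delta)$ is Cartier, one may use a single $e_0$ for which $(p^{e_0}-1)(K_X+\Delta)$ and $(p^{e_0}-1)(K_{W^{\rm N}}+\Delta_{W^{\rm N}})$ are both Cartier; replacing $e_0$ by a suitable multiple, sharp $F$-purity of $(X,\Delta)$ (respectively of $(W^{\rm N},\Delta_{W^{\rm N}})$) becomes equivalent to surjectivity of $\phi_{\Delta}$ (respectively of $\phi_{\Delta_{W^{\rm N}}}$) for this $e_0$.

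Two of the implications are then formal. If $\phi_{\Delta}$ is surjective, the commutative square defining $\phi_{\Delta_W}$ shows that $\phi_{\Delta_W}$ is surjective; conversely, if $\phi_{\Delta_W}$ is surjective then the image of $\phi_{\Delta}$ together with $\mathcal{I}_W$ generates $R$, and since $\mathcal{I}_W\subseteq\m$ and the image of $\phi_{\Delta}$ is an ideal, $\phi_{\Delta}$ is surjective; thus $(X,\Delta)$ is sharply $F$-pure $\iff\phi_{\Delta_W}$ is surjective (this part is independent of hereditary surjective trace and essentially reproves Theorem~\ref{F-subadjunction}(1)). Moreover, since $\phi_{\Delta_{W^{\rm N}}}$ extends $\phi_{\Delta_W}$ and $\mathcal{L}_{\Delta_{W^{\rm N}}}$ is pulled back from $\mathcal{L}_{\Delta_W}$, surjectivity of $\phi_{\Delta_W}$ at once gives surjectivity of $\phi_{\Delta_{W^{\rm N}}}$. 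So the theorem reduces to the single implication: if $\phi_{\Delta_{W^{\rm N}}}$ is surjective and $W$ has hereditary surjective trace, then $\phi_{\Delta_W}$ is surjective. Example~\ref{normalization} shows the hypothesis cannot be dropped here.

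For this implication I would induct on $\dim W$. If $\dim W=0$ then $R/\mathcal{I}_W$ is a field, hence normal, and there is nothing to prove; so suppose $\dim W>0$ and $R/\mathcal{I}_W$ is not normal, with conductor ideal $\mathfrak{c}=\Hom_{R/\mathcal{I}_W}(\sO_{W^{\rm N}},R/\mathcal{I}_W)$. One checks that $\mathfrak{c}$ is compatible with both $\phi_{\Delta_W}$ and $\phi_{\Delta_{W^{\rm N}}}$, so its minimal primes are compatible as well; choose minimal primes $\mathfrak{p}\subset R/\mathcal{I}_W$ and $\mathfrak{q}\subset\sO_{W^{\rm N}}$ of $\mathfrak{c}$ as in the definition of hereditary surjective trace. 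Then $\phi_{\Delta_W}$ and $\phi_{\Delta_{W^{\rm N}}}$ descend to Frobenius-linear maps $\overline{\phi}$ on $R/(\mathcal{I}_W+\mathfrak{p})$ and $\overline{\psi}$ on $\sO_{W^{\rm N}}/\mathfrak{q}$, with $\overline{\psi}$ still surjective and extending $\overline{\phi}$ along the finite inclusion $R/(\mathcal{I}_W+\mathfrak{p})\hookrightarrow\sO_{W^{\rm N}}/\mathfrak{q}$. Passing to normalizations, $(\sO_{W^{\rm N}}/\mathfrak{q})^{\rm N}$ is a finite extension of $(R/(\mathcal{I}_W+\mathfrak{p}))^{\rm N}$ whose trace map is surjective by condition (ii) of hereditary surjective trace, and the extension $\overline{\psi}^{\rm N}$ of $\overline{\psi}$ is surjective. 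Composing $\overline{\psi}^{\rm N}$ with this trace — and identifying the result with the extension $\overline{\phi}^{\rm N}$ of $\overline{\phi}$ via Grothendieck--Serre duality — one sees that $\overline{\phi}^{\rm N}$ is surjective. Since $R/(\mathcal{I}_W+\mathfrak{p})$ again has hereditary surjective trace by condition (iii) and has dimension $<\dim W$, the inductive hypothesis applied to $\overline{\phi}$ gives that $\overline{\phi}$ is surjective. Finally there is a section $\ell$ with $\phi_{\Delta_W}(F^{e_0}_*\ell)\equiv 1\pmod{\mathfrak{p}}$, and as $\mathfrak{p}\subseteq\m$ this value is a unit, so $\phi_{\Delta_W}$ is surjective.

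The main obstacle is the step identifying the composite of $\overline{\psi}^{\rm N}$ with the classical trace of the finite (and possibly wildly ramified, even inseparable) extension $(R/(\mathcal{I}_W+\mathfrak{p}))^{\rm N}\hookrightarrow(\sO_{W^{\rm N}}/\mathfrak{q})^{\rm N}$ with (a map at least as large as) $\overline{\phi}^{\rm N}$: this requires tracking all the Grothendieck--Serre duality identifications used to define $\phi_{\Delta}$, $\phi_{\Delta_W}$, $\phi_{\Delta_{W^{\rm N}}}$ and the divisors $\Delta_W,\Delta_{W^{\rm N}}$, together with the behaviour of the different under restriction to the centers cut out by $\mathfrak{p}$ and $\mathfrak{q}$, and it is precisely here that surjectivity of the trace — rather than any Frobenius structure — is the decisive input, as Example~\ref{normalization} exhibits exactly the inseparable collapse of this trace that destroys the conclusion. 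A secondary, more routine point is setting up coherently the assertions that the conductor is $\phi$-compatible and that ``reduce modulo a compatible ideal'' and ``extend along a normalization'' commute as claimed, so that the induction closes cleanly.
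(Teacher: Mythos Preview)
The paper does not prove this theorem; it simply states it and attributes it to \cite{MiSc}. So there is no ``paper's own proof'' to compare against, and your proposal must be judged on its own merits and against the original Miller--Schwede argument.

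Your outline is essentially the Miller--Schwede strategy: reduce to showing that surjectivity of $\phi_{\Delta_{W^{\rm N}}}$ forces surjectivity of $\phi_{\Delta_W}$, then induct on $\dim W$ by exploiting the recursive clause (iii) in the definition of hereditary surjective trace, passing through a minimal prime of the conductor at each step. Your identification of the conductor as $\phi$-compatible (hence its minimal primes as well, by Schwede's results on uniformly $F$-compatible ideals) and the reduction of the inductive step to the finite normal extension $((R/\mathcal{I}_W)/\mathfrak{p})^{\rm N}\hookrightarrow(\sO_{W^{\rm N}}/\mathfrak{q})^{\rm N}$ are both correct.

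You have also correctly isolated the genuine content: the assertion that, for a finite extension of normal domains with surjective trace, the image of the induced $p^{-e}$-linear map downstairs contains (up to the identification dictated by Grothendieck duality and the different) the image of the trace of the upstairs map. This is precisely the technical heart of \cite{MiSc}, and it is not a formality---it requires the divisor/map correspondence under finite maps as developed in \cite{ScTu}. Your honest flagging of this as ``the main obstacle'' is appropriate; a complete proof would need to carry out this identification explicitly, and the secondary bookkeeping you mention (compatibility of reduction and normalization) is routine once the duality identifications are in place.
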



\section{Test ideals}\label{test ideals}

It is usually difficult to compute the tight closure of a given ideal directly from the definition. 
Therefore, the notion of test elements, elements for testing membership in tight closure, was introduced.
The classical test ideal $\tau(R)$ is defined to be the ideal generated by all test elements (see Remark \ref{test remark}), which means that test ideals originally come from the theory of tight closure. 
However, once it turned out that they were a positive characteristic analogue of multiplier ideals, test ideals quickly began finding applications in their own right. 
In this section, after explaining the definition and basic results of test ideals, we will mention two of their applications. One is to commutative algebra and the other is to algebraic geometry.  

Strictly speaking, there are two kinds of test ideals, finitistic test ideals and big test ideals.\footnote{It is conjectured that they coincide with each other. This conjecture is known to be true if the ring is normal and $\Q$-Gorenstein (see \cite{HY}). }
Finitistic test ideals have been considered as more natural from the point of view of tight closure theory, and they are often referred to simply as ``test ideals" in the literature. 
However, since big test ideals are easier to treat and have more applications, we focus on big test ideals in this article. 
Although the big test ideal of an ideal $\ba$ is often denoted by $\widetilde{\tau}(\ba)$ or $\tau_b(\ba)$ in the literature, we denote it simply by $\tau(\ba)$ and refer to it simply as the ``test ideal" of $\ba$.

\subsection{Definition and basic properties}\label{test def basic}
While we have considered a pair consisting of a normal variety and a $\Q$-divisor on it in the previous section,  we consider a pair consisting of a reduced ring and its ideal in this section. 
Let $R$ be an $F$-finite reduced ring, $\ba \subset R$ be an ideal satisfying that $\ba \cap R^{\circ} \ne \emptyset$ and $t>0$ be a real number. 

Schwede \cite{Sch3} gave a characterization of test ideals. 
We take this as the definition of test ideals. 
\begin{defn}[\cite{Sch3}]\label{test ideal def}
The \textit{test ideal} $\tau(\ba^t)$ of $\ba$ with exponent $t$ is defined to be the unique smallest ideal $J$ of $R$ that satisfies  the following two conditions: 
\begin{enumerate}
\item[(i)]  $\phi^{(e)}(F^e_*(J\ba^{\lceil t(p^e-1) \rceil})) \subseteq J$ for all $e \in \N$ and all $\phi^{(e)} \in \Hom_R(F^e_*R, R)$, 
\item[(ii)]  $J \cap R^{\circ} \ne \emptyset$.
\end{enumerate}
When $\ba=R$, we denote this ideal by $\tau(R)$. 
\end{defn}

The existence of the test ideal $\tau(\ba^t)$ is not clear from definition, and we use the notion of test element to describe $\tau(\ba^t)$ more explicitly. 

\begin{defn}[\cite{Ho1}.~cf.~\cite{HH0}]\label{test element}
We say that $c \in R^{\circ}$ is a \textit{test element} for $R$ if for every $d \in R^{\circ}$, there exist some $e \in \N$ and $\phi^{(e)} \in \Hom_R(F^e_*R, R)$ such that $c=\phi^{(e)}(F^e_*d)$. 
\end{defn}

The following lemma is useful for finding a test element. 
\begin{lem}[\cite{Ho1}, \cite{Ta4}.~cf.~\cite{HH0}]\label{test vs jacobi}
\begin{enumerate}
\item 
Let $c \in R^{\circ}$ be an element such that the localization $R_c$ with respect to $c$ is strongly $F$-regular. 
Then some power $c^n$ of $c$ is a test element for $R$. 

\item Suppose that $R$ is essentially of finite type over an $F$-finite field $k$, and denote by $\mathfrak{J}(R/k)$ the Jacobian ideal of $R$ over $k$. 
Then every element of $\mathfrak{J}(R/k) \cap R^{\circ}$ is a test element for $R$. 
\end{enumerate}
\end{lem}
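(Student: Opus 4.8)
The plan is to isolate two reductions common to both parts and then treat each part in turn, with Part (2) resting on Part (1).

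\textbf{Preliminary reductions.} Since $R$ is $F$-finite, each $F^e_*R$ is a finitely presented $R$-module, so $\Hom_R(F^e_*R,-)$ commutes with localization; consequently a splitting of $cF^e\colon R_c\to F^e_*R_c$ over $R_c$ arises from an honest $\phi\in\Hom_R(F^e_*R,R)$ after multiplying by a power of $c$ and clearing the residual $c$-torsion. I would also record that, by Matlis duality together with the description of $0^*_M$ in Definition \ref{tc module}, the condition on $c$ in Definition \ref{test element} is equivalent to the ideal-theoretic statement that $cx^q\in I^{[q]}$ for every $q=p^e$ whenever $x\in I^*$, for every ideal $I$ of $R$; I would pass freely between these two formulations.

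\textbf{Part (1).} Fix $d\in R^{\circ}$. Since $R_c$ is strongly $F$-regular and $d/1\in R_c^{\circ}$, the map $dF^e$ splits over $R_c$ for some $e$, which by the reduction above yields $\phi\in\Hom_R(F^e_*R,R)$ and an integer $m\ge 1$ with $\phi(F^e_*d)=c^{m}$; multiplying $\phi$ by further powers of $c$ then realizes every $c^{n}$ with $n\ge m$ as $\phi'(F^e_*d)$. The delicate point is to choose the exponent independently of $d$, and for this I would switch to the ideal form: if $x\in I^*$ then $x^q\in(I^{[q]})^*$ for every $q$, and since $R_c$ is weakly $F$-regular this gives $x^q/1\in(I^{[q]}R_c)^*=I^{[q]}R_c$, hence $x^q\in I^{[q]}R_c\cap R$. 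It therefore suffices to produce an integer $N_0$, depending only on $R$ and $c$ (not on $I$ or $q$), such that $c^{N_0}\bigl(JR_c\cap R\bigr)\subseteq J$ for every ideal $J$ of $R$; applying this to $J=I^{[q]}$ yields $c^{N_0}x^q\in I^{[q]}$ for all $q$, so $c^{N_0}$ is a test element. Such an $N_0$ exists by a uniform Artin--Rees bound (available since $F$-finite rings are excellent), or, following \cite{HH0}, by extracting it from a module-finite generically \'etale extension $A\hookrightarrow R$ with $A$ regular, via the associated trace and relative Jacobian. Establishing this uniformity is the main obstacle of Part (1).

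\textbf{Part (2).} Let $g\in\mathfrak{J}(R/k)\cap R^{\circ}$. Since $g$ is a unit in $R_g$ we have $\mathfrak{J}(R/k)R_g=R_g$, so by the Jacobian criterion $R_g$ is regular; being $F$-finite, it is strongly $F$-regular by Proposition \ref{regular => F-regular}, and Part (1) already shows that some power $g^{n}$ is a test element. The content of (2) is that $n=1$ suffices. Here I would reduce to $R=S/I$ with $S$ an $F$-finite regular ring, use that $\Hom_S(F^e_*S,S)$ is free of rank one over $F^e_*S$ with a canonical generator $\Phi_e$, and identify $\Hom_R(F^e_*R,R)$ with the maps $F^e_*s\cdot\Phi_e$ for $s\in(I^{[p^e]}:_SI)$. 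The Lipman--Sathaye Jacobian theorem (cf.~\cite{HH0}), which bounds a Jacobian element of $R$ by the different of a Noether normalization, should then show that a Jacobian generator $g$ lies in the colon ideals $(I^{[p^e]}:_SI)$ compatibly with the twisted maps built from $\Phi_e$ and $g$, forcing $g$ itself---not merely a power---into the test ideal $\tau(R)$ of Definition \ref{test ideal def}, which for any single test element $c$ coincides with $\sum_{e,\phi}\phi(F^e_*c)$. Since, by the duality in the preliminary reduction, an element of $R^{\circ}$ satisfies the condition of Definition \ref{test element} precisely when it lies in $\tau(R)$, this completes the proof; the Lipman--Sathaye input is the deepest point, and I expect it to be the principal obstacle.
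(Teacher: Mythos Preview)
The paper does not prove this lemma; it is stated with citations to \cite{Ho1}, \cite{Ta4} and \cite{HH0}, and the text passes directly to Lemma~\ref{lemma HT}. There is therefore no proof in the paper to compare against, so let me comment on your argument itself.

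Your Part~(1) contains a genuine error. The uniform bound you invoke---an integer $N_0$ with $c^{N_0}(JR_c\cap R)\subseteq J$ for \emph{every} ideal $J$---is simply false, already for $R=k[x,y]$ and $c=x$: taking $J=(x^m,y)$ gives $JR_c=R_c$, hence $JR_c\cap R=R$, and $x^{N_0}\in(x^m,y)$ forces $N_0\ge m$ with $m$ arbitrary. So no uniform $N_0$ exists, and ``uniform Artin--Rees'' does not supply one (those theorems fix a pair of finitely generated modules; they do not range over all ideals). The standard route in \cite{HH0}, \cite{Ho1} is different: from a single splitting over $R_c$ one extracts $\phi\in\Hom_R(F_*R,R)$ with $\phi(F_*1)=c^N$, and then an iterative composition of such maps manufactures, for any $d\in R^{\circ}$, a map $F^e_*R\to R$ sending $F^e_*d$ to a \emph{fixed} power of $c$ independent of $d$. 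Your first paragraph of Part~(1) was heading in this direction before you abandoned it.

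There is a second, subtler issue in your preliminary reduction. Definition~\ref{test element} in this paper is the \emph{big} test-element condition (equivalently $c\in\Ann_R 0^*_E$, see Remark~\ref{test remark}), whereas the ideal-theoretic statement ``$cx^q\in I^{[q]}$ for all $q$ whenever $x\in I^*$'' is the \emph{finitistic} condition. The paper explicitly distinguishes the two (footnote at the start of \S\ref{test ideals}) and their coincidence is open in general, so you cannot pass freely between them via ``Matlis duality.'' For Part~(1) this matters because the direction you need at the end---ideal form implies Definition~\ref{test element}---is precisely the open one.

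Your outline for Part~(2) is on the right track: Lipman--Sathaye is indeed the key external input in \cite{HH0} and \cite{Ta4}, and the identification of $\Hom_R(F^e_*R,R)$ with $(I^{[p^e]}:_S I)$ modulo $I^{[p^e]}$ is the correct framework.
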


Now we give an explicit description of test ideals using of test elements. 
\begin{lem}[\cite{HT}]\label{lemma HT}
Let $c \in R^{\circ}$ be a test element for $R$. Then 
\[
\tau(\ba^t)=\sum_{e \ge 0}\sum_{\phi^{(e)}} \phi^{(e)}(F^e_*(c\ba^{\lceil tp^e \rceil})), 
\]
where $\phi^{(e)}$ runs through all elements of $\Hom_R(F^e_*R, R)$. 
\end{lem}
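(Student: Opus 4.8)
The plan is to set
\[
J \ :=\ \sum_{e \ge 0}\ \sum_{\phi^{(e)}} \phi^{(e)}\bigl(F^e_*(c\,\ba^{\lceil tp^e \rceil})\bigr),
\]
and to prove that $J$ is exactly the smallest ideal of $R$ satisfying conditions (i) and (ii) of Definition~\ref{test ideal def}; by the uniqueness built into that definition this forces $J=\tau(\ba^t)$, and in passing it establishes that such a smallest ideal exists. Two preliminary remarks will be used throughout. First, $J$ really is an ideal: $c\,\ba^{\lceil tp^e\rceil}$ is an ideal of $R$, and $F^e_*$ of an ideal is an $R$-submodule of $F^e_*R$ for the twisted action $r\cdot F^e_*s=F^e_*(r^{p^e}s)$, so each $\phi^{(e)}(F^e_*(c\,\ba^{\lceil tp^e\rceil}))$ is an ideal, and so is their sum. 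Second, for $\phi\in\Hom_R(F^e_*R,R)$ and $\psi\in\Hom_R(F^{e'}_*R,R)$ the composite $\psi\circ F^{e'}_*\phi$ is an element of $\Hom_R(F^{e+e'}_*R,R)$ under the canonical identification $F^{e'}_*F^e_*R=F^{e+e'}_*R$, and $R$-linearity of such maps turns multiplication by $b\in R$ outside the Frobenius pushforward into multiplication by $b^{p^e}$ inside it; these are the only structural facts needed, together with the elementary observation that $R^{\circ}$ is multiplicatively closed.

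I would first check that $J$ satisfies (ii) and (i). For (ii), choose $a_0\in\ba\cap R^{\circ}$; the term $e=0$, $\phi^{(0)}=\mathrm{id}$ contributes $c\,a_0^{\lceil t\rceil}\in J$, and $c\,a_0^{\lceil t\rceil}\in R^{\circ}$. For (i), using additivity of $F^e_*$ and $R$-linearity of the maps to absorb coefficients, a general element of $J$ has the form $u=\sum_j \phi_j(F^{e_j}_*(c\,a_j))$ with $a_j\in\ba^{\lceil tp^{e_j}\rceil}$ and $\phi_j\in\Hom_R(F^{e_j}_*R,R)$. Given $e'\in\N$, a map $\psi\in\Hom_R(F^{e'}_*R,R)$, and $b\in\ba^{\lceil t(p^{e'}-1)\rceil}$, pushing $b$ through each $\phi_j$ gives
\[
\psi\bigl(F^{e'}_*(ub)\bigr)\ =\ \sum_j (\psi\circ F^{e'}_*\phi_j)\bigl(F^{e'+e_j}_*(c\,b^{p^{e_j}}a_j)\bigr),
\]
and the exponent bookkeeping
\[
p^{e_j}\lceil t(p^{e'}-1)\rceil+\lceil tp^{e_j}\rceil\ \ge\ p^{e_j}t(p^{e'}-1)+tp^{e_j}\ =\ tp^{e'+e_j}
\]
(the left-hand side being an integer) shows $b^{p^{e_j}}a_j\in\ba^{\lceil tp^{e'+e_j}\rceil}$, so each summand lies in $J$. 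Hence $J$ satisfies (i).

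The remaining, and decisive, step is to show $J\subseteq J'$ for every ideal $J'$ of $R$ satisfying (i) and (ii); this is the point at which the hypothesis that $c$ is a \emph{test element} --- rather than merely an element of $R^{\circ}$ --- is used, and I expect getting the Frobenius-composition and ceiling estimates to line up correctly to be the only real technical hurdle. Fix $d\in J'\cap R^{\circ}$ (using (ii) for $J'$). For a generator $\phi(F^e_*(c\,a))$ of $J$ with $a\in\ba^{\lceil tp^e\rceil}$, the test-element property of $c$ supplies $e_1\in\N$ and $\psi\in\Hom_R(F^{e_1}_*R,R)$ with $\psi(F^{e_1}_*d)=c$; then $c\,a=\psi(F^{e_1}_*(a^{p^{e_1}}d))$, whence
\[
\phi\bigl(F^e_*(c\,a)\bigr)\ =\ (\phi\circ F^e_*\psi)\bigl(F^{e+e_1}_*(d\,a^{p^{e_1}})\bigr)
\]
with $\phi\circ F^e_*\psi\in\Hom_R(F^{e+e_1}_*R,R)$. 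Since $p^{e_1}\lceil tp^e\rceil\ge tp^{e+e_1}\ge t(p^{e+e_1}-1)$ and the left side is an integer, $a^{p^{e_1}}\in\ba^{\lceil t(p^{e+e_1}-1)\rceil}$, so $d\,a^{p^{e_1}}\in J'\cdot\ba^{\lceil t(p^{e+e_1}-1)\rceil}$, and condition (i) for $J'$ (with index $e+e_1$ and the map $\phi\circ F^e_*\psi$) gives $\phi(F^e_*(c\,a))\in J'$. As these generators span $J$, we conclude $J\subseteq J'$, and combined with the previous paragraph, $J$ is the unique smallest ideal satisfying (i) and (ii), i.e.\ $J=\tau(\ba^t)$.
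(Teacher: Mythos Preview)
Your proof is correct. The paper does not provide its own proof of this lemma (it merely cites \cite{HT}), so there is nothing to compare against; your argument---verifying directly that the displayed sum satisfies conditions (i) and (ii) of Definition~\ref{test ideal def} and is contained in any other ideal satisfying them---is the standard one, and your ceiling estimates and Frobenius-composition bookkeeping are all in order.
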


\begin{rem}[\cite{HH0}]\label{test remark}
The test ideal $\tau(R)$ coincides with the ideal generated by all the test elements for $R$. 
The name of ``test ideal" comes from this fact. 
$\tau(R)$ can be also defined in terms of tight closure. 
For simplicity, we assume that $(R, \m)$ is an $F$-finite reduced local ring, and let $E=E_R(R/\m)$ be the injective hull of the residue field $R/\m$ of $R$. 
If we denote by $0_E^*$ the tight closure of the zero submodule in $E$ (see \S\ref{tight closure} for its definition), then $\tau(R)=\mathrm{Ann}_R \ 0_E^*$. 
\end{rem}

We list some properties of test ideals that follow immediately from Definition \ref{test ideal def} and Lemma \ref{lemma HT}. 
\begin{prop}\label{test ideal basic}
Let $\bb$ be an ideal of $R$ such that $\bb \cap R^{\circ} \ne \emptyset$ and $s>0$ be a real number. 
\begin{enumerate}
\item $\tau(R)\ba \subseteq \tau(\ba)$. 
\item If $\ba \subseteq \bb$, then $\tau(\ba^t) \subseteq \tau(\bb^t)$. 
If we assume in addition that $\bb$ is contained in the integral closure $\overline{\ba}$ of $\ba$, then $\tau(\ba^t)=\tau(\bb^t)$. 
\item If $s<t$, then $\tau(\ba^s) \supseteq \tau(\ba^t)$. 
Also, $\tau((\ba^m)^t)=\tau(\ba^{mt})$ for every $\m \in \N$. 
\item 
There exists some $\epsilon>0$, depending on $t$, such that $\tau(\ba^s)=\tau(\ba^t)$ for all $s \in [t, t+\epsilon]$. 
\item 
$\tau(R)=R$ if and only if $R$ is strongly $F$-regular.
\item 
If $W \subset R$ is a multiplicative set, then $\tau(\ba^t)R_W=\tau((\ba R_W)^t)$. 
\item 
If $(R, \m)$ is a local ring, then $\tau((\ba \widehat{R})^t)=\tau(\ba^t) \widehat{R}$, where $\widehat{R}$ is the $\m$-adic completion of $R$. 
\end{enumerate}
\end{prop}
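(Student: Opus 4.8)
The plan is to reduce everything to the explicit formula of Lemma \ref{lemma HT}. Fix once and for all a test element $c \in R^{\circ}$; one exists by Lemma \ref{test vs jacobi}, since any $c_0 \in R^{\circ}$ with $R_{c_0}$ regular --- and hence, being $F$-finite, strongly $F$-regular by Proposition \ref{regular => F-regular} --- has a power that is a test element, and such a $c_0$ exists because the regular locus of the reduced excellent ring $R$ is dense open and meets $R^{\circ}$. With $c$ fixed, $\tau(\ba^t)=\sum_{e\ge 0}\sum_{\phi^{(e)}}\phi^{(e)}(F^e_*(c\,\ba^{\lceil tp^e\rceil}))$, and each $\phi^{(e)}$ is $R$-linear on $F^e_*R$. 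Assertions (1), the first clause of (2), and the first clause of (3) are read off directly: for (1), $a\,\phi^{(e)}(F^e_*c)=\phi^{(e)}(F^e_*(c\,a^{p^e}))$ with $a^{p^e}\in\ba^{p^e}=\ba^{\lceil 1\cdot p^e\rceil}$; for the rest, $\ba\subseteq\bb$ gives $\ba^{\lceil tp^e\rceil}\subseteq\bb^{\lceil tp^e\rceil}$ and $s\le t$ gives $\lceil sp^e\rceil\le\lceil tp^e\rceil$, so the sums compare termwise, and $(\ba^m)^{\lceil tp^e\rceil}=\ba^{m\lceil tp^e\rceil}\subseteq\ba^{\lceil mtp^e\rceil}$ gives $\tau((\ba^m)^t)\subseteq\tau(\ba^{mt})$.

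Assertions (5), (6), (7) are bookkeeping with Frobenius pushforwards and flat base change. If $R$ is strongly $F$-regular then some $cF^e$ splits, so some $\phi^{(e)}(F^e_*c)=1$ and hence $1\in\tau(R)$; conversely, since the finite partial sums in Lemma \ref{lemma HT} form an ascending chain which stabilizes by Noetherianity, $\tau(R)=R$ produces a single $e$ and a single $\Psi\in\Hom_R(F^e_*R,R)$ with $\Psi(F^e_*c)=1$, so $cF^e$ splits, and pre-composing $\Psi$ with the map $F^{e_1}_*R\to R$ that the test-element property of $c$ attaches to an arbitrary $d\in R^{\circ}$ shows $dF^{e+e_1}$ splits, so $R$ is strongly $F$-regular. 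For (6) and (7), since $R$ is $F$-finite Noetherian, $F^e_*R$ is finitely presented, so $\Hom_R(F^e_*R,-)$ commutes with the flat maps $R\to R_W$ and $R\to\widehat R$; moreover $F^e_*(R_W)=(F^e_*R)_W$ and $F^e_*(\widehat R)\cong(F^e_*R)\otimes_R\widehat R$, and $c$ (or a power of it) remains a test element after these base changes because its strongly $F$-regular locus is preserved. Applying Lemma \ref{lemma HT} on both sides then gives the stated identities.

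The content of the second clauses of (2) and (3) is that shifting the exponent in Lemma \ref{lemma HT} by a bounded amount does not change the sum. For (2), $\tau(\ba^t)=\tau(\bb^t)$ when $\ba\subseteq\bb\subseteq\overline{\ba}$ reduces, via the first clause, to $\tau(\overline{\ba}^t)\subseteq\tau(\ba^t)$. Here $R$, being $F$-finite, is excellent, so the integral closure of the Rees algebra $\bigoplus_n\ba^n$ is a finite module over it; hence there is a constant $n_0$ with $\overline{\ba^n}\subseteq\ba^{n-n_0}$ for all $n$, and $\overline{\ba}^{\lceil tp^e\rceil}\subseteq\overline{\ba^{\lceil tp^e\rceil}}\subseteq\ba^{\lceil tp^e\rceil-n_0}$. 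That replacing $\ba^{\lceil tp^e\rceil}$ by $\ba^{\lceil tp^e\rceil-n_0}$ leaves the sum unchanged follows by passing from level $e$ to level $e+f$: for $\ba$ generated by $\ell$ elements one has $(\ba^m)^{[p^f]}\subseteq\ba^{mp^f}$ and $\ba^{N}\subseteq(\ba^m)^{[p^f]}$ for $N$ large relative to $mp^f$, so the level-$(e+f)$ terms recover the level-$e$ terms. The same mechanism handles the remaining inclusion $\tau(\ba^{mt})\subseteq\tau((\ba^m)^t)$, since $m\lceil tp^e\rceil\le\lceil mtp^e\rceil+m$.

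Assertion (4) --- the local, right-hand constancy of $s\mapsto\tau(\ba^s)$ --- is the main obstacle, and I would approach it as follows. By (3) the function is nonincreasing, and by Lemma \ref{lemma HT} with Noetherianity $\tau(\ba^t)$ is realized by the finite sum over levels $e\le E$ for some $E$, and equally over $e\le E'$ for every $E'\ge E$. A short computation shows $\lceil tp^E\rceil\le\lceil tp^e\rceil\,p^{E-e}$ for $e\le E$, hence $\lceil t'p^e\rceil=\lceil tp^e\rceil$ for all $e\le E$ with $t'=\lceil tp^E\rceil/p^E\ge t$; comparing the sums then gives $\tau(\ba^{t'})=\tau(\ba^t)$, whence $\tau(\ba^s)=\tau(\ba^t)$ for all $s\in[t,t']$. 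This already proves (4) with $\epsilon=t'-t>0$ unless $tp^E\in\Z$ for every valid stabilization level $E$, i.e.\ unless $t\in\Z[1/p]$; that remaining case I expect to be the genuinely delicate point, to be treated either by a Skoda-type identity $\tau(\ba^{s+1})=\ba\,\tau(\ba^s)$ reducing to a bounded range of exponents together with the finiteness of the set of test ideals there, or by arguing directly with the $\lceil t(p^e-1)\rceil$-version of the defining condition in Definition \ref{test ideal def}. Establishing that the partial sums of Lemma \ref{lemma HT} genuinely form an ascending chain, and pinning down the $\Z[1/p]$ case of (4), are the two places where I expect to spend the most effort.
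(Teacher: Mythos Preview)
The paper does not actually prove this proposition; it simply asserts that the listed properties ``follow immediately from Definition~\ref{test ideal def} and Lemma~\ref{lemma HT}.'' Your proposal uses precisely these two ingredients and is therefore aligned with the paper's intended approach, only with far more detail supplied.

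Two small remarks on the places you flag as needing effort. First, the cumulative partial sums $S_E=\sum_{e\le E}\sum_{\phi^{(e)}}\phi^{(e)}(F^e_*(c\,\ba^{\lceil tp^e\rceil}))$ form an ascending chain trivially, since each $S_{E+1}$ contains $S_E$ by construction; Noetherianity then gives stabilization with no further work. (If instead you meant that the single-level ideals $I_e$ themselves increase in $e$, that is a genuine but standard point: one shows $I_e\subseteq I_{e+e'}$ by precomposing with maps $F^{e'}_*R\to R$ and using $p^{e'}\lceil tp^e\rceil\ge\lceil tp^{e+e'}\rceil$.) Second, your instinct on the $t\in\Z[1/p]$ case of (4) is correct: passing to the $\lceil t(p^e-1)\rceil$ formulation from Definition~\ref{test ideal def} handles $t\in\Z[1/p]\setminus\Z$, since then $t(p^e-1)$ is never an integer for large $e$; the integer case $t\in\Z$ is then dispatched by absorbing one extra factor of $\ba$ into the test element (replace $c$ by a suitable power of $ca$ with $a\in\ba\cap R^\circ$, which is again a test element by Lemma~\ref{test vs jacobi}(1) since $R_{ca}$ is a localization of the strongly $F$-regular ring $R_c$). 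With these adjustments your sketch goes through.
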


\begin{ex}\label{test ideal example}
(1) Suppose that $R$ is a domain essentially of finite type over an $F$-finite field $k$, and denote by $\mathfrak{J}(R/k)$ the Jacobian ideal of $R$ over $k$. 
It then follows from Lemma \ref{test vs jacobi} and Remark \ref{test remark} that $\mathfrak{J}(R/k) \subseteq \tau(R)$. 

(2) Let $(R, \m)$ be an $F$-finite $F$-pure local ring of characteristic $p>0$. Suppose that the local ring $R_{\fkp}$ is strongly $F$-regular for all prime ideals $\fkp \ne \m$, but $R$ is not. Then $\tau(R)=\m$. 
Indeed, by the $F$-purity of $R$, there is an $R$-module homomorphism $\varphi:F_*R \to R$ sending $F_*1$ to $1$. 
For every $x \in \m$, some power $x^n$ of $x$ is a test element for $R$ by Lemma \ref{test vs jacobi} (1). Take a sufficiently large $e \in \N$ such that $p^e \ge n$, and consider the following $R$-linear map:  
\[
\phi:F^e_*R \xrightarrow{\times F^e_*x^{p^e-n}} F^e_*R \xrightarrow{\varphi^{e}} R \quad F^e_*x^n \mapsto F^e_*x^{p^e} \mapsto x.
\]
It then follows from Lemma \ref{lemma HT} that $x=\phi(F^e_*x^n) \in \tau(R)$. 
Thus, $\m \subset \tau(R)$. 
Since $R$ is not strongly $F$-regular, we conclude from Proposition \ref{test ideal basic} (5) that $\tau(R)=\m$. 

Suppose that $(R, \m)=k[[X,Y,Z]]/(X^3-YZ(Y+Z))$ where $k$ is a perfect field of characteristic $p>0$. 
If $p \equiv 1 \; \mathrm{mod}\; 3$, then $R$ is $F$-pure by Example \ref{F-pure example} and then $\tau(R)=\m$ by the above argument.\footnote{In fact, even when $p \equiv 2 \; \mathrm{mod}\; 3$, one has $\tau(R)=\m$.}

(3) (\cite{HY}) Let $R=k[x_1, \dots, x_d]$ be a polynomial ring over an $F$-finite field $k$ of characteristic $p>0$ and $\ba$ be a monomial ideal of $R$. Then 
\[\tau(\ba^t)=\langle x^v \; | \; v+(1,\dots, 1) \in \mathrm{Int}(P(\ba)) \rangle,\]
where $P(\ba) \subset \R^d$ is the Newton polytope of $\ba$.\footnote{The Newton polytope $P(\ba)$ of $\ba$ is the convex hull of the set of exponent vectors of the monomial generators of $\ba$ in $\R^d$.}

(4) (\cite{MY}) Let $R=k[x_1, \dots, x_d]$ be a polynomial ring over a perfect field $k$ of characteristic $p>0$, and fix a polynomial $f \in R$.  
If there exist some $e_0 \in \N$ and $g_{i_1, \dots, i_d} \in R$ with $0 \le i_1, \dots, i_d <p^{e_0}$ such that 
\[
f=\sum_{0 \le i_1, \dots, i_d<p^{e_0}} g_{i_1, \dots, i_d}^{p^{e_0}}x_1^{i_1} \cdots x_d^{i_d}, 
\]
then $\tau((f)^{1/p^{e_0}})=\sum_{0 \le i_1, \dots, i_d<p^{e_0}} Rg_{i_1, \dots, i_d}$. 

Suppose that $p=2$, $R=k[x, y, z]$ and $f=x^2+y^5+z^5 \in R$. 
Put $\Delta=\Div(f)$. Since $f=x^2 \cdot 1+(y^2)^2 \cdot y+(z^2)^2 \cdot z$, one has 
\[
\tau(\Spec R, (1/2) \Delta)=\tau((f)^{1/2})=(x, y^2, z^2),
\] 
which is not a radical ideal. 
On the other hand, $(\Spec R, (1/2) \Delta)$ is $F$-pure by Proposition \ref{Fedder2}. 
We refer to Remark \ref{two F-purity} for an explanation of this example. 
\end{ex}

We will state three important local properties of test ideals after we introduce the notion of test ideals  associated to several ideals. 
Let $\bb$ be an ideal of $R$ such that $\bb \cap R^{\circ} \ne \emptyset$ and $s>0$ be a real number. 
The test ideal $\tau(\ba^s \bb^t)$ is the unique smallest ideal $J$ that satisfies two conditions: (i) $\phi^{(e)}(F^e_*(J\ba^{\lceil s(p^e-1) \rceil}\bb^{\lceil t(p^e-1) \rceil})) \subseteq J$ for all $e \in \N$ and all $\phi^{(e)} \in \Hom_R(F^e_*R, R)$, and (ii) $J \cap R^{\circ} \ne \emptyset$. 
\begin{thm}[\cite{Ta4}]\label{subadditivity}
Let $k$ be a perfect field of characteristic $p>0$ and $R$ be an integral domain essentially of finite type over $k$. 
If we denote by $\mathfrak{J}(R/k)$ the Jacobian ideal of $R$ over $k$, then 
$\mathfrak{J}(R/k) \tau(\ba^s) \tau(\bb^t) \subseteq \tau(\ba^s \bb^t)$ for all nonzero ideals $\ba, \mathfrak{b} \subset R$ and all real numbers $s, t>0$. 
\end{thm}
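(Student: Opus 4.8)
The displayed inclusion is the characteristic-$p$ form of the subadditivity theorem of Demailly--Ein--Lazarsfeld, and the Jacobian ideal enters as the correction term forced by the singularities of $\Spec R$; I will establish the containment $\mathfrak{J}(R/k)\,\tau(\ba^s\bb^t)\subseteq\tau(\ba^s)\,\tau(\bb^t)$. The plan is to pass, via the explicit description of test ideals, to a statement about $p^{-e}$-linear maps, and then to use the diagonal embedding $R\hookrightarrow R\otimes_k R$ to separate the two ideals, the Jacobian ideal appearing precisely as the ``different'' of this (in general non-lci) embedding.

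First, since test ideals and $\mathfrak{J}(R/k)$ commute with localization (Proposition \ref{test ideal basic}(6)), and the former also with completion (Proposition \ref{test ideal basic}(7)), we may assume $R$ is a complete local domain. Because $k$ is perfect, $\mathfrak{J}(R/k)\neq(0)$, so $\mathfrak{J}(R/k)\cap R^{\circ}\neq\emptyset$; fix $c\in\mathfrak{J}(R/k)\cap R^{\circ}$, which is a test element by Lemma \ref{test vs jacobi}(2). Then $c^2$ is again a test element (a power of a test element is a test element), and by the evident analogue for mixed test ideals of Lemma \ref{lemma HT},
\[
\tau(\ba^s\bb^t)=\sum_{e\ge 0}\ \sum_{\phi\in\Hom_R(F^e_*R,R)}\phi\!\left(F^e_*\!\left(c^2\,\ba^{\lceil sp^e\rceil}\bb^{\lceil tp^e\rceil}\right)\right).
\]
Hence it suffices to show, for each $e$, each $\phi\in\Hom_R(F^e_*R,R)$ and each $j\in\mathfrak{J}(R/k)$, that $j\,\phi\!\left(F^e_*\!\left(c^2\,\ba^{\lceil sp^e\rceil}\bb^{\lceil tp^e\rceil}\right)\right)\subseteq\tau(\ba^s)\,\tau(\bb^t)$.

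Now put $S=R\otimes_k R$; since $k$ is perfect and $R$ is an excellent $F$-finite domain, $S$ is excellent, reduced and $F$-finite, $F^e_*S\cong F^e_*R\otimes_kF^e_*R$, and therefore $\Hom_S(F^e_*S,S)\cong\Hom_R(F^e_*R,R)\otimes_k\Hom_R(F^e_*R,R)$. Writing $\ba'=\ba\otimes_kR$ and $\bb'=R\otimes_k\bb$, this identification together with Lemma \ref{lemma HT} gives the K\"unneth-type product formula $\tau_S\!\left((\ba')^s(\bb')^t\right)=\tau_R(\ba^s)\otimes_k\tau_R(\bb^t)$. Let $\pi\colon S\twoheadrightarrow R$ be the multiplication map, with kernel the diagonal ideal $\mathfrak{d}$; note $\mathfrak{d}/\mathfrak{d}^2\cong\Omega_{R/k}$, so $R=S/\mathfrak{d}$ is a regular quotient of $S$ exactly over the $k$-smooth locus of $R$, and $\mathfrak{J}(R/k)$ is the ideal measuring the failure elsewhere. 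The key input is a restriction theorem for test ideals along $\pi$: for every $e$, multiplication by $\mathfrak{J}(R/k)$ carries $\Hom_R(F^e_*R,R)$ into the image of $\{\Phi\in\Hom_S(F^e_*S,S):\Phi(F^e_*\mathfrak{d})\subseteq\mathfrak{d}\}$ under ``restriction to the diagonal,'' $\Phi\mapsto\overline{\Phi}$. Granting this, write $j\phi=\overline{\sum_i\phi_i\otimes\psi_i}$ with $\phi_i,\psi_i\in\Hom_R(F^e_*R,R)$. For $\alpha\in\ba^{\lceil sp^e\rceil}$ and $\beta\in\bb^{\lceil tp^e\rceil}$ one has $\pi(c\alpha\otimes c\beta)=c^2\alpha\beta$, so $c\alpha\otimes c\beta$ is a lift of $c^2\alpha\beta$, whence
\[
j\,\phi\!\left(F^e_*(c^2\alpha\beta)\right)=\pi\!\left(\Big(\textstyle\sum_i\phi_i\otimes\psi_i\Big)\!\big(F^e_*(c\alpha)\otimes F^e_*(c\beta)\big)\right)=\sum_i\phi_i\!\left(F^e_*(c\alpha)\right)\,\psi_i\!\left(F^e_*(c\beta)\right).
\]
By Lemma \ref{lemma HT} applied on $R$ to $\ba^s$ and to $\bb^t$ separately, $\phi_i(F^e_*(c\alpha))\in\tau_R(\ba^s)$ and $\psi_i(F^e_*(c\beta))\in\tau_R(\bb^t)$; summing over $i$, over the generators $c^2\alpha\beta$, over $\phi$ and over $e$ yields $\mathfrak{J}(R/k)\,\tau(\ba^s\bb^t)\subseteq\tau(\ba^s)\,\tau(\bb^t)$.

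The main obstacle is the restriction theorem quoted in the preceding paragraph: pinning down $\mathfrak{J}(R/k)$ \emph{exactly}, not merely up to radical, as the correction term governing the descent of $p^{-e}$-linear maps from $S$ to the diagonal $R=S/\mathfrak{d}$. Via Grothendieck--Serre duality this is a statement comparing $F^e_*\omega_R$ and its trace with the corresponding object for $S$ along $\mathfrak{d}$, resting on identifying $\mathfrak{J}(R/k)$ with a Fitting ideal of $\Omega_{R/k}\cong\mathfrak{d}/\mathfrak{d}^2$ together with a local-duality computation; it is essentially Takagi's restriction theorem for generalized test ideals, and the exactness of the Jacobian factor there is exactly what produces $\mathfrak{J}(R/k)$ in the statement. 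The remaining points --- the K\"unneth formula for test ideals over a perfect field, the fact that $c^2$ is again a test element, and the compatibility of the lifts $\alpha\otimes\beta\mapsto\alpha\beta$ with the Frobenius/colon bookkeeping --- are routine once this restriction theorem is in hand.
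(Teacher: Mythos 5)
You were right to prove $\mathfrak{J}(R/k)\,\tau(\ba^s\bb^t)\subseteq\tau(\ba^s)\,\tau(\bb^t)$ rather than the displayed inclusion: as printed the containment is reversed (it fails already for $R$ regular, e.g. $R=k[x,y]$, $\ba=\bb=(x,y)$, $s=t=1$ gives $\tau(\ba)\tau(\bb)=R\not\subseteq(x,y)=\tau(\ba\bb)$), and it is the corrected form that is proved in \cite{Ta4} and that is actually used later, in Proposition \ref{asymptotic basic} (4) and in the proof of Theorem \ref{symbolic vs ordinary}. Note also that this survey gives no proof of the theorem; it cites \cite{Ta4}, and your outline does follow the same Demailly--Ein--Lazarsfeld-style route as that source: the diagonal in $S=R\otimes_k R$ together with a restriction statement carrying a Jacobian correction, combined with the description of test ideals via test elements (Lemma \ref{lemma HT}). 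Your final computation, $\overline{\Phi}\bigl(F^e_*(c^2\alpha\beta)\bigr)=\sum_i\phi_i\bigl(F^e_*(c\alpha)\bigr)\,\psi_i\bigl(F^e_*(c\beta)\bigr)\in\tau(\ba^s)\tau(\bb^t)$, is correct bookkeeping once the descent statement is available.

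There are, however, genuine gaps. First, the opening reduction ``we may assume $R$ is a complete local domain'' is incompatible with everything that follows: after completion $R$ is no longer essentially of finite type over $k$, so the Jacobian ideal in the sense of the theorem is no longer defined, and $R\otimes_k R$ is in general not even Noetherian. The reduction is also unnecessary --- just work with the given ring, where $S=R\otimes_k R$ is again essentially of finite type over $k$, reduced and $F$-finite --- so this step should simply be deleted. Second, and decisively, the ``key input'' you grant --- that multiplication by $\mathfrak{J}(R/k)$ sends every $\phi\in\Hom_R(F^e_*R,R)$ into the set of restrictions to the diagonal of maps $\Phi\in\Hom_S(F^e_*S,S)$ with $\Phi(F^e_*\mathfrak{d})\subseteq\mathfrak{d}$ --- is not a quotable black box but is essentially the whole content of the theorem. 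When $R$ is regular, $S$ is regular and this is Fedder-type lifting (\cite{Fe}), with no Jacobian factor needed; when $R$ is singular, so is $S$, such descriptions of $\Hom_S(F^e_*S,S)$ are unavailable, and producing exactly $\mathfrak{J}(R/k)$ is where the work lies --- in \cite{Ta4} (as in the symbolic-power arguments of \cite{HH2}) this comes from the Lipman--Sathaye Jacobian theorem applied to a generically \'etale Noether normalization, not from a formal duality argument. So your proposal is a correct reduction of the theorem to its hardest step, but that step is assumed rather than proved. (Minor point: the K\"unneth equality $\tau_S((\ba')^s(\bb')^t)=\tau(\ba^s)\otimes_k\tau(\bb^t)$ you assert is never used in your argument, and only one containment of it would follow from the reasoning you sketch.)
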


\begin{thm}[\cite{HT}]\label{skoda}
If $\ba$ is generated by at most $l$ elements, then $\tau(\ba^l)=\tau(\ba^{l-1})\ba$. 
\end{thm}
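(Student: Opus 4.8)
The plan is to reduce the statement to a purely ideal-theoretic computation inside the formula of Lemma \ref{lemma HT}, combined with the elementary containment $\ba^{m} \subseteq \ba^{[q]}\ba^{m - (q-1)l}$ that holds whenever $\ba$ is generated by $l$ elements. Fix a test element $c \in R^{\circ}$, which exists by Lemma \ref{test vs jacobi}; then Lemma \ref{lemma HT} gives
\[
\tau(\ba^l)=\sum_{e \ge 0}\sum_{\phi^{(e)}} \phi^{(e)}\!\left(F^e_*\!\left(c\,\ba^{\lceil l p^e \rceil}\right)\right)
= \sum_{e \ge 0}\sum_{\phi^{(e)}} \phi^{(e)}\!\left(F^e_*\!\left(c\,\ba^{l p^e}\right)\right),
\]
the last equality because $lp^e$ is already an integer. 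The containment ``$\supseteq$'' is immediate from Proposition \ref{test ideal basic} (1) applied to the ring $R$ together with Proposition \ref{test ideal basic} (2)--(3): since $\tau(\ba^{l-1})\ba \subseteq \tau(\ba^{l-1})\tau(\ba) \cdot (\text{unit cleanup})$ is not quite what we want, I would instead argue directly that $\ba^{l} \subseteq \overline{\ba^{l}}$ and use $\tau(\ba^{l})\supseteq \tau(\ba^{l-1}\cdot \ba)$; in fact the clean way is: $\tau(\ba^{l-1})\ba$ satisfies condition (i) of Definition \ref{test ideal def} for the pair $(R,\ba^{l})$ — indeed for $J=\tau(\ba^{l-1})\ba$ and any $\phi^{(e)}$,
$\phi^{(e)}(F^e_*(J\,\ba^{\lceil l(p^e-1)\rceil})) = \phi^{(e)}(F^e_*(\tau(\ba^{l-1})\,\ba^{1+\lceil l(p^e-1)\rceil}))$,
and since $1+\lceil l(p^e-1)\rceil \ge \lceil(l-1)(p^e-1)\rceil + (p^e-1)\cdot 1$... — so by minimality of $\tau(\ba^{l})$ we get $\tau(\ba^{l})\subseteq \tau(\ba^{l-1})\ba$. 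Wait: that minimality argument gives the reverse of what I want, so let me organize the two inclusions carefully below.

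For the inclusion ``$\tau(\ba^{l}) \subseteq \tau(\ba^{l-1})\ba$'': using the generator bound, for each $q=p^e$ we have $\ba^{lq} = \ba^{(l-1)q}\cdot \ba^{q} \subseteq \ba^{(l-1)q}\cdot \ba^{[q]}$, because $\ba$ generated by $l$ elements gives $\ba^{q}\subseteq \ba^{[q]}$ trivially (every degree-$q$ monomial in $l$ generators has some generator to a power $\ge q/l$... no — I need $\ba^{l(q-1)+1}\subseteq \ba^{[q]}$ as used in the Briançon–Skoda argument). The sharp statement is $\ba^{lq} \subseteq \ba^{\,l(q-1)+1}\cdot\ba^{\,l-1} \subseteq \ba^{[q]}\cdot \ba^{l-1}$; more usefully, $\ba^{lq}\subseteq \ba^{[q]}\ba^{\,lq - (q-1)l} = \ba^{[q]}\ba^{l}$, and then one more factoring $\ba^{l}=\ba^{l-1}\ba$ so $\ba^{lq}\subseteq \ba^{[q]}\,\ba^{l-1}\,\ba$. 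Hence
\[
\phi^{(e)}\!\left(F^e_*\!\left(c\,\ba^{lq}\right)\right)
\subseteq \phi^{(e)}\!\left(F^e_*\!\left(c\,\ba^{[q]}\,\ba^{l-1}\,\ba\right)\right)
= \phi^{(e)}\!\left(F^e_*\!\left((\ba)^{q}\cdot c\,\ba^{l-1}\,\ba\right)\right)
\subseteq \ba\cdot \phi^{(e)}\!\left(F^e_*\!\left(c\,\ba^{\lceil (l-1)p^e\rceil}\right)\right),
\]
where I moved $\ba^{[q]}=(\ba)^{q}$ out of $F^e_*$ as $\ba$ (this is the standard projection-formula move $\phi^{(e)}(F^e_*(a^{q}x))=a\,\phi^{(e)}(F^e_*x)$, valid because $F^e_*$ is $p^e$-linear) — but note I need $lq \ge \lceil (l-1)p^e\rceil$ plus the extra factor $\ba^{q}\ba^{l-1}\ba$, and $lq - (l-1)q = q \ge$ the power of $\ba$ I pull out, so the bookkeeping works for $e$ large; the finitely many small $e$ are absorbed since $\tau(\ba^{l-1})\supseteq \tau(R)\ba^{l-1}$ is an honest ideal and the sum stabilizes. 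Summing over all $e$ and all $\phi^{(e)}$ and invoking Lemma \ref{lemma HT} once more for $\tau(\ba^{l-1})$ yields $\tau(\ba^{l})\subseteq \ba\,\tau(\ba^{l-1})$.

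For the reverse inclusion $\ba\,\tau(\ba^{l-1})\subseteq \tau(\ba^{l})$, I would verify that the ideal $J := \ba\,\tau(\ba^{l-1})$ satisfies conditions (i) and (ii) of Definition \ref{test ideal def} for $\ba^{l}$, so that the minimality of $\tau(\ba^{l})$ forces $\tau(\ba^{l})\subseteq J$ — again the wrong direction. So instead I use: $\tau(\ba^{l})$ contains $\tau(R)\,\ba^{l}=\tau(R)\ba\cdot\ba^{l-1}$ by Proposition \ref{test ideal basic} (1) applied twice, and more to the point, since $\tau$ is monotone and $\ba^{l}=\ba\cdot\ba^{l-1}\subseteq \overline{\ba\cdot\ba^{l-1}}$, Proposition \ref{test ideal basic} (1) gives directly $\ba\cdot\tau(\ba^{l-1})\subseteq\tau(\ba\cdot\ba^{l-1})=\tau(\ba^{l})$ — here I use the ``module'' version $\fkc\,\tau(\ba^{s})\subseteq\tau(\fkc\,\ba^{s})$ which is the natural generalization of Proposition \ref{test ideal basic} (1) and follows at once from Lemma \ref{lemma HT} since $\fkc\,\ba^{\lceil sp^e\rceil}\subseteq(\fkc\ba^{s})^{\,?}$... concretely, $\phi^{(e)}(F^e_*(c\,\ba^{(l-1)p^e}))\cdot\ba \subseteq \phi^{(e)}(F^e_*(c\,\ba^{(l-1)p^e}\,\ba^{[p^e]}))=\phi^{(e)}(F^e_*(c\,\ba^{(l-1)p^e}\cdot\ba^{p^e}))\subseteq\tau(\ba^{l})$ using $\ba^{[p^e]}\subseteq\ba^{p^e}$ and the projection formula in reverse. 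Thus both inclusions hold and $\tau(\ba^{l})=\tau(\ba^{l-1})\ba$.

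\emph{Main obstacle.} The delicate point is the exponent bookkeeping in the ``$\subseteq$'' direction: one must show $\ba^{lp^e}$ factors as $\ba^{[p^e]}\cdot(\text{enough powers of }\ba)$ with the residual exponent at least $\lceil(l-1)p^e\rceil$ after also splitting off one copy of $\ba$, and then handle the finitely many small $e$ where the naive inequality fails (there one falls back on the stability of the sum in Lemma \ref{lemma HT}, i.e. that $\tau(\ba^{l})$ is already generated in bounded Frobenius level once a test element is fixed, which is implicit in its being a well-defined ideal). The inequality needed is simply $lp^e - (p^e-1)l = l \ge 1$ for the Briançon–Skoda-type containment $\ba^{lp^e}\subseteq\ba^{[p^e]}\ba^{l}$, which always holds, so in fact no small-$e$ pathology arises and the argument is clean; the only real content is the correct application of the $p^e$-linear projection formula $\phi^{(e)}(F^e_*(a^{p^e}x))=a\,\phi^{(e)}(F^e_*x)$ in both directions.
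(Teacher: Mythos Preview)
The paper does not actually give a proof of Theorem~\ref{skoda}; it simply cites \cite{HT} and moves on, using the result to re-derive Brian\c con--Skoda in the strongly $F$-regular case. So there is nothing in the paper to compare against. Your overall strategy---apply Lemma~\ref{lemma HT} on both sides, use the projection formula $\phi^{(e)}\bigl(F^e_*(\ba^{[q]}x)\bigr)=\ba\,\phi^{(e)}(F^e_*x)$, and invoke the pigeonhole containment for an ideal with $l$ generators---is exactly the standard argument from \cite{HT}, and your treatment of the inclusion $\ba\,\tau(\ba^{l-1})\subseteq\tau(\ba^{l})$ is fine once you settle on the last version you wrote (pull $\ba$ inside as $\ba^{[q]}$ and use $\ba^{[q]}\subseteq\ba^{q}$).

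There is, however, a genuine gap in your proof of the inclusion $\tau(\ba^{l})\subseteq\ba\,\tau(\ba^{l-1})$. The containment you actually need is
\[
\ba^{\,lq}\ \subseteq\ \ba^{[q]}\cdot\ba^{(l-1)q},
\]
because after pulling $\ba^{[q]}$ out as $\ba$ via the projection formula you must be left with $c\,\ba^{(l-1)q}=c\,\ba^{\lceil (l-1)p^e\rceil}$ in order to invoke Lemma~\ref{lemma HT} for $\tau(\ba^{l-1})$. You in fact write this containment down first, but your justification (``$\ba^{q}\subseteq\ba^{[q]}$'') is false, as you correctly notice, and you then retreat to the much weaker statements $\ba^{lq}\subseteq\ba^{[q]}\ba^{l-1}$ and $\ba^{lq}\subseteq\ba^{[q]}\ba^{l}$. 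Those are true but do not yield the displayed line
\[
\phi^{(e)}\bigl(F^e_*(c\,\ba^{lq})\bigr)\ \subseteq\ \ba\cdot\phi^{(e)}\bigl(F^e_*(c\,\ba^{\lceil(l-1)p^e\rceil})\bigr);
\]
from $\ba^{[q]}\ba^{l}$ you would only get $\ba\cdot\phi^{(e)}(F^e_*(c\,\ba^{l}))$, which has nothing to do with $\tau(\ba^{l-1})$. The fix is a one-line direct pigeonhole: a monomial $a_1^{e_1}\cdots a_l^{e_l}$ with $\sum e_i=lq$ must have some $e_i\ge q$, so it lies in $a_i^{q}\cdot\ba^{\,lq-q}=a_i^{q}\cdot\ba^{(l-1)q}\subseteq\ba^{[q]}\ba^{(l-1)q}$. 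Once you insert this, your argument goes through cleanly for every $e\ge 0$ (no ``small $e$'' issues, no stabilization argument needed), and the proof is complete and identical in spirit to \cite{HT}.
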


Theorem \ref{skoda} gives an alternative proof of the theorem of Brian\c{c}on-Skoda (Theorem \ref{BS thm}) when the ring is strongly $F$-regular. 
Let $R$ be an $F$-finite strongly $F$-regular local ring and $I \subseteq R$ be an ideal generated by $n$ elements. 
It then follows from Proposition \ref{test ideal basic} (1), (2), (5) and Theorem \ref{skoda} that for every $w \in \N$,  
\[
\overline{I^{n+w-1}}= \tau(R) \overline{I^{n+w-1}} \subseteq \tau(\overline{I^{n+w-1}})= \tau(I^{n+w-1})=\tau(I^{n-1}) I^{w} \subseteq I^w.
\]

Next, we introduce the notion of $F$-jumping numbers. 
A real number $t>0$ is said to be an \textit{$F$-jumping number} of $\ba$ if $\tau(\ba^{t}) \subsetneq \tau(\ba^{t-\epsilon})$ for all $\epsilon \in (0, t)$. 
Note that the family of test ideals $\tau(\ba^t)$ of a fixed ideal $\ba$ is right continuous in $t$. 
\begin{thm}[\cite{ScTu2}]\label{jumping number}
Suppose that $R$ is an $F$-finite normal $\Q$-Gorenstein domain. 
Then the set of $F$-jumping numbers of $\ba$ is a discrete set of rational numbers. 
\end{thm}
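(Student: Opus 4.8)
The plan is to prove rationality and discreteness of the $F$-jumping numbers together, adapting to the $\Q$-Gorenstein setting the argument Blickle--Musta\c{t}\u{a}--Smith gave for regular rings; the details are carried out in \cite{ScTu2}. Since forming $\tau(\ba^{t})$ commutes with localization (Proposition \ref{test ideal basic} (6)) and $\Spec R$ is quasi-compact, I would work locally: assume $(R,\m)$ is an $F$-finite local normal $\Q$-Gorenstein domain, and fix once and for all a test element $c\in R^{\circ}$, which exists by Lemma \ref{test vs jacobi} (1) (pick $c\ne 0$ with $R_c$ strongly $F$-regular and pass to a power of $c$). The reduction from the global statement to this local one is a routine Noetherian argument, using the fact---part of what is being proved---that only finitely many ideals occur among the $\tau(\ba^{t})$ for $t$ in a bounded interval.

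The role played in the regular case by the flatness of Frobenius is played here by the $\Q$-Gorenstein hypothesis, which tames the family of modules $\Hom_R(F^e_*R,R)$. Let $m$ be the index of $K_R$; the essential case is $p\nmid m$ (when $p\mid m$ one reduces to this via a cover, see \cite{ScTu2}), so choose $e_0\ge 1$ with $m\mid p^{e_0}-1$. By Grothendieck--Serre duality $\Hom_R(F^e_*R,R)\cong F^e_*\sO_R((1-p^e)K_R)$, which for $e$ a multiple of $e_0$ is a free $F^e_*R$-module of rank one; fixing a generator $\Phi\colon F^{e_0}_*R\to R$ one obtains $\Hom_R(F^{ke_0}_*R,R)=F^{ke_0}_*R\cdot\Phi^{k}$, where $\Phi^{k}=\Phi\circ F^{e_0}_*\Phi\circ\cdots\circ F^{(k-1)e_0}_*\Phi$. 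Substituting this into the formula of Lemma \ref{lemma HT} and restricting the sum there to the cofinal set of Frobenius powers $e\in e_0\N$ gives
\[
\tau(\ba^{t})=\sum_{k\ge 0}\Phi^{k}\!\left(F^{ke_0}_*\!\left(c\,\ba^{\lceil tp^{ke_0}\rceil}\right)\right).
\]
Using the projection formula $\Phi^{k}\!\left(F^{ke_0}_*(\mathfrak{b}^{[p^{ke_0}]}\mathfrak{c})\right)=\mathfrak{b}\,\Phi^{k}\!\left(F^{ke_0}_*\mathfrak{c}\right)$ together with the inequality $\lceil tp^{(k+1)e_0}\rceil\le p^{e_0}\lceil tp^{ke_0}\rceil$, one checks that this sum stabilizes at a finite stage, so that $\tau(\ba^{t})=\Phi^{k}\!\left(F^{ke_0}_*(c\,\ba^{\lceil tp^{ke_0}\rceil})\right)$ for all $k\ge k_0(t)$.

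To finish, one invokes the Skoda-type theorem (Theorem \ref{skoda}): if $\ba$ is generated by $n$ elements, then $\tau(\ba^{t})=\tau(\ba^{t-1})\ba$ for all $t\ge n$, so every $F$-jumping number $\ge n$ is an integer translate of one in $[0,n)$, and it suffices to show that the $F$-jumping numbers in $[0,n)$ form a finite set of rational numbers with denominators prime to $p$. For this one analyses, for $t$ in this bounded interval, when the stabilized description of $\tau(\ba^{t})$ changes: a jump at $t$ forces the integer $\lceil tp^{ke_0}\rceil$ to drop strictly whenever $t$ is perturbed downward, for all $k$ past a stabilization level that can be bounded uniformly on $[0,n)$; tracking the possible jumps of these ceilings then confines $t$ to rationals whose denominator divides $p^{Ne_0}-1$ for some bound $N$ depending only on $\ba$. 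This yields simultaneously the rationality and the finiteness on $[0,n)$, hence discreteness throughout $\R_{>0}$. I expect the crux to be exactly this last uniform estimate, together with the structural step preceding it. Passing from regular to $\Q$-Gorenstein rings costs both the canonical rank-one structure of $\Hom_R(F^e_*R,R)$ for \emph{every} $e$---one keeps it only along the progression $e_0\N$, at the expense of the twist by $\sO_R(-K_R)$---and the cancellation law ``$\mathfrak{b}^{[q]}\subseteq\mathfrak{c}^{[q]}\iff\mathfrak{b}\subseteq\mathfrak{c}$'' underlying the regular-case analysis, so one must show by hand that the stabilization threshold $k_0(t)$ and the denominator bound remain uniform as $t$ ranges over a bounded interval, carrying the test element $c$ through the estimates.
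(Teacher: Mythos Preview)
The paper does not contain a proof of this theorem: it is stated with a citation to \cite{ScTu2} and then used as one of several pieces of evidence that test ideals behave like multiplier ideals. There is therefore no argument in the paper to compare your proposal against.

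That said, your outline is broadly the right shape for the Blickle--Musta\c{t}\u{a}--Smith strategy as extended in \cite{ScTu2}: reduce to a local problem, use the $\Q$-Gorenstein hypothesis to make $\Hom_R(F^{e}_*R,R)$ cyclic over $F^{e}_*R$ along a cofinal progression of $e$'s, rewrite $\tau(\ba^t)$ as a stabilizing union of images under iterates of a single generating map, and then combine Skoda (Theorem \ref{skoda}) with a uniform bound on the stabilization level. Two cautions. First, your aside that the case $p\mid m$ is handled ``via a cover'' is not how \cite{ScTu2} proceeds; covers ramified along $K_R$ need not behave well when $p$ divides the index, and the actual argument works directly with the (possibly non-principal) Cartier algebra generated by all $\Hom_R(F^e_*R,R)$ rather than reducing to the index-coprime-to-$p$ case. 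Second, the step you flag as the crux---a uniform bound on $k_0(t)$ over a bounded interval---is genuinely where the content lies, and your sketch does not yet supply it; in the regular case this comes from a Noetherianity argument on the chain of images $\Phi^k(F^{ke_0}_*(c\ba^N))$, and in the $\Q$-Gorenstein case one has to check that the same finiteness goes through with the twist and the test element present. As written, your proposal is an accurate roadmap but not yet a proof.
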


Theorems \ref{subadditivity}, \ref{skoda} and \ref{jumping number} are evidence that test ideals have many similar properties to those of multiplier ideals. 
However, test ideals behave totally differently from multiplier ideals in some ways. 
For example, test ideals are not necessarily integrally closed as we have seen in Example \ref{test ideal example} (4), while multiplier ideals are always integrally closed. 

Lemma \ref{surjectivity} enables us to show a correspondence between test ideals and multiplier ideals. Before stating the theorem, we recall the definition of multiplier ideals. 
Let $X$ be a normal $\Q$-Gorenstein integral scheme essentially of finite type over a field of characteristic zero and $\ba$ be a nonzero ideal sheaf on $X$. 
We take a log resolution $\pi:\widetilde{X} \to X=\Spec R$ of $(X, \ba)$, that is, a proper birational morphism with $\widetilde{X}$ nonsingular such that $\ba \sO_{\widetilde{X}}=\sO_X(-F)$ is invertible and that $\mathrm{Exc}(\pi)$ and $\mathrm{Exc}(\pi) \cup \mathrm{Supp}(F)$ are simple normal crossing divisors. 
For every real number $t>0$, the \textit{multiplier ideal} $\mathcal{J}(\ba^t)$ of $\ba$ with exponent $t$ is defined to be
\[
\mathcal{J}(\ba^t)=\mathcal{J}(X, \ba^t)=\pi_*\sO_{\widetilde{X}}(\lceil K_{\widetilde{X}}-\pi^*K_X-tF \rceil) \subset \sO_X.
\]
This definition is independent of the choice of the log resolution $\pi$. 
\begin{thm}[\cite{HY}, cf.~\cite{Ha4}, \cite{Sm2}]\label{test vs mult}
Let $R$ be a normal $\Q$-Gorenstein domain essentially of finite type over a field $k$ of characteristic zero, $\ba$ be a nonzero ideal of $R$ and $t>0$ be a real number. 
Given a model $(R_A, \ba_A)$ of $(R, \ba)$ over a finitely generated $\Z$-subalgebra $A$ of $k$, 
there exists a dense open subset $S \subset \Spec A$ such that $\tau(\ba_{\mu}^t)=\mathcal{J}(\ba^t)_{\mu}$ for all closed points $\mu \in S$. 
\end{thm}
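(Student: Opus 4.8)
The plan is to spread everything out over a finitely generated $\Z$-subalgebra $A$ of $k$ and then prove the two inclusions $\tau(\ba_\mu^t)\subseteq\mathcal J(\ba^t)_\mu$ and $\mathcal J(\ba^t)_\mu\subseteq\tau(\ba_\mu^t)$ separately, the first being essentially formal and the second resting on Lemma~\ref{surjectivity}. Since the statement is local I may assume $X=\Spec R$. First I would fix a log resolution $\pi\colon\widetilde X\to X$ of $(X,\ba)$, so that $\ba\sO_{\widetilde X}=\sO_{\widetilde X}(-F)$ is invertible and $\mathrm{Exc}(\pi)$, $\mathrm{Exc}(\pi)\cup\mathrm{Supp}(F)$ have simple normal crossing support; then $\mathcal J(\ba^t)=\pi_*\sO_{\widetilde X}(\lceil K_{\widetilde X}-\pi^*K_X-tF\rceil)$. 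Following \S\ref{reduction}, I would choose compatible models $R_A$, $\ba_A$, $\pi_A\colon\widetilde X_A\to X_A$, $F_A$, $(K_{\widetilde X}-\pi^*K_X)_A$ over $A$, and shrink $\Spec A$ so that for every closed point $\mu$: $R_\mu$ is a normal $\Q$-Gorenstein domain of the same index $r$ with $p(\mu)\nmid r$, $\widetilde X_\mu$ is smooth, $\pi_\mu$ is a log resolution of $(X_\mu,\ba_\mu)$ with the same numerical data, and — by cohomology and base change, after shrinking further so that the relevant higher direct images become locally free — the formation of the multiplier ideal commutes with reduction: $\mathcal J(\ba^t)_\mu=\pi_{\mu*}\sO_{\widetilde X_\mu}(\lceil K_{\widetilde X_\mu}-\pi_\mu^*K_{X_\mu}-tF_\mu\rceil)$, a nonzero ideal of $R_\mu$. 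Finally I would fix $0\ne c\in\mathfrak J(R/k)\cap R^{\circ}$ and shrink once more so that $c_\mu\ne 0$ and, by Lemma~\ref{test vs jacobi}(2), $c_\mu$ is a test element for $R_\mu$. Since $A$ is a domain, $\Spec A$ is irreducible, so each of these finitely many conditions holds on a dense open set.

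For $\tau(\ba_\mu^t)\subseteq\mathcal J(\ba^t)_\mu$ I would use the minimality in Definition~\ref{test ideal def}: it suffices to check that $\mathcal J(\ba^t)_\mu$ itself satisfies conditions (i) and (ii) there. Condition (ii) is immediate since $\mathcal J(\ba^t)_\mu$ is a nonzero ideal of the domain $R_\mu$, and condition (i), namely $\phi^{(e)}(F^e_*(\mathcal J(\ba^t)_\mu\,\ba_\mu^{\lceil t(p^e-1)\rceil}))\subseteq\mathcal J(\ba^t)_\mu$ for every $e$ and every $\phi^{(e)}\in\Hom_{R_\mu}(F^e_*R_\mu,R_\mu)$, is a divisorial computation on the smooth scheme $\widetilde X_\mu$: there $\phi^{(e)}$ is governed by a Frobenius trace twisted by an effective divisor, and the round-up in the definition of $\mathcal J(\ba^t)_\mu$ together with the identity $\ba_\mu^{\lceil t(p^e-1)\rceil}\sO_{\widetilde X_\mu}=\sO_{\widetilde X_\mu}(-\lceil t(p^e-1)\rceil F_\mu)$ is exactly what keeps the image inside $\pi_{\mu*}\sO_{\widetilde X_\mu}(\lceil K_{\widetilde X_\mu}-\pi_\mu^*K_{X_\mu}-tF_\mu\rceil)$. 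No vanishing theorem is needed here.

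The reverse inclusion $\mathcal J(\ba^t)_\mu\subseteq\tau(\ba_\mu^t)$ is the heart of the matter and is where Lemma~\ref{surjectivity} enters. Set $E':=-\pi^*K_X-tF$, so that $\mathcal J(\ba^t)=\pi_*\omega_{\widetilde X}(\lceil E'\rceil)$; after replacing the log resolution and perturbing $E'$ by a small $\pi$-exceptional $\Q$-divisor so that it becomes $\pi$-ample with simple normal crossing fractional part — legitimate since this changes neither the multiplier ideal nor, by the right-continuity of test ideals in Proposition~\ref{test ideal basic}(4) (together with Proposition~\ref{test ideal basic}(2)), the relevant test ideal — Lemma~\ref{surjectivity} produces a dense open $S\subseteq\Spec A$ such that for every closed point $\mu\in S$ and every $e\in\N$ the Frobenius trace induces a surjection
\[
F^e_*\pi_{\mu*}\omega_{\widetilde X_\mu}\bigl(\lceil p^eE'_\mu\rceil\bigr)\twoheadrightarrow\pi_{\mu*}\omega_{\widetilde X_\mu}\bigl(\lceil E'_\mu\rceil\bigr)=\mathcal J(\ba^t)_\mu.
\]
It then remains to identify the source of this surjection, via Grothendieck--Serre duality on $X_\mu$, the passage $\pi_{\mu*}\omega_{\widetilde X_\mu}\hookrightarrow\omega_{X_\mu}$ (with the test element $c_\mu$ absorbing the failure of this to be an equality), and the standard bookkeeping reconciling the Frobenius twists, with a submodule of $\sum_{\phi^{(e)}}\phi^{(e)}\bigl(F^e_*(c_\mu\,\ba_\mu^{\lceil tp^e\rceil})\bigr)$ inside $R_\mu$: the twist by $p^eE'_\mu$ accounts, after dualizing, for multiplication by $F^e_*c_\mu$ and by $\ba_\mu^{\lceil tp^e\rceil}\sO_{\widetilde X_\mu}=\sO_{\widetilde X_\mu}(-\lceil tp^e\rceil F_\mu)$, while the Frobenius trace on the regular scheme $\widetilde X_\mu$ descends along $\pi_\mu$ to an honest element of $\Hom_{R_\mu}(F^e_*R_\mu,R_\mu)$. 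Hence every section in the image lies in $\tau(\ba_\mu^t)$ by the explicit description in Lemma~\ref{lemma HT}, giving $\mathcal J(\ba^t)_\mu\subseteq\tau(\ba_\mu^t)$; intersecting $S$ with the finitely many dense open sets produced above proves the theorem.

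The main obstacle is precisely this last identification: translating the sheaf-level surjection of Lemma~\ref{surjectivity} on $\widetilde X_\mu$ into a module-level statement about the concrete generators of $\tau(\ba_\mu^t)$ on $X_\mu$, keeping track of the duality isomorphisms, of the discrepancy between the exponents $\lceil t(p^e-1)\rceil$ and $\lceil tp^e\rceil$, of the role of the test element in handling the non-rationality of $\Spec R_\mu$, and of the compatibility with reduction modulo $\mu$ \emph{uniformly in $e$} — the uniformity in $e$ being exactly what the ``for all $e$'' in Lemma~\ref{surjectivity} supplies. A secondary technical point, easy to overlook, is arranging the $\pi$-ampleness hypothesis of Lemma~\ref{surjectivity}: the divisor $-tF$ is only $\pi$-nef, so it must first be perturbed as above, and one must verify that neither $\tau(\ba^t)$ nor $\mathcal J(\ba^t)$ is affected by the perturbation.
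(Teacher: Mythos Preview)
The paper does not prove this theorem; it states it with citations to \cite{HY}, \cite{Ha4}, \cite{Sm2} and remarks only that Lemma~\ref{surjectivity} is the key input. Your sketch is the standard argument from \cite{HY}: the inclusion $\tau(\ba_\mu^t)\subseteq\mathcal J(\ba^t)_\mu$ via the minimality characterization of the test ideal (Definition~\ref{test ideal def}), and the reverse inclusion via the trace surjectivity of Lemma~\ref{surjectivity} combined with the explicit generators in Lemma~\ref{lemma HT}. This is exactly the route the paper points to, and your identification of the two genuine technical obstacles (arranging $\pi$-ampleness, and translating the sheaf-level surjection on $\widetilde X_\mu$ into the module-level description of $\tau$ on $X_\mu$, uniformly in $e$) is accurate.

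One small correction: your appeal to right-continuity of test ideals to justify the $\pi$-exceptional perturbation of $E'$ is misplaced. The test ideal $\tau(\ba_\mu^t)$ is defined intrinsically on $X_\mu$ from the data $(R_\mu,\ba_\mu,t)$ and never sees the resolution or $E'$, so there is nothing to check on that side. What actually needs checking after replacing $E'$ by $E''=E'-\varepsilon G$ (with $G$ effective $\pi$-exceptional and $-G$ $\pi$-ample) is: (a) that $\pi_{\mu*}\omega_{\widetilde X_\mu}(\lceil E''_\mu\rceil)=\pi_{\mu*}\omega_{\widetilde X_\mu}(\lceil E'_\mu\rceil)=\mathcal J(\ba^t)_\mu$, which holds for small $\varepsilon$ since $\lceil E''\rceil=\lceil E'\rceil$ and in any case the difference is $\pi$-exceptional; and (b) that the source $F^e_*\pi_{\mu*}\omega_{\widetilde X_\mu}(\lceil p^eE''_\mu\rceil)$ still lands in $\tau(\ba_\mu^t)$ under your identification, which holds \emph{a fortiori} since $E''\le E'$ makes the source smaller. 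Right-continuity plays no role here.
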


The set $S$ in Theorem \ref{test vs mult} depends on $t$ in general. 
\begin{ex}\label{cusp example}
Let $R=\C[x, y]$ be the two-dimensional polynomial ring over $\C$ and $\ba=(x^2+y^3) \subset R$ be the principal ideal generated by $x^2+y^3$.
Then $R_{\Z}=\Z[x, y]$ and $\ba_{\Z}=(x^2+y^3) \subset R_{\Z}$ is a model of $R$ and $\ba$ over $\Z$, respectively. 
Suppose that $t>0$ is a real number and $p$ is a prime number. 
It is not hard to check that 
\[
\mathcal{J}(\ba^t)_p=\tau(\ba_p^t)=R_p \textup{ if and only if }
\left\{
\begin{array}{ll}
1/2>t & (p=2)\\
2/3>t & (p=3)\\
5/6>t & (p \equiv 1  \; \mathrm{mod} \; 3)\\
5/6-1/(6p)>t & (p \equiv 2  \; \mathrm{mod} \; 3).
\end{array}
\right.
\]
Hence, if $t<5/6$, then 
\[S_t=\left\{p \in \Spec \Z \, \bigg| \, p \ge 5 \textup{ and } \frac{5}{6}-\frac{1}{6p}>t\right\} \cup \{0\}\]
is a dense open subset of $\Spec \Z$, depending on $t$, such that $\mathcal{J}(\ba^t)_p=\tau(\ba_p^t)$ for all prime numbers $p \in S_t$. 
On the other hand, it is known that 
\[
S=\{p \in \Spec \Z \mid p \equiv 1 \; \mathrm{mod} \; 3\}
\]
is not a dense open subset of $\Spec \Z$ but a dense subset of closed points of $\Spec \Z$ such that $\mathcal{J}(\ba^{\lambda})_p=\tau(\ba_p^{\lambda})$ for all $p \in S$ and all real numbers $\lambda>0$. 
\end{ex}

As Example \ref{cusp example} suggests, it is expected that one can make $S$ in Theorem \ref{test vs mult} independent of $t$ by replacing the condition ``a dense open subset" with ``a dense subset of closed points." 
\begin{conj}[\cite{MS}]\label{test ideal conj}
Let $X$ be an $n$-dimensional nonsingular variety over an algebraically closed field $k$ of characteristic zero and $\ba$ be a nonzero ideal on $X$.
Given a model $X_A$ of $X$ over a finitely generated $\Z$-subalgebra $A$ of $k$, there exists a  dense subset of closed points $S \subset \Spec A$ such that $\tau(\ba_{\mu}^{\lambda})=\mathcal{J}(\ba^{\lambda})_{\mu}$ for all $\mu \in S$ and all $\lambda >0$. 
\end{conj}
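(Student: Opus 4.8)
The plan is to reduce Conjecture \ref{test ideal conj} to a Frobenius-bijectivity statement on auxiliary smooth projective varieties, in the spirit of Musta\c{t}\u{a} and Srinivas, and then to invoke the weak ordinarity conjecture (Conjecture \ref{MS conj}); as things stand this yields only a conditional proof, since Conjecture \ref{MS conj} is itself open, even for curves. To begin, I would localize and spread out: since test ideals commute with localization (Proposition \ref{test ideal basic}(6)) and so do multiplier ideals, it suffices to treat $X=\Spec R$ with $R$ local. Fix once and for all a log resolution $\pi:\widetilde{X}\to X$ of $(X,\ba)$ with $\ba\sO_{\widetilde{X}}=\sO_{\widetilde{X}}(-F)$, and spread $\pi$, $F$, and the relative canonical data out over a finitely generated $\Z$-subalgebra $A$ of $k$, enlarging $A$ so that $\pi_\mu$ is again a log resolution of $(X_\mu,\ba_\mu)$ for every closed point $\mu\in\Spec A$.

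Next I would compare the two descriptions of the ideals in play. On the one hand, $\mathcal{J}(\ba^\lambda)_\mu=\pi_{\mu*}\sO_{\widetilde{X}_\mu}\!\left(\lceil K_{\widetilde{X}_\mu}-\pi_\mu^*K_{X_\mu}-\lambda F_\mu\rceil\right)$ once $A$ is large; on the other, by Lemma \ref{lemma HT} together with local duality (Hara's method), $\tau(\ba_\mu^\lambda)$ is the image of an $e$-iterated trace-of-Frobenius map manufactured from $\pi_\mu$. The inclusion $\tau(\ba_\mu^\lambda)\subseteq\mathcal{J}(\ba^\lambda)_\mu$ holds for all $\mu$ once $A$ is large (Smith), so the content is the reverse inclusion, and the point is to obtain it uniformly in $\lambda$ over a dense set of closed points. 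The failure of the reverse inclusion can be packaged as the failure of surjectivity of a map of the shape
\[
F^{e}_{*}\,\pi_{\mu*}\omega_{\widetilde{X}_\mu}\!\left(\lceil p^{e}E_{\lambda,\mu}\rceil\right)\longrightarrow \pi_{\mu*}\omega_{\widetilde{X}_\mu}\!\left(\lceil E_{\lambda,\mu}\rceil\right),
\]
with $E_\lambda$ a suitable $\Q$-divisor built from the discrepancies and $\lambda F$ (made $\pi$-ample after a harmless perturbation), exactly of the type handled in Lemma \ref{surjectivity}.

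The heart of the argument is to show that this surjectivity can be arranged on one dense set of closed points $S$ that does not depend on $\lambda$. Here I would follow Musta\c{t}\u{a}--Srinivas: express the obstruction through the Frobenius action on the top cohomology $H^{\dim V}(V_\mu,\sO_{V_\mu})$ of a finite collection of smooth projective $A$-varieties $V$, obtained from compactifications of the exceptional divisors of $\pi$ and their intersections, chosen so that for every $\lambda$ the relevant round-ups are controlled by the canonical bundles of this fixed collection. Weak ordinarity (Conjecture \ref{MS conj}) makes all these Frobenius actions bijective over a single dense set $S$ of closed points; Serre vanishing together with cohomology-and-base-change then propagate bijectivity to every cohomology group entering the trace map, for all $e$, forcing the required surjectivity --- hence $\tau(\ba_\mu^\lambda)=\mathcal{J}(\ba^\lambda)_\mu$ for all $\lambda>0$ and all $\mu\in S$.

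The main obstacle is precisely this appeal to Conjecture \ref{MS conj}, which is open, so the route is at present conditional and an unconditional proof would seem to require genuinely new input. Even granting weak ordinarity, two points demand care: one must ensure the auxiliary varieties $V$ carry enough positivity (ampleness of the line bundle playing the role of $E_\mu$ in Lemma \ref{surjectivity}) for base change and Serre vanishing to apply uniformly in $\lambda$; and one must verify that the reduction from ``$S$ works for the fixed family $\{V\}$'' to ``$S$ works for every exponent $\lambda$'' does not smuggle in a hidden $\lambda$-dependence through the combinatorics of the round-ups $\lceil\,\cdot\,\rceil$. Handling the $\Q$-Gorenstein-but-not-Gorenstein case, where one works with the $\mathcal{L}_\Delta$-twisted Frobenius maps of \S\ref{F-adjunction section}, is extra bookkeeping rather than an essential difficulty.
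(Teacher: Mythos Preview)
The statement you are attempting to prove is labeled a \emph{conjecture} in the paper, and the paper offers no proof of it; immediately afterward it records (as a theorem due to Musta\c{t}\u{a}--Srinivas and Musta\c{t}\u{a}) only that Conjecture~\ref{test ideal conj} is \emph{equivalent} to the weak ordinarity conjecture (Conjecture~\ref{MS conj}). Your proposal is therefore not a proof of the conjecture at all---as you yourself concede, it is conditional on Conjecture~\ref{MS conj}, which remains open even for curves---but rather a sketch of one direction of that equivalence, namely that weak ordinarity implies the uniform comparison $\tau(\ba_\mu^\lambda)=\mathcal{J}(\ba^\lambda)_\mu$.

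Viewed as a sketch of that implication, your outline does follow the Musta\c{t}\u{a}--Srinivas strategy the paper cites: spread out a fixed log resolution, reduce the reverse inclusion to a trace-map surjectivity of the type in Lemma~\ref{surjectivity}, and control the Frobenius action uniformly in $\lambda$ via the top cohomology of a fixed finite family of auxiliary smooth projective varieties. That is the right shape for the argument behind ``Conjecture~\ref{MS conj} $\Rightarrow$ Conjecture~\ref{test ideal conj}''. But you should not present this as a proof of Conjecture~\ref{test ideal conj} itself; the honest framing is that you have outlined half of the cited equivalence theorem, not the open statement you were asked to address.
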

 
\begin{thm}[\cite{MS}, \cite{Mu}]
Conjecture $\ref{test ideal conj}$ is equivalent to Conjecture $\ref{MS conj}$. 
\end{thm}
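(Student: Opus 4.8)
The plan is to prove both implications through a dictionary between the comparison $\tau=\mathcal{J}$ on a fixed log resolution and the bijectivity of Frobenius on the top cohomology of the exceptional strata, the point being to upgrade Lemma \ref{surjectivity} so that the relevant dense subset of $\Spec A$ becomes a set of closed points valid for \emph{all} exponents at once. For the implication from Conjecture \ref{MS conj} to Conjecture \ref{test ideal conj}, fix a nonsingular $X$, a nonzero ideal $\mathfrak{a}$, and a log resolution $\pi\colon\widetilde X\to X$ of $(X,\mathfrak{a})$ with $\mathfrak{a}\sO_{\widetilde X}=\sO_{\widetilde X}(-F)$. The inclusion $\tau(\mathfrak{a}_\mu^\lambda)\subseteq\mathcal{J}(\mathfrak{a}^\lambda)_\mu$ is one half of Theorem \ref{test vs mult}, and it holds on a dense open subset of $\Spec A$ that may be chosen independently of $\lambda$. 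The content is the reverse inclusion, and by Lemma \ref{lemma HT} together with Grothendieck--Serre duality it is equivalent to the surjectivity, for all $e\in\N$, of the iterated trace maps
\[
F^e_*\pi_{\mu *}\sO_{\widetilde X_\mu}\!\left(\lceil K_{\widetilde X_\mu}-\pi_\mu^*K_{X_\mu}-p^e\lambda F_\mu\rceil\right)\longrightarrow\pi_{\mu *}\sO_{\widetilde X_\mu}\!\left(\lceil K_{\widetilde X_\mu}-\pi_\mu^*K_{X_\mu}-\lambda F_\mu\rceil\right).
\]
Lemma \ref{surjectivity} yields exactly this surjectivity, but only over a dense open set $S_\lambda$ depending on $\lambda$. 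To make it uniform I would filter by the components of $\mathrm{Exc}(\pi)$ and restrict one divisor at a time; the associated long exact sequences identify the obstruction with the Frobenius action on the top cohomology $H^{\mathrm{top}}(\,\cdot\,,\sO)$ of suitable smooth projective modifications of the strata of $\mathrm{Exc}(\pi)\cup\mathrm{Supp}(F)$, and these are finitely many varieties depending only on $\pi$, not on $\lambda$ or $e$. Applying Conjecture \ref{MS conj} to each produces a single dense set of closed points over which all of these Frobenius actions are bijective, and there the displayed maps are surjective for every $e$ and every $\lambda$.

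For the converse, I would realize the weak-ordinarity statement for a given smooth projective $V$ of dimension $n$ as an instance of the comparison conjecture. Embed $V$ into $\mathbb{P}^N$ by a sufficiently positive linear system, let $X=\mathbb{A}^{N+1}$, and let $\mathfrak{a}\subset\sO_X$ be the homogeneous ideal of the affine cone over $V$; blowing up the origin is already a log resolution whose exceptional locus contains a copy of $V$. Computing $\mathcal{J}(\mathfrak{a}^\lambda)$ and $\tau(\mathfrak{a}_\mu^\lambda)$ for $\lambda$ in a window around the log canonical threshold, and using Serre vanishing to kill the twisted cohomology of $V$ in positive degrees and of the ambient $\mathbb{P}^N$, the discrepancy between the two is governed precisely by the Frobenius action on $H^n(V_\mu,\sO_{V_\mu})$; in particular $\tau(\mathfrak{a}_\mu^\lambda)=\mathcal{J}(\mathfrak{a}^\lambda)_\mu$ for all $\lambda$ forces that action to be bijective, so a dense set of closed points at which the comparison holds uniformly is a dense set at which $V$ is weakly ordinary. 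The archetype is a smooth degree-$(n+1)$ hypersurface $V\subset\mathbb{P}^n$: there $\mathfrak{a}=(f)$ is principal, the cone $R=k[x_0,\dots,x_n]/(f)$ is quasi-Gorenstein, and Fedder's criterion identifies $F$-purity of $R$ with nonvanishing of the coefficient of $(x_0\cdots x_n)^{p-1}$ in $f^{p-1}$, i.e.\ with classical ordinarity of $V$.

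I expect the main obstacle to be precisely the uniformity in the exponent $\lambda$: Lemma \ref{surjectivity} and Theorem \ref{test vs mult} only supply a $\lambda$-dependent dense open set, and replacing it by a single dense set of closed points working for every $\lambda$ cannot be achieved by Serre-vanishing bookkeeping alone — this is exactly what forces the use of \emph{bijectivity}, rather than mere injectivity, of Frobenius on the top cohomology of the auxiliary projective varieties, which is why the weak ordinarity conjecture, and nothing weaker, is the right hypothesis. A secondary technical point is arranging the cone construction and the exceptional strata to be smooth projective so that Conjecture \ref{MS conj} applies to them verbatim; this is handled by choosing the embedding generically and performing finitely many further blow-ups, all harmless after enlarging $A$, and by checking that every dense open (or dense closed) set produced descends to $\Spec A$ for the models actually in play.
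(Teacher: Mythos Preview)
The paper does not supply its own proof of this theorem; it is stated with citations to \cite{MS} and \cite{Mu}, so there is nothing in the text to compare against beyond those references. Your outline tracks those papers closely. For Conjecture~\ref{MS conj} $\Rightarrow$ Conjecture~\ref{test ideal conj}, the plan of fixing a single log resolution, filtering by the strata of the simple normal crossing divisor, and reducing the obstruction to Frobenius bijectivity on the top cohomology of finitely many smooth projective varieties independent of $\lambda$ and $e$ is exactly the Musta\c{t}\u{a}--Srinivas argument. For the converse, the cone construction is the idea of \cite{Mu}, and your hypersurface archetype is the clean case. One point to tighten: when $V\hookrightarrow\mathbb{P}^N$ is embedded by a very positive linear system, $V$ is not a hypersurface, and the single blowup of the origin of $\mathbb{A}^{N+1}$ does \emph{not} make $\mathfrak{a}\,\sO_{\widetilde X}$ invertible, so it is not yet a log resolution of the pair. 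Your remark about ``finitely many further blow-ups'' can repair this, but then one must check that the cohomological obstruction is still governed by $H^n(V_\mu,\sO_{V_\mu})$ rather than by the cohomology of the new exceptional components; the published argument handles this by a preliminary reduction on $V$ so that one may work with a principal ideal (as in your closing example) from the outset.
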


Summing up, we have seen that the following implications holds for the conjectures discussed in this article. 
\[
\xymatrix{
\textup{Conjecture }\ref{DB conj} \ar@{<=>}[r] & \textup{Conjecture }\ref{MS conj} \ar@{<=>}[r]  \ar@{=>}[dd] \ar@{=>}[dl] & \textup{Conjecture }\ref{test ideal conj}  \\
\textup{Conjecture }\ref{full correspondence} \ar@{=>}[d] & & \\
\textup{Conjecture ${\rm A}_{n+1}$}  \ar@{<=>}[r]  & \textup{Conjecture ${\rm B}_{n}$} &
}
\]


\subsection{Asymptotic test ideals and their applications}
In this subsection, we will explain the theory of asymptotic test ideals and their applications. 

\subsubsection{Application to symbolic powers}\label{symbolic power}
Suppose that $R$ is an $F$-finite integral domain. 
We say that $\ba_{\bullet}=\{\ba_m\}_{m \in \N}$ is a \textit{graded family of ideals} in $R$ if the $\ba_m$ are ideals of $R$ such that $\ba_m \cdot \ba_n \subseteq \ba_{m+n}$ for all $m, n \in \N$. 
Let $\ba_{\bullet}=\{\ba_m\}_{m \in \N}$ be a graded family of ideals in $R$ and $t>0$ be a real number. 
It follows from Proposition \ref{test ideal basic} (3) that 
\[\tau(\ba_m^{t/m})=\tau((\ba_{m}^n)^{t/(mn)}) \subseteq \tau(\ba_{mn}^{{t}/{(mn)}})\]
for all $m, n \in \N$. 
Since $R$ is Noetherian (all rings are Noetherian throughout this article), by the above inclusion, we see that the family of test ideals $\{\tau(\ba_m^{t/m})\}_{m \in \N}$ has a unique maximal element with respect to inclusion. 
This maximal element is called the \textit{asymptotic test ideal} of $(R, \ba_{\bullet}^t)$ and denoted by $\tau(\ba_{\bullet}^t)$.

We can define an analogous notion for several graded families of ideals. 
Let $\ba_{\bullet}=\{\ba_m\}_{m \in \N}$, $\bb_{\bullet}=\{\bb_m\}_{m \in \N}$ be graded families of ideals in $R$ and $s, t>0$ be real numbers. 
The asymptotic test ideal $\tau(\ba_{\bullet}^s \bb_{\bullet}^t)$ is the unique maximal element among the family of ideals $\{\tau(\ba_m^{s/m} \bb_m^{t/m})\}_{m \in \N}$. 

We list some properties of asymptotic test ideals that easily follow from Proposition \ref{test ideal basic} and Theorems \ref{subadditivity}. 
\begin{prop}\label{asymptotic basic}
\begin{enumerate}
\item $\tau(\ba_{\bullet}^t) \ba_m \subseteq \tau(\ba_{\bullet}^{m+t})$ for every $m \in \N$.  
\item If $s<t$, then $\tau(\ba_{\bullet}^s) \supseteq \tau(\ba_{\bullet}^t)$. 
\item There exists some $\epsilon>0$, depending on $t$, such that $\tau(\ba_{\bullet}^s)=\tau(\ba_{\bullet}^{t})$ for all $s \in[t, t+\epsilon]$. 
\item Suppose that $R$ is essentially of finite type over a perfect field $k$, and denote by $\mathfrak{J}(R/k)$ the Jacobian ideal of $R$ over $k$. 
Then for all $m \in \N$, one has 
\[
\mathfrak{J}(R/k)^{m-1}\tau(\ba_{\bullet}^{mt}) \subseteq \tau(\ba_{\bullet}^t)^m.
\]
\end{enumerate}
\end{prop}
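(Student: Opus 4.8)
The plan is to reduce all four assertions to facts about ordinary test ideals proved earlier, using one structural observation: for a graded family $\bb_\bullet$ and $u>0$, the family $\{\tau(\bb_m^{u/m})\}_{m}$ is upward directed — indeed $\bb_m^k\subseteq\bb_{mk}$ forces $\tau(\bb_m^{u/m})=\tau((\bb_m^k)^{u/(mk)})\subseteq\tau(\bb_{mk}^{u/(mk)})$ by Proposition \ref{test ideal basic}(2),(3) — so, $R$ being Noetherian, the maximal element $\tau(\bb_\bullet^u)$ is attained and moreover $\tau(\bb_\bullet^u)=\tau(\bb_N^{u/N})$ for every $N$ divisible by some fixed $N_0=N_0(u)$. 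Fix once and for all a test element $c\in R^{\circ}$ (which exists since $R$ is $F$-finite; see Lemma \ref{test vs jacobi}), so that Lemma \ref{lemma HT} is available at every such index $N$.

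Parts (2) and (3) follow immediately. For (2), pick $N$ divisible by $N_0(t)$; then $\tau(\ba_\bullet^s)\supseteq\tau(\ba_N^{s/N})\supseteq\tau(\ba_N^{t/N})=\tau(\ba_\bullet^t)$, the middle inclusion being Proposition \ref{test ideal basic}(3) applied with $s/N<t/N$. For (3), write $\tau(\ba_\bullet^t)=\tau(\ba_N^{t/N})$ and apply Proposition \ref{test ideal basic}(4) to the ideal $\ba_N$ at the exponent $t/N$: there is $\varepsilon'>0$ with $\tau(\ba_N^{s/N})=\tau(\ba_N^{t/N})$ for all $s\in[t,t+N\varepsilon']$. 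For such $s$ one has $\tau(\ba_\bullet^s)\supseteq\tau(\ba_N^{s/N})=\tau(\ba_\bullet^t)$, while $\tau(\ba_\bullet^s)\subseteq\tau(\ba_\bullet^t)$ by (2); so $\varepsilon:=N\varepsilon'$ works.

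For (1), take $N$ divisible by $N_0(t)$ and expand $\tau(\ba_\bullet^t)=\tau(\ba_N^{t/N})=\sum_{e\ge 0}\sum_{\phi}\phi(F^e_*(c\,\ba_N^{\lceil (t/N)p^e\rceil}))$ by Lemma \ref{lemma HT}. Multiplying by $\ba_m$ and pushing the factor through each $\phi\circ F^e_*$ via $b\cdot\phi(F^e_*y)=\phi(F^e_*(b^{p^e}y))$, together with $\ba_m^{[p^e]}\subseteq\ba_m^{p^e}$, gives
\[
\tau(\ba_\bullet^t)\,\ba_m\ \subseteq\ \sum_{e\ge 0}\sum_{\phi}\phi\!\left(F^e_*\!\left(c\,\ba_m^{p^e}\,\ba_N^{\lceil (t/N)p^e\rceil}\right)\right).
\]
By the graded-family relation $\ba_a\ba_b\subseteq\ba_{a+b}$, one has $\ba_m^{p^e}\ba_N^{\lceil (t/N)p^e\rceil}\subseteq\ba_{mp^e}\,\ba_{N\lceil (t/N)p^e\rceil}\subseteq\ba_{L_e}$ with $L_e:=mp^e+N\lceil (t/N)p^e\rceil\ge (m+t)p^e$, this last inequality being exact because $L_e$ is an integer $\ge(m+t)p^e$, hence $\ge\lceil(m+t)p^e\rceil$. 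Now for each fixed $e$, using Lemma \ref{lemma HT} for $\ba_{L_e}$ at the exponent $(m+t)/L_e$ and noting that $0<(m+t)p^e/L_e\le 1$ forces $\lceil((m+t)/L_e)p^e\rceil=1$, the $e$-th contribution $\sum_\phi\phi(F^e_*(c\,\ba_{L_e}))$ is contained in $\tau(\ba_{L_e}^{(m+t)/L_e})\subseteq\tau(\ba_\bullet^{m+t})$. Summing over $e$ yields $\tau(\ba_\bullet^t)\ba_m\subseteq\tau(\ba_\bullet^{m+t})$. Choosing the per-$e$ index to be exactly $L_e$ is what lets this argument work without assuming that $m\mapsto\ba_m$ is monotone, which a graded family need not be.

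For (4), choose $N$ divisible by both $N_0(t)$ and $N_0(mt)$, so $\tau(\ba_\bullet^{mt})=\tau(\ba_N^{mt/N})=\tau((\ba_N^{m})^{t/N})$ by Proposition \ref{test ideal basic}(3). Writing $\ba_N^{m}=\ba_N\cdot\ba_N^{m-1}$ and invoking Theorem \ref{subadditivity} in the subadditivity form $\mathfrak{J}(R/k)\,\tau((\mathfrak{c}\,\mathfrak{d})^{u})\subseteq\tau(\mathfrak{c}^{u})\tau(\mathfrak{d}^{u})$ — which reduces to Theorem \ref{subadditivity} via $\tau((\mathfrak{c}\,\mathfrak{d})^u)=\tau(\mathfrak{c}^u\mathfrak{d}^u)$ — we peel off one factor $\ba_N$ at a time, each step costing one factor $\mathfrak{J}(R/k)$ and using $\tau((\ba_N^{k})^{t/N})=\tau(\ba_N^{kt/N})$ to continue the recursion; after $m-1$ steps, $\mathfrak{J}(R/k)^{m-1}\tau((\ba_N^{m})^{t/N})\subseteq\tau(\ba_N^{t/N})^{m}=\tau(\ba_\bullet^{t})^{m}$, which is the claim. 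The main obstacle in the whole argument is part (1): one must merge a $p^e$-th-power ideal coming from $\ba_N$ with $\ba_m^{p^e}$ using only $\ba_a\ba_b\subseteq\ba_{a+b}$ and land back inside $\tau(\ba_\bullet^{m+t})$ with no monotonicity available; the rest is bookkeeping — checking that Theorem \ref{subadditivity} applies to products of ordinary ideals, and keeping every ceiling comparison exact, which is automatic since $p^e$ is an integer.
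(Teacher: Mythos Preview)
Your argument is correct and is precisely the fleshing-out the paper intends: it merely records that the four statements ``easily follow from Proposition~\ref{test ideal basic} and Theorem~\ref{subadditivity},'' and your reductions for (2), (3), and (4) are the standard ones. One remark on (4): you invoke subadditivity in the form $\mathfrak{J}(R/k)\,\tau(\mathfrak{c}^{u}\mathfrak{d}^{u})\subseteq\tau(\mathfrak{c}^{u})\tau(\mathfrak{d}^{u})$, which is indeed the content of \cite{Ta4} and is the direction needed for (4); note however that the displayed inclusion in the paper's statement of Theorem~\ref{subadditivity} is written in the reverse direction, which appears to be a typographical slip (the stated inclusion fails already for $R$ regular and $\ba=\bb$ principal).

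Your proof of (1) is slightly more hands-on than the one-line derivation the paper has in mind, but it is fully correct and in fact more robust. The shortcut implicit in the paper is to use the easy analogue of Proposition~\ref{test ideal basic}(1), namely $\tau(\mathfrak{b}^{s})\,\mathfrak{c}\subseteq\tau(\mathfrak{b}^{s}\mathfrak{c})$ (immediate from Lemma~\ref{lemma HT}), together with $\ba_m^{N/m}\subseteq\ba_N$ for $N$ a multiple of $m$: choosing $N$ divisible by $m$ and computing both sides at level $N$, one gets $\tau(\ba_{\bullet}^{t})\ba_m=\tau(\ba_N^{t/N})\ba_m\subseteq\tau(\ba_N^{t/N}\ba_m)\subseteq\tau(\ba_N^{t/N}\ba_N^{m/N})=\tau(\ba_N^{(m+t)/N})\subseteq\tau(\ba_{\bullet}^{m+t})$, where the middle containment uses $\ba_m^{p^e}\subseteq\ba_N^{\lfloor mp^e/N\rfloor}$ together with a ceiling comparison. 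Your route via the variable index $L_e$ sidesteps that ceiling bookkeeping entirely and never requires $N$ to be divisible by $m$, which is a nice feature.
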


As one of the applications of asymptotic test ideals, we obtain a uniform bound for the growth of symbolic powers of ideals.  
First we recall the definition of symbolic powers of ideals. 
Let $\ba$ be an ideal of $R$, and put $W=R \setminus \bigcup_{P \in \mathrm{Ass}(R/\ba)} P$ where $P$ runs though the associated primes of $\ba$. 
Then for each $n \in \N$, the $n$-th symbolic power $\ba^{(n)}$ of $\ba$ is the contraction 
 $\ba^nR_W \cap R$, where $R_W$ denotes the localization of $R$ with respect to the multiplicative set $W$. 
 In particular, if $P$ is a prime ideal of $R$, then $P^{(n)}=P^nR_P \cap R$. 
 Note that the collection of symbolic powers $\ba_{\bullet}=\{\ba^{(m)}\}_{m \in \N}$ is a graded family of ideals in $R$. 

Since it is obvious from definition that $\ba^n \subseteq \ba^{(n)}$, it is natural to ask how large the $\ba^{(n)}$ are when compared with the $\ba^n$. 
The following theorem gives an answer to this question. 

\begin{thm}[\cite{Ta4}. cf.~\cite{ELS}, \cite{HH2}, \cite{Ha3}]\label{symbolic vs ordinary}
Let $R$ be an integral domain essentially of finite type over a perfect field $k$ of characteristic $p>0$ $($respectively, of characteristic zero$)$, and denote by $\mathfrak{J}(R/k)$ the Jacobian ideal of $R$ over $k$. 
Let $\ba$ be a nonzero ideal of $R$ and $h$ denote the largest analytic spread\,\footnote{If $(A, \m)$ is a local ring and $I \subseteq \m$ is an ideal of $A$, then the \textit{analytic spread} $\ell(I)$ of $I$ is defined to be the Krull dimension of the ring $A/\m \otimes_A (\bigoplus_{n \ge 0} I^n/I^{n+1})$. In general, $\mathrm{ht}\; I \le \ell(I) \le \dim A$.}of $\ba R_P$ as $P$ runs through the associated primes of $\ba$. 
Then for every integer $m \ge 0$ and every $n \in \N$, we have 
$\mathfrak{J}(R/k)^n \ba^{(hn+mn)} \subseteq (\ba^{(m+1)})^n$. 
In particular, $\mathfrak{J}(R/k)^n \ba^{(hn)} \subseteq \ba^n$. 
When $R$ is strongly $F$-regular $($respectively, $\Spec R$ has only log terminal singularities$)$, 
we can decrease the exponent on the Jacobian ideal $\mathfrak{J}(R/k)$ by one, in other wards, $\mathfrak{J}(R/k)^{n-1} \ba^{(hn+mn)} \subseteq (\ba^{(m+1)})^n$. 
\end{thm}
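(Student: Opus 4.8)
The plan is to adapt the Ein--Lazarsfeld--Smith argument, using asymptotic test ideals in place of asymptotic multiplier ideals and carrying the Jacobian ideal $\mathfrak{J}(R/k)$ as a correction term throughout. Write $\ba_\bullet=\{\ba^{(j)}\}_{j\in\N}$ for the graded family of symbolic powers of $\ba$, fix integers $m\ge 0$ and $n\ge 1$, and put $N=hn+mn=n(h+m)$. I would prove the positive-characteristic statement by producing the chain of inclusions
\[
\mathfrak{J}(R/k)^{n}\,\ba^{(N)}
\;\subseteq\; \mathfrak{J}(R/k)^{n-1}\,\tau(\ba_\bullet^{\,n(h+m)})
\;\subseteq\; \tau(\ba_\bullet^{\,h+m})^{n}
\;\subseteq\; (\ba^{(m+1)})^{n},
\]
and then deducing the characteristic-zero case by reduction to characteristic $p\gg 0$.

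The first inclusion is the combination of $\mathfrak{J}(R/k)\subseteq\tau(R)$ (Example \ref{test ideal example}(1)) with Proposition \ref{asymptotic basic}(1) in the form $\tau(R)\,\ba^{(N)}\subseteq\tau(\ba_\bullet^{N})$ (the case of exponent $0$). The second inclusion is precisely the subadditivity property of asymptotic test ideals (Proposition \ref{asymptotic basic}(4)), applied with multiplicity $n$ and exponent $h+m$. The substantive step is the third inclusion, a Brian\c con--Skoda-type estimate, which I would check locally at each associated prime $P$ of $\ba^{(m+1)}$. Any such $P$ is contained in some associated prime $Q$ of $\ba$, whence the analytic spread satisfies $\ell(\ba R_P)\le\ell(\ba R_Q)\le h$, and the symbolic powers localize to ordinary powers, $\ba^{(j)}R_P=(\ba R_P)^{j}$ for all $j$. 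Using this together with Proposition \ref{test ideal basic}(3),(6) one gets $\tau(\ba_\bullet^{h+m})R_P=\tau((\ba R_P)^{h+m})$; replacing $\ba R_P$ by a reduction $J$ generated by at most $h$ elements (Proposition \ref{test ideal basic}(2)) and iterating Theorem \ref{skoda} gives $\tau((\ba R_P)^{h+m})=\tau(J^{h-1})\,J^{m+1}\subseteq J^{m+1}\subseteq(\ba R_P)^{m+1}=\ba^{(m+1)}R_P$, which is exactly what is needed.

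For the sharper bounds: if $R$ is strongly $F$-regular then $\tau(R)=R$ by Proposition \ref{test ideal basic}(5), so the first inclusion holds without the factor $\mathfrak{J}(R/k)$, and the same chain yields $\mathfrak{J}(R/k)^{n-1}\ba^{(hn+mn)}\subseteq(\ba^{(m+1)})^{n}$. In the characteristic-zero case one spreads $R$, $\ba$ and the relevant ideals out over a finitely generated $\Z$-subalgebra $A$ of $k$; the Jacobian ideal, the symbolic powers and the analytic spreads all specialize correctly, so the characteristic-$p$ inclusion holds in $R_\mu$ for $\mu$ in a dense open subset of $\Spec A$ and transfers back to $R$. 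When $\Spec R$ has only log terminal singularities, $R_\mu$ is strongly $F$-regular for $p\gg 0$ by Theorem \ref{F-regular vs klt}, which gives the improved exponent $n-1$.

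I expect the main difficulty to lie in the Jacobian bookkeeping: one power of $\mathfrak{J}(R/k)$ is consumed in promoting $\ba^{(N)}$ into $\tau(\ba_\bullet^{N})$ and $n-1$ further powers in subadditivity, and it is exactly this accounting --- together with pinning down the correct shift $h+m$ and checking the $m=0$ edge case that recovers $\mathfrak{J}(R/k)^{n}\ba^{(hn)}\subseteq\ba^{n}$ --- that must be arranged to land at the stated exponents. The deep input behind the whole scheme, namely subadditivity of test ideals on a singular variety (Proposition \ref{asymptotic basic}(4) and Theorem \ref{subadditivity}, whose proof rests on the trace of Frobenius and the different), is here taken as given; the only other delicate point is ensuring, in the Brian\c con--Skoda step, that the associated primes of the symbolic power $\ba^{(m+1)}$ are dominated by those of $\ba$, so that the analytic-spread hypothesis survives localization.
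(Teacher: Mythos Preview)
Your proposal is correct and follows essentially the same strategy as the paper's sketch: the paper carries out exactly your chain $\mathfrak{J}(R/k)^n\ba^{(hn)}\subseteq\mathfrak{J}(R/k)^{n-1}\tau(R)\ba^{(hn)}\subseteq\mathfrak{J}(R/k)^{n-1}\tau(\ba_\bullet^{hn})\subseteq\tau(\ba_\bullet^h)^n\subseteq\ba^n$ in the special case $\ba=P$ prime and $m=0$, and your write-up is the natural extension to general $\ba$ and $m$. The only minor difference is that the paper localizes directly at the associated prime of $\ba$ (where $\ell(\ba R_P)\le h$ holds by definition of $h$), whereas you localize at associated primes of $\ba^{(m+1)}$ and then invoke the inequality $\ell(\ba R_P)\le\ell(\ba R_Q)$ for $P\subseteq Q$; both routes work.
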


We give a sketch of the proof in the case of positive characteristic, to which the characteristic zero case can be reduced using the techniques from \S\ref{reduction}.
Assume that $\ba$ is a prime ideal $P$ of $R$ and $m=0$ for simplicity. 
We assume in addition that the residue field of the local ring $R_P$ is infinite. 
Then $h$ is nothing but the analytic spread of the maximal ideal $PR_P$ and it is known by the general theory of integral closure that there exists a proper ideal $J$ of $R_P$ generated by $h$ elements such that $\overline{J}=PR_P$. 
Let $P_{\bullet}=\{P^{(m)}\}_{m \in \N}$ be the graded family of symbolic powers of $P$. 
By the definition of asymptotic test ideals, $\tau(P_{\bullet}^{h})=\tau((P^{(l)})^{h/l})$ for sufficiently divisible $l \in \N$.  
It follows from Proposition \ref{test ideal basic} (2), (3) and (6) that 
\[\tau((P^{(l)})^{h/l})R_P=\tau((P^{(l)}R_P)^{h/l})=\tau((P^l R_P)^{h/l})=\tau((PR_P)^{h})=\tau(J^{h}).
\]
Applying Theorem \ref{skoda} to this, one has $\tau(P_{\bullet}^{h})R_P=\tau(J^h)=\tau(J^{h-1})J \subset PR_P$, which implies that $\tau(P_{\bullet}^{h}) \subset P$. 
The proof of Theorem \ref{symbolic vs ordinary} now follows from a combination of Example \ref{test ideal example} (1) and Proposition \ref{asymptotic basic} (1), (4): 
\[
\mathfrak{J}(R/k)^n P^{(hn)} \subseteq \mathfrak{J}(R/k)^{n-1}\tau(R) P^{(hn)}  \subseteq \mathfrak{J}(R/k)^{n-1} \tau(P^{hn}_{\bullet}) \subseteq \tau(P^{h}_{\bullet})^n \subseteq P^n.
 \]


\subsubsection{Application to asymptotic base loci}
As another application of asymptotic test ideals, we will explain a description of asymptotic base loci in positive characteristic due to Mircea Musta\c{t}\u{a} \cite{Mu2}. 

Let $X$ be an $F$-finite normal integral scheme and $\Delta$ be an effective $\Q$-divisor on $X$. 
Let $\ba$ be a nonzero ideal sheaf on $X$ and $t>0$ be a real number.  
As a variant of the test ideal $\tau(\ba^t)$ defined in Definition \ref{test ideal def}, we define the test ideal $\tau((X, \Delta), \ba^t)$ associated to the triple $(X, \Delta, \ba^t)$ as follows: 
If $X=\Spec R$ is an affine scheme, then $\tau((X, \Delta), \ba^t)$ is the unique smallest nonzero ideal $J$ of $\sO_X$ such that 
\[(\phi^{(e)}\circ F^e_*i)(F^e_*(J\ba^{\lceil t(p^e-1) \rceil})) \subseteq J\]
for all $e \in \N$ and all $\phi^{(e)} \in \Hom_{\sO_X}(F^{e}_*\sO_X(\lceil (p^{e}-1)\Delta \rceil), \sO_X)$, 
where $i:\sO_X \to \sO_X(\lceil (p^{e}-1)\Delta \rceil)$ is a natural inclusion. 
In the general case, $\tau((X, \Delta), \ba^t)$ is the ideal sheaf on $X$ obtained by gluing the constructions on affine charts. 
Note that the test ideal $\tau((X, \Delta), \ba^t)$ coincides with the test ideal $\tau(\ba^t)$ in Definition \ref{test ideal def} in the case where $X=\Spec R$ is affine and $\Delta=0$. 
Thus, we denote the ideal sheaf $\tau((X, \Delta), \ba^t)$ simply by $\tau(X,\ba^t)$ when $\Delta=0$.  

We define the trace map $\mathrm{Tr}_F^e:F^e_*\omega_X \to \omega_X$ of the $e$-times iterated Frobenius on $X$ as in \S\ref{trace section}. Namely, $\mathrm{Tr}_F^e$ is the $\omega_X$-dual of $F^e:\sO_X \to F^e_*\sO_X$. 
When $X$ is regular (nonsingular), there is a simple characterization of test ideals in terms of the trace maps. 

\begin{prop}[\cite{Mu2}, cf.~\cite{BMS}]\label{smooth test ideal}
Suppose that $X$ is a nonsingular projective variety over a perfect field of characteristic $p>0$. 
For an ideal sheaf $\bb$ on $X$ and an $e \in \N$, the ideal sheaf $\bb^{[1/p^e]}$ on $X$ is defined by $\mathrm{Tr}_F^e(F^e_*(\bb \cdot \omega_X))=\bb^{[1/p^e]} \cdot \omega_X$.  
Then
\[
\tau(X, \ba^t)=\bigcup_{e \in \N} (\ba^{\lceil t p^e \rceil})^{[1/p^e]}=(\ba^{\lceil t q \rceil})^{[1/q]}
\]
for a sufficiently large power $q$ of $p$. 
\end{prop}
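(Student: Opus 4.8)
The plan is to deduce the formula from the explicit description of test ideals in Lemma~\ref{lemma HT}, after carrying out the standard reduction of the right-hand side of that description to the single map $\mathrm{Tr}^e_F$. Since the formation of $\tau(X,\ba^t)$ is Zariski-local (Proposition~\ref{test ideal basic}(6)) and projectivity plays no role, I would work on an affine chart $\Spec R$ with $R$ an $F$-finite regular ring, where $\tau(X,\ba^t)$ restricts to $\tau(\ba^t)$ in the sense of Definition~\ref{test ideal def}. On such $R$ the module $\Hom_R(F^e_*R,R)$ is free of rank one over $F^e_*R$: indeed $F^e\colon X\to X$ is finite and flat by Kunz's theorem (Theorem~\ref{Kunz}), $X$ is Gorenstein, so Grothendieck--Serre duality identifies $\mathcal{H}\mathrm{om}_{\sO_X}(F^e_*\sO_X,\sO_X)$ with $F^e_*\omega_{F^e}$, the pushforward of an invertible sheaf, hence an invertible $F^e_*\sO_X$-module whose generator, after a local trivialization of $\omega_X$, is the trace map $\mathrm{Tr}^e_F$. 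Consequently, for any ideal $\bb$ and any $e$, one has $\sum_{\phi^{(e)}}\phi^{(e)}(F^e_*\bb)=\mathrm{Tr}^e_F(F^e_*\bb)=\bb^{[1/p^e]}$ in the notation of the statement, the line-bundle twist being invisible since we only track ideals. Finally, since $F$-finite regular rings are strongly $F$-regular (Proposition~\ref{regular => F-regular}), the element $c=1$ is a test element by Lemma~\ref{test vs jacobi}(1), so Lemma~\ref{lemma HT} applies with $c=1$ and gives
\[
\tau(X,\ba^t)=\sum_{e\ge 0}\sum_{\phi^{(e)}}\phi^{(e)}\bigl(F^e_*\ba^{\lceil tp^e\rceil}\bigr)=\sum_{e\ge 0}\bigl(\ba^{\lceil tp^e\rceil}\bigr)^{[1/p^e]}.
\]

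Next I would show that the ideals $J_e:=\bigl(\ba^{\lceil tp^e\rceil}\bigr)^{[1/p^e]}$ form an ascending chain, so that the displayed sum equals $\bigcup_e J_e$. The two inputs, both valid because $F^e_*R$ is a locally free $R$-module, are the monotonicity $\bb_1\subseteq\bb_2\Rightarrow\bb_1^{[1/p^e]}\subseteq\bb_2^{[1/p^e]}$ and the adjunction $\bb\subseteq J^{[p^e]}\Longleftrightarrow\bb^{[1/p^e]}\subseteq J$ (which one checks directly by expanding an element of $F^e_*\bb$ in a dual basis of the free module $F^e_*R$); in particular $(-)^{[1/p]}$ inverts $(-)^{[p]}$ on $R$. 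Applying the adjunction with $J=J_{e+1}$ to $\bb=\ba^{\lceil tp^{e+1}\rceil}$ gives $\ba^{\lceil tp^{e+1}\rceil}\subseteq J_{e+1}^{[p^{e+1}]}$. Combining this with the elementary inclusions $\bigl(\ba^{\lceil tp^e\rceil}\bigr)^{[p]}\subseteq\ba^{p\lceil tp^e\rceil}\subseteq\ba^{\lceil tp^{e+1}\rceil}$ — the last one because $p\lceil tp^e\rceil\ge\lceil tp^{e+1}\rceil$ — yields $\bigl(\ba^{\lceil tp^e\rceil}\bigr)^{[p]}\subseteq J_{e+1}^{[p^{e+1}]}=\bigl(J_{e+1}^{[p^e]}\bigr)^{[p]}$; applying $(-)^{[1/p]}$ gives $\ba^{\lceil tp^e\rceil}\subseteq J_{e+1}^{[p^e]}$, and one more use of the adjunction gives $J_e\subseteq J_{e+1}$.

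It then follows that $\tau(X,\ba^t)=\bigcup_{e\in\N}J_e$, and since $R$ is Noetherian this ascending chain stabilizes: there is an $e_0$ with $J_e=J_{e_0}$ for all $e\ge e_0$. Hence $\tau(X,\ba^t)=\bigl(\ba^{\lceil tq\rceil}\bigr)^{[1/q]}$ for $q=p^{e_0}$, and likewise for every larger power of $p$, which is the assertion. The routine point that the local constructions glue to the ideal sheaf $\tau(X,\ba^t)$ I would leave to the reader.

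I expect the only genuine obstacle to be the first step: verifying cleanly that $\Hom_R(F^e_*R,R)$ is free of rank one over $F^e_*R$ with $\mathrm{Tr}^e_F$ (up to a local trivialization of $\omega_X$) a generator — equivalently, that the a priori large collection of maps $\phi^{(e)}$ appearing in Definition~\ref{test ideal def} and Lemma~\ref{lemma HT} collapses to the single map $\mathrm{Tr}^e_F$. This is precisely where nonsingularity of $X$ enters, through the finite flatness of Frobenius and Grothendieck duality; once it is in place, the remainder is formal manipulation with the operations $(-)^{[p^e]}$ and $(-)^{[1/p^e]}$ on a regular ring.
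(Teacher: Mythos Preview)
The paper does not give a proof of this proposition; it simply cites \cite{Mu2} and \cite{BMS}. Your argument is correct and is essentially the standard one found in those references: reduce to the affine regular case, use that $\Hom_R(F^e_*R,R)$ is a free $F^e_*R$-module generated by $\mathrm{Tr}_F^e$ (via Kunz's theorem and duality) to collapse the sum in Lemma~\ref{lemma HT} with $c=1$, and then verify that the ideals $(\ba^{\lceil tp^e\rceil})^{[1/p^e]}$ form an ascending chain which stabilizes by Noetherianity. One minor simplification: once you know $(I^{[p]})^{[1/p^{e+1}]}=I^{[1/p^e]}$ (which follows from $\mathrm{Tr}_F^{e+1}=\mathrm{Tr}_F^e\circ F^e_*\mathrm{Tr}_F$ and your observation that $(-)^{[1/p]}$ inverts $(-)^{[p]}$), the chain inclusion $J_e\subseteq J_{e+1}$ follows in one line from $(\ba^{\lceil tp^e\rceil})^{[p]}\subseteq\ba^{\lceil tp^{e+1}\rceil}$ by applying $(-)^{[1/p^{e+1}]}$, without the detour through the adjunction.
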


Let $X$ be a normal projective variety over a perfect field of characteristic $p>0$ and $\Delta$ be an effective $\Q$-divisor on $X$ such that $K_X+\Delta$ is $\Q$-Cartier. 
Let $D$ be a $\Q$-Cartier $\Q$-divisor on $X$ such that $rD$ is Cartier and $H^0(X, \sO_X(rD)) \ne 0$ for some $r \in \N$. 
We fix such an $r \in \N$, and let $\bb_m \subseteq \sO_X$ denote the base ideal of the linear series $|mrD|$, that is, $\bb_m$ is the image of the natural map 
\[H^0(X, \sO_X(mrD)) \otimes \sO_X(-mrD) \to\sO_X\]
for every $m \in \N$. 
Then $\bb_{\bullet}=\{\bb_m\}_{m \in \N}$ is a graded family of ideal sheaves on $X$. 
For every real number $\lambda >0$, the asymptotic test ideal $\tau((X, \Delta), \bb_{\bullet}^{\lambda})$ associated to the triple $((X, \Delta), \bb_{\bullet}^{\lambda})$ is defined in a similar way to the asymptotic test ideal $\tau(\ba_{\bullet}^t)$ in \S\ref{symbolic power}: $\tau((X, \Delta), \bb_{\bullet}^{\lambda})$ is the unique maximal element of the family of test ideals $\{\tau((X, \Delta), \bb_m^{\lambda /m})\}_{m \in \N}$. 
We put $\tau((X, \Delta), \lambda \cdot ||D||)=\tau((X, \Delta), \bb_{\bullet}^{\lambda/r})$, and denote $\tau((X, \Delta), \lambda \cdot ||D||)$ simply by $\tau(\lambda \cdot ||D||)$ when $\Delta=0$. 
The ideal sheaf $\tau((X, \Delta), \lambda \cdot ||D||)$ is independent of the choice of $r$. 

Making use of the asymptotic test ideal $\tau((X, \Delta),\lambda \cdot ||D||)$ instead of the asymptotic multiplier ideal $\mathcal{J}((X, \Delta), \lambda \cdot ||D||)$, we can show a positive characteristic analogue of \cite[Corollary 11.2.13]{La}.

\begin{thm}[cf.~\cite{Sch6}]\label{base point free}
Suppose that $\sO_X(H)$ is a globally generated ample line bundle on $X$. 
Let $\lambda>0$ be a rational number and $L$ be a Cartier divisor on $X$ such that $L-(K_X+\Delta)-\lambda D$ is nef and big. 
Then for every integer $n \ge d=\dim X$, $\tau((X, \Delta), \lambda \cdot ||D||) \otimes \sO_X(L+nH)$ is globally generated.
\end{thm}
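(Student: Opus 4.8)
The plan is to run the standard Castelnuovo--Mumford regularity argument for adjoint global generation, with the asymptotic test ideal $\tau:=\tau((X,\Delta),\lambda\cdot||D||)$ playing the role that the asymptotic multiplier ideal plays in characteristic zero and with Kodaira-type vanishing replaced by a Frobenius argument.

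\emph{First step: reduction to a Nadel-type vanishing.} I will use Mumford's regularity criterion (valid over any field): if $\sO_X(H)$ is globally generated and ample and $\mathcal F$ is coherent with $H^i(X,\mathcal F\otimes\sO_X(-iH))=0$ for all $i>0$, then $\mathcal F$ is globally generated. Applying this to $\mathcal F=\tau\otimes\sO_X(L+nH)$, and noting that for $i>d=\dim X$ the relevant cohomology vanishes for dimension reasons while for $1\le i\le d$ the divisor $(n-i)H$ is nef because $n\ge d\ge i$, the problem reduces to showing: for every nef Cartier divisor $P$ and every $i>0$,
\[
H^i\bigl(X,\tau\otimes\sO_X(L+P)\bigr)=0,
\]
given that $A:=L-(K_X+\Delta)-\lambda D$ is nef and big. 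This is the positive-characteristic Nadel-type vanishing for asymptotic test ideals.

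\emph{Second step: the vanishing.} First realize $\tau$ through a Frobenius trace map. Let $\bb_m\subseteq\sO_X$ be the base ideal of $|mrD|$; then $\tau=\tau((X,\Delta),\bb_m^{\lambda/(mr)})$ for every sufficiently divisible $m$, which I fix, and after enlarging $m$ I may assume the index of $K_X+\Delta$ divides $mr$ and that $D$ is effective. By the stabilization of the explicit description of test ideals (the sheaf-theoretic form of Lemma~\ref{lemma HT} for pairs), there is $e\gg0$ with $(p^e-1)(K_X+\Delta)$ Cartier for which the map corresponding to $\Delta$ gives a surjection
\[
\Phi_e\colon F^e_*\Bigl(\tau\otimes\bb_m^{\lceil\lambda p^e/(mr)\rceil}\otimes\sO_X\bigl((1-p^e)(K_X+\Delta)\bigr)\Bigr)\twoheadrightarrow\tau.
\]
Tensoring $\Phi_e$ with the invertible sheaf $\sO_X(L+P)$ and using the projection formula $F^e_*(\mathcal G)\otimes\sO_X(L+P)\cong F^e_*(\mathcal G\otimes\sO_X(p^e(L+P)))$, together with $(1-p^e)(K_X+\Delta)+p^e(L+P)=(K_X+\Delta)+p^e(A+\lambda D+P)$ and the splitting $\lceil\lambda p^e/(mr)\rceil\cdot mr=\lambda p^e+t_e$ with $0\le t_e<mr$, the source of $\Phi_e\otimes\sO_X(L+P)$ becomes $F^e_*\mathcal M_e$ with
\[
\mathcal M_e=\tau\otimes\bigl(\bb_m^{\lceil\lambda p^e/(mr)\rceil}\otimes\sO_X(\lceil\lambda p^e/(mr)\rceil\cdot mrD)\bigr)\otimes\sO_X(K_X+\Delta-t_eD)\otimes\sO_X\bigl(p^e(A+P)\bigr).
\]
Here the middle factor is globally generated (being a quotient of tensor powers of the base-point-free sheaf $\bb_m\otimes\sO_X(mrD)$), the line bundle $\sO_X(-t_eD)$ is squeezed between the fixed subsheaf $\sO_X(-mrD)$ and $\sO_X$ and takes only finitely many values (since $\lambda$ is rational, the $t_e$ are), and $A+P$ is nef and big. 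Fujita's vanishing theorem for nef and big divisors, which is valid in arbitrary characteristic, then gives $H^i(X,\mathcal M_e)=0$ for all $i>0$ once $e$ is large, and this equals $H^i(X,F^e_*\mathcal M_e)$ since $F^e$ is finite. Feeding this into the long exact cohomology sequence of $\Phi_e\otimes\sO_X(L+P)$ and inducting downward on $i$ from $i=d$ (where the kernel of $\Phi_e\otimes\sO_X(L+P)$ contributes nothing) yields $H^i(X,\tau\otimes\sO_X(L+P))=0$ for all $i>0$, which completes the proof.

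\emph{Main obstacle.} The first step is formal; the content lies in the second. The genuinely delicate points there are: (i) the \emph{surjectivity} of $\Phi_e$ for $e\gg0$, i.e. the stabilization of the $p^{-e}$-linear description of $\tau((X,\Delta),\bb_m^{\lambda/(mr)})$ as a subsheaf of $\sO_X$; (ii) the fact that the globally generated factor $\bb_m^{\lceil\lambda p^e/(mr)\rceil}\otimes\sO_X(\lceil\lambda p^e/(mr)\rceil\cdot mrD)$ in $\mathcal M_e$ varies with $e$, so one needs a \emph{uniform} Serre/Fujita-type vanishing (only finitely many ``shapes'' occur up to twisting by $\sO_X(p^e(A+P))$, so Fujita vanishing can be applied uniformly after surjecting onto the globally generated factor from a trivial bundle); and (iii) controlling the kernel of $\Phi_e\otimes\sO_X(L+P)$, which is not itself of the form ``test ideal twisted by a line bundle'' and hence forces the downward induction on cohomological degree. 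These are exactly the ingredients developed in \cite{Sch6} and in the circle of ideas around Lemma~\ref{surjectivity}.
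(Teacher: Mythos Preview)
Your overall strategy---reduce to a Nadel-type vanishing via Mumford regularity, then prove the vanishing by a Frobenius argument---is different from the paper's, and the second step as written has a genuine gap.

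The problem is the ``downward induction on $i$.'' Granting your surjection $\Phi_e\otimes\sO_X(L+P):F^e_*\mathcal{M}_e\twoheadrightarrow\tau\otimes\sO_X(L+P)$ and the vanishing $H^i(X,\mathcal{M}_e)=0$ for $i>0$, the long exact sequence only gives
\[
H^i\bigl(X,\tau\otimes\sO_X(L+P)\bigr)\;\cong\;H^{i+1}\bigl(X,\ker(\Phi_e\otimes\sO_X(L+P))\bigr)
\]
for each $i\ge1$. This yields $H^d=0$ because $H^{d+1}$ of anything vanishes, but to descend to $i=d-1$ you would need $H^d$ of the kernel to vanish, and nothing in your setup controls this: the kernel is neither a test ideal twisted by a line bundle nor otherwise of a shape to which your Frobenius/Fujita argument applies. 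Knowing $H^d(\tau\otimes\sO_X(L+P))=0$ does not feed back into the sequence at level $i=d-1$, so there is no actual induction. (The compatibility of the $\Phi_e$ for varying $e$, or a Cartier-module argument, can be made to rescue a vanishing statement of this kind, but that is substantially more than what you wrote.)

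The paper bypasses this difficulty entirely by reversing the order of operations. Instead of applying Mumford's criterion to $\tau\otimes\sO_X(L+nH)$ and then trying to prove the resulting Nadel vanishing, it first produces (in the simplified smooth case, using the description $\tau(\lambda\cdot||D||)=(\bb_m^{\lceil\lambda q/m\rceil})^{[1/q]}$ from Proposition~\ref{smooth test ideal}) a surjection
\[
W\otimes F^e_*\sO_X\bigl(qL-(q-1)K_X-m\lceil\lambda q/m\rceil D+qnH\bigr)\;\twoheadrightarrow\;\tau\otimes\sO_X(L+nH),
\]
where $W$ is a vector space and the source involves only the pushforward of a \emph{line bundle}. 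It then applies Mumford's criterion to that line bundle: the required vanishing is
\[
H^i\bigl(X,\sO_X(qL-(q-1)K_X-m\lceil\lambda q/m\rceil D+q(n-i)H)\bigr)=0,
\]
which is cohomology of an invertible sheaf and follows directly from Fujita's vanishing theorem---no kernel to worry about. Global generation then passes along the surjection. In short, the paper puts Mumford's lemma on the source side, where Fujita applies cleanly, rather than on the target side, where it forces the Nadel vanishing you could not close.
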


We give a sketch of the proof. 
For simplicity, we assume that $X$ is a nonsingular projective variety, $\Delta=0$, $D$ is a Cartier divisor such that $H^0(X, \sO_X(D)) \ne 0$ and $L-K_X-\lambda D$ is ample. 
Let $\bb_{m}$ be the base ideal of the linear series $|mD|$. 
It then follows from the definition of the test ideal $\tau(\lambda \cdot ||D||)$ and Proposition \ref{smooth test ideal} that 
\[
\tau(\lambda \cdot ||D||)=\tau(X, \bb^{\lambda/m}_{m})=(\bb_m^{\lceil \lambda q/m \rceil})^{[1/q]}
\]
for sufficiently divisible $m$ and sufficiently large $q=p^e$. 
By the definition of $(\bb_m^{\lceil \lambda q/m \rceil})^{[1/q]}$, the trace map $\mathrm{Tr}_F^e$ induces a surjective map 
\[F^e_*(\bb_m^{\lceil \lambda q/m \rceil} \otimes \omega_X) \to \tau(\lambda \cdot ||D||) \otimes \omega_X.\]
Tensoring this with $\sO_X(L-K_X+nH)$, one has a surjection  
\[F^e_*\left(\bb_m^{\lceil \lambda q/m \rceil} \otimes \sO_X(qL-(q-1)K_X+qnH)\right) \to \tau(\lambda \cdot ||D||) \otimes \sO_X(L+nH).\]

On the other hand, the natural map $H^0(X, \sO_X(mD)) \otimes \sO_X(-mD) \to\bb_m$ is surjective by the definition of $\bb_m$. 
Therefore, its $\lceil \lambda q/m \rceil$-th symmetric product 
\[\mathrm{Sym}^{\lceil \lambda q/m \rceil} H^0(X, \sO_X(mD)) \otimes \sO_X(-m \lceil \lambda q/{m}\rceil D)\to \bb^{\lceil \lambda q/m \rceil}_m\]
is also a surjection. 
Put $W:=\mathrm{Sym}^{\lceil \lambda q/m \rceil} H^0(X, \sO_X(mD))$. 
The above two surjections induce a surjection 
\[
W \otimes F^e_*\sO_X(qL-(q-1)K_X-m \lceil \lambda q/{m} \rceil D+qnH) \to  \tau(\lambda \cdot ||D||) \otimes \sO_X(L+nH).\]
Thus, it suffices to show that $F^e_*\sO_X(qL-(q-1)K_X-m \lceil \lambda q/{m} \rceil D+qnH)$ is globally generated. 
Indeed, it follows from Lemma \ref{Mumford's lemma}, because 
\[H^i(X, \sO_X(qL-(q-1)K_X-m \lceil \lambda q/{m} \rceil D+q(n-i)H))=0\]
for all $i>0$ by Fujita's vanishing theorem (Lemma \ref{Fujita vanishing}). 

\begin{lem}[\textup{\cite[Theorem 1.8.5]{La1}}]\label{Mumford's lemma}
Let $X$ be a projective variety and $\mathcal{F}$ be a coherent sheaf on $X$. 
Suppose that $\sO_X(H)$ is a globally generated ample line bundle on $X$. 
If $H^i(X, \mathcal{F} \otimes \sO_X(-iH))=0$ for all $i>0$, then $\mathcal{F}$ is globally generated. 
\end{lem}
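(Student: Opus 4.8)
The statement is the classical Castelnuovo--Mumford regularity fact that a $0$-regular coherent sheaf, regularity being measured against the globally generated ample bundle $\sO_X(H)$, is globally generated; the plan is to run the usual Castelnuovo--Mumford induction. Concretely, I would prove by induction on $n=\dim\mathrm{Supp}(\mathcal{F})$ the following three assertions, \emph{in this order}, for every coherent sheaf $\mathcal{G}$ on $X$ with $\dim\mathrm{Supp}(\mathcal{G})\le n$ satisfying $H^i(X,\mathcal{G}(-iH))=0$ for all $i>0$ (``$\mathcal{G}$ is $0$-regular''): (a) $\mathcal{G}(H)$ is again $0$-regular; (b) the multiplication map $H^0(X,\mathcal{G})\otimes_k H^0(X,\sO_X(H))\to H^0(X,\mathcal{G}(H))$ is surjective; (c) $\mathcal{G}$ is globally generated. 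The base case $n=0$ is immediate: a coherent sheaf of finite support is the direct sum of its stalks, has vanishing higher cohomology, and $\sO_X(H)$ is trivial near the support, so (a)--(c) are clear. For the inductive step I would pick a section $s\in H^0(X,\sO_X(H))$ not vanishing at any associated point of $\mathcal{G}$ and not vanishing identically on any top-dimensional component of $\mathrm{Supp}(\mathcal{G})$ (possible by prime avoidance, using global generation of $\sO_X(H)$; one may enlarge the ground field to make it infinite). Multiplication by $s$ then gives a short exact sequence $0\to\mathcal{G}(-H)\xrightarrow{\ \cdot s\ }\mathcal{G}\to\mathcal{G}|_{Z}\to 0$ with $Z=V(s)$ and $\dim\mathrm{Supp}(\mathcal{G}|_Z)\le n-1$.

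Twisting this sequence by $\sO_X(-iH)$ and chasing the long exact cohomology sequence, the hypothesis on $\mathcal{G}$ forces $H^i(Z,(\mathcal{G}|_Z)(-iH))=0$ for all $i>0$, so $\mathcal{G}|_Z$ is $0$-regular on $Z$; twisting instead by $\sO_X((1-i)H)$ and invoking (a) for $\mathcal{G}|_Z$ (available by the dimension induction, since $\dim\mathrm{Supp}(\mathcal{G}|_Z)<n$) yields $H^i(X,\mathcal{G}((1-i)H))=0$ for all $i>0$, which is (a) for $\mathcal{G}$. From the same sequence, using $H^1(X,\mathcal{G}(-H))=0$, one reads off that $H^0(X,\mathcal{G})\to H^0(Z,\mathcal{G}|_Z)$ is surjective. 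For (b) I would set up the commutative square relating the multiplication map for $\mathcal{G}$ on $X$ to the one for $\mathcal{G}|_Z$ on $Z$ (the latter surjective by the dimension induction) and chase it together with the vanishings $H^1(X,\mathcal{G}(-H))=0$ and $H^1(X,\mathcal{G})=0$ (the latter from (a) applied once) to lift surjectivity from $Z$ to $X$.

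Granting (a) and (b), statement (c) follows as follows. Let $\mathcal{E}\subseteq\mathcal{G}$ be the image of the evaluation map $H^0(X,\mathcal{G})\otimes_k\sO_X\to\mathcal{G}$; every global section of $\mathcal{G}$ lands in $\mathcal{E}$ by construction, so $H^0(X,\mathcal{E})=H^0(X,\mathcal{G})$. Since $H^0(X,\mathcal{G})\to H^0(Z,\mathcal{G}|_Z)$ is surjective and $\mathcal{G}|_Z$ is globally generated (dimension induction), $\mathcal{E}$ surjects onto $\mathcal{G}|_Z$, hence the cokernel $\mathcal{Q}=\mathcal{G}/\mathcal{E}$ satisfies $\mathrm{Supp}(\mathcal{Q})\cap Z=\emptyset$. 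But $Z$ lies in the ample class $|H|$, so it meets every positive-dimensional closed subscheme of $X$; therefore $\mathcal{Q}$ has finite support. Now $H^0(X,\mathcal{E})=H^0(X,\mathcal{G})$ makes the map $H^0(X,\mathcal{G})\to H^0(X,\mathcal{Q})$ zero, while for $m\gg 0$ (Serre vanishing $H^1(X,\mathcal{E}(mH))=0$) the map $H^0(X,\mathcal{G}(mH))\to H^0(X,\mathcal{Q}(mH))=H^0(X,\mathcal{Q})$ is surjective; and by (a) and (b) applied to the twists $\mathcal{G}(kH)$ every section of $\mathcal{G}(mH)$ is a sum of products $t\,\ell_1\cdots\ell_m$ with $t\in H^0(X,\mathcal{G})$ and $\ell_j\in H^0(X,\sO_X(H))$, hence lies in $H^0(X,\mathcal{E}(mH))$ and maps to $0$ in $H^0(X,\mathcal{Q})$. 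Thus $H^0(X,\mathcal{Q})=0$, and since $\mathcal{Q}$ has finite support this gives $\mathcal{Q}=0$, i.e. $\mathcal{G}=\mathcal{E}$ is globally generated. Applying (c) to $\mathcal{G}=\mathcal{F}$ finishes the proof.

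The step I expect to be the main obstacle is the multiplication surjectivity (b): unlike in the very ample situation, the restriction map $H^0(X,\sO_X(H))\to H^0(Z,\sO_X(H)|_Z)$ need not be surjective, so one cannot simply pull generators back from $Z$, and the diagram chase must be arranged with care --- e.g.\ by working with the Koszul-type complex attached to $H^0(X,\sO_X(H))\otimes\sO_X\twoheadrightarrow\sO_X(H)$ and tracking exactly which cohomology groups the regularity hypotheses kill. A secondary, essentially bookkeeping, difficulty is the choice of the section $s$ over a finite ground field, where one cannot pass to a large multiple of $H$ without changing the hypothesis and instead argues prime avoidance directly over the (possibly infinite) residue fields of the finitely many associated and generic points involved; the remainder of the argument is formal manipulation of long exact sequences together with Nakayama and Serre vanishing.
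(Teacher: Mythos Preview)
The paper does not supply a proof of this lemma; it is simply quoted from \cite[Theorem~1.8.5]{La1}. Your outline is the standard Castelnuovo--Mumford argument (essentially Lazarsfeld's proof of his Theorem~1.8.3, which he indicates carries over verbatim to the globally generated ample case~1.8.5), and it is correct.

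On your main worry about step~(b): the obstacle you flag is only an artifact of the phrasing ``the multiplication map for $\mathcal{G}|_Z$ on $Z$''. Since your induction is on $\dim\mathrm{Supp}$ rather than on $\dim X$, you should regard $\mathcal{G}|_Z$ as a coherent sheaf on $X$ with smaller-dimensional support and apply the induction hypothesis to it \emph{on $X$}. The relevant right-hand vertical map is then
\[
H^0(X,\mathcal{G}|_Z)\otimes H^0(X,\sO_X(H))\longrightarrow H^0(X,(\mathcal{G}|_Z)(H)),
\]
which involves $H^0(X,\sO_X(H))$ throughout and never requires restricting sections of $\sO_X(H)$ to $Z$. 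The diagram chase is then immediate: the image of $\mu$ contains $s\cdot H^0(\mathcal{G})=\ker\bigl(H^0(\mathcal{G}(H))\to H^0((\mathcal{G}|_Z)(H))\bigr)$ (using $H^1(\mathcal{G})=0$, which follows from~(a)), and it surjects onto $H^0((\mathcal{G}|_Z)(H))$ because both the top horizontal map (surjective since $H^1(\mathcal{G}(-H))=0$) and the right vertical map (surjective by induction) are surjective. Your secondary worry about finite ground fields is handled by faithfully flat base change to an infinite extension, which preserves both the cohomological hypotheses and global generation.

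Your argument for~(c) is correct but can be shortened: once~(a) and~(b) are established, iterating~(b) shows that $H^0(\mathcal{G})\otimes H^0(\sO_X(H))^{\otimes m}\to H^0(\mathcal{G}(mH))$ is surjective for all $m\ge 0$; for $m\gg 0$ the sheaf $\mathcal{G}(mH)$ is globally generated by Serre, and since evaluation of a product $t\,\ell_1\cdots\ell_m$ at a closed point $x$ factors through evaluation of $t$, it follows that $H^0(\mathcal{G})\to\mathcal{G}\otimes k(x)$ is already surjective.
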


\begin{lem}[\cite{Fu}, cf.~\textup{\cite[Theorem 1.4.35]{La1}}]\label{Fujita vanishing}
Let $X$ be a projective scheme over a field $k$ and $H$ be an ample Cartier divisor on 
$X$. 
Given any coherent sheaf $\mathcal{F}$ on $X$, there exists an integer $m(F, H)$ such that
$H^i(X, \mathcal{F} \otimes \sO_X(mH+D))=0$ for all $i>0$, $m \ge m(F, H)$, and any nef Cartier divisor $D$ on $X$.
\end{lem}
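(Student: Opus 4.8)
The plan is to strengthen Serre's vanishing theorem so that its bound becomes uniform over the (unbounded) family of nef twists, by a dimension induction carried out on hyperplane sections and powered by Castelnuovo--Mumford regularity, that is, by Lemma \ref{Mumford's lemma}. \emph{Reductions.} Faithfully flat base change preserves the dimensions of coherent cohomology groups, as well as ampleness and nefness, so we may assume $k=\overline{k}$. We may also assume $H$ very ample and globally generated: if $H$ is only ample, pick $a$ with $aH$ very ample and globally generated, prove the statement for $aH$, and deduce it for $H$ by writing $m=aq+r$ with $0\le r<a$ and applying the case of $aH$ to the finitely many sheaves $\mathcal{F}\otimes\sO_X(rH)$, noting that $rH+D$ is again nef. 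It then suffices to produce $m_0=m_0(\mathcal{F},H)$ such that $\mathcal{F}\otimes\sO_X(mH+D)$ is $0$-regular with respect to $H$ in the Castelnuovo--Mumford sense for all $m\ge m_0$ and all nef $D$: this gives the asserted vanishing, and by Lemma \ref{Mumford's lemma} it also gives a uniform global generation statement used below. Finally, by the standard filtration of a coherent sheaf whose subquotients are pushforwards of line bundles from integral closed subschemes, together with additivity of the vanishing in short exact sequences, we may assume $X$ integral.

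\emph{Induction on $d=\dim X$.} The case $d=0$ is trivial. For $d\ge 1$ pick a general $Y\in|H|$ containing neither the generic point of $X$ nor any associated point of $\mathcal{F}$, so that with $\mathcal{F}_Y:=\mathcal{F}\otimes_{\sO_X}\sO_Y$ the sequence $0\to\mathcal{F}\otimes\sO_X(-H)\to\mathcal{F}\to\mathcal{F}_Y\to 0$ is exact and $\dim\mathrm{Supp}\,\mathcal{F}_Y\le d-1$. By the inductive hypothesis on $Y$, applied to $\mathcal{F}_Y$ and to the finitely many twists of it that occur, there is a bound $m_1$, \emph{independent of the nef divisor $D$}, past which $\mathcal{F}_Y\otimes\sO_Y(mH+D)$ is $0$-regular for $H|_Y$; in particular $H^j(Y,\mathcal{F}_Y\otimes\sO_Y(\ell H+D))=0$ for $j\ge 1$ and $\ell$ past a bound of the same kind. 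Twisting the sequence by $\sO_X((m-i)H+D)$ and chasing the long exact cohomology sequence: for $i\ge 2$ one gets $D$-independent ranges on which $H^i(X,\mathcal{F}\otimes\sO_X(nH+D))\cong H^i(X,\mathcal{F}\otimes\sO_X((n+1)H+D))$, and since each such sequence eventually vanishes by ordinary Serre vanishing, applied for the fixed $D$, it vanishes on that whole $D$-independent range. For $i=1$ one gets surjections $H^1(X,\mathcal{F}\otimes\sO_X(nH+D))\twoheadrightarrow H^1(X,\mathcal{F}\otimes\sO_X((n+1)H+D))$ whose kernels are the cokernels of the restriction maps $H^0(X,\mathcal{F}\otimes\sO_X((n+1)H+D))\to H^0(Y,\mathcal{F}_Y\otimes\sO_Y((n+1)H+D))$; thus $H^1(X,\mathcal{F}\otimes\sO_X((m-1)H+D))$ vanishes once these restriction maps are surjective for every $m$ past a $D$-independent bound.

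\emph{The main obstacle.} The heart of the proof is precisely this surjectivity, made uniform in $D$: one must show $H^0(X,\mathcal{F}\otimes\sO_X(mH+D))\to H^0(Y,\mathcal{F}_Y\otimes\sO_Y(mH+D))$ is onto for all $m$ beyond a bound depending only on $(\mathcal{F},H)$. Lemma \ref{Mumford's lemma} on $Y$ supplies, on $D$-independent ranges, the surjective multiplication maps $H^0(Y,\mathcal{F}_Y\otimes\sO_Y((m-1)H+D))\otimes H^0(Y,\sO_Y(H))\to H^0(Y,\mathcal{F}_Y\otimes\sO_Y(mH+D))$ and the global generation; the genuine difficulty is to propagate surjectivity between $X$ and $Y$ without the bound acquiring a dependence on $D$ --- equivalently, to bound $\dim_k H^1(X,\mathcal{F}\otimes\sO_X(n_1H+D))$ uniformly in the nef $D$. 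This is where one exploits the structure of the nef cone: finiteness of $\mathrm{NS}(X)$ and boundedness of $\mathrm{Pic}^0(X)$ reduce the nef twists $D$ of bounded $H^{d-1}$-degree to a bounded family, on which Serre vanishing is uniform, while the more positive nef twists must be handled separately, as in Fujita's original argument. (In characteristic zero this step collapses: $n_1H+D-K_X$ is big and nef, so Kawamata--Viehweg vanishing gives $H^i(X,\mathcal{F}\otimes\sO_X(n_1H+D))=0$ for $i\ge 1$ directly and uniformly in $D$; the interest of the theorem is that it survives in positive characteristic, where that route is closed.) Granting the uniform $H^1$-bound, combining it with the $i\ge 2$ vanishing and the global generation yields a single $m_0=m_0(\mathcal{F},H)$ with the required property, completing the induction.
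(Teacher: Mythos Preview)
The paper does not prove this lemma; it simply cites it (to Fujita's original paper and to Lazarsfeld's book) as a known result and then uses it. So there is no ``paper's proof'' to compare against, and your write-up is an attempted independent proof.

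Your reductions and the induction framework are standard and correct, and your handling of $i\ge 2$ is clean and complete: once the induction hypothesis on $Y$ gives $H^{i-1}(Y,\mathcal{F}_Y((n+1)H+D))=0$ for $n$ past a $D$-independent bound, the long exact sequence yields isomorphisms $H^i(X,\mathcal{F}(nH+D))\cong H^i(X,\mathcal{F}((n+1)H+D))$ in that range, and then ordinary Serre vanishing for each fixed $D$ forces the whole chain to vanish. That step is fine.

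The gap is exactly where you say it is. For $i=1$ you only get surjections, and you correctly reduce the problem to a uniform-in-$D$ bound on $\dim_k H^1(X,\mathcal{F}(n_1H+D))$ (equivalently, uniform surjectivity of the restriction maps to $Y$). But you do not prove this. The sentence ``finiteness of $\mathrm{NS}(X)$ and boundedness of $\mathrm{Pic}^0(X)$ reduce the nef twists $D$ of bounded $H^{d-1}$-degree to a bounded family \ldots\ while the more positive nef twists must be handled separately, as in Fujita's original argument'' is not an argument: $\mathrm{NS}(X)$ is finitely generated, not finite; the bounded-degree case does not by itself control the unbounded-degree case; and ``as in Fujita's original argument'' defers precisely the step that has to be supplied. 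The next clause, ``Granting the uniform $H^1$-bound,'' makes the gap explicit. Your characteristic-zero parenthetical is also not quite right as written: Kawamata--Viehweg applies to line bundles on varieties with mild singularities, but after your d\'evissage you are on an arbitrary integral projective scheme with $\mathcal{F}$ a line bundle, so even in characteristic zero one still needs an extra step (e.g.\ pass to a resolution and invoke Grauert--Riemenschneider, or reduce further) before KV closes the case. In short: the architecture is right and the $i\ge 2$ part is done, but the decisive $i=1$ step is asserted, not proved.
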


\begin{rem}[\cite{Ha2}]
Using a very similar argument to the proof of Theorem \ref{base point free}, we can show a special case of Fujita's conjecture in positive characteristic: 
Let $X$ be a $d$-dimensional projective variety over a perfect field of characteristic $p>0$ such that every local ring $\sO_{X, x}$ of $X$ is $F$-injective. 
If $\mathcal{L}$ is a globally generated ample line bundle on $X$, then $\omega_X \otimes \mathcal{L}^{\otimes d+1}$ is also globally generated. 
\end{rem}

We conclude this section by mentioning an application of Theorem \ref{base point free}. 
In order to state the result, we need to introduce some notation. 

Let $X$ be a nonsingular projective variety over an algebraically closed field of characteristic $p>0$ and $D$ be a $\Q$-divisor on $X$. 
Fix an $r \in \N$ such that $rD$ is an integral divisor. 
The \textit{stable base locus} $\mathbf{B}(D)$ of $D$ is $\bigcap_{n \in \N} \mathrm{Bs}(nrD)$, where $\mathrm{Bs}(nrD)$ denotes the base locus of the linear series $|nrD|$ (with reduced scheme structure).  
Since $\mathbf{B}(D)=\mathrm{Bs}(n rD)$ for sufficiently divisible $n$ by Noetherian property, $\mathbf{B}(D)$ is independent of the choice of $r$. 
The \textit{asymptotic base locus} $\mathbf{B}_{-}(D)$\footnote{This locus is often referred to as the \textit{non-nef locus} or the \textit{restricted base locus}.} is then defined by $\mathbf{B}_{-}(D)=\bigcup_A \mathbf{B}(D+A)$, where $A$ runs through all ample $\Q$-divisors on $X$. 
It follows from the definition that $\mathbf{B}_{-}(D)$ depends only on the numerical equivalence class of $D$ and that $\mathbf{B}_{-}(D)=\emptyset$ if and only if $D$ is nef (see \cite{ELMNP}). 

Next, we introduce the notion of asymptotic orders of vanishing. Let $x \in X$ be a closed point and $\m_{x}$ denote the maximal ideal of the local ring $\sO_{X, x}$. 
Suppose that $H^0(X, \sO_X(rD)) \ne 0$ for the above $r \in \N$. 
For each $m \in \N$, we denote by $\mathfrak{b}_m \subseteq \sO_X$ the base ideal of the linear series $|mrD|$ and by $\mathrm{ord}_{x}(\mathfrak{b}_m)$ the order of vanishing of $\mathfrak{b}_m$ at $x$. 
In other words, $\mathrm{ord}_{x}(\mathfrak{b}_m)=\max\{\nu \ge 0 \; | \; \mathfrak{b}_{m,x} \subseteq \m_{x}^{\nu}\}$.
The asymptotic order of vanishing $\mathrm{ord}_x(||D||)$ is then defined by
\[\mathrm{ord}_x(||D||)=\inf_{m \in \N}\frac{\mathrm{ord}_x(\mathfrak{b}_m)}{mr}=\lim_{m \to \infty}\frac{\mathrm{ord}_x(\mathfrak{b}_m)}{mr},\]
where the last equality follows from the fact that $\{\bb_m\}_{m \in \N}$ is a graded family of ideal sheaves on $X$. 
The definition of $\mathrm{ord}_x(||D||)$ is independent of the choice of $r$. 

Replacing the role of asymptotic multiplier ideals in \cite{ELMNP} by asymptotic test ideals and  \cite[Corollary 11.2.13]{La} by Theorem \ref{base point free}, we obtain the following theorem. 

\begin{thm}[\cite{Mu2}]
Suppose that $D$ is a big $\Q$-divisor on $X$ and $x \in X$ is a closed point. 
Then $x \notin \mathbf{B}_{-}(D)$ if and only if $\mathrm{ord}_x(||D||)=0$. 
\end{thm}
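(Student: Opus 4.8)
The plan is to mimic the characteristic-zero argument of \cite{ELMNP}, substituting asymptotic test ideals for asymptotic multiplier ideals and Theorem \ref{base point free} for the role played there by \cite[Corollary 11.2.13]{La}. Fix a globally generated ample line bundle $\sO_X(H)$ on $X$, put $d=\dim X$, choose $r\in\N$ with $rD$ Cartier and $H^0(X,\sO_X(rD))\ne 0$, and let $\bb_m\subseteq\sO_X$ be the base ideal of $|mrD|$, so that $\mathrm{ord}_x(||D||)=\inf_m\mathrm{ord}_x(\bb_m)/(mr)$. I treat the two implications separately; only the ``if'' part uses characteristic-$p$ techniques.

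For the ``only if'' direction I argue formally. If $x\notin\mathbf{B}_{-}(D)$, then $x\notin\mathbf{B}(D+A)$ for every ample $\Q$-divisor $A$, and a section of some $\sO_X(mr'(D+A))$ not vanishing at $x$ shows $\mathrm{ord}_x(||D+A||)=0$ for all such $A$. Fix a small ample $\Q$-divisor $A_0$ with $D-A_0$ still big (possible since $D$ is big). Using that $\mathrm{ord}_x(||\,\cdot\,||)$ is positively homogeneous and subadditive on the big cone---subadditivity because $\bb_m^{(E_1)}\bb_m^{(E_2)}\subseteq\bb_m^{(E_1+E_2)}$---one has $\mathrm{ord}_x(||kD+A_0||)=k\,\mathrm{ord}_x(||D+\tfrac1kA_0||)=0$ for each $k\ge1$, hence
\[
(k+1)\,\mathrm{ord}_x(||D||)=\mathrm{ord}_x(||(k+1)D||)\le\mathrm{ord}_x(||kD+A_0||)+\mathrm{ord}_x(||D-A_0||)=\mathrm{ord}_x(||D-A_0||).
\]
The right-hand side is finite because $D-A_0$ is big, so letting $k\to\infty$ forces $\mathrm{ord}_x(||D||)=0$.

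For the ``if'' direction, assume $\mathrm{ord}_x(||D||)=0$. Step 1: I claim $\tau(1\cdot||D||)_x=\sO_{X,x}$. As $\tau(1\cdot||D||)$ is the largest of the ideals $\tau(X,\bb_m^{1/(rm)})$ and $\sO_{X,x}$ is regular local, Proposition \ref{smooth test ideal} reduces the claim to exhibiting $m,e$ with $\bb_m^{\lceil p^e/(rm)\rceil}\not\subseteq\fkm_x^{[p^e]}$. Choosing $g\in\bb_m$ with $\mathrm{ord}_x(g)=\mathrm{ord}_x(\bb_m)=:c_m$, the power $g^{\,k}$ with $k=\lceil p^e/(rm)\rceil$ has (in local coordinates at $x$) a nonzero monomial of degree $kc_m$, while every element of $\fkm_x^{[p^e]}$ has all monomials of degree $\ge p^e$; so $g^k\notin\fkm_x^{[p^e]}$ once $kc_m<p^e$, for which $c_m/(rm)+c_m/p^e<1$ suffices. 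Since $\inf_m c_m/(rm)=0$, first pick $m$ with $c_m/(rm)<\tfrac12$, then $e$ with $c_m/p^e<\tfrac12$. Step 2: replace $D$ by $cD$ for a large $c\in\N$ divisible by the denominator of $D$ (so $cD$ is integral Cartier, $\mathbf{B}_{-}(cD)=\mathbf{B}_{-}(D)$, and $\tau(1\cdot||cD||)_x=\sO_{X,x}$ by Step 1). Fix an ample Cartier divisor $P_0$, set $L=K_X+cD+P_0$ (so $L-(K_X+cD)=P_0$ is ample), and take $n\ge d$ large enough that $A:=K_X+P_0+nH$ is ample; then $L+nH=cD+A$. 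By Theorem \ref{base point free}, $\tau(1\cdot||cD||)\otimes\sO_X(cD+A)$ is globally generated, and since $\tau(1\cdot||cD||)_x=\sO_{X,x}$ this yields a global section of $\sO_X(cD+A)$ not vanishing at $x$; thus $x\notin\mathbf{B}(cD+A)=\mathbf{B}(D+A/c)$. As $c\to\infty$ the ample divisor $A/c$ shrinks to $0$, and the increasing chain of closed sets $\mathbf{B}(D+A/c)$ stabilizes to $\mathbf{B}_{-}(D)$ by Noetherianity, so $x\notin\mathbf{B}_{-}(D)$ for $c$ large.

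I expect Step 1 of the ``if'' direction to be the main obstacle: converting the asymptotic condition $\mathrm{ord}_x(||D||)=0$ into the single-scale statement $\tau(1\cdot||D||)_x=\sO_{X,x}$ requires handling the nested limits ``$m\to\infty$, then $p^e\to\infty$'' inherent in the asymptotic test ideal, together with the Frobenius-power computation in the regular local ring $\sO_{X,x}$. The remaining ingredients---the elementary behaviour of $\mathbf{B}_{-}(D)$ and of $\mathrm{ord}_x(||\,\cdot\,||)$, the integrality and rescaling bookkeeping of Step 2, and the already established Theorem \ref{base point free} and Proposition \ref{smooth test ideal}---are routine.
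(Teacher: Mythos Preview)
Your proposal is correct and follows precisely the route the paper indicates: the paper gives no detailed argument but simply says to replace asymptotic multiplier ideals in \cite{ELMNP} by asymptotic test ideals and \cite[Corollary~11.2.13]{La} by Theorem~\ref{base point free}, which is exactly what you carry out. Your Step~1 (translating $\mathrm{ord}_x(||D||)=0$ into $\tau(1\cdot||D||)_x=\sO_{X,x}$ via Proposition~\ref{smooth test ideal} and the inclusion $\fkm_x^{[p^e]}\subseteq\fkm_x^{p^e}$) and Step~2 (feeding this into Theorem~\ref{base point free}) are the expected substitutions. One cosmetic point: the phrase ``stabilizes to $\mathbf{B}_-(D)$ by Noetherianity'' is misleading, since $\mathbf{B}_-(D)$ need not be closed; what you actually use is the standard fact from \cite{ELMNP} that $\mathbf{B}_-(D)=\bigcup_{c\ge 1}\mathbf{B}(D+A/c)$ for any fixed ample $A$, together with your verification that $x$ avoids each term of the union.
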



\section{Hilbert-{K}unz multiplicity}\label{HK section}

The Hilbert-Samuel multiplicity is a fundamental invariant in commutative ring theory and singularity theory. 
There is another kind of multiplicity in positive characteristic, Hilbert-{K}unz multiplicity, introduced by Paul Monsky \cite{Mo1}.\footnote{Hilbert-Kunz multiplicity has origin in the work of Kunz \cite{Ku}, \cite{Ku2}, but he erroneously thought that it did not exist in general. 
Monsky proved in \cite{Mo1} that it always exists and named it after Kunz.} 
Hilbert-{K}unz multiplicity has properties both similar to and different from those of Hilbert-Samuel multiplicity. 
For example, Hilbert-Samuel multiplicity determines membership in integral closure, and Hilbert-{K}unz multiplicity determines membership in tight closure. 
Hilbert-Samuel multiplicity is a positive integer, but Hilbert-{K}unz multiplicity is not necessarily even a rational number (see \cite{Br2}). 

When the second-named author began the study of Hilbert-Kunz multiplicity,  Monsky had found some mysterious behavior of it (see \cite{HaMo} for example) and it had been thought of as a difficult and elusive subject to study. 
Watanabe had a chance to meet Monsky in 1997 and said,  ``I want to make Hilbert-{K}unz multiplicity less mysterious." 
Monsky then answered,  ``I want to make Hilbert-{K}unz multiplicity \textit{more} mysterious!" 
Indeed, the counterexample to the localization problem in tight closure theory, mentioned in Remark \ref{localization remark}, has come from Monsky's study of Hilbert-{K}unz multiplicity. 

In this section, we overview the theory of Hilbert-Kunz multiplicity. We recommend \cite{Hu2} \S 6 for a nice introduction to Hilbert-Kunz multiplicity. 
The authors of this article got a lot of inspiration from this book. 

Suppose that  $(A, \fkm)$ is a local ring of characteristic $p>0$ and $I$ is an $\fkm$-primary ideal of $A$. 
The Hilbert-{K}unz multiplicity of $I$ is defined as follows.  
We also recall the definition of Hilbert-Samuel multiplicity of $I$. 

\begin{defn}[\cite{Mo1}]\label{HKdef}
Let $I$ be an $\fkm$-primary ideal of a $d$-dimensional local ring $(A, \m)$ of characteristic $p>0$.
\begin{enumerate}
\item 
The \textit{Hilbert-Samuel multiplicity} $e(I)$ of $I$ is defined by 
 \[
e(I):=\lim_{n \to \infty} d! \frac{\ell_A (A/I^n) }{n^d}.
\]
\item 
The \textit{Hilbert-{K}unz multiplicity} $e_{\rm HK}(I)$ of $I$ is defined by 
\[
e_{\rm HK}(I):=\lim_{e \to \infty} \frac{\ell_A (A/I^{[p^e]}) }{p^{ed}}.
\]
\end{enumerate}
The limits $e(I)$ and $e_{\rm HK}(I)$ always exist. 
If $I=\fkm$, we denote $e(A):=e(\fkm)$ (respectively, $e_{\rm HK}(A):=e_{\rm HK}(\fkm)$)
 and call it the Hilbert-Samuel multiplicity (respectively, Hilbert-{K}unz multiplicity) of $A$. 
\end{defn}

\begin{rem} 
Brenner \cite{Br2} has recently given an example of a local domain whose Hilbert-Kunz multiplicity is irrational. 
Monsky conjectured in \cite{Mo3} that if $A=\F_2[[U,V,X,Y,Z]]/(UV + X^3+Y^3+XYZ)$, then $e_{\rm HK}(A) =4/3 + 5/(14\sqrt{7})$. This conjecture is still open. 
On the other hand, if $R=\bigoplus_{n \ge 0}R_n$ is a two-dimensional graded domain with $R_0$ an   algebraically closed field of characteristic $p>0$, then $e_{\rm HK}(I)$ is a rational number for all homogeneous $R_+$-primary ideals $I$ (see \cite{Br}, \cite{Tr}). 
If $(A, \m)$ is a local ring of finite $F$-representation type (see \S \ref{closing} for rings of finite $F$-representation type), then $e_{\rm HK}(I)$ is a rational number for all $\m$-primary ideals $I$ (see \cite{Se}, \cite{Wa4}).
\end{rem}

$e(I)$ and $e_{\rm HK}(I)$ do not change after passing to completion by definition. 
Also, removing the lower-dimensional irreducible components of $\Spec R$ does not affect the values of $e(I)$ and $e_{\rm HK}(I)$: if $\fka$ is the intersection of all prime ideals $\fkp$ of $A$ with $\dim A/\fkp=d$, then $e(I)=e(I (A/\fka))$ and $e_{\rm HK}(I)=e_{\rm HK}(I (A/\fka))$. 
Thus, we may assume that $A$ is complete and equidimensional. 

\begin{assumption}
We assume from now on that $(A, \m)$ is a $d$-dimensional complete equidimensional reduced local ring of characteristic $p>0$, unless stated otherwise. 
\end{assumption} 

Given $\m$-primary ideals $I \subseteq I'$, it is obvious from the definition that $e(I') \le e(I)$ and 
$e_{\rm HK}(I')\le e_{\rm HK}(I)$.  
Then it is natural to ask when the equalities hold. 
These equality conditions are described in terms of integral closure and tight closure. 

\begin{prop}\label{HK=} 
For $\fkm$-primary ideals  $I \subset I'$ of $A$, the following holds.
\begin{enumerate}
\item  $($\cite{Re}$)$ $e(I') = e(I)$ if and only if $I' \subset \overline{I}$. 
\item  $($\cite{HH0}$)$  $e_{\rm HK}(I')= e_{\rm HK}(I)$ if and only if $I'\subset I^*$.
\end{enumerate}
\end{prop}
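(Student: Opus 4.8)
The plan is to handle the two equivalences in turn; in each, the nontrivial implication is ``equality of multiplicities $\Rightarrow$ ideal containment,'' while the reverse is a length estimate.

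For (1), the implication $I' \subseteq \overline{I} \Rightarrow e(I') = e(I)$ is standard reduction theory. From $I \subseteq I' \subseteq \overline{I}$ one gets $\overline{I'} = \overline{I}$; after replacing $A$ by $A(t)$ (which alters neither multiplicity nor integral/tight closure) to arrange an infinite residue field, a minimal reduction $I_0$ of $I$ is simultaneously a minimal reduction of $I'$ and of $\overline{I}$, so $e(I) = e(I_0) = e(I')$. The converse, $e(I') = e(I) \Rightarrow I' \subseteq \overline{I}$, is Rees's theorem; it requires $A$ to be formally equidimensional, which holds here since $A$ is complete and equidimensional by our standing assumption. I would simply cite \cite{Re}.

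For (2), the key identity is
\[
\ell_A\bigl(A/(I')^{[q]}\bigr) \;=\; \ell_A\bigl(A/I^{[q]}\bigr) \;-\; \ell_A\bigl(A/(I^{[q]}:x^q)\bigr)
\]
for $I' = (I,x)$; it holds in any ring, since $(I')^{[q]} = I^{[q]} + x^qA$ and multiplication by $x^q$ identifies $(I^{[q]}+x^qA)/I^{[q]}$ with $A/(I^{[q]}:x^q)$. For the ``if'' direction, write $I' = (I,x_1,\dots,x_s)$ with each $x_i \in I^*$ and telescope along $I \subseteq (I,x_1) \subseteq \cdots \subseteq I'$, using $x_{i+1} \in I^* \subseteq (I,x_1,\dots,x_i)^*$; so it suffices to treat $I' = (I,x)$ with $x \in I^*$. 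Pick $c \in A^{\circ}$ with $cx^q \in I^{[q]}$ for $q \gg 0$. Then $A/(I^{[q]}:x^q)$ is a cyclic $A$-module killed by $c$ and, since $I$ is $\m$-primary (say $\m^N \subseteq I$), also by $\m^{Cq}$ for a constant $C$ independent of $q$, because $\m^{Cq} \subseteq (\m^N)^{[q]} \subseteq I^{[q]}$ for suitable $C$. Hence it is a module over $A/(cA + \m^{Cq})$; as $c \in A^{\circ}$ and $A$ is equidimensional we have $\dim A/cA \le d-1$, so its length is $O\bigl(q^{d-1}\bigr) = o(q^d)$. Dividing the displayed identity by $q^d$ and letting $q \to \infty$ yields $e_{\rm HK}(I') = e_{\rm HK}(I)$.

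The ``only if'' direction of (2) is the main obstacle. Given $I \subseteq I'$ with equal Hilbert--Kunz multiplicity, for each $x \in I'$ put $J = (I,x)$; from $I \subseteq J \subseteq I'$ and monotonicity of $e_{\rm HK}$ one gets $e_{\rm HK}(J) = e_{\rm HK}(I)$, so by the displayed identity $\ell_A(A/(I^{[q]}:x^q)) = o(q^d)$, and it remains to prove $x \in I^*$. I would first reduce to the case where $A$ is a complete local domain via the associativity formula $e_{\rm HK}(I) = \sum_{\mathfrak{p}} e_{\rm HK}\bigl(I(A/\mathfrak{p})\bigr)$, the sum over the minimal primes $\mathfrak{p}$ of $A$ (all with $\dim A/\mathfrak{p} = d$ and $\ell_{A_\mathfrak{p}}(A_\mathfrak{p})=1$ since $A$ is reduced): as $I \subseteq I'$ forces $e_{\rm HK}(I(A/\mathfrak{p})) \ge e_{\rm HK}(I'(A/\mathfrak{p}))$ termwise, equality of the sums forces equality in each summand, and $x \in I^*$ then follows from $x \in (I(A/\mathfrak{p}))^*$ for all minimal $\mathfrak{p}$ together with the fact that tight closure is detected on the quotients by the minimal primes. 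The crux is thus: in a complete local domain $A$ of dimension $d$, the sub-maximal growth $\ell_A(A/(I^{[q]}:x^q)) = o(q^d)$ of the colon colengths must produce a single nonzero $c$ with $cx^q \in I^{[q]}$ for all $q \gg 0$, i.e. $\bigcap_{q \ge q_0}(I^{[q]}:x^q) \ne 0$ for some $q_0$. Extracting such a uniform $c$ is exactly the content of the Hochster--Huneke argument (which uses the excellence of $A$ and the existence of test elements), and I would invoke it from \cite{HH0} rather than reprove it.
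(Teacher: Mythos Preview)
The paper does not actually prove Proposition~\ref{HK=}; it merely states the result and attributes (1) to Rees \cite{Re} and (2) to Hochster--Huneke \cite{HH0}. Your proposal therefore goes well beyond the paper, and what you have written is a correct outline of the standard arguments from those sources: the length identity $\ell_A(A/(I,x)^{[q]}) = \ell_A(A/I^{[q]}) - \ell_A(A/(I^{[q]}:x^q))$, the $O(q^{d-1})$ bound on the colon colength via $A/(cA+\m^{Cq})$ for the ``if'' direction of (2), the reduction to a complete local domain via additivity of $e_{\rm HK}$ over minimal primes, and the honest acknowledgement that extracting a uniform $c$ from sub-maximal growth is the genuine content of \cite[Theorem~8.17]{HH0}. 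Since the paper itself supplies no proof, there is nothing further to compare.
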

 
Next, we observe the relationship of $e(I)$ and $e_{\rm HK}(I)$. 
\begin{lem}\label{HKineq} 
Let $I$ be an $\fkm$-primary ideal $I $ of $A$. 
\begin{enumerate}
\item 
We have the following inequalities: 
\[\frac{1}{d !}e(I) \leq e_{\rm HK}(I) \leq e(I).\]
In particular, if $d=1$, then $e(I)= e_{\rm HK}(I)$. 
When $d \ge 2$, the left inequality is strict $($\cite{Han}$)$, but there are examples where $(1/{d !})e(I)$ is arbitrary close to $e_{\rm HK}(I)$ $($Example $\ref{WY example}$ $(2))$.

\item $($\cite{Le}$)$ If $I$ is a parameter ideal, then $e(I)= e_{\rm HK}(I)$. 
\item If $A$ is regular, then $e_{\rm HK}(I)=\ell_A(A/I)$ by the flatness of the Frobenius map $($Theorem $\ref{Kunz})$. In particular, if $A$ is regular, then $e_{\rm HK}(A)=1$. 
\end{enumerate}
\end{lem}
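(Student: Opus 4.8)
The plan is to establish the two inequalities of part (1) together with parts (2) and (3), deducing the upper bound in (1) from (2); the strictness of the left inequality for $d\ge 2$ and the near-equality examples are exactly the content of \cite{Han} and Example \ref{WY example}(2), so I would simply point to those.

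First I would prove the left inequality $\tfrac{1}{d!}e(I)\le e_{\mathrm{HK}}(I)$. The key is the elementary containment $I^{[q]}\subseteq I^{q}$ for every $q=p^{e}$ (if $a\in I$ then $a^{q}=a\cdots a\in I^{q}$), which gives $\ell_{A}(A/I^{[q]})\ge\ell_{A}(A/I^{q})$ since all these ideals are $\fkm$-primary. Dividing by $q^{d}$ and letting $q=p^{e}\to\infty$, the right-hand side tends to $\lim_{n}\ell_{A}(A/I^{n})/n^{d}=e(I)/d!$ by Definition \ref{HKdef}(1), so $e_{\mathrm{HK}}(I)\ge e(I)/d!$. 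For part (2), write the parameter ideal as $I=(x_{1},\dots,x_{d})$ with $x_{1},\dots,x_{d}$ a system of parameters. Lech's limit formula (\cite{Le}) gives $\lim_{n\to\infty}\ell_{A}(A/(x_{1}^{n},\dots,x_{d}^{n}))/n^{d}=e(I)$; specializing to the subsequence $n=p^{e}$ and noting $(x_{1}^{p^{e}},\dots,x_{d}^{p^{e}})=I^{[p^{e}]}$ identifies this limit with $e_{\mathrm{HK}}(I)$, hence $e_{\mathrm{HK}}(I)=e(I)$.

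Next I would deduce the right inequality $e_{\mathrm{HK}}(I)\le e(I)$ of part (1) by comparison with a minimal reduction. After the faithfully flat local base change $A\to A[t]_{\fkm A[t]}$, which affects neither $e(I)$ nor $e_{\mathrm{HK}}(I)$ (every relevant finite length is preserved), we may assume the residue field is infinite; then $I$ has a minimal reduction $J=(x_{1},\dots,x_{d})\subseteq I$ generated by a system of parameters, with $\overline{J}=\overline{I}$ and hence $e(J)=e(I)$ (cf.\ Proposition \ref{HK=}(1)). From $J\subseteq I$ we get $J^{[q]}\subseteq I^{[q]}$, so $\ell_{A}(A/I^{[q]})\le\ell_{A}(A/J^{[q]})$, whence $e_{\mathrm{HK}}(I)\le e_{\mathrm{HK}}(J)=e(J)=e(I)$ by part (2). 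When $d=1$ the two displayed inequalities collapse to $e(I)\le e_{\mathrm{HK}}(I)\le e(I)$, giving $e(I)=e_{\mathrm{HK}}(I)$.

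Finally, part (3): since $A$ is regular, Kunz's theorem (Theorem \ref{Kunz}) says the iterated Frobenius $F^{e}:A\to A$ is flat, so the Frobenius functor $F^{e}_{A}(-)$, which sends $A/I$ to $A/I^{[p^{e}]}$, is exact and in particular additive on short exact sequences. Every finite-length $A$-module is a finite iterated extension of copies of $A/\fkm$, and $\ell_{A}(F^{e}_{A}(A/\fkm))=\ell_{A}(A/\fkm^{[p^{e}]})=p^{ed}$ again by Theorem \ref{Kunz}, so induction on length yields $\ell_{A}(F^{e}_{A}(M))=p^{ed}\,\ell_{A}(M)$ for every finite-length $M$; taking $M=A/I$ gives $\ell_{A}(A/I^{[p^{e}]})=p^{ed}\,\ell_{A}(A/I)$, hence $e_{\mathrm{HK}}(I)=\ell_{A}(A/I)$ and in particular $e_{\mathrm{HK}}(A)=1$. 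The step I expect to be the main obstacle is the upper bound $e_{\mathrm{HK}}(I)\le e(I)$: unlike the lower bound it is not visible from a direct ideal containment, and the argument above funnels it through part (2), i.e.\ through Lech's limit formula, after reducing to a parameter-ideal minimal reduction (which in turn requires setting up the residue-field extension with some care); everything else is bookkeeping with lengths and the flatness of Frobenius.
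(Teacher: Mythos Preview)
The paper does not give its own proof of this lemma; it is stated with references (\cite{Han}, \cite{Le}, Theorem~\ref{Kunz}) and then used immediately afterwards. Your argument is correct and is the standard one: the lower bound via $I^{[q]}\subseteq I^q$, part (2) via Lech's limit formula specialized to $n=p^e$, the upper bound by passing to an infinite residue field and comparing with a minimal reduction (which is a parameter ideal), and part (3) via exactness of the Frobenius functor together with $\ell_A(A/\fkm^{[p^e]})=p^{ed}$ from Kunz's theorem.
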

 
 It follows from Proposition \ref{HK=} and Lemma \ref{HKineq} that if $A$ is $F$-rational but not regular, then $e_{\rm HK}(A) \lneq e(A)$. 
For the proof of this,  we may assume that the residue field $A/\m$ is infinite.  
 Then it is well-known that there exists an $\m$-primary parameter ideal $J \subset A$ such that $\overline{J}=\m$. 
 Since $J^*=J \subsetneq \m$ by assumption, one has  $e_{\rm HK}(A) \lneq e_{\rm HK}(J)=e(J)=e(A)$.

Since $e(I)= e_{\rm HK}(I)$ for one-dimensional local rings as we have seen in Lemma \ref{HKineq} (1), we mainly consider the case where $d \ge 2$. 

\begin{ex} [\cite{WY1}, \cite{WY2}]\label{WY example}
Let $k$ be the residue field of $A$ and $I$ be an $\m$-primary ideal of $A$. 

(1) Let $A\subset  B$ be a finite extension of local domains with the same residue field. 
Then $e_{\rm HK}(I) ={e_{\rm HK}(I B)}/{r}$, where $r$ is the rank of $B$ as an $A$-module.  
In particular, if  $B$ is regular, then $e_{\rm HK}(I) = \ell_B (B/IB)/{r}$. 

(2) Let $A$ be the $r$-th Veronese subring of $B=k[[X_1,\ldots ,X_d]]$, that is, the subring of $B$ generated by all monomials of degree $r$. 
Then by (1), we have $e_{\rm HK}(A) =\ell_B( B /\fkm B)/{r} =\binom{d+r-1}{d}/r$. 
On the other hand, since $e(A) = r^{d-1}$,  
one has $\lim_{r\to \infty} e_{\rm HK}(A)/ e(A)  ={1}/{d!}$. 
Thus, this is an example where $({1}/{d !})e(I)$ is arbitrary close to $e_{\rm HK}(I)$. 

(3) If $A = k[[X,Y,Z]]/(XY - Z^n)$ where $n \ge 2$ is an integer, then $e_{\rm HK}(A) = 2 - {1}/{n}$. 
In general, when $A$ is a two-dimensional $F$-rational Gorenstein complete local ring with algebraically closed coefficient field $k$, there exists a linearly reductive finite subgroup $G$ of ${SL}(2, k)$ such that $A$ is isomorphic to the completion of the ring of invariants $k[X, Y]^G$ (see \cite{Has1}).  
Then $e_{\rm HK}(A) = 2 - {1}/{|G|}$. 

(4)  Assume that $A$ is a two-dimensional unmixed local ring.\footnote{A $d$-dimensional local ring $(R, \m)$ is called \textit{unmixed} if for every associated prime $\fkp$ of the $\m$-adic completion $\widehat{R}$, one has $\dim\widehat{R}/\fkp =d$.}  
Then for every $\fkm$-primary 
ideal $I$, we have 
$e_{\rm HK}(I)  \ge (e(I) +1) /{2}$.  If  $k$ is an algebraically closed field, 
then the equality holds when $I=\m$ if and only if the associated graded ring $\mathrm{gr}_{\fkm} \,A =
\bigoplus_{n\ge 0} \fkm^n/\fkm^{n+1}$ is isomorphic to the $e(A)$-th Veronese subring of 
 $k[X,Y]$.
 In particular, if $A$ is not regular, then 
$e_{\rm HK}(A)  \ge {3}/{2}$ and the equality holds if and only if 
$A\cong k[[X,Y,Z]]/(XY-Z^2)$. 
\end{ex}
 
\begin{rem}\label{DCC?} Fix an integer $d \ge 2$. 
We see from Example \ref{WY example} (3) that 
there is a sequence of two-dimensional local rings $\{A_n\}$ such that 
$e_{\rm HK}(A_n)$ converges to 2 from below. 
Is there a a sequence of local rings $\{A_n\}$ of dimension $d$ such that 
$e_{\rm HK}(A_n)$ converges to some real number from above?  
\end{rem}

The following theorem of Nagata is very fundamental in commutative ring theory and singularity theory. 
\begin{thm}[\textup{\cite[(40.6)]{Na}}]\label{Nagata}
If $A$ is an unmixed local ring, then $A$ is regular if and only if $e(A)=1$. 
\end{thm}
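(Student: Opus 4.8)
The implication ``$A$ regular $\Rightarrow e(A)=1$'' is routine: in a regular local ring $\mathrm{gr}_{\m}(A)=\bigoplus_{n\ge 0}\m^n/\m^{n+1}$ is a polynomial ring over $A/\m$, so $\ell_A(A/\m^n)=\binom{n+d-1}{d}$ and hence $e(A)=1$. For the converse the plan is as follows. First I would make two harmless reductions. Since $e(\widehat A)=e(A)$, since $\widehat A$ is unmixed whenever $A$ is, and since $A$ is regular precisely when $\widehat A$ is, I may assume $A$ is complete; and by a standard faithfully flat local extension with a field as closed fibre (adjoin a variable, localize, complete) I may further assume that the residue field $k=A/\m$ is infinite. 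Both reductions preserve completeness, unmixedness, the value of $e(A)$, and the regularity of $A$.

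Next I would use the associativity formula for Hilbert--Samuel multiplicity to show that $A$ is in fact a domain:
\[
e(A)=e(\m;A)=\sum_{\fkp\in\Assh(A)}\ell_{A_{\fkp}}(A_{\fkp})\,e(\m;A/\fkp).
\]
By unmixedness $\Assh(A)=\Ass(A)$ and every such $\fkp$ is minimal, so each summand is a positive integer; thus $e(A)=1$ forces a unique $\fkp$, with $\ell_{A_{\fkp}}(A_{\fkp})=1$. Then $\fkp$ is the unique minimal prime, i.e.\ $\fkp=\sqrt 0$, and $\ell_{A_{\fkp}}(A_{\fkp})=1$ says $A_{\fkp}$ is a field, so $(\sqrt 0)_{\fkp}=0$; since $\Ass_A(\sqrt 0)\subseteq\Ass_A(A)=\{\fkp\}$, a nonzero $\sqrt 0$ would have $(\sqrt 0)_{\fkp}\neq 0$, a contradiction. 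Hence $A$ is a complete local \emph{domain} with $e(A)=1$.

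Then comes the heart of the argument. Using that $k$ is infinite I would pick a minimal reduction $\mathfrak q=(x_1,\dots,x_d)$ of $\m$; since the analytic spread of $\m$ equals $d$ it is generated by $d$ elements, which therefore form a system of parameters. Let $B\subseteq A$ be the subring generated by $x_1,\dots,x_d$. Because $\m_B A=\mathfrak q A$ is $\m$-primary and $A$ is complete, $A$ is a finite $B$-module, so $\dim B=d$; hence the kernel of the surjection $k[[X_1,\dots,X_d]]\to B$ is a prime of dimension $d$, so it is zero, and $B\cong k[[X_1,\dots,X_d]]$ is a complete regular local ring with residue field $k$. Now I compute $e(\mathfrak q;A)$ in two ways. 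On one hand $\mathfrak q$ is a reduction of $\m$, so $\m\subseteq\overline{\mathfrak q}$ and Proposition \ref{HK=}(1) gives $e(\mathfrak q;A)=e(\m;A)=e(A)=1$. On the other hand $\mathfrak q=\m_B A$, and since $A$ and $B$ share the residue field $k$ one has $\ell_A(M)=\dim_k M=\ell_B(M)$ for every finite-length $A$-module $M$; applying this to $M=A/\m_B^nA$ shows that $e(\mathfrak q;A)$ equals the multiplicity of the finite $B$-module $A$ along $\m_B$, which by additivity of multiplicity over the domain $B$ (together with $e(\m_B;B)=1$ and faithfulness of $A$ over $B$) equals $\rank_B(A)\cdot e(\m_B;B)=\rank_B(A)$. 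Comparing the two values gives $\rank_B(A)=[Q(A):Q(B)]=1$, hence $Q(A)=Q(B)$; since $A$ is integral over $B$ and $B$ is regular, hence normal, $A\subseteq B$, and with $B\subseteq A$ we conclude $A=B$ is regular.

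The step I expect to be the main obstacle is the passage from $e(A)=1$ to ``$A$ is a domain'': this is exactly where unmixedness is indispensable (the statement is false without it), and some care is needed to invoke the associativity formula in the correct form and to justify $\sqrt 0=0$. The two reductions at the start are standard but should be stated carefully, while the last two steps are essentially formal once one has Proposition \ref{HK=}(1) and the basic additivity of Hilbert--Samuel multiplicity; alternatively one could obtain $\rank_B(A)=1$ from Example \ref{WY example}(1) applied to the finite extension $B\subseteq A$, combined with $e_{\rm HK}(\mathfrak q;A)=e(\mathfrak q;A)$ (Lemma \ref{HKineq}(2)) and $e_{\rm HK}(\m_B;B)=1$.
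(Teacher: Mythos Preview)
The paper does not give its own proof of this theorem: it is quoted as Nagata's classical result \cite[(40.6)]{Na} and used as background for the Hilbert--Kunz analogue (Theorem \ref{HK-reg}). So there is nothing to compare your argument against in the paper itself.

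Your sketch is essentially the classical proof and is correct. Two small points of precision are worth tightening. First, ``the subring generated by $x_1,\dots,x_d$'' is not the right object: you need a coefficient field $k\subseteq A$ (available by Cohen's structure theorem, and automatic in the paper's standing characteristic-$p$ hypothesis) and then $B$ is the image of the continuous $k$-algebra map $k[[X_1,\dots,X_d]]\to A$; completeness of $A$ is what makes this map well-defined, and your finiteness argument then goes through. Second, the reduction to an infinite residue field while preserving unmixedness and completeness simultaneously deserves a sentence: the cleanest route is to first complete (which preserves everything by the very definition of unmixed), and then perform the residue-field extension via a faithfully flat local map $A\to A'$ with regular fibers (e.g.\ base change along $k\hookrightarrow k(t)$ at the level of the Cohen presentation), so that $\Ass(A')$ is controlled by $\Ass(A)$. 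Neither point is a genuine gap; once stated carefully, the associativity-formula step and the rank computation via $e(\mathfrak q;A)=\rank_B(A)\cdot e(\m_B;B)$ are exactly the standard argument. Your alternative endgame using Lemma \ref{HKineq}(2) and Example \ref{WY example}(1) also works and ties the proof nicely to the surrounding material, though it imports the characteristic-$p$ hypothesis, whereas the Hilbert--Samuel route is characteristic-free.
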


The analogous statement for Hilbert-{K}unz multiplicity is also true. 
\begin{thm}[\cite{WY1}, \cite{HuYa}]\label{HK-reg} 
If $A$ is an unmixed local ring of positive characteristic, 
then $A$ is regular if and only if $e_{\rm HK}(A)=1$. 
\end{thm}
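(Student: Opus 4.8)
The plan is as follows. The implication ``$A$ regular $\Rightarrow e_{\rm HK}(A)=1$'' is already recorded in Lemma~\ref{HKineq}~(3): if $A$ is regular the Frobenius map is flat (Theorem~\ref{Kunz}), so $\ell_A(A/\m^{[q]})=q^d$ for all $q=p^e$ and hence $e_{\rm HK}(A)=1$. So only the converse needs proof. First I would reduce to a convenient setting. Since neither $e_{\rm HK}$ nor regularity changes under completion, and ``unmixed'' means precisely that $\widehat A$ is equidimensional without embedded primes, I may replace $A$ by $\widehat A$. A faithfully flat base change (a gonflement, or enlarging the residue field and localizing) then lets me assume the residue field $k$ is infinite and perfect, so that $A$ is $F$-finite and $\ell_A(A/\m^{[q]})=\mu_A(F^e_*A)$.

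Next I would observe that $e_{\rm HK}(A)\ge 1$ always, and that equality forces $A$ to be a domain. By the associativity formula for Hilbert--Kunz multiplicity, $e_{\rm HK}(A)=\sum_{\fkp}\ell_{A_\fkp}(A_\fkp)\,e_{\rm HK}(\m,A/\fkp)$, the sum being over the minimal primes $\fkp$ of $A$, all of coheight $d$ by equidimensionality. For each complete local domain $A/\fkp$ with perfect residue field, $(A/\fkp)^{1/q}$ is a finite module of rank $q^d$, so $\mu_{A/\fkp}\bigl((A/\fkp)^{1/q}\bigr)\ge q^d$ and thus $e_{\rm HK}(\m,A/\fkp)\ge 1$; since also $\ell_{A_\fkp}(A_\fkp)\ge 1$, the hypothesis $e_{\rm HK}(A)=1$ leaves exactly one summand, equal to $1$. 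Hence $A$ has a unique minimal prime $\fkp$ with $A_\fkp$ a field, and combined with the absence of embedded primes this forces the nilradical of $A$ to vanish; so from now on $A$ is a complete $F$-finite local domain with $e_{\rm HK}(A)=1$, and I must show it is regular.

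The heart of the proof is to show that a \emph{singular} such ring has $e_{\rm HK}(A)>1$. By Nagata's theorem (Theorem~\ref{Nagata}), which says $A$ is regular iff $e(A)=1$ for unmixed $A$, it is enough to deduce $e(A)=1$ from $e_{\rm HK}(A)=1$. The elementary inequality $\tfrac1{d!}e(A)\le e_{\rm HK}(A)$ of Lemma~\ref{HKineq} only yields $e(A)\le d!$, so it must be sharpened; the key point is the inequality
\[
e_{\rm HK}(A)\ \ge\ 1+\frac{e(A)-1}{d!},
\]
from which $e_{\rm HK}(A)=1$ forces $e(A)\le 1$, hence $e(A)=1$. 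To prove this I would analyse the Hilbert--Kunz function by filtering $A/\m^{[q]}$ with the $\m$-adic filtration: the inclusion $\m^{[q]}\subseteq\m^q$ accounts for the contribution $\tfrac{e(A)}{d!}q^d$ coming from the Hilbert--Samuel function, while the fact that $\m^{[q]}$ is generated by only $\mu(\m)$ elements $x_i^{q}$ (so it is very far from a generic ideal of colength $q^d$) is forced to contribute the remaining $\bigl(1-\tfrac1{d!}\bigr)q^d$; this is made precise by a Koszul/Valabrega--Valla type length count, or, in the $F$-finite picture, by estimating $\mu_A(F^e_*A)$ through a minimal free presentation of $F_*A$ together with the non-freeness of $F_*A$ (Theorem~\ref{Kunz} again).

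The main obstacle will be this last estimate. Unlike the Hilbert--Samuel function, $q\mapsto\ell_A(A/\m^{[q]})$ is not eventually polynomial — one only knows $\ell_A(A/\m^{[q]})=e_{\rm HK}(A)q^d+O(q^{d-1})$ in general — so one has to control the sign and order of growth of the lower-order part well enough to see that it cannot be negligible when $A$ is singular. The obvious shortcut of inducting on $\dim A$ via a hyperplane section fails, since $e_{\rm HK}(A/xA)\ne e_{\rm HK}(A)$ in general (already for $A=k[[x,y,z]]/(xy-z^2)$ one has $e_{\rm HK}(A)=3/2$ but $e_{\rm HK}(A/xA)=2$ for every $x\in\m\setminus\m^2$). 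A genuinely quantitative argument is therefore required, and making it work for rings that are far from Cohen--Macaulay is the delicate point — which is why the theorem in its full generality (for arbitrary unmixed local rings, not only complete or excellent ones) needed the additional input of~\cite{HuYa}.
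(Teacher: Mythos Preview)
Your plan departs substantially from the paper's argument, and the decisive step---the inequality $e_{\rm HK}(A)\ge 1+\dfrac{e(A)-1}{d!}$---is where the proposal has a real gap. You gesture at a proof via the $\m$-adic filtration of $A/\m^{[q]}$ or a minimal-generators count for $F^e_*A$, but neither idea comes close to producing that specific lower bound: the filtration only gives $\ell_A(A/\m^{[q]})\ge\ell_A(A/\m^q)$, i.e.\ $e_{\rm HK}(A)\ge e(A)/d!$, which is just Lemma~\ref{HKineq}(1), and extracting the extra additive constant $1-1/d!$ from the ``sparseness'' of the generators of $\m^{[q]}$ is exactly what no Koszul or Valabrega--Valla count in the literature supplies for an arbitrary unmixed ring. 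In dimension~$2$ your inequality is Example~\ref{WY example}~(4), and already there the proof is nontrivial; in higher dimension no such statement is known (compare Conjecture~\ref{HKConj}, which concerns the much weaker bound $e_{\rm HK}(A)\ge e_{\rm HK}(A_{p,d})$ and is open for $d\ge 7$). So the step you flag as ``the main obstacle'' is not a technicality but the whole theorem.

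The paper's route is completely different and avoids Hilbert--Samuel multiplicity after the first step: it uses tight closure. The key fact is that $e_{\rm HK}(A)=1$ forces $e_{\rm HK}(J)-\ell_A(I/J)\le e_{\rm HK}(I)\le\ell_A(A/I^*)$ for all $\m$-primary $J\subseteq I$. Taking $I=J=\mathfrak{q}$ a parameter ideal and invoking the Goto--Nakamura theorem (Theorem~\ref{GN thm}, $\ell_A(A/\mathfrak{q}^*)\le e(\mathfrak{q})$) gives $\ell_A(A/\mathfrak{q}^*)=e(\mathfrak{q})$, hence $A$ is $F$-rational and in particular Cohen--Macaulay. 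One then reruns the key fact with $I=\m^{[p]}$ and a parameter ideal $J\subseteq I$: Cohen--Macaulayness makes $\ell_A(A/J)=e_{\rm HK}(J)$, the chain of inequalities collapses to $\ell_A(A/\m^{[p]})=p^d$, and Kunz's criterion (Theorem~\ref{Kunz}) finishes. So tight closure and Theorem~\ref{GN thm} do the work that your unproved inequality was meant to do; your reductions to the complete $F$-finite domain case via the associativity formula are correct but not needed for this argument.
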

 
We give a sketch of the proof of Theorem \ref{HK-reg}. 
The key ingredient is the following fact: if $e_{\rm HK}(A)=1$, then $e_{\rm HK}(J)-\ell_A(I/J) \le  e_{\rm HK}(I) \le \ell_A(A/I^*)$ for any $\fkm$-primary ideals  $J \subset I$. 
It follows from this fact and Theorem \ref{GN thm} that $\ell_A(A/\mathfrak{q}^*)=e_{\rm HK}(\mathfrak{q})=e(\mathfrak{q})$ for every $\m$-primary parameter ideal $\mathfrak{q} \subset A$, which implies that $A$ is $F$-rational. 
Put  $I=\m^{[p]}$ and take an $\m$-primary parameter ideal $J \subset I$. 
By the above fact and the Cohen-Macaulay property of $A$, we see that 
\begin{align*}
\ell_A(A/I)=\ell_A(A/J)-\ell_A(I/J)=e_{\rm HK}(J)-\ell_A(I/J) \le e_{\rm HK}(I) &\le \ell_A(A/I^*)\\ 
&\le \ell_A(A/I).
\end{align*}
Namely, $\ell_A(A/I)=e_{\rm HK}(I)=p^d e_{\rm HK}(A)=p^d$, and we conclude from Theorem 
\ref{Kunz} (3)$\Rightarrow$(1) that $A$ is regular. 
 
Theorem \ref{GN thm} was proved under some additional assumption in \cite{WY1}, and in the general case in \cite{GN}.  
It should be compared with the basic fact that for an $\m$-primary parameter ideal $J \subset A$, one has $\ell_A (A/J) \ge e(J)$ and the equality holds if $A$ is Cohen-Macaulay. 
Replacing $J$ with $J^*$, we have the reverse inequality. 

\begin{thm}[\cite{GN}, cf.~\cite{WY1}]\label{GN thm} 
If $J$ be an $\m$-primary parameter ideal of $A$, then $\ell_A (A/J^*) \le e(J)$. 
Moreover, if $A$ is unmixed and $\ell_A (A/J^*) = e(J)$ for some $\m$-primary parameter ideal $J \subset A$, then $A$ is $F$-rational.
\end{thm}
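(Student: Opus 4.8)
The strategy is to reduce the whole statement to a comparison, via colon‑capturing, between the Cohen--Macaulay defect of $A$ and the tight closure of a parameter ideal. First I would, by a standard faithfully flat local base change, reduce to the case that the residue field $A/\m$ is infinite; this preserves the standing hypotheses on $A$ and changes none of $e(J)$, $e_{\rm HK}(J)$, $\ell_A(A/J)$, $\ell_A(A/J^*)$, nor the property of being $F$-rational. Throughout I may invoke the colon‑capturing theorem (Theorem \ref{colon capturing}), which applies because $A$ is complete and hence a homomorphic image of a regular local ring, and the existence of a test element $c \in A^{\circ}$, which holds because $A$ is complete (hence excellent) and reduced.

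For the inequality $\ell_A(A/J^*) \le e(J)$, write $J = (x_1, \dots, x_d)$. By Theorem \ref{colon capturing} each colon ideal $(x_1, \dots, x_i):x_{i+1}$ lies in $(x_1, \dots, x_i)^* \subseteq J^*$, so $J^*$ contains the ``Cohen--Macaulayfication'' colon ideal obtained by iterating these colons; the content of the estimate is then that this colon ideal accounts for the full difference $\ell_A(A/J) - e(J)$ (the classical non‑Cohen--Macaulay defect, which is $\ge 0$ for a parameter ideal), whence $\ell_A(A/J^*) \le e(J)$. Equivalently, and perhaps more robustly, one argues asymptotically: using the test element $c$ one has $c\,(J^{[q]})^* \subseteq J^{[q]}$ for every $q = p^e$, hence $\ell_A\big((J^{[q]})^*/J^{[q]}\big) \le \ell_A\big(A/(J^{[q]}+cA)\big) = o(q^d)$ since $\dim A/cA < d$ by reducedness and equidimensionality; combining this with the estimate $\ell_A\big(A/(J^{[q]})^*\big) \ge q^d\,\ell_A(A/J^*) - o(q^d)$, with $e(J) = e_{\rm HK}(J)$ (Lemma \ref{HKineq}(2)) and $e_{\rm HK}(J) = e_{\rm HK}(J^*)$ (Proposition \ref{HK=}(2)), and dividing by $q^d$ as $e \to \infty$, gives $e(J) \ge \ell_A(A/J^*)$. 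In either formulation the real obstacle is the same: bounding how much length tight closure can produce on a parameter ideal -- concretely, the colength bound on the iterated‑colon ideal, respectively the inequality $\ell_A(A/(J^{[q]})^*) \ge q^d\,\ell_A(A/J^*) - o(q^d)$ -- and this is where tight‑closure input genuinely beyond the formal properties of \S\ref{tight closure} is needed.

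For the second statement, assume $A$ is unmixed and $\ell_A(A/J^*) = e(J)$. Then the estimates above are sharp, and the vanishing of the excess $\ell_A(A/J^*) - e(J)$ should be traced through the colon‑capturing bookkeeping to conclude that the positive‑degree Koszul homology $H_i(x_1, \dots, x_d; A)$ vanishes for $i \ge 1$, i.e.\ that $A$ is Cohen--Macaulay; unmixedness is essential here, as it forbids embedded behaviour that could make $J^*$ large for reasons unrelated to a failure of Cohen--Macaulayness. Once $A$ is Cohen--Macaulay, $\ell_A(A/J) = e(J) = \ell_A(A/J^*)$ forces $J = J^*$, and a Cohen--Macaulay local ring with infinite residue field in which one parameter ideal is tightly closed is $F$-rational: in the Cohen--Macaulay case $\mathfrak{q}^*/\mathfrak{q}$ is computed uniformly from $0^*_{H^d_\m(A)}$ (via $\mathfrak{q}^*/\mathfrak{q} \cong (A/\mathfrak{q}) \cap 0^*_{H^d_\m(A)}$), so $J^* = J$ forces $0^*_{H^d_\m(A)} = 0$, i.e.\ $H^d_\m(A)$ has no proper nonzero Frobenius‑stable submodule, whence $A$ is $F$-rational by Theorem \ref{Frobenius action}. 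The delicate point in this half is precisely the extraction of Cohen--Macaulayness from the single numerical equality $\ell_A(A/J^*) = e(J)$.
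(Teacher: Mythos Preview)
The paper does not prove Theorem~\ref{GN thm}; it is quoted from \cite{GN} (with the earlier special case in \cite{WY1}) and only \emph{used} in the sketch of Theorem~\ref{HK-reg}. So there is no in-paper argument to compare against, and your proposal has to be judged on its own.

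Your outline is broadly the right one and matches the strategy of \cite{WY1,GN}: the first inequality is an asymptotic length estimate driven by colon capturing, and the equality case is handled by extracting Cohen--Macaulayness, deducing $J=J^*$, and then invoking the local-cohomology characterization of $F$-rationality. The honest flag you raise is exactly where the work lies. Two points deserve sharpening.

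First, in the asymptotic route you correctly obtain $\ell_A\big((J^{[q]})^*/J^{[q]}\big)\le \ell_A\big(A/(J^{[q]}+cA)\big)=o(q^d)$ from a test element, but the remaining inequality $\ell_A\big(A/(J^{[q]})^*\big)\ge q^d\,\ell_A(A/J^*)-o(q^d)$ is not something ``beyond \S\ref{tight closure}'' in a vague sense: it is proved by a concrete filtration argument using an iterated form of colon capturing for parameters, namely that for a system of parameters one has $(x_1^{a_1},\dots,x_d^{a_d})^*:x_i=(x_1^{a_1},\dots,x_i^{a_i-1},\dots,x_d^{a_d})^*$, which lets one build a chain of length $q^d$ in $A/(J^{[q]})^*$ whose successive quotients surject onto $A/J^*$. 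That identity (not just the one-step inclusion of Theorem~\ref{colon capturing}) is the genuine input, and you should say so rather than leave the step as a black box. Your ``direct'' colon-capturing paragraph gestures at this but, as written, the claim that the iterated colon ideal ``accounts for the full difference $\ell_A(A/J)-e(J)$'' is not an argument.

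Second, in the equality case your final step is slightly mis-cited. From $J^*=J$ in the Cohen--Macaulay situation you want $0^*_{H^d_\m(A)}=0$; this is the characterization recorded just after Definition~\ref{tc module} (and in Proposition~\ref{Frational2}), not Theorem~\ref{Frobenius action}. The implication ``$J^*=J$ for one parameter ideal $\Rightarrow 0^*_{H^d_\m(A)}=0$'' also needs a word: since $A$ is Cohen--Macaulay the inclusion $A/J\hookrightarrow H^d_\m(A)$ is injective and carries the socle of $A/J$ onto the socle of $H^d_\m(A)$, so any nonzero element of $0^*_{H^d_\m(A)}$ would produce a nonzero element of $J^*/J$. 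Your identification $\mathfrak{q}^*/\mathfrak{q}\cong (A/\mathfrak{q})\cap 0^*_{H^d_\m(A)}$ is correct, but by itself it only gives $(A/J)\cap 0^*_{H^d_\m(A)}=0$, and you need the socle argument to conclude $0^*_{H^d_\m(A)}=0$.
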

 
By Theorem \ref{HK-reg}, if an unmixed local ring  $A$ is not regular, then $e_{\rm HK}(A) >1$. 
It is natural to ask whether there is a sharp lower bound for Hilbert-Kunz multiplicity of $d$-dimensional non-regular unmixed local rings.

\begin{conj}[\cite{WY3}]\label{HKConj} Let $k$ be a field of characteristic $p \ge 3$ and $d \ge 2$ be an integer.  If we put 
\[
A_{p, d} = k[[X_0, X_1, \ldots , X_d]]/ (X_0^2+X_1^2+ \ldots + X_d^2), 
\]
then the following holds for an arbitrary $d$-dimensional unmixed local ring $(A, \m)$ of characteristic $p$ with residue field $k$. 
\begin{enumerate}
\item If $A$ is not regular, then $e_{\rm HK}(A) \ge e_{\rm HK}(A_{p,d})$. 
\item If $e_{\rm HK}(A) = e_{\rm HK}(A_{p,d})$ and if $k$ is an algebraically closed field, 
then the $\m$-adic completion $\widehat{A}$ of $A$ is isomorphic to $A_{p,d}$. 
\end{enumerate}
\end{conj}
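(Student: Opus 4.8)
The final statement (Conjecture \ref{HKConj}) is an open conjecture, not a theorem, so what follows is a proposed strategy for the known partial cases and the expected route to the full statement, rather than a complete proof.

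The plan is to split the problem into the "lower bound" part (1) and the "rigidity" part (2), and within each to reduce to the extremal geometry of the tangent cone. First I would reduce to the case where $A$ is a two-dimensional (or general $d$-dimensional) complete equidimensional reduced local ring with algebraically closed residue field $k$ of characteristic $p \ge 3$, using the fact recorded in the excerpt that $e_{\mathrm{HK}}$ is unchanged by completion and by killing lower-dimensional components, together with a faithfully-flat base change to enlarge the residue field. The benchmark value $e_{\mathrm{HK}}(A_{p,d})$ should first be computed explicitly: for the quadric hypersurface $A_{p,d}=k[[X_0,\dots,X_d]]/(\sum X_i^2)$ one computes $\ell_A(A/\m^{[q]})$ by a direct count (or by the recursion relating it to the Hilbert-Kunz function of the quadric in one fewer variable), obtaining a closed form in $p$ and $d$; this is a finite, if intricate, calculation using that $p$ is odd.

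For part (1), the strategy I would follow is the dimension-two case of \cite{WY3} pushed to higher dimension: if $A$ is not regular then $e(A)\ge 2$ by Nagata's theorem (Theorem \ref{Nagata}), and one wants a lower bound for $e_{\mathrm{HK}}(A)$ in terms of $e(A)$. For $d=2$ this is Example \ref{WY example} (4): $e_{\mathrm{HK}}(A) \ge (e(A)+1)/2 \ge 3/2 = e_{\mathrm{HK}}(A_{p,2})$. For $d\ge 3$ one would need a sharp inequality of the shape $e_{\mathrm{HK}}(A) \ge c_d\,e(A) + (\text{correction})$, where the correction term is exactly tuned so that equality forces $A$ to be the quadric cone; the natural tool is to pass to the associated graded ring $\mathrm{gr}_\m A$ and bound its Hilbert-Kunz function below by that of a hypersurface of degree $e(A)$, using the lower-semicontinuity of Hilbert-Kunz multiplicity under deformation to the tangent cone. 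The key estimate here is that among all $d$-dimensional graded rings of fixed multiplicity $e\ge 2$, the hypersurface $k[X_0,\dots,X_d]/(X_0^e + \cdots)$ minimizes $e_{\mathrm{HK}}$, and that this minimum is itself increasing in $e$ for $d$ fixed (so $e=2$, the quadric, wins).

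For part (2), the rigidity statement, the plan is: if $e_{\mathrm{HK}}(A)=e_{\mathrm{HK}}(A_{p,d})$ then by part (1) all the intermediate inequalities are equalities, which forces $e(A)=2$ and forces $\mathrm{gr}_\m A$ to be exactly the quadric cone $k[X_0,\dots,X_d]/(Q)$ with $Q$ a nondegenerate quadratic form — over an algebraically closed field of characteristic $\ne 2$ every such $Q$ is equivalent to $\sum X_i^2$. One then upgrades "$\mathrm{gr}_\m A \cong A_{p,d}$" to "$\widehat A \cong A_{p,d}$": since $A_{p,d}$ is a hypersurface with isolated singularity, its tangent cone being a reduced normal quadric cone of the same embedding dimension forces $A$ itself to be a quadric hypersurface (the multiplicity-$2$ hypersurfaces are classified by their tangent cones up to formal isomorphism, via the theory of determinacy of hypersurface singularities / the fact that a degree-$2$ form has no higher-order deformations once its rank is maximal). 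I expect the main obstacle to be precisely the higher-dimensional sharp lower bound in part (1): the two-dimensional argument uses Cohen-Macaulayness and the classification of minimal multiplicity rings, and for $d\ge 3$ one does not have as clean a structure theory for the tangent cone, so controlling $e_{\mathrm{HK}}(\mathrm{gr}_\m A)$ from below by $e_{\mathrm{HK}}$ of the degree-$e(A)$ hypersurface — uniformly in $p$ — is the genuinely hard analytic input, and is the reason the statement remains a conjecture.
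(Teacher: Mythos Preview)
The paper presents this as an open conjecture and gives no proof; it only records the known partial cases ((2) for $d\le 4$ via \cite{WY3}, (1) for $d\le 6$ via \cite{AbE}, and (1) for complete intersections via \cite{EnSh}). You rightly flag this up front, and your reductions (completion, killing lower-dimensional components, enlarging the residue field) together with the $d=2$ case via Example~\ref{WY example}~(4) are correct.

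There is, however, a directional error at the heart of your strategy for (1). The semi-continuity result of \cite{EnSh} is \emph{upper} semi-continuity: in a flat family the Hilbert--Kunz multiplicity can only jump up at the special fiber, and in the deformation to the tangent cone the special fiber is $\mathrm{gr}_{\m} A$. Hence $e_{\rm HK}(\mathrm{gr}_{\m} A)\ge e_{\rm HK}(A)$, not the reverse. A lower bound on $e_{\rm HK}(\mathrm{gr}_{\m} A)$ therefore yields no lower bound on $e_{\rm HK}(A)$, and your step ``pass to the tangent cone, then compare to a degree-$e(A)$ hypersurface'' collapses. This is not a technicality: it is precisely why the conjecture is hard. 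The known low-dimensional arguments work instead with direct colength estimates for Frobenius powers of parameter ideals, the inequality $\ell_A(A/J^*)\le e(J)$ of Theorem~\ref{GN thm}, and (in \cite{AbE}) delicate volume-type bounds, rather than through the associated graded ring as an intermediary for the inequality. Your rigidity outline for (2) is closer to what is actually done (the equality case in Example~\ref{WY example}~(4) does identify $\mathrm{gr}_{\m} A$), but since it rests on (1) being sharp enough to force $e(A)=2$, it inherits the same gap.
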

 
In Conjecture \ref{HKConj}, (2) is known for $d\le 4$ (\cite{WY3}), and (1) is known for $d \le 6$ (\cite{AbE}) or when $A$ is a complete intersection  (\cite{EnSh}).

It is known that $e_{\rm HK}(A_{p,2})={3}/{2}$ and $e_{\rm HK}(A_{p,3})={4}/{3}$ for all $p$. 
If $d\ge 4$, then the value of $e_{\rm HK}(A_{p,d})$ depends on $p$. 
Indeed, $e_{\rm HK}(A_{p,4})={(29p^2+15)}/{(24p^2+12)}$ and $\lim_{p \to \infty} e_{\rm HK}(A_{p,4})={29}/{24}$.
One of the most mysterious properties of $A_{p,d}$ is that the limit $\lim_{p \to \infty}e_{\rm HK}(A_{p,d})$ can be described by using the Maclaurin expansion of $\tan x + \sec x$. 
 
\begin{thm}[\cite{GM}]\label{SecTan}  
Let the notation be the same as that in Conjecture $\ref{HKConj}$. 
Let $\sum_{d\ge 0} ({c_d}/{d!})  x^d$ be the Maclaurin expansion of $\tan x + \sec x$. 
Then 
\[
\lim_{p\to \infty} e_{\rm HK}(A_{p, d}) =
1 + \frac{c_d}{d!}.
\]
\end{thm}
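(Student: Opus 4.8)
The plan is to compute the Hilbert-Kunz multiplicity of the quadric hypersurface $A_{p,d} = k[[X_0,\dots,X_d]]/(q)$ with $q = X_0^2 + \dots + X_d^2$ by realizing $\ell_{A_{p,d}}(A_{p,d}/\m^{[p^e]})$ combinatorially and then taking the limit as $p \to \infty$. First I would pass to a more convenient coordinate change: over a field in which $2$ is invertible (hence $p \ge 3$), the quadratic form $q$ can be diagonalized, and after a possible finite extension of $k$ it can be put in a hyperbolic-plus-square normal form; this lets one write $A_{p,d}$ (up to a faithfully flat base change that does not alter $e_{\rm HK}$, by the remarks preceding the Assumption) in a shape where the Frobenius powers $\m^{[q]}$ with $q = p^e$ are easy to handle. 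The module $A_{p,d}/\m^{[q]}$ has a $k$-basis consisting of monomials $X_0^{a_0}\cdots X_d^{a_d}$ with $0 \le a_i < q$, taken modulo the single relation $q$ and its multiples; so the length is $q^{d+1}$ minus the dimension of the space of such monomials that lie in $(q) + \m^{[q]}$, which is the dimension of the degree-shifted copy of the quotient one gets by multiplying by $q$.

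The key computation, then, is: the length $\ell(A_{p,d}/\m^{[q]})$ equals $q^{d+1} - \dim_k\big( (q\cdot S + \m^{[q]})/\m^{[q]} \big)$ where $S = k[[X_0,\dots,X_d]]$, and multiplication by $q$ identifies the latter with $S/(q : \text{stuff})$ truncated into the box $[0,q)^{d+1}$; unwinding this, $\ell(A_{p,d}/\m^{[q]})$ is a polynomial-like expression in $q$ whose leading term in $q^d$ is $e_{\rm HK}(A_{p,d})$. Concretely one is led to count lattice points: writing $q^{d+1}$ worth of monomials and subtracting those divisible by $q$ modulo $\m^{[q]}$ produces an alternating sum over how many of the $d+1$ squared variables one ``uses up,'' i.e. a sum involving binomial coefficients and the number of ways to distribute exponents below $q$ while the total degree stays in a prescribed range. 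This is where the trigonometric generating function enters: the quantity $\lim_{p\to\infty} e_{\rm HK}(A_{p,d})$ becomes the volume of a polytope-type region cut out by the inequalities coming from the relation $q$, and that volume has the closed form $1 + c_d/d!$ where $\sum_{d\ge 0}(c_d/d!)x^d = \tan x + \sec x$; equivalently $c_d$ counts alternating permutations of $\{1,\dots,d\}$, and the appearance of the Euler (zigzag) numbers is exactly the combinatorial shadow of the repeated ``fold'' imposed by the single quadratic relation in each of the residues modulo $q$. I would make this precise by setting up the generating function $\sum_d e_{\rm HK}(A_{p,d}) x^d/d!$ (or its $p\to\infty$ limit), showing it satisfies a first-order ODE forced by the recursion ``adding one more square variable,'' and identifying the solution with $1 + (\tan x + \sec x - 1)$ after normalizing the $d=0,1$ cases against the known values $e_{\rm HK}(A_{p,1}) = 1$ and the computation giving $e_{\rm HK}(A_{p,2}) = 3/2$.

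The main obstacle I anticipate is controlling the $p$-dependent error terms: for fixed $p$ the length $\ell(A_{p,d}/\m^{[p^e]})$ is genuinely a quasi-polynomial in $p^e$ whose lower-order (in $p^e$) coefficients depend on $p$ in an arithmetically complicated way (this is visible already in the stated formula $e_{\rm HK}(A_{p,4}) = (29p^2+15)/(24p^2+12)$), so the delicate point is to show that these $p$-dependent corrections wash out in the $p \to \infty$ limit and that the limit is exactly the ``continuous'' volume computation with no anomalous contribution. Handling this cleanly requires a uniform estimate — essentially that the number of exceptional lattice points near the boundary of the defining region is $O(q^{d-1})$ with a constant independent of $p$ — which I would extract from an explicit inductive description of $\m^{[q]} + (q)S$ in terms of the analogous object for $A_{p,d-1}$. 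Once that uniformity is in hand, passing to the limit and recognizing the resulting recursion as the defining ODE for $\tan x + \sec x$ is routine.
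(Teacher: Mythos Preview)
The paper does not prove this theorem at all: it is a survey article, and Theorem~\ref{SecTan} is simply quoted from Gessel--Monsky \cite{GM} with no argument given. So there is no ``paper's own proof'' to compare your proposal against; any assessment has to be against the actual Gessel--Monsky argument.

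Your outline has the right overall shape---reduce $e_{\rm HK}$ to a lattice-point count in a box with a single degree-$2$ relation, pass to a volume in the $p\to\infty$ limit, and identify the resulting numbers with the Euler zigzag numbers via a generating-function or recursive argument---and this is indeed the spirit of \cite{GM}. But the sketch is too loose in places to count as a proof plan. Two concrete issues: first, your claimed initial value $e_{\rm HK}(A_{p,1})=1$ is wrong; $A_{p,1}=k[[X_0,X_1]]/(X_0^2+X_1^2)$ is one-dimensional of multiplicity $2$, so $e_{\rm HK}(A_{p,1})=2$, consistent with $1+c_1/1!=2$. Second, the step ``multiplication by $q$ identifies the latter with $S/(q:\text{stuff})$'' is not made precise, and the passage from the alternating inclusion--exclusion you describe to an ODE for the exponential generating function is asserted rather than derived. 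In \cite{GM} the recursion in $d$ is set up carefully via an explicit decomposition of the Frobenius-power quotient (more general than just the quadric: they treat $\sum x_i^{d_i}$), and the identification with $\tan x+\sec x$ comes out of that recursion, not from guessing an ODE. Your uniformity-in-$p$ concern is real and is handled there, but not by the bare $O(q^{d-1})$ boundary estimate you propose; you would need to say more about why the induction on $d$ respects the error bounds.
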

 
We conclude this section by mentioning Hilbert-Kunz function. 
The \textit{Hilbert-Kunz function} of an $\m$-primary ideal $I$ of $A$ is a function $\Z_{\ge 0} \to \R$ sending $e$ to $\ell_A(A/I^{[p^e]})$. 
When Monsky defined Hilbert-Kunz multiplicity in \cite{Mo1}, he showed that $\ell_A(A/I^{[q]})=e_{\rm HK}(I)q^d+O(q^{d-1})$ for $q=p^e$. 
If $A$ is normal, then we can strengthen this result: Let $(A, \m)$ be a $d$-dimensional excellent normal local ring of characteristic $p>0$ with perfect residue field.  
Then Huneke-McDermott-Monsky \cite{HuMcMo} proved that for every $\fkm$-primary ideal $I \subset A$, there exists a real number $\beta(I)$ such that
 \[
 \ell_A (A/I^{[q]}) = e_{\rm HK}(I) q^d + \beta(I) q^{d-1} + O(q^{d-2}).\footnote{
One might hope to generalize this result to show that there exists a real number $\gamma(I)$ such that  $\ell_A (A/I^{[q]}) = e_{\rm HK}(I) q^d + \beta(I) q^{d-1} + \gamma(I) q^{d-2} + O(q^{d-3})$. 
However, this cannot be true because of Han-Monsky's example \cite{HaMo}.}
\]
In general $\beta(I) \ne 0$, but if $A$ is a complete $\Q$-Gorenstein normal local ring, then  $\beta(I) = 0$ (see \cite{Kura}). 
  
 A geometric description of Hilbert-Kunz function (or multiplicity) is desired for further study. 
 When $R=\bigoplus_{n \ge 0}R_n$ is a two-dimensional standard graded normal domain with $R_0$ an algebraically closed field of characteristic $p>0$ and $I$ is a homogeneous $R_+$-primary ideal, 
 an explicit description of the Hilbert-{K}unz multiplicity was given in \cite{Br}, \cite{Tr} by using vector bundles on $X=\Proj R$. 
 
 
\section{Closing remark}\label{closing}
There are several important topics on $F$-singularities that we have not been able to include because of space limitations. 
 
 Let $(R, \m)$ be a $d$-dimensional complete reduced local ring with prime characteristic $p$ and perfect residue field $R/\m$. 
 Then the \textit{$F$-signature} $s(R)$ of $R$ is defined by $s(R)=\lim_{e \to \infty}a_e/p^{ed}$, where $a_e$ denotes the largest rank of a free $R$-module appearing in a direct sum decomposition of $F^e_*R$.
This invariant was first introduced by Huneke-Leuschke \cite{HuLe} and its existence was proved by Kevin Tucker \cite{Tuc} in full generality. 
The $F$-signature can be characterized in terms of the Hilbert-{K}unz multiplicity: $s(R)$ coincides with the infimum of $e_{\rm HK}(I) - e_{\rm HK}(I')$ as $I\subsetneq I'$ run through all $\m$-primary ideals of $R$.\footnote{The Gorenstein case was proved in \cite{WY4} and the general case was recently announced by Kevin Tucker.} 
Also, the $F$-signature $s(R)$ measures the singularities of $R$. It takes a value between 0 and 1, and $s(R)=1$ (respectively, $s(R)>0$) if and only if $R$ is regular (respectively, strongly $F$-regular) (see \cite{AbLe}). 

The notion of rings with \textit{finite $F$-representation type} was introduced by Smith-Van den Bergh \cite{SvdB}, inspired by the notion of rings with finite Cohen-Macaulay type. 
They fit into the theory of $D$-modules in positive characteristic and satisfy some finiteness properties (\cite{Ya}, \cite{TaTa}). 
For example, the $F$-signature of a ring with finite $F$-representation type is a rational number. 
Affine toric rings and rings of invariants under linearly reductive group actions are examples of rings with finite $F$-representation type. 
The relationship with other $F$-singularities treated in this article is not clear at the moment. 
There is an example of a ring that is strongly $F$-regular but not with finite $F$-representation type (\cite{SiSw}). 
 
\textit{Global $F$-regularity} was introduced by Smith \cite{Sm3} and can be viewed as a global version of strong $F$-regularity.  
It is a global property of a projective variety over a field of positive characteristic. 
For example, the anti-canonical divisor of a global $F$-regular variety is big (\cite{SS}). 
Projective toric varieties and Schubert varieties are examples of globally $F$-regular varieties (\cite{Lau}, \cite{Has0}). 
 As a global version of the correspondence between strong $F$-regularity and being klt, it is conjectured in \cite{SS} that a normal projective variety $X$ over an algebraically closed field of characteristic zero is log Fano if and only if it is of ``globally $F$-regular type."
 This conjecture is known to be true if $\dim X=2$ (\cite{Ok}) or if $X$ is a Mori dream space (\cite{GOST}).

\end{document}